\documentclass[twoside]{article}
\usepackage[letterpaper, left=3.5cm, right=3.5cm, top=4cm, bottom=2.5cm]{geometry}
\usepackage{graphicx}
\usepackage{amsmath,amssymb,amsthm}
\usepackage{authblk}
\usepackage[utf8]{inputenc}
\usepackage{microtype}
\usepackage{mathrsfs} 
\usepackage{enumitem}
\usepackage{calc}
\usepackage{esint}
\usepackage{fancyhdr}
\usepackage{xcolor}
\usepackage[labelfont = bf]{caption}
\usepackage{doi}
\usepackage{subfigure}
\usepackage[numbers]{natbib}
\usepackage{float}
\usepackage{stmaryrd}
\usepackage{mathtools}
\usepackage{booktabs}
\usepackage{colortbl}
\usepackage{tabulary}
\usepackage{diagbox}
\usepackage{caption}

\DeclareMathAlphabet{\mathbbold}{U}{bbold}{m}{n}
\newcommand{\mathbbone}{\mathbbold{1}}
\newcommand{\mathbbzero}{\mathbbold{0}}

\usepackage{physics} 
\usepackage{tikz}
\usepackage{mathdots}
\usepackage{yhmath}
\usepackage{cancel}
\usepackage{color}
\usepackage{siunitx}
\usepackage{array}
\usepackage{multirow}
\usepackage{amssymb}
\usepackage{gensymb}
\usepackage{tabularx}
\usepackage{extarrows}
\usepackage{booktabs}
\usetikzlibrary{fadings}
\usetikzlibrary{patterns}
\usetikzlibrary{shadows.blur}
\usetikzlibrary{shapes}

\newtheorem{theorem}{Theorem}[section]
\numberwithin{equation}{section}
\newtheorem{proposition}[theorem]{Proposition}
\newtheorem{definition}[theorem]{Definition}
\newtheorem{corollary}[theorem]{Corollary}
\newtheorem{remark}[theorem]{Remark}
\newtheorem{lemma}[theorem]{Lemma}
\newtheorem{example}[theorem]{Example}
\newtheorem{examples}[theorem]{Examples}
\newtheorem{algorithm}[theorem]{Algorithm}
\newtheorem{assumption}[theorem]{Assumption}

\definecolor{blue1}{rgb}{0.031,0.188,0.419} % darkest
\definecolor{blue2}{rgb}{0.063,0.357,0.643}
\definecolor{blue3}{rgb}{0.231,0.545,0.760}
\definecolor{blue4}{rgb}{0.459,0.705,0.848}
\definecolor{blue5}{rgb}{0.716,0.833,0.916}
\definecolor{blue6}{rgb}{0.895,0.937,0.976} % lightest

\usepackage{titlesec}
\titleformat{\section}{\normalfont\scshape\centering}{\thesection.}{0.5em}{}
\titleformat*{\subsection}{\itshape}
\titleformat*{\subsubsection}{\itshape}

\providecommand{\keywords}[1]
{
	{\small\emph{Keywords:} #1}
}

\providecommand{\MSC}[1]
{
	{\small\emph{AMS MSC (2020):~~} #1}
}

\definecolor{denim}{rgb}{0.08, 0.38, 0.74}
\definecolor{byzantium}{rgb}{0.44, 0.16, 0.39} 
\definecolor{shamrockgreen}{rgb}{0.0, 0.62, 0.38} 

\usepackage{hyperref}
\hypersetup{
	colorlinks=true,
	linkcolor=denim,
	citecolor = shamrockgreen,
	filecolor=magenta, 
	urlcolor=byzantium,
}

\begin{document}
	\setlength{\abovedisplayskip}{5.5pt}
	\setlength{\belowdisplayskip}{5.5pt}
	\setlength{\abovedisplayshortskip}{5.5pt}
	\setlength{\belowdisplayshortskip}{5.5pt}

	\title{\vspace{-15mm}A $\operatorname{prox}$-Based Semi-Smooth Newton Method\\
for Convex Variational Problems} 
	\author[1]{Sören Bartels\thanks{Email: \url{bartels@mathematik.uni-freiburg.de}}}
	\author[2,3]{Alex Kaltenbach\thanks{Email: \url{kaltenbach@math.tu-berlin.de}}}
	\date{\today\vspace{-1.5mm}}
	\affil[1]{\small{Department of Applied Mathematics, University of Freiburg, Hermann--Herder-Str. 10,  D-79104  Freiburg}}\affil[2]{\small{Department of Mathematical Sciences and the Center for Mathematics and Artificial Intelligence (CMAI), George Mason University, Fairfax, VA 22030, USA.}}
	\affil[3]{\small{Institute of Mathematics, Technical University of Berlin, Stra\ss e des 17.\ Juni 136, D-10623 Berlin\vspace{-1.5mm}}}
	\maketitle

	\pagestyle{fancy}
	\fancyhf{}
	\fancyheadoffset{0cm}
	\addtolength{\headheight}{-0.25cm}
	\renewcommand{\headrulewidth}{0pt} 
	\renewcommand{\footrulewidth}{0pt}
	\fancyhead[CO]{A \textsc{$\operatorname{prox}$-based Semi-Smooth Newton Method}}
	\fancyhead[CE]{\textsc{S. Bartels and A. Kaltenbach}}
	\fancyhead[R]{\thepage}
	\fancyfoot[R]{}
	
	\begin{abstract}
		In this paper, we devise a $\operatorname{prox}$-based semi-smooth Newton method that is applicable to a finite element discretization of a broad class of nonsmooth convex variational problems, including the TV-minimization problem, the $p$-Dirichlet problem, the obstacle problem, and the elasto-plastic torsion problem.
        To this end, on the basis of the proximity operator, the discrete  primal-dual optimality conditions are reformulated
        as  nonlinear operator equations with  Newton-differentiable structure.  
        Under suitable assumptions~on~the~energy~densities, 
        we establish the global well-posedness and local super-linear convergence of the resulting semi-smooth Newton method.
        The proposed approach coincides with established semi-smooth Newton methods for obstacle-type problems,  satisfies a primal-dual invariance, and,\linebreak under \hspace{-0.1mm}suitable 
\hspace{-0.1mm}additional \hspace{-0.1mm}assumptions, \hspace{-0.1mm}is \hspace{-0.1mm}globally \hspace{-0.1mm}well-posed \hspace{-0.1mm}in \hspace{-0.1mm}the \hspace{-0.1mm}infinite-dimensional~\hspace{-0.1mm}\mbox{setting}.
	\end{abstract}
	
	\keywords{Proximity operator; semi-smooth Newton method; convex variational problems;
Fenchel duality; finite element method; total variation minimization;
$p$-Dirichlet problem; elasto-plastic~torsion}
	
	\MSC{49M15; 65K10; 65N30; 49M29; 49J52; 90C25}
	
	\section{Introduction}\thispagestyle{empty}\enlargethispage{15mm}\vspace{-1mm}

\hspace{5mm}In this paper, we devise a $\operatorname{prox}$-based semi-smooth Newton method that is applicable to a finite element discretization of a broad class of nonsmooth convex variational~problems and is based on a Fenchel duality framework.\vspace{-1mm}

\subsection{Fenchel duality for general convex variational problems}\vspace{-1.5mm}

\hspace{5mm}Let a general convex minimization problem be given via the minimization of the primal energy functional $I\colon W^{1,p}_D(\Omega)\to \mathbb{R}\cup\{+\infty\}$, where $p\in [1,+\infty)$, for every $v\in W^{1,p}_D(\Omega)$~defined~by
\begin{align}\label{intro:primal}
  I(v) \coloneqq \int_{\Omega}{\phi(\cdot,\nabla v)\,\mathrm{d}x} + \int_{\Omega}{\psi(\cdot,v)\,\mathrm{d}x}\,,
\end{align}
where the energy densities $\phi\colon \Omega\times\mathbb{R}^d\to \mathbb{R}\cup \{+\infty\}$ and $\psi\colon \Omega\times \mathbb{R}\to \mathbb{R}\cup \{+\infty\}$~are~convex~normal\linebreak integrands   having the lower compactness property (\textit{cf}.~Assumption \ref{ass:energy_densities}).

Under appropriate additional assumptions on the energy density $\psi$ (\textit{cf}.\ Assumption \ref{ass:energy_densities_conjugates}), 
a (Fenchel) dual problem (in the sense of \cite[Rem.\ 4.2, p.\ 60/61]{EkelandTemam1999}) is given via the maximization of the dual energy functional $D\colon \smash{W^{p'}_N(\operatorname{div};\Omega)}\to \mathbb{R}\cup\{-\infty\}$, for every $y\in  \smash{W^{p'}_N(\operatorname{div};\Omega)}$ defined by
\begin{align}\label{intro:dual}
  D(y) \coloneqq -\int_{\Omega}{\phi^{*}(\cdot,y)\,\mathrm{d}x} - \int_{\Omega}{\psi^{*}(\cdot,\operatorname{div} y)\,\mathrm{d}x}\,,
\end{align}
where $\smash{\phi^*\colon \Omega\times\mathbb{R}^d\to \mathbb{R}\cup \{+\infty\}}$ and $\smash{\psi^*\colon \Omega\times \mathbb{R}\to \mathbb{R}\cup \{+\infty\}}$ denote the Fenchel conjugates (with respect to the second argument) of the energy densities $\phi$ and $\psi$, respectively. 

A minimizer $u\in \smash{W^{1,p}_D(\Omega)}$ of the primal energy functional \eqref{intro:primal}, called \emph{primal solution},~and a maximizer \hspace{-0.15mm}$z\hspace{-0.15em}\in \hspace{-0.15em}\smash{W^{p'}_N(\operatorname{div};\Omega)}$ \hspace{-0.15mm}of \hspace{-0.15mm}the \hspace{-0.15mm}dual \hspace{-0.15mm}energy \hspace{-0.15mm}functional \hspace{-0.15mm}\eqref{intro:dual}, \hspace{-0.15mm}called \hspace{-0.15mm}\emph{dual \hspace{-0.15mm}solution},~\hspace{-0.15mm}are~\hspace{-0.15mm}\mbox{characterized} and related by the \emph{primal optimality inclusions}\vspace{-0.5mm}
\begin{subequations}\label{intro:primal-optimality-inclusions}
\begin{alignat}{2}
  z &\in \partial_{t}\phi(\cdot,\nabla u)&&\quad\text{ a.e.\ in }\Omega\,,\\
  \operatorname{div} z &\in \partial_{s}\psi(\cdot,u)&&\quad\text{ a.e.\ in }\Omega\,,
\end{alignat}\\[-4.5mm]
\end{subequations}
where $\smash{\partial_{t}\phi\colon \Omega\times\mathbb{R}^d\to 2^{\mathbb{R}^d}}$ and $\smash{\partial_{s}\psi\colon \Omega\times\mathbb{R}\to 2^{\mathbb{R}}}$ denote the subdifferentials (with respect to the second argument) of the energy densities $\phi$ and $\psi$, respectively. Note that %in can be shown that 
the primal optimality inclusions \eqref{intro:primal-optimality-inclusions}
are equivalent to the \emph{strong duality relation} (\textit{cf}.\ \cite[Prop.\ 3.1(ii)]{BartelsKaltenbach2024Overview})\vspace{-0.5mm}
\begin{align}\label{intro:strong-duality}
    I(u) = D(z)\,.\\[-6.5mm]\notag
\end{align} 

\subsection{Equivalence of the primal optimality inclusions to a root-finding problem}\enlargethispage{7.5mm}\vspace{-0.5mm}

\hspace{5mm}Inspired by the derivation of the first-order primal-dual algorithm for general convex variational problems by Chambolle~and~Pock (\textit{cf}.~\cite{ChambollePock2011}) 
as well as the derivation of the semi-smooth Newton method for Tikhonov functionals with $\ell^1$-sparsity constraints~by~Herzog~and~Lorenz~(\textit{cf}.~\cite{GriesseLorenz2008}),
the primal optimality inclusions \eqref{intro:primal-optimality-inclusions} may be rewritten as a root-finding problem with Newton-type differentiability properties:

Multiplying the primal optimality inclusions \eqref{intro:primal-optimality-inclusions} by \emph{proximity parameters} $\gamma_{1},\gamma_{2}>0$ and adding $\nabla u$ and $u$ to both sides, respectively, we obtain the equivalent inclusions\vspace{-0.5mm}
\begin{subequations}\label{eq:equivalent-inclusions}
\begin{alignat}{2}
  \nabla u + \gamma_{1}z &\in (\operatorname{id}_{\mathbb{R}^d}+ \gamma_{1}\partial_{t}\phi)(\cdot,\nabla u)&&\quad\text{ a.e.\ in }\Omega\,,\\
 u + \gamma_{2}\operatorname{div} z &\in (\operatorname{id}_{\mathbb{R}} + \gamma_{2}\partial_{s}\psi)(\cdot,u)&&\quad\text{ a.e.\ in }\Omega\,.
\end{alignat}\\[-4.5mm]
\end{subequations} 
Using the proximity operators (with respect to the second argument) $\operatorname{prox}_{\gamma_{1}\phi} \coloneqq(\operatorname{id}_{\mathbb{R}^d}
+\gamma_1\partial_t\phi)^{-1}: $ $ \hspace{-0.15em}\Omega\hspace{-0.1em}\times\hspace{-0.1em} \mathbb{R}^d\hspace{-0.15em}\to\hspace{-0.15em} \mathbb{R}^d$ \hspace{-0.1mm}and \hspace{-0.1mm}$\operatorname{prox}_{\gamma_{2}\psi}\coloneqq(\operatorname{id}_{\mathbb{R}}
+\gamma_2\partial_s\psi)^{-1} \colon\hspace{-0.15em}\Omega\hspace{-0.1em}\times\hspace{-0.1em} \mathbb{R}\hspace{-0.15em}\to\hspace{-0.15em} \mathbb{R}$ of the energy~\hspace{-0.1mm}\mbox{densities}~\hspace{-0.1mm}$\phi$~\hspace{-0.1mm}and~\hspace{-0.1mm}$\psi$,~\hspace{-0.1mm}respectively, 
%for a.e.\ $x\in \Omega$ as well as every $t\in \mathbb{R}^d$ and $s\in \mathbb{R}$, respectively, defined by
%\begin{align*}
%  \operatorname{prox}_{\gamma_{1}\phi}(x,t) &\coloneqq \underset{\widehat{t}\in \mathbb{R}^d}{\operatorname{arg\,min}}{\big\{\tfrac{1}{2\gamma_{1}}|\widehat{t}-t|^{2} + \phi(x,\widehat{t})\big\}}\,,\\[-1mm]
%   \operatorname{prox}_{\gamma_{2}\psi}(x,s) &\coloneqq \underset{\widehat{s}\in \mathbb{R}}{\operatorname{arg\,min}}{\big\{\tfrac{1}{2\gamma_{2}}|\widehat{s}-s|^{2} + \psi(x,\widehat{s})\big\}}\,,\\[-6.5mm]
%\end{align*}
%which coincide with the resolvents (with respect to the second argument) of  $\smash{\partial_t \phi \colon \Omega\times\mathbb{R}^d\to 2^{\mathbb{R}^d}}$~and $\smash{\partial_s \psi \colon \Omega\times\mathbb{R}\to 2^{\mathbb{R}}}$, \textit{i.e.}, for a.e.\ $x\in \Omega$ as well as every $t\in \mathbb{R}^d$ and $s\in \mathbb{R}$, respectively,~we~have~that
%\begin{align*}
%    \operatorname{prox}_{\gamma_{1}\phi}(x,t)&=(\operatorname{id}_{\mathbb{R}^d}+\gamma_{1} \partial_t \phi(x,\cdot))^{-1}(t)\,,\\
%    \operatorname{prox}_{\gamma_{2}\psi}(x,s)&=(\operatorname{id}_{\mathbb{R}}+\gamma_{2} \partial_s \psi(x,\cdot))^{-1}(s)\,,
%\end{align*}
the inclusions \eqref{eq:equivalent-inclusions} may be rewritten  equivalently~as~the~\emph{proximal optimality~conditions}\vspace{-4.5mm}
\begin{subequations}\label{eq:equivalent-inclusions.2}
\begin{alignat}{2}\label{eq:equivalent-inclusions.2.1}
    \nabla u &= \operatorname{prox}_{\gamma_{1}\phi}(\cdot,\nabla u + \gamma_{1} z)&&\quad \text{ a.e.\ in }\Omega\,,\\
    u &= \operatorname{prox}_{\gamma_{2}\psi}(\cdot,u + \gamma_{2}\operatorname{div} z)&&\quad \text{ a.e.\ in }\Omega\,.\label{eq:equivalent-inclusions.2.2}
\end{alignat}\\[-8.5mm]
\end{subequations}
%The equations \eqref{eq:equivalent-inclusions.2} form the starting point of the primal-dual algorithm~by Chambolle~and~Pock (\textit{cf}.~\cite{ChambollePock2011}).

The proximal optimality conditions \eqref{eq:equivalent-inclusions.2} may be viewed as a fixed-point system involving $1$-Lipschitz proximity operators, as in Chambolle--Pock  \cite{ChambollePock2011}. Equivalently,~the~primal~\mbox{optimality} inclusions \eqref{intro:primal-optimality-inclusions} are equivalent to the root-finding problem for the nonlinear mapping
$\mathtt{F}\colon \smash{W^{p'}_N(\operatorname{div};\Omega)}\times W^{1,p}_D(\Omega)\to (L^{\min\{p,p'\}}(\Omega))^d\times L^{\min\{p,p'\}}(\Omega)$, for every~${(y,v)\in \smash{W^{p'}_N(\operatorname{div};\Omega)}\times W^{1,p}_D(\Omega)}$~defined~by\vspace{-0.5mm}
\begin{align}\label{def:F-general}
    \mathtt{F}(y,v) \coloneqq
  \begin{bmatrix}
    \nabla v - \operatorname{prox}_{\gamma_{1}\phi}(\cdot,\nabla v + \gamma_{1} y)\\
    v - \operatorname{prox}_{\gamma_{2}\psi}(\cdot,v + \gamma_{2}\operatorname{div} y)
  \end{bmatrix}\quad\text{ a.e.\ in }\Omega\,,\\[-6mm]\notag
\end{align}
which, by the equations \eqref{eq:equivalent-inclusions.2}, admits the root $(z,u)\in \smash{W^{p'}_N(\operatorname{div};\Omega)\times W^{1,p}_D(\Omega)}$, \textit{i.e.},~we~have~that\vspace{-0.5mm}
\begin{align*}
     \mathtt{F}(z,u) = (\mathtt{0}_d,0) \quad \text{ a.e.\ in }\Omega\,.\\[-6.5mm]
\end{align*}

\subsection{Newton-type differentiability properties of the root-finding problem}\vspace{-0.5mm}

\hspace{5mm}For a.e.\ $x\in \Omega$, the proximity operators $\operatorname{prox}_{\gamma_{1}\phi}(x,\cdot)\colon \mathbb{R}^d\to \mathbb{R}^d$ and $\operatorname{prox}_{\gamma_{2}\psi}(x,\cdot)\colon \mathbb{R}\to \mathbb{R}$ 
are $1$-Lipschitz continuous and, consequently, according to \cite[Thm.~2.6]{ChenNashedQi2000}, point-wise Newton~differen\-tiable \hspace{-0.1mm}(\textit{cf}.\  \hspace{-0.1mm}Definition \ref{def:newton_differentiable}) \hspace{-0.1mm}with \hspace{-0.1mm}point-wise Newton \hspace{-0.1mm}derivatives \hspace{-0.1mm}${\mathtt{J}_{\smash{\operatorname{prox}_{\gamma_1\phi}}}(x,\hspace{-0.1em}\cdot)\colon\hspace{-0.175em} \smash{B_{\delta_{x,t}}^d(t)}\hspace{-0.175em}\to \hspace{-0.175em}\mathbb{R}^{d\times d}}$,~${\delta_{x,t}\hspace{-0.175em}>\hspace{-0.175em}0}$, for all $t\in \mathbb{R}^d$ and $\mathtt{J}_{\smash{\operatorname{prox}_{\gamma_2\psi}}}(x,\cdot)\colon \smash{B_{\delta_{x,s}}^1(s)}\to \mathbb{R}$,~$\delta_{x,s}>0$, for all $s\in \mathbb{R}$, respectively.

Similarly, the  mapping
$\mathtt{F}\colon \hspace{-0.1em}W^{p'}_N(\operatorname{div};\Omega)\hspace{-0.1em}\times \hspace{-0.1em}W^{1,p}_D(\Omega)\hspace{-0.1em}\to\hspace{-0.1em} (L^{\min\{p,p'\}}(\Omega))^d\hspace{-0.1em}\times\hspace{-0.1em} L^{\min\{p,p'\}}(\Omega)$, defined~by \eqref{def:F-general}, is Lipschitz continuous and, consequently, according to \cite[Thm.~2.6]{ChenNashedQi2000}, point-wise Newton differentiable with a point-wise Newton derivative at a root that may depend~on~this~root. 
Such a dependence of the point-wise Newton derivative on this root would render~it~\mbox{impractical}~for~use in a semi-smooth Newton method, as the respective root is
not known \emph{a priori}. Therefore,~it~is~in-\linebreak dispensable \hspace{-0.1mm}to \hspace{-0.1mm}identify \hspace{-0.1mm}a \hspace{-0.1mm}Newton
\hspace{-0.1mm}derivative \hspace{-0.1mm}that \hspace{-0.1mm}can \hspace{-0.1mm}be \hspace{-0.1mm}evaluated \hspace{-0.1mm}without~\hspace{-0.1mm}prior~\hspace{-0.1mm}\mbox{knowledge}~\hspace{-0.1mm}of~\hspace{-0.1mm}a~\hspace{-0.1mm}root.
However, constructing an explicit instance of such a derivative is
severely obstructed~by~the~typical \emph{norm gap phenomenon}
(\textit{cf}.\ \cite[Prop.~4.1]{HintermuellerItoKunisch2003}),
which is likewise expected in the present setting.  

Apparently, a canonical candidate for a Newton derivative is given via the Newton-type derivative  $\mathtt{J}_{\mathtt{F}}\colon \smash{W^{p'}_N(\operatorname{div};\Omega)}\times W^{1,p}_D(\Omega)\to \mathcal{L}(\smash{W^{p'}_N(\operatorname{div};\Omega)}\times W^{1,p}_D(\Omega);\smash{(L^{\min\{p,p'\}}(\Omega))^d}\times \smash{L^{\min\{p,p'\}}(\Omega)})$, for every $(y,v),(\widehat{y},\widehat{v})\in \smash{W^{p'}_N(\operatorname{div};\Omega)\times W^{1,p}_D(\Omega)}$ defined by\vspace{-0.75mm}
\begin{align}\label{def:J_F-general}
  \mathtt{J}_{\mathtt{F}}(y,v)(\widehat{y},\widehat{v}) \coloneqq
  \begin{bmatrix}
    (\mathbbone - \mathtt{J}_{\operatorname{prox}_{\gamma_{1}\phi}}(a))\nabla \widehat{v}-\gamma_{1} \mathtt{J}_{\operatorname{prox}_{\gamma_{1}\phi}}(a)\widehat{y} \\
     (\operatorname{1} - \mathtt{J}_{\operatorname{prox}_{\gamma_{2}\psi}}(b))\widehat{v}-\gamma_{2} \mathtt{J}_{\operatorname{prox}_{\gamma_{2}\psi}}(b)\operatorname{div}\widehat{y} 
  \end{bmatrix}\quad\text{ a.e.\ in }\Omega\,,\\[-6mm]\notag
\end{align}
where $a\coloneqq \nabla v + \gamma_{1}y\in \smash{(L^{\smash{\min\{p,p'\}}}(\Omega))^d}$ and $ b\coloneqq v + \gamma_{2}\operatorname{div} y\in \smash{L^{\smash{\min\{p,p'\}}}(\Omega)}$.

Although the Newton-type derivative \eqref{def:J_F-general}, in general, does not represent a genuine Newton derivative on the continuous level,  it leads to a rigorous justification of semi-smooth Newton methods for corresponding discretizations.  For well-posedness~of~the~semi-smooth~Newton~steps, the \hspace{-0.15mm}dual \hspace{-0.15mm}components \hspace{-0.15mm}of 
\hspace{-0.15mm}iterates, \hspace{-0.15mm}update \hspace{-0.15mm}directions, \hspace{-0.15mm}and  \hspace{-0.15mm}roots 
\hspace{-0.15mm}are \hspace{-0.15mm}sought \hspace{-0.15mm}in \hspace{-0.15mm}$W^p_N(\operatorname{div};\Omega)$~\hspace{-0.15mm}rather~\hspace{-0.15mm}than \hspace{-0.15mm}in the natural
Fenchel-dual space $\smash{W^{p'}_N(\operatorname{div};\Omega)}$ which, under suitable
assumptions, allows~the~linear\-ized equations to be reduced to elliptic
problems for the primal components of update directions~in
$W^{1,p}_D(\Omega)$ with right-hand sides in $W^{-1,p}_D(\Omega)$. Accordingly, we introduce\vspace{-0.5mm} %the algorithmic domain and residual space
\begin{align*} 
\smash{\mathbb{A}_p\coloneqq 
W^p_N(\operatorname{div};\Omega)\times W^{1,p}_D(\Omega)\,,
\qquad
\mathbb{B}_p
\coloneqq
(L^p(\Omega))^d\times L^p(\Omega)\,,}\\[-6mm]
\end{align*}
and formulate the resulting $\operatorname{prox}$-based semi-smooth Newton iteration as follows:\vspace{-0.5mm}

\begin{algorithm}[$\operatorname{prox}$-based \hspace{-0.1mm}semi-smooth \hspace{-0.1mm}Newton \hspace{-0.1mm}method \hspace{-0.1mm}for \hspace{-0.1mm}general \hspace{-0.1mm}convex \hspace{-0.1mm}variational~\hspace{-0.1mm}\mbox{problems}]\label{alg:SSNM-cont-general}
Let $\gamma_1,\gamma_2>0$ be proximity parameters, let $\varepsilon_{\mathtt{abs}},\varepsilon_{\mathtt{rel}} >0$ be stopping parameters, let $(z^{0},u^{0})\in \mathbb{A}_p$ be an initial iterate, and let $k_{\mathtt{max}}\in  \mathbb{N}\cup\{+\infty\}$ be a maximum number of iterations. Then, for $k=0,\ldots,k_{\mathtt{max}}$, perform the following~\mbox{iteration}~loop:
\begin{itemize}[noitemsep,topsep=2pt,leftmargin=!,labelwidth=\widthof{(2)}]
\item[(1)] \hypertarget{alg:SSNM-cont.1}{} Compute the \emph{primal-dual update direction} $\smash{(\delta z^k,\delta u^k) \in \mathbb{A}_p}$ such that\vspace{-0.5mm}
\begin{align}\label{alg:SSNM-cont-general.0}
  \smash{\mathtt{J}_{\mathtt{F}}(z^{k},u^{k})(\delta z^k,\delta u^k) = -\mathtt{F}(z^{k},u^{k})\quad\text{ in }\mathbb{B}_p\,,}\\[-6mm]\notag
\end{align}
and the updated iterate $\smash{(z^{k+1},u^{k+1}) \coloneqq (z^{k},u^{k}) +(\delta z^k,\delta u^k)\in   \mathbb{A}_p}$; %,  where $\alpha_k\hspace{-0.1em}>\hspace{-0.1em}0$~is~a~(\mbox{possibly} variable) step size;
\item[(2)] If $\smash{\|\mathtt{F}(z^{k+1},u^{k+1})\|_{\mathbb{B}_p}<\max\{\varepsilon_{\mathtt{abs}},\varepsilon_{\mathtt{rel}}\|\mathtt{F}(z^0,u^0)\|_{\mathbb{B}_p}\}}$,
then \textup{STOP}; otherwise, $k\mapsto  k+1$ and continue with (\hyperlink{alg:SSNM-cont.1}{1}).\vspace{-0.5mm}
\end{itemize}
\end{algorithm}
The well-posedness of Algorithm \ref{alg:SSNM-cont-general} can be ensured (\textit{cf}.\ \cite[Thm.\ 2.7]{BartelsKaltenbach2026}) for $p$~in~a~small~\mbox{interval} around~$2$~by~Gröger
regularity theory, or, under additional regularity
assumptions, for all $p\in (1,+\infty)$ by Schauder regularity theory. However, \hspace{-0.1mm}due \hspace{-0.1mm}to \hspace{-0.1mm}the \hspace{-0.1mm}lack \hspace{-0.1mm}of \hspace{-0.1mm}genuine \hspace{-0.1mm}Newton~\hspace{-0.1mm}\mbox{differentiability}\linebreak of  $\mathtt{F}\colon W^p_N(\operatorname{div};\Omega)\times W^{1,p}_D(\Omega)\to (L^p(\Omega))^d\times L^p(\Omega)$, defined by \eqref{def:F-general}, in the natural function spaces by the norm-gap phenomenon, it is unclear whether one can expect local~super-linear~convergence. 

In the present paper, we focus on a finite-dimensional counterpart of the above construction based on a finite element discretization of the primal \eqref{intro:primal} and the dual \eqref{intro:dual} problem, in which the norm-gap phenomenon does not occur and, therefore, a thorough~convergence~analysis~is~\mbox{possible}.\enlargethispage{7.5mm}

Proximity-operator and Moreau--Yosida reformulations are well-established in
nonsmooth optimization, in particular, through splitting schemes and proximal
Newton-type methods for composite minimization in finite-dimensional and
Hilbert-space settings; see,~\textit{e.g.},~\mbox{\cite{ParikhBoyd2014,LeeSunSaunders2014,PoetzlSchielaJaap2022,PoetzlSchielaJaap2024}}. 
Closely related proximal optimality conditions occur in inverse and
PDE-constrained~\mbox{optimization}~with nonsmooth control or sparsity terms, where
reduced or dual formulations often exploit smoothing by a solution operator;
see,~\textit{e.g.},~\cite{GriesseLorenz2008,Wachsmuth2026}.
By contrast, the present work treats nonsmooth convex variational energies
directly, without relying on such solution-operator smoothing, and combines the
proximal optimality conditions \eqref{eq:equivalent-inclusions.2} with a discrete Fenchel duality~framework.

\textit{The outline of the article is as follows.} In Section \ref{sec:preliminaries}, we recall important~\mbox{definitions}~and~results in connection with Newton differentiability, semi-smooth Newton methods, Moreau envelopes, and proximity operators as well as notation for standard continuous and discrete function spaces. In Section \ref{sec:continuous_duality}, we present a (Fenchel) duality framework for a broad class of convex integral functionals, which we transfer to a discrete level in Section \ref{sec:discrete_duality}. In Section \ref{sec:semi-smooth_newton}, starting from the discrete (Fenchel) duality framework in Section \ref{sec:discrete_duality}, we derive and study a discrete variant of the $\operatorname{prox}$-based  semi-smooth Newton method (\textit{cf}.\ Algorithm \ref{alg:SSNM-cont-general}). In Section \ref{sec:applications}, we apply the discrete (Fenchel) duality framework and the discrete $\operatorname{prox}$-based semi-smooth Newton method derived in Section \ref{sec:semi-smooth_newton} to the TV-minimization problem, the $p$-Dirichlet problem, and the elasto-plastic torsion problem. In Section \ref{sec:experiments}, we present numerical tests for these three benchmark problems.\pagebreak

    \section{Preliminaries}\vspace{-0.5mm}\label{sec:preliminaries}
    
    %\hspace{5mm}In this section, we collect the most important results and definitions that will be used in the forthcoming analysis.

    \subsection{Newton differentiability and the semi-smooth Newton method}\vspace{-0.5mm}

    \hspace{5mm}In this subsection, we recall the notion of Newton (or slant) differentiability along with related results, \hspace{-0.1mm}outline \hspace{-0.1mm}the \hspace{-0.1mm}semi-smooth \hspace{-0.1mm}Newton \hspace{-0.1mm}method, \hspace{-0.1mm}and \hspace{-0.1mm}state \hspace{-0.1mm}the \hspace{-0.1mm}main \hspace{-0.1mm}result~\hspace{-0.1mm}\mbox{concerning}~\hspace{-0.1mm}its~\hspace{-0.1mm}\mbox{local}~\hspace{-0.1mm}well-posedness and local (super-linear) convergence. For a detailed presentation, we refer~to~\cite{ChenNashedQi2000,Ulbrich2003,HintermuellerItoKunisch2003}.
    In doing so, throughout this subsection, let $(X,\|\cdot\|_X)$ and $(Y,\|\cdot\|_Y)$ be normed vector spaces.~~~

    At the basis of the semi-smooth Newton method is a generalized concept of differentiability.\enlargethispage{12mm}\vspace{-0.5mm}

    \begin{definition}[Newton (or slant)  differentiability]\label{def:newton_differentiable}
        Let $U\subseteq X$ be an open subset.~Then,~a~mapping $F\colon U\to Y$ is called \emph{Newton} (or \emph{slantly}) \emph{differentiable at} $x\in U$ if there exists~a~(\mbox{non-trivial})\footnote{Here, \emph{`non-trivial'} means that $\partial_{\mathrm{N}} F(\widehat{x})\neq \emptyset$ for all $\widehat{x}\in B_{\varepsilon}^X(x)$.} mapping $\partial_{\mathrm{N}} F\colon B_{\varepsilon}^X(x) \subseteq U\to  2^{\smash{\mathcal{L}(X;Y)}}$, $\varepsilon>0$, called \emph{Newton~\mbox{derivative}~of~$F$~at}~${x\in U}$, where $ 2^{\smash{\mathcal{L}(X;Y)}}$ denotes the power set of the space of linear and bounded operators $\mathcal{L}(X;Y)$,~such~that\vspace{-0.5mm}
        \begin{align}\label{def:newton_differentiable.1}
            \lim_{\tau\to 0_X}{\Big\{\smash{\sup_{\mathtt{L}\in \partial_{\mathrm{N}} F(x+\tau)}}{\big\{\tfrac{1}{\|\tau\|_X}\|F(x+\tau)-F(x)-\mathtt{L}\tau\|_Y\big\}}\Big\}}=0\,.\\[-6mm]\notag
        \end{align}
        
        A selection $\mathtt{J}_F\colon B_{\varepsilon}^X(x)\to \mathcal{L}(X;Y)$ of the \emph{Newton derivative} $\partial_{\mathrm{N}} F\colon B_{\varepsilon}^X(x)\to 2^{\smash{\mathcal{L}(X;Y)}}$, \textit{i.e.}, %we have that 
        $\mathtt{J}_F(\widehat{x})\in \partial_{\mathrm{N}} F(\widehat{x})$ for all $\widehat{x}\in B_\varepsilon^X(x)$, 
       %\begin{align*}
       %    \mathtt{J}_F(y)\in \partial_{\mathrm{N}} F(y)\quad\text{ for all }y\in B_\varepsilon^Y(x)\,,
       %\end{align*}
        is called \emph{Newton derivative selection} (or \emph{slanting function}).

        The mapping $F\colon U\to Y$ is called \emph{Newton} (or \emph{slantly}) \emph{differentiable in} $U$ if there exists a (non-trivial) mapping $\partial_{\mathrm{N}} F\colon U\to 2^{\smash{\mathcal{L}(X;Y)}}$ such that \eqref{def:newton_differentiable.1} holds at every $x\in U$.

        %The function $F\colon U\to Y$ is called \emph{semi-smooth} at $x\in U$ (in~$U$, respectively), if it is Newton differentiable  at $x\in U$ (in $U$, respectively) and locally Lipschitz continuous.
    \end{definition}

    \begin{remark}[on Definition \ref{def:newton_differentiable}]\label{rem:newton_differentiable} Let $U\subseteq X$ be an open subset and $F\colon U\to Y$~(\textit{cf}.~\cite[Sec.\ 2]{ChenNashedQi2000}):
        \begin{itemize}[noitemsep,topsep=2pt,leftmargin=!,labelwidth=\widthof{(iii)}]
            \item[(i)] \hypertarget{rem:newton_differentiable.i}{} In general, a Newton derivative $\partial_{\mathrm{N}} F\colon B_{\varepsilon}^X(x)\to 2^{\smash{\mathcal{L}(X;Y)}}$ is not unique;
            
            \item[(ii)] \hypertarget{rem:newton_differentiable.ii}{} If $F\colon U\to Y$ is continuously Fr\'echet differentiable in $U$, then it is Newton differentiable~in~$U$ and the Fr\'echet derivative $\mathrm{D}F\colon U\to \mathcal{L}(X;Y)$ is a possible Newton derivative selection;

            \item[(iii)] \hypertarget{rem:newton_differentiable.iii}{} If $F\colon U\to Y$ is Lipschitz continuous at $x\in U$, \textit{i.e.}, there exists some $L>0$ such that $\|F(x+\tau)-F(x)\|_Y\leq L\|\tau\|_X$ for sufficiently small $\tau\in X$,
            then it is Newton~differentiable~at~$x\in U$.
            %the Newton derivative %, for every $x\in U$, %$\partial_{\mathrm{N}} F\colon U\to 2^{\smash{\mathcal{L}(X;Y)}}$ 
           % given~via~${\partial_{\mathrm{N}} F(x)=\{\mathrm{D}F(x)\}}$~for~all~$x\in U$.

            %\item[(ii)] If $X$ is a Hilbert space and $Y=\mathbb{R}$, then the function $F\colon X\to \mathbb{R}$, for every $x\in X$ defined by $F(x)=\|x\|_X$, is semi-smooth in $X$ and a Newton derivative $\partial_{\mathrm{N}} F\colon X\to \smash{2^X}$, for every $x\in X$, given via $\partial_{\mathrm{N}} F(x)=\{\frac{x}{\|x\|_X}\}$ if $x\neq 0_X$ and $\partial_{\mathrm{N}} F(0)=B_1^X(0)$;

            %\item[(iii)] The function $F\coloneqq \max\{0,\cdot\}\colon \mathbb{R}\to \mathbb{R}$ is Newton differentiable in $\mathbb{R}$ and a Newton~derivative $\partial_{\mathrm{N}} F\colon \mathbb{R}\to 2^{\mathbb{R}}$ %, %for every $x\in \mathbb{R}$,  given via $\partial_{\mathrm{N}} F(x)=\{0\}$ if $x<0$, $\partial_{\mathrm{N}} F(x)=\{1\}$ if $x>0$, and $\partial_{\mathrm{N}} F(0)=[0,1]$.
        \end{itemize}
    \end{remark} 

    Building on %the generalized concept of  differentiation in 
    Definition \ref{def:newton_differentiable}, the semi-smooth Newton method is analogous to the classical Newton method, while employing a Newton derivative selection rather than the Fr\'echet derivative.\vspace{-0.5mm}
    
    \begin{algorithm}[Semi-smooth Newton method]\label{alg:SSN}
    Let $U\subseteq X$ be an open subset and $F\colon U\to Y$.
    Let $\varepsilon_{\mathtt{abs}},\varepsilon_{\mathtt{rel}}>0$ be stopping parameters, let $x^0\in U$ be an initial iterate, and 
 let~${k_{\mathtt{max}}\in \mathbb{N}\cup\{+\infty\}}$ be a maximum number of iterations.
 Then, for  $k=0,\ldots,k_{\mathtt{max}}$, perform the following~\mbox{iteration}~loop:
    \begin{itemize}[noitemsep,topsep=2pt,leftmargin=!,labelwidth=\widthof{(iii)}]
        %\item[(i)] \hypertarget{alg:SSN.ii}{} If $F(x^k)=0_Y$ in $Y$, then  \textup{STOP}; otherwise, continue with Step (\hyperlink{alg:SSN.ii}{ii}); 
        \item[(i)] \hypertarget{alg:SSN.i}{} Check if $F\colon U\to Y$ is Newton differentiable at $x^k\in U$ and compute a Newton derivative selection $\mathtt{J}_F(x^k)\in \mathcal{L}(X;Y)$;
        \item[(ii)] \hypertarget{alg:SSN.ii}{} Compute the \emph{update direction $\delta x^k\in X$} such that\vspace{-0.5mm}
        \begin{align}\label{alg:SSN.1}
            \smash{\mathtt{J}_F(x^k)\delta \smash{x^k}=-F(x^k)\quad\text{ in }Y\,,}\\[-6mm]\notag
        \end{align}
        and the \emph{updated iterate} $\smash{x^{k+1}\hspace{-0.175em}\coloneqq \hspace{-0.175em}x^k\hspace{-0.175em}+\hspace{-0.175em}\alpha_k \delta x^k\hspace{-0.175em}\in \hspace{-0.175em}X}$, where $\alpha_k\hspace{-0.175em}>\hspace{-0.175em}0$ is a~(possibly~\mbox{variable})~\mbox{step~size};
        \item[(iii)] \hypertarget{alg:SSN.iii}{} If $\|F(x^{k+1})\|_Y< \max\{\varepsilon_{\mathtt{abs}},\varepsilon_{\mathtt{rel}}\|F(x^0)\|_Y\}$, then \textup{STOP}; otherwise, $k\to k+1$ and continue with Step (\hyperlink{alg:SSN.i}{i}). 
    \end{itemize}
    \end{algorithm}

    If the mapping  $F\colon \hspace{-0.15em}U\hspace{-0.15em}\to\hspace{-0.15em} Y$ is Newton differentiable at a root $x\hspace{-0.15em}\in\hspace{-0.15em} U$ and 
    in a ball~${B_\varepsilon^X(x)\hspace{-0.15em}\subseteq\hspace{-0.15em} U}$,~${\varepsilon\hspace{-0.15em}>\hspace{-0.15em}0}$, a uniformly  bounded inverse of a Newton derivative selection exists,~then~Algorithm~\ref{alg:SSN}~is~\emph{locally well-posed}, \textit{i.e.},  if %the initial iterate 
    $x^0\in U$ is sufficiently close to $x$, all iterates $\{x^k\}_{k\in \mathbb{N}}\subseteq B_\varepsilon^X(x)$ are computable, and \emph{locally super-linearly convergent} to $x$, \textit{i.e.}, if %the initial iterate 
    $x^0\in U$ is sufficiently close to $x$, for every $\eta>0$, there exists some $K\in \mathbb{N}$, such that for every $k\in \mathbb{N}$ with $k\ge K$, there holds\vspace{-0.5mm}
         \begin{align*}
             \smash{\|x^{k+1}-x\|_X \leq \eta\|x^k-x\|_X\,.}\\[-6mm]
         \end{align*}

    \begin{theorem}[Well-posedness and convergence]\label{thm:wellposed_convergent}
         Let $U\subseteq X$ be an open subset and $F\colon U\to Y$ %a function 
         such that $F(x)\hspace{-0.1em}=\hspace{-0.1em}0_Y$ for some $x\hspace{-0.1em}\in\hspace{-0.1em} U$. If $F\colon\hspace{-0.1em} U\hspace{-0.1em}\to\hspace{-0.1em} Y$
         is Newton differentiable~at~$x\hspace{-0.1em}\in\hspace{-0.1em} U$~with~\mbox{point-wise}\linebreak invertible \hspace{-0.1mm}Newton \hspace{-0.1mm}derivative \hspace{-0.1mm}selection 
         \hspace{-0.1mm}$\mathtt{J}_F\colon \hspace{-0.175em}B_\varepsilon^X(x)\hspace{-0.175em}\to\hspace{-0.175em} \mathcal{L}(X;Y)$ \hspace{-0.1mm}and \hspace{-0.1mm}${\sup_{\widetilde{x}\in B_\varepsilon^X(x)}{\hspace{-0.1em}\{\|\mathtt{J}_F(\widetilde{x})^{-1}\|_{\mathcal{L}(Y;X)}\}}\hspace{-0.175em}}$  $<\hspace{-0.2em}+\infty$ for some $\varepsilon>0$, %then, if the initial iterate $x^0\in U$ is sufficiently close to $x$, 
         %then 
         Algorithm \ref{alg:SSN} (with $\alpha_k=1$ for all $k\in \mathbb{N}$) is locally well-posed  
         and locally super-linearly convergent~to~$x$.
    \end{theorem}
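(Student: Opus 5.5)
The plan is the classical one-step argument for semi-smooth Newton methods (cf.\ \cite{ChenNashedQi2000,HintermuellerItoKunisch2003}). Set $C\coloneqq\sup_{\widetilde{x}\in B_\varepsilon^X(x)}\|\mathtt{J}_F(\widetilde{x})^{-1}\|_{\mathcal{L}(Y;X)}<+\infty$. Shrinking $\varepsilon$ if necessary, we may assume that $B_\varepsilon^X(x)\subseteq U$ and that the selection $\mathtt{J}_F$ is defined on all of $B_\varepsilon^X(x)$.

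\emph{Error identity.} Fix an iterate $x^k\in B_\varepsilon^X(x)$. Then $\mathtt{J}_F(x^k)$ is invertible, so the update direction $\delta x^k=-\mathtt{J}_F(x^k)^{-1}F(x^k)\in X$ in \eqref{alg:SSN.1} exists and is unique. Hence the step $x^{k+1}=x^k+\delta x^k$ is computable. Since $F(x)=0_Y$, we have
\begin{align*}
x^{k+1}-x=\mathtt{J}_F(x^k)^{-1}\big[F(x)-F(x^k)+\mathtt{J}_F(x^k)(x^k-x)\big]\,.
\end{align*}
Writing $\tau_k\coloneqq x^k-x$ and taking norms gives
\begin{align*}
\|x^{k+1}-x\|_X\leq C\,\|F(x+\tau_k)-F(x)-\mathtt{J}_F(x+\tau_k)\tau_k\|_Y\,.
\end{align*}

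\emph{Contraction via Newton differentiability.} Because $\mathtt{J}_F(x+\tau_k)\in\partial_{\mathrm N}F(x+\tau_k)$, the supremum in \eqref{def:newton_differentiable.1} dominates this residual. So for every $\eta>0$ there is a $\rho(\eta)\in(0,\varepsilon]$ such that $\|\tau\|_X<\rho(\eta)$ implies
\begin{align*}
\|F(x+\tau)-F(x)-\mathtt{J}_F(x+\tau)\tau\|_Y\leq \tfrac{\eta}{C}\|\tau\|_X\,.
\end{align*}
Combining this with the error identity yields
\begin{align*}
\|x^{k+1}-x\|_X\leq\eta\|x^k-x\|_X\quad\text{whenever }\|x^k-x\|_X<\rho(\eta)\,.
\end{align*}

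\emph{Induction.} Choose $\eta=\tfrac12$ and $x^0\in B_{\rho(1/2)}^X(x)$. By induction, every iterate stays in $B_{\rho(1/2)}^X(x)\subseteq B_\varepsilon^X(x)$, so every step is well-defined. Moreover $\|x^k-x\|_X\leq 2^{-k}\|x^0-x\|_X\to0$, which gives local well-posedness and convergence.

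\emph{Super-linear rate.} Given an arbitrary $\eta>0$, pick $K$ with $2^{-K}\|x^0-x\|_X<\rho(\eta)$. For all $k\ge K$ the previous estimate then gives $\|x^{k+1}-x\|_X\leq\eta\|x^k-x\|_X$, which is local super-linear convergence.

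Two edge cases need care. If $x^k=x$ for some $k$, then $F(x^k)=0_Y$, so $\delta x^k=0$ and the sequence stays at $x$; the estimates hold trivially. The rest of the argument is routine. The main subtlety is that Definition~\ref{def:newton_differentiable} controls the derivative evaluated at the \emph{perturbed} point $x+\tau$ and not at $x$. This is exactly what the error identity needs, since it uses $\mathtt{J}_F(x^k)$. I would therefore state this matching explicitly, and also note that the uniform bound $C$ is used only at points inside $B_\varepsilon^X(x)$, which the induction guarantees.
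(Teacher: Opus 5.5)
Your proof is correct and is the standard one-step estimate $\|x^{k+1}-x\|_X\leq C\,\|F(x^k)-F(x)-\mathtt{J}_F(x^k)(x^k-x)\|_Y$, combined with Newton differentiability at the root; this is exactly the argument of \cite[Thm.~3.4]{ChenNashedQi2000}, which the paper cites instead of reproducing. Your remark that the definition controls $\mathtt{J}_F$ at the perturbed point $x+\tau$ is what makes the estimate close, and your induction keeping all iterates in $B_{\rho(1/2)}^X(x)\subseteq B_\varepsilon^X(x)$ delivers the paper's notion of local well-posedness.
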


    \begin{proof}
        See \cite[Thm.\ 3.4]{ChenNashedQi2000}.
    \end{proof}\vspace{-1mm}
    \newpage
    
    \subsection{Moreau envelope, proximity operator, resolvent operator, and Yosida approximation}

    \hspace{5mm}In this subsection, we collect the most important results connected with the Moreau envelope,  proximity operator,  resolvent operator, and  Yosida approximation. For~a~\mbox{detailed}~\mbox{presentation}, we \hspace{-0.1mm}refer \hspace{-0.1mm}to \hspace{-0.1mm}\cite{BauschkeCombettes2017,RockafellarWets1998}. \hspace{-0.1mm}Throughout \hspace{-0.1mm}this \hspace{-0.1mm}subsection, \hspace{-0.1mm}if \hspace{-0.1mm}not \hspace{-0.1mm}otherwise \hspace{-0.1mm}specified, \hspace{-0.1mm}let \hspace{-0.1mm}$(X,(\cdot,\cdot)_X)$~\hspace{-0.1mm}be~\hspace{-0.1mm}a~\hspace{-0.1mm}Hilbert space and $\Gamma_0(X)$ the class of proper, convex, and lower semi-continuous~\mbox{functionals}~\mbox{defined}~on~$X$.

    Then, a Fr\'echet differentiable approximation of a functional from $\Gamma_0(X)$, which also preserves the minimizers of the latter, is obtained via infimal convolution with a scaled quadratic functional.
    
    \begin{definition}[Moreau envelope]\label{def:moreau}
        Let $f\colon X\to \mathbb{R}\cup\{+\infty\}$ be a functional. Then, for every~$\gamma>0$, the \emph{Moreau envelope} of $f$ (of \emph{proximity parameter} $\gamma>0$) ${f^\gamma\colon\hspace{-0.1em} X\hspace{-0.1em}\to\hspace{-0.1em} \mathbb{R}\hspace{-0.1em}\cup\hspace{-0.1em}\{+\infty\}}$,~for~\mbox{every}~${x\hspace{-0.1em}\in\hspace{-0.1em} X}$, is defined by
        \begin{align}\label{eq:moreau}
            f^\gamma(x)\coloneqq \inf_{y\in X}{\big\{f(y)+\tfrac{1}{2\gamma}\|x-y\|_X^2\big\}}\,.
        \end{align}
    \end{definition}

    For functionals from  $\Gamma_0(X)$, the Moreau envelope offers the following approximation properties.%, which are illustrated in Figure \ref{fig:moreau_envelope}.

    \begin{lemma}%[Approximation properties of Moreau envelope]
    \label{lem:approx_properties_moreau}
    Let  $f\in \Gamma_0(X)$ be a functional. Then, for every proximity parameter $\gamma>0$, the Moreau envelope $f^\gamma\colon X\to  \mathbb{R}$ is real-valued, convex, and continuous. Moreover, 
    for every $x\in X$ and the associated net $\{f^\gamma(x)\}_{\gamma>0:\gamma\to 0^+}\subseteq \mathbb{R}$, the following statements~apply:
    \begin{itemize}[noitemsep,topsep=2pt,leftmargin=!,labelwidth=\widthof{(iii)}]
        \item[(i)] $\{f^\gamma(x)\}_{\gamma>0:\gamma\to 0^+}\subseteq \mathbb{R}$ is non-decreasing;
        \item[(ii)] $f^\gamma(x)\nearrow f(x)$ $(\gamma\to 0^+)$;
        \item[(iii)]  $f^\gamma(x)\searrow \inf_{y\in X}{\{f(y)\}}$ $(\gamma\to +\infty)$. In particular, %there holds 
        $\inf_{y\in X}{\{f^\gamma(y)\}}=\inf_{y\in X}{\{f(y)\}}$.
    \end{itemize}
    \end{lemma}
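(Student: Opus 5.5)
The plan is to work directly from the defining formula \eqref{eq:moreau}, writing $h_\gamma(x,y)\coloneqq f(y)+\tfrac{1}{2\gamma}\|x-y\|_X^2$ so that $f^\gamma(x)=\inf_{y\in X}h_\gamma(x,y)$. I would treat the basic properties first, then the three limit statements.

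\emph{Real-valuedness, convexity, continuity.} Since $f$ is proper, some $y_0$ has $f(y_0)<+\infty$, and then $f^\gamma(x)\le h_\gamma(x,y_0)<+\infty$. For the lower bound I would use that every $f\in\Gamma_0(X)$ admits a continuous affine minorant: there are $x^*\in X$ and $c\in\mathbb{R}$ with $f(y)\ge (x^*,y)_X+c$ for all $y$ (Hahn--Banach separation of the closed convex epigraph from a point below it). Then $h_\gamma(x,\cdot)$ is bounded below by the quadratic $y\mapsto (x^*,y)_X+c+\tfrac{1}{2\gamma}\|x-y\|_X^2$, which is coercive. Hence $f^\gamma(x)>-\infty$.

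For convexity, the map $(x,y)\mapsto h_\gamma(x,y)$ is jointly convex on $X\times X$. Partial minimization of a jointly convex function over one variable yields a convex function, so $f^\gamma$ is convex. A convex function that is real-valued everywhere is not automatically continuous in infinite dimensions, so I would argue more directly. I would show local boundedness above: $f^\gamma(x)\le h_\gamma(x,y_0)$, and the right-hand side is bounded on bounded sets. A convex function bounded above on a neighbourhood of one point is continuous, and this gives continuity everywhere. Alternatively, I would give an explicit local Lipschitz estimate via the identity $\|x-y\|^2-\|x'-y\|^2=(x-x',x+x'-2y)$ together with boundedness of near-minimizers.

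\emph{(i) and (ii).} For $0<\gamma_1<\gamma_2$ we have $h_{\gamma_1}\ge h_{\gamma_2}$ pointwise, so $f^{\gamma_1}\ge f^{\gamma_2}$. This is the claimed monotonicity: the net is non-decreasing as $\gamma\to0^+$. Taking $y=x$ gives $f^\gamma(x)\le f(x)$, hence $\lim_{\gamma\to0^+}f^\gamma(x)\le f(x)$.

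The reverse inequality is the main step. I would pick $y_\gamma$ with $h_\gamma(x,y_\gamma)\le f^\gamma(x)+\gamma$. Suppose the limit $L$ is finite, for otherwise there is nothing to prove. Using the affine minorant,
\[
(x^*,y_\gamma)_X+c+\tfrac{1}{2\gamma}\|x-y_\gamma\|_X^2\le L+\gamma.
\]
This first makes $\{y_\gamma\}$ bounded, and then gives $\|x-y_\gamma\|_X^2\le 2\gamma\,C\to0$, so $y_\gamma\to x$ strongly. Lower semicontinuity of $f$ then yields
\[
f(x)\le\liminf_{\gamma\to0^+} f(y_\gamma)\le\liminf_{\gamma\to0^+}\bigl(f^\gamma(x)+\gamma\bigr)=L.
\]
I expect this bookkeeping, namely extracting boundedness from the affine minorant when $L<+\infty$, to be the only delicate point.

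\emph{(iii).} Monotonicity again shows the net is non-increasing as $\gamma\to+\infty$. Also $f^\gamma(x)\ge\inf_X f$, because $h_\gamma(x,y)\ge f(y)$. For the reverse, fix $\varepsilon>0$ and choose $y_\varepsilon$ with $f(y_\varepsilon)\le\inf_X f+\varepsilon$, where if $\inf_X f=-\infty$ I take $f(y_\varepsilon)\le-1/\varepsilon$. Then
\[
f^\gamma(x)\le f(y_\varepsilon)+\tfrac{1}{2\gamma}\|x-y_\varepsilon\|_X^2\to f(y_\varepsilon)\quad(\gamma\to\infty),
\]
which gives the limit $\inf_X f$. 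Finally, $\inf_y f^\gamma(y)\le\inf_y f(y)$ follows from $f^\gamma\le f$. The bound $f^\gamma\ge\inf_X f$ gives the reverse inequality, so the two infima coincide.
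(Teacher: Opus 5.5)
Your proof is correct. The paper gives no argument of its own; it only cites Bauschke--Combettes (Props.~12.15 and 12.32). Your proposal is therefore a self-contained version of the standard reasoning behind those results. A continuous affine minorant gives finiteness from below. In (ii), once the limit $L$ is finite, the same minorant forces the near-minimizers $y_\gamma$ to converge strongly to $x$, and lower semicontinuity then yields $f(x)\le L$. Items (i) and (iii) follow from the pointwise ordering of the quadratic penalties.

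The differences from the usual textbook route are minor. There one typically uses the exact minimizer $\operatorname{prox}_{\gamma f}(x)$, whose existence the paper notes just before Definition~\ref{def:proximity}, which removes your $+\gamma$ slack. Your $\gamma$-approximate minimizers instead make the argument independent of the existence of minimizers. Your continuity proof, via local boundedness from above of a convex function, is elementary and does not rely on the later differentiability statement in Lemma~\ref{lem:relations}(ii).
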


    \begin{proof}
        See \cite[Props.\ 12.15 \& 12.32]{BauschkeCombettes2017}.
    \end{proof}

    The approximation properties of Moreau envelopes (\textit{cf.} Lemma \ref{lem:approx_properties_moreau}) are illustrated in Figure~\ref{fig:moreau_envelope} by  two \hspace{-0.1mm}examples \hspace{-0.1mm}of \hspace{-0.1mm}functionals \hspace{-0.1mm}from \hspace{-0.1mm}$\Gamma_0(\mathbb{R}^d)$ 
 \hspace{-0.1mm}relevant~\hspace{-0.1mm}for~\hspace{-0.1mm}the~\hspace{-0.1mm}forthcoming~\hspace{-0.1mm}\mbox{analysis}~(\textit{cf}.~\hspace{-0.1mm}\mbox{\cite[Chap.~\hspace{-0.1mm}6]{Beck2017}}).\enlargethispage{8mm}

    \begin{examples}[for Moreau envelopes]\label{expl:moreau_envelopes} Let $\gamma>0$ be a proximity parameter and $d\in \mathbb{N}$.
        \begin{itemize}[noitemsep,topsep=2pt,leftmargin=!,labelwidth=\widthof{(ii)}]
            \item[(i)] \hypertarget{expl:moreau_envelopes.i}{} The Moreau envelope of the Euclidean distance $\smash{f\hspace{-0.1em}\coloneqq \hspace{-0.1em}\vert \cdot\vert\hspace{-0.1em}\in \hspace{-0.1em}\Gamma_0(\mathbb{R}^d)}$, called \emph{Huber~\mbox{regularization}} (\textit{cf}.\ \cite{Huber1964}), $ f^\gamma\colon \mathbb{R}^d\to \mathbb{R}$ (\textit{cf}.\ Figure \ref{fig:moreau_envelope}(\textit{left})), for every $t\in \mathbb{R}^d$,~is~given~via 
        \begin{align*}
            \smash{f^\gamma(t)
            =\min\{\tfrac{1}{2\gamma} \vert t\vert^2,\vert t\vert-\tfrac{\gamma}{2}\}\,.}
            %\begin{cases}
            %    \frac{1}{2\gamma} \vert t\vert^2& \text{ if }\vert t\vert\leq \gamma\,,\\
            %     \vert t\vert-\frac{\gamma}{2}& \text{ if }\vert t\vert> \gamma\,.\\
            %\end{cases}
        \end{align*}
         \item[(ii)] \hypertarget{expl:moreau_envelopes.ii}{} The Moreau envelope of the functional $\smash{f\coloneqq \frac{1}{2}\vert \cdot\vert^2+I_{\smash{K_1^d(0)}}\in \Gamma_0(\mathbb{R}^d)}$\footnote{Here, $K_1^d(0)\coloneqq \{x\in \mathbb{R}^d\mid \vert x\vert\leq 1\}$},
         where~${I_{\smash{K_1^d(0)}}(t)\coloneqq 0}$~if $\vert t\vert\hspace{-0.12em} \leq\hspace{-0.12em} 1$ and $I_{\smash{K_1^d(0)}}(t)\hspace{-0.12em}\coloneqq\hspace{-0.12em} +\infty$ else, $f^\gamma\hspace{-0.12em}\colon \hspace{-0.12em}\mathbb{R}^d\hspace{-0.15em}\to\hspace{-0.15em} \mathbb{R}$ (\textit{cf}.\ Figure \ref{fig:moreau_envelope}(\textit{right})), for~every~${t\hspace{-0.12em}\in\hspace{-0.12em} \mathbb{R}^d}$,~is~given~via\footnote{Here, $(\cdot)_+\coloneqq\max\{0,\cdot\}\colon \mathbb{R}\to \mathbb{R}_{\ge 0}$.}
        \begin{align*}
            \smash{f^\gamma(t)=\tfrac{1}{2(1+\gamma)}\vert t\vert^2+\tfrac{1}{2\gamma(1+\gamma)}(\vert t\vert-(1+\gamma))_+^2
            %\max\{\tfrac{1}{2(1+\gamma)} \vert t\vert^2,\tfrac{1}{2\gamma}(\vert t\vert-1)^2+\tfrac{1}{2}\}
            \,.}
            %\begin{cases}
            %    \frac{1}{2\gamma} \vert t\vert^2& \text{ if }\vert t\vert\leq \gamma\,,\\
            %     \vert t\vert-\frac{\gamma}{2}& \text{ if }\vert t\vert> \gamma\,.\\
            %\end{cases}
        \end{align*}
        \end{itemize} 
    \end{examples}
    
    \begin{figure}[H]\vspace{-1mm}
        \centering
        \includegraphics[width=0.5\linewidth]{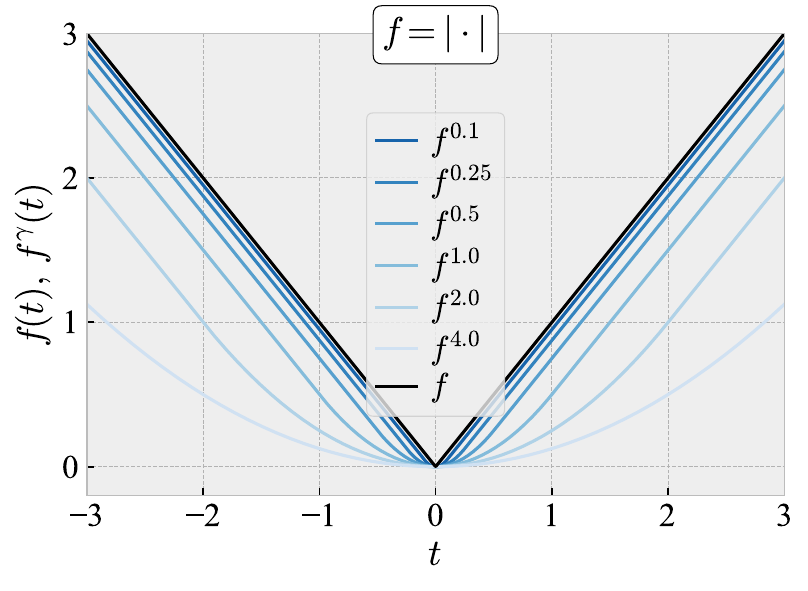}\includegraphics[width=0.5\linewidth]{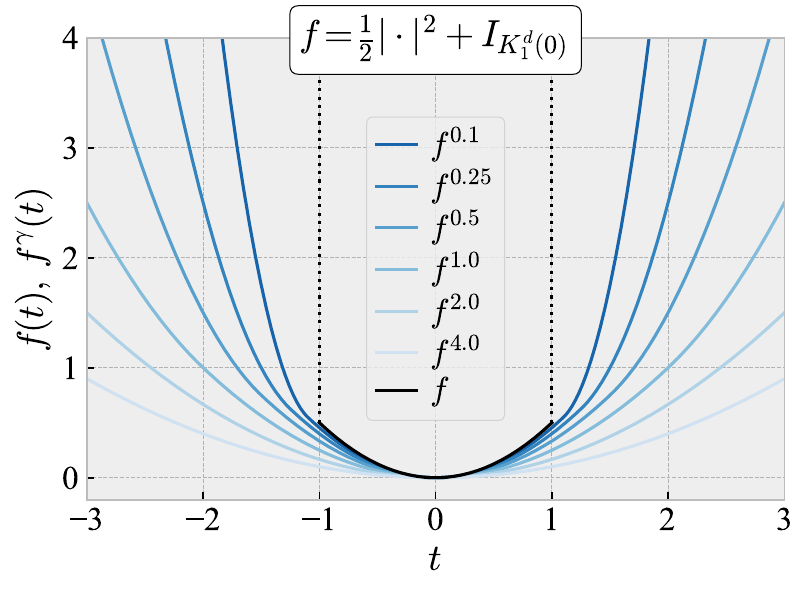}\vspace{-2.5mm}
        \caption{\textit{left:} Euclidean distance $f\hspace{-0.15em}\coloneqq\hspace{-0.15em} \vert \cdot\vert \hspace{-0.15em}\in\hspace{-0.15em} \Gamma_0(\mathbb{R})$; \textit{right:} the functional $f\hspace{-0.15em}\coloneqq\hspace{-0.15em} \frac{1}{2}\vert \cdot\vert^2+I_{\smash{K_1^d(0)}}\hspace{-0.15em}\in \hspace{-0.15em}\Gamma_0(\mathbb{R})$; each with Moreau envelopes $f^\gamma\colon \mathbb{R}\to \mathbb{R}$, $\gamma\in \{0.1(\text{{\color{blue1}\raisebox{2pt}{\rule{0.5em}{1pt}}}}), 0.25(\text{{\color{blue2}\raisebox{2pt}{\rule{0.5em}{1pt}}}}), 0.5(\text{{\color{blue3}\raisebox{2pt}{\rule{0.5em}{1pt}}}}), 1(\text{{\color{blue4}\raisebox{2pt}{\rule{0.5em}{1pt}}}}), 2(\text{{\color{blue5}\raisebox{2pt}{\rule{0.5em}{1pt}}}}), 4(\text{{\color{blue6}\raisebox{2pt}{\rule{0.5em}{1pt}}}})\}$,~which,~consistent~with Lemma \ref{lem:approx_properties_moreau}, approximate $f\in \Gamma_0(\mathbb{R}) \text{ ({\color{black}\raisebox{2pt}{\rule{0.5em}{1pt}}})}$ point-wise from below with~matching~\mbox{minima}.}
        \label{fig:moreau_envelope}
    \end{figure}\newpage
    
    Beyond that, for functionals from $\Gamma_0(X)$, by the direct method in the calculus~of~variations, the infimum in \eqref{eq:moreau} is always attained, which gives rise to the following definition.

    \begin{definition}[Proximity operator]\label{def:proximity}
        Let \hspace{-0.1mm}$f\hspace{-0.175em}\in \hspace{-0.175em}\Gamma_0(X)$ \hspace{-0.1mm}be \hspace{-0.1mm}a \hspace{-0.1mm}functional. \hspace{-0.1mm}Then, \hspace{-0.1mm}the~\hspace{-0.1mm}\emph{\mbox{proximity}~\hspace{-0.1mm}\mbox{operator}} of $f$ --or \emph{proximal mapping}-- $\mathrm{prox}_{f}\colon X\to X$, for every $x\in X$, is defined by\vspace{-0.5mm}
        \begin{align}\label{eq:proximity}
            \mathrm{prox}_{f}(x)\coloneqq \underset{y\in X}{\operatorname{arg\,min}}{\big\{f(y)+\tfrac{1}{2}\|x-y\|_X^2\big\}}\,.\\[-5.5mm]\notag
        \end{align}
    \end{definition}

    Important properties of the proximity operator  are collected in the following~lemma.

    \begin{lemma}%[Properties of the proximity operator]
    \label{lem:prop_proximity}
        Let $f\in \Gamma_0(X)$ be a functional. Then, the following statements~apply:
        \begin{itemize}[noitemsep,topsep=2pt,leftmargin=!,labelwidth=\widthof{(iii)}]
            \item[(i)] \hypertarget{lem:prop_proximity.i}{} For every $x\in X$ and $\gamma>0$, there holds the \emph{Moreau decomposition}\vspace{-0.5mm}
        \begin{align}\label{lem:prop_proximity.i}
            \smash{x=\gamma\,\mathrm{prox}_{\smash{\frac{1}{\gamma}f^*}}(\smash{\tfrac{1}{\gamma}x})+\mathrm{prox}_{\gamma f}(x)}\,,\\[-5.5mm]\notag
        \end{align}
        where, identifying $X$ with its (topological) dual space $X^*$, %by Riesz representation theorem, 
        the \emph{(Fenchel) conjugate} functional $f^*\colon X\to \mathbb{R}\cup\{+\infty\}$, 
        for every $x\in X$, is defined by $f^*(x)\coloneqq \sup_{y\in X}{\{(x,y)_X-f(y)\}}$;
        \item[(ii)] \hypertarget{lem:prop_proximity.ii}{} The proximity operator $\mathrm{prox}_{f}\colon \hspace{-0.125em}X\hspace{-0.125em}\to\hspace{-0.125em} X$  and $\mathbbone_X-\mathrm{prox}_{f}\colon \hspace{-0.125em}X\to\hspace{-0.125em} X$, where~${\mathbbone_X\hspace{-0.125em}\in\hspace{-0.125em} \mathcal{L}(X)\hspace{-0.125em}\coloneqq\hspace{-0.125em} \mathcal{L}(X;X)}$ denotes the identity operator (\textit{i.e.}, $\mathbbone_X(x)\coloneqq x$ for all $x\in X$), are monotone and $1$-Lipschitz continuous. More specifically,  for every $x,y\in X$, there holds\vspace{-0.5mm}
        \begin{subequations}\label{lem:prop_proximity.ii}
            \begin{align}
                \|\mathrm{prox}_{f}(x)-\mathrm{prox}_{f}(y)\|^2_X&\leq (\mathrm{prox}_{f}(x)-\mathrm{prox}_{f}(y),x-y)_X\,,\\
                \|x-\mathrm{prox}_{f}(x)-(y-\mathrm{prox}_{f}(y))\|^2_X&\leq (x-\mathrm{prox}_{f}(x)-(y-\mathrm{prox}_{f}(y)),x-y)_X\,;
            \end{align}
        \end{subequations}
        %    In particular, if $X=\mathbb{R}^n$, $n\in  \mathbb{N}$, the proximity operator $\mathrm{prox}_{f}\colon X\to X$~is~a.e.~\mbox{differentiable};

        \item[(iii)] \hypertarget{lem:prop_proximity.iii}{} The proximity operator $\mathrm{prox}_{f}\colon X\to X$ is Newton differentiable point-wise at every $x\in X$ and for every $\mathtt{L}\in \partial_{\mathrm{N}} \mathrm{prox}_{f}(x)$, there holds $\mathtt{L}\succeq \mathbbzero_X$ as well as $\|\mathtt{L}\|_{\mathcal{L}(X)}\leq 1$, where $\mathbbzero_X\in \mathcal{L}(X)$ is the zero operator  (\textit{i.e.}, $\mathbbzero_X(x)=0_X$ for all $x\in X$).

        \item[(iv)] \hypertarget{lem:prop_proximity.iv}{} If the functional $f\colon X\to \mathbb{R}$ is twice continuously Fr\'echet differentiable with second Fr\'echet derivative $\mathrm{D}^2f\colon X\to \mathcal{L}(X)$, the proximity operator $\mathrm{prox}_{f}\colon X\to X$ is Fr\'echet differentiable with Fr\'echet derivative $\mathrm{D}\mathrm{prox}_{f}\colon X\to \mathcal{L}(X)$, for every $x\in X$ given via\vspace{-1mm}
        \begin{align}\label{lem:prop_proximity.iv}
            \mathrm{D}\mathrm{prox}_{f}(x)=\big(\mathbbone_X+\mathrm{D}^2f(\mathrm{prox}_{f}(x))\big)^{-1}\quad\text{ in }\mathcal{L}(X)\,.
        \end{align}
        \end{itemize}
    \end{lemma}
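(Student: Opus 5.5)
The plan is to treat the four items one at a time, using only the variational characterization of $\mathrm{prox}_f$. By Definition~\ref{def:proximity} and the optimality condition for the strongly convex problem in \eqref{eq:proximity}, $p=\mathrm{prox}_f(x)$ if and only if $x-p\in\partial f(p)$. Equivalently, $\mathrm{prox}_f=(\mathbbone_X+\partial f)^{-1}$, and this operator is single-valued because the minimized functional is strongly convex and lower semi-continuous.

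\textbf{Item (ii).} Set $p=\mathrm{prox}_f(x)$ and $q=\mathrm{prox}_f(y)$. Then $x-p\in\partial f(p)$ and $y-q\in\partial f(q)$. Monotonicity of $\partial f$ gives $((x-p)-(y-q),p-q)_X\ge 0$, which rearranges to the first inequality in \eqref{lem:prop_proximity.ii}. For the second inequality, expand $\|(x-p)-(y-q)\|_X^2=((x-p)-(y-q),x-y)_X-((x-p)-(y-q),p-q)_X$ and drop the nonnegative last term. Both operators are therefore firmly nonexpansive, and Cauchy--Schwarz yields monotonicity and the $1$-Lipschitz bound.

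\textbf{Item (i).} The key fact is $v\in\partial(\gamma f)(w)\iff w\in\partial(\gamma f)^*(v)$, together with $(\gamma f)^*(v)=\gamma f^*(v/\gamma)$. Let $p=\mathrm{prox}_{\gamma f}(x)$. Then $x-p\in\gamma\partial f(p)$, so $p\in\partial f^*((x-p)/\gamma)$. With $q:=(x-p)/\gamma$ this reads $\tfrac1\gamma x-q=\tfrac{1}{\gamma}p\in\tfrac1\gamma\partial f^*(q)$, which is exactly $q=\mathrm{prox}_{\frac1\gamma f^*}(\tfrac1\gamma x)$. Rearranging $x=\gamma q+p$ gives \eqref{lem:prop_proximity.i}. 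The only ingredient needed here is the Fenchel--Young equality case for $f\in\Gamma_0(X)$.

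\textbf{Item (iii).} Newton differentiability follows from Remark~\ref{rem:newton_differentiable}(iii), since $\mathrm{prox}_f$ is $1$-Lipschitz by (ii). The real content is that the admissible Newton derivative can be chosen with elements $\mathtt{L}\succeq\mathbbzero_X$ and $\|\mathtt{L}\|_{\mathcal{L}(X)}\le1$; this is the step I expect to be the main obstacle. Following the construction in \cite[Thm.~2.6]{ChenNashedQi2000}, the Newton derivative at $x$ consists of operators $\mathtt{L}$ built from difference quotients, i.e., linear maps with $\mathtt{L}\tau=\mathrm{prox}_f(x+\tau)-\mathrm{prox}_f(x)$ along the relevant direction. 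I would restrict this set to operators inheriting the firm nonexpansiveness from (ii), namely $(\mathtt{L}h,h)_X\ge\|\mathtt{L}h\|_X^2$. This gives positivity and, by Cauchy--Schwarz, $\|\mathtt{L}\|\le1$. In finite dimensions one could instead use the Clarke Jacobian, which is the convex hull of limits of Jacobians at differentiability points. These Jacobians satisfy $(Jh,h)\ge|Jh|^2$, obtained by passing to the limit in (ii), and this inequality is preserved under limits and convex combinations. If a symmetric representative is required, note that $\mathrm{prox}_f=\nabla(\tfrac12|\cdot|^2-f^{1})$ is a gradient, where $f^{1}$ denotes the Moreau envelope with $\gamma=1$. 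Hence its Jacobians are symmetric, and $\mathtt{L}\succeq0$ holds in the usual sense.

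\textbf{Item (iv).} If $f\in C^2$, then $p=\mathrm{prox}_f(x)$ solves $G(p,x):=p+\mathrm{D}f(p)-x=0$. We have $\partial_pG=\mathbbone_X+\mathrm{D}^2f(p)$, and convexity gives $\mathrm{D}^2f(p)\succeq0$. By Lax--Milgram, $\partial_pG$ is boundedly invertible with inverse norm at most $1$. The implicit function theorem then yields Fréchet differentiability of $x\mapsto p(x)$, with derivative given by \eqref{lem:prop_proximity.iv}. This agrees with the global single-valued solution map by the uniqueness shown above.
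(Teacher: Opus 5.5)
\textbf{Items (i), (ii), (iv).} These are fine. For (i) and (ii) you derive directly what the paper only cites from Bauschke--Combettes, and your implicit-function-theorem argument for (iv) is the paper's.

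\textbf{Item (iii): the main route has a gap.} The paper's own proof of (iii) only points to Remark~\ref{rem:newton_differentiable}(iii), which gives Newton differentiability but not the bounds. You are right that the bounds can only hold for a suitably chosen Newton derivative. However, your construction does not produce one.

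The difference-quotient operators behind \cite[Thm.~2.6]{ChenNashedQi2000} are rank one: $\mathtt{L}_\tau h=\frac{(\tau,h)_X}{\|\tau\|_X^2}\,d_\tau$ with $d_\tau\coloneqq\mathrm{prox}_f(x+\tau)-\mathrm{prox}_f(x)$. Hence $(\mathtt{L}_\tau h,h)_X=\frac{(\tau,h)_X\,(d_\tau,h)_X}{\|\tau\|_X^2}$, which is negative for suitable $h$ as soon as $d_\tau$ and $\tau$ are linearly independent. This happens, e.g., for the projection onto a half-plane, with $x$ on the boundary line and $\tau$ pointing obliquely outward. Item (ii) gives firm nonexpansiveness only in the single direction $\tau$, namely $\|d_\tau\|_X^2\le(d_\tau,\tau)_X$.

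So ``restricting'' this set to operators with $(\mathtt{L}h,h)_X\ge\|\mathtt{L}h\|_X^2$ for all $h$ typically empties it, which violates the non-triviality required in Definition~\ref{def:newton_differentiable}. If you instead meant all linear maps with $\mathtt{L}\tau=d_\tau$, you never show that a firmly nonexpansive one exists. That existence is the missing idea, and it does hold:
\begin{itemize}
\item $\|d_\tau\|_X^2\le(d_\tau,\tau)_X$ is equivalent to $\|2d_\tau-\tau\|_X\le\|\tau\|_X$.
\item Hence $N_\tau h\coloneqq\frac{(\tau,h)_X}{\|\tau\|_X^2}(2d_\tau-\tau)$ has $\|N_\tau\|_{\mathcal{L}(X)}\le 1$.
\item Then $\mathtt{L}_\tau\coloneqq\frac12(\mathbbone_X+N_\tau)$ satisfies $\mathtt{L}_\tau\tau=d_\tau$ and $(\mathtt{L}_\tau h,h)_X-\|\mathtt{L}_\tau h\|_X^2=\frac14(\|h\|_X^2-\|N_\tau h\|_X^2)\ge 0$.
\end{itemize}
Consequently $\mathtt{L}_\tau\succeq\mathbbzero_X$, and $\|\mathtt{L}_\tau\|_{\mathcal{L}(X)}\le 1$ by Cauchy--Schwarz. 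The remainder in \eqref{def:newton_differentiable.1} vanishes identically, and at $\tau=0$ you may take, e.g., $\{\mathbbzero_X\}$. This works in any Hilbert space.

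\textbf{Item (iii): the Clarke fallback also fails.} The Clarke generalized Jacobian is a Newton derivative only under an additional semismoothness-type property, which proximity operators need not have. For example, let $p(t)\coloneqq\frac12 t+\frac14 t\sin(\log|t|)$ and $p(0)\coloneqq 0$. For $t\neq 0$ we have $p'(t)=\frac12+\frac14(\sin(\log|t|)+\cos(\log|t|))\in(0,1)$. Thus $p$ is nondecreasing and $1$-Lipschitz, i.e., firmly nonexpansive on $\mathbb{R}$. Hence it is the resolvent of a maximal monotone operator on $\mathbb{R}$, i.e., $p=\mathrm{prox}_f$ for some $f\in\Gamma_0(\mathbb{R})$.

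Yet $p(t)-p(0)-p'(t)\,t=-\frac14 t\cos(\log|t|)$ is not $o(|t|)$. So the Clarke Jacobian $\{p'(t)\}$, $t\neq 0$, violates \eqref{def:newton_differentiable.1} at $0$. Your observation that firmly nonexpansive linear maps form a set closed under limits and convex combinations is correct. But it only bounds the elements of a candidate set; it does not make that set a Newton derivative.
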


    \begin{proof}
        \emph{ad \hspace{-0.1mm}(\hyperlink{lem:prop_proximity.i}{i}).} \hspace{-0.1mm}See \hspace{-0.1mm}\cite[Thm.\ \hspace{-0.1mm}14.3(ii)]{BauschkeCombettes2017}.\quad\hspace{-2mm} \emph{ad \hspace{-0.1mm}(\hyperlink{lem:prop_proximity.ii}{ii}).} \hspace{-0.1mm}See \hspace{-0.1mm}\cite[Prop.\ \hspace{-0.1mm}12.28]{BauschkeCombettes2017}.\quad\hspace{-2mm}  \emph{ad \hspace{-0.1mm}(\hyperlink{lem:prop_proximity.iii}{iii}).} \hspace{-0.1mm}See \hspace{-0.1mm}Remark~\hspace{-0.1mm}\ref{rem:newton_differentiable}(\hyperlink{rem:newton_differentiable.iii}{iii}).
 
%Although the result is well known and often used implicitly in the literature, explicit proofs are rarely given.  For completeness, we present a self-contained argument based on the implicit function theorem.

\emph{ad (\hyperlink{lem:prop_proximity.iv}{iv}).} By the definition of  $\operatorname{prox}_f\colon X\to X$ (\textit{cf}.\ Definition \ref{def:proximity}), for every $x\in X$,~the~element $\operatorname{prox}_f(x)\in X$ is the unique root of the mapping $F(x,\cdot)\colon X\to X$, for every $y\in X$~defined~by
\begin{align}\label{lem:prop_proximity.iv.1}
F(x,y) \coloneqq y - x + \mathrm{D} f(y)\quad\text{ in }X\,.
\end{align}
This is a consequence of the first-order optimality condition associated with the minimization problem in \eqref{eq:proximity} and the assumed continuous Fr\'echet differentiability of the functional $f\colon X\to \mathbb{R}$. 

Due to the assumed twice continuous Fr\'echet differentiability of  the functional $f\colon X\to \mathbb{R}$,
the mapping $F\colon  X \times X \to X$, defined via  \eqref{lem:prop_proximity.iv.1}, is continuously Fr\'echet differentiable with Fr\'echet derivative $\mathrm{D}F\colon X\times X\to \mathcal{L}(X\times X;X)$, for every $(x,y)\in X\times X$~given~via
\begin{align*}
    \mathrm{D}F(x,y)=\left[\begin{array}{c}
         \mathrm{D}_x F(x,y)  \\
         \mathrm{D}_y F(x,y)
    \end{array}\right]=\left[\begin{array}{c}
        -\mathbbone_X  \\
        \mathbbone_X + \mathrm{D}^2 f(y)
    \end{array}\right]\quad \text{ in }\mathcal{L}(X\times X;X)\,.
\end{align*}
By the convexity and twice continuous Fr\'echet differentiability of $f\colon X\to \mathbb{R}$, for every $y\in X$, the second Fr\'echet derivative $\mathrm{D}^2 f(y)\in \mathcal{L}(X)$ is bounded, self-adjoint,~positive~semi-definite~and,~thus, $ \mathrm{D}_y F(\cdot,y)\hspace{-0.175em}=\hspace{-0.175em}\mathbbone_X+\mathrm{D}^2 f(y)\hspace{-0.175em}\in\hspace{-0.175em} \mathcal{L}(X)$ \hspace{-0.15mm}an \hspace{-0.15mm}isomorphism. \hspace{-0.15mm}Then, \hspace{-0.15mm}due \hspace{-0.15mm}to \hspace{-0.15mm}$F(x,\operatorname{prox}_f(x))\hspace{-0.175em}=\hspace{-0.175em}0$~\hspace{-0.15mm}in~\hspace{-0.15mm}$X$~\hspace{-0.15mm}for~\hspace{-0.15mm}all~\hspace{-0.15mm}${x\hspace{-0.175em}\in\hspace{-0.175em} X}$,\linebreak by the implicit function~theorem~(\textit{cf}.~\mbox{\cite[p.~133]{HildebrandtGraves1927}}), for every $x\in X$, 
there exist open sets $U_x,V_x\subseteq X$ with $x\in U_x$ and $\operatorname{prox}_f(x)\in V_x$ and a continuously Fr\'echet differentiable~function~$y_x\colon U_x\to V_x$ such that $F(\widehat{x},y_x(\widehat{x}))=0$ in $X$ for all $\widehat{x}\in U_x$, which, due to the uniqueness of the root of the mapping $F(\widehat{x},\cdot)\colon X\to X$ for all $\widehat{x}\in X$, satisfies $y_x(\widehat{x})=\operatorname{prox}_f(\widehat{x})$ for all $\widehat{x}\in U_x$.  In other words, the proximity operator is continuously Fr\'echet differentiable in $ U_x$. Beyond that,  for every $\widehat{x}\in U_x$, we have that $\mathrm{D}y_x(\widehat{x})=-(\mathrm{D}_y F(\widehat{x},y_x(\widehat{x})))^{-1}\mathrm{D}_x F(\widehat{x},y_x(\widehat{x}))=(\mathbbone_X+\mathrm{D}^2 f(y_x(\widehat{x})))^{-1}$ in $\mathcal{L}(X)$, which implies the claimed representation \eqref{lem:prop_proximity.iv} of the Fr\'echet derivative of $\operatorname{prox}_f\colon X\to X$. 
    \end{proof}
    
    Note that point (\hyperlink{lem:prop_proximity.iii}{iii}) in Lemma \ref{lem:prop_proximity} asserts only \emph{point-wise} Newton differentiability of proximity operators, not \emph{global} (or even \emph{local}) Newton differentiability. However,~global~Newton~differentiability is typically satisfied in practice and can be verified~on~a~\textit{\mbox{case-by-case}~basis}~(\textit{cf}.~\mbox{\cite[Chap.~6]{Beck2017}}).

    \begin{examples}[for proximity operators]\label{expl:proximity} Let $\gamma>0$ be a proximity parameter and $d\in \mathbb{N}$. 
        \begin{itemize}[noitemsep,topsep=2pt,leftmargin=!,labelwidth=\widthof{(ii)}]
            \item[(i)] \hypertarget{expl:proximity.i}{} The proximity operator $ \operatorname{prox}_{\gamma f}\colon \mathbb{R}^d\to \mathbb{R}^d$ (\textit{cf}.\ Figure \ref{fig:proximity_operators}(\textit{left}))~of~the~Euclidean~distance~$f\coloneqq \vert \cdot\vert\in \Gamma_0(\mathbb{R}^d)$, for every $t\in \mathbb{R}^d$, is given via\vspace{-0.5mm} 
        \begin{align}\label{expl:proximity.1}
            \smash{\operatorname{prox}_{\gamma f}(t)=(\vert t\vert-\gamma)_+\tfrac{t}{\vert t\vert }\,,}
            %\begin{cases}
            %    \frac{1}{2\gamma} \vert t\vert^2& \text{ if }\vert t\vert\leq \gamma\,,\\
            %     \vert t\vert-\frac{\gamma}{2}& \text{ if }\vert t\vert> \gamma\,.\\
            %\end{cases}
        \end{align}
        and, thus, not only Newton differentiable point-wise at every $t\in \mathbb{R}^d$, %(\textit{cf}.~Lemma~\ref{lem:prop_proximity}(\hyperlink{lem:prop_proximity.iii}{iii})),
        but Newton differentiable in  $\mathbb{R}^d$ with Newton derivative $\partial_{\mathrm{N}} \smash{\operatorname{prox}_{\gamma f}}\colon \mathbb{R}^d\to  2^{\mathbb{R}^d}$, for every $t\in \mathbb{R}^d $ given via\footnote{By $\mathbbone\coloneqq(\delta_{ij})_{i,j\in \{1,\ldots,d\}},\mathbbzero\in \mathbb{R}^{d\times d}$, we denote the \emph{identity} and \emph{zero matrix}, respectively.}\vspace{-0.5mm} 
        \begin{align}\label{expl:proximity.2}
            \partial_{\mathrm{N}} \operatorname{prox}_{\gamma f}(t)=\begin{cases}
                \mathbbone-\tfrac{\gamma}{\vert t\vert}(\mathbbone-\tfrac{t\otimes t}{\vert t\vert^2})&\text{ if }\vert t\vert >\gamma\,,\\[-0.25mm]
                [0,1]\tfrac{t\otimes t}{\vert t\vert^2}&\text{ if }\vert t\vert =\gamma\,,\\[-0.25mm]
                \mathbbzero&\text{ if }\vert t\vert <\gamma\,;
            \end{cases}
        \end{align}
         \item[(ii)] \hypertarget{expl:proximity.ii}{} The proximity operator $\operatorname{prox}_{\gamma f}\colon \mathbb{R}^d\to \mathbb{R}^d$ (\textit{cf}.\ Figure \ref{fig:proximity_operators}(\textit{right})) of the functional $f\coloneqq \frac{1}{2}\vert \cdot\vert^2$ $+I_{\smash{K_1^d(0)}}\in \Gamma_0(\mathbb{R}^d)$, for every $t\in \mathbb{R}^d$, is given via\vspace{-0.5mm} 
        \begin{align}\label{expl:proximity.3}
             \smash{\operatorname{prox}_{\gamma f}(t)= 
               \min\{\tfrac{1}{1+\gamma},\tfrac{1}{\vert t\vert}\}t\,,}
        \end{align}
        and, thus, not only Newton differentiable point-wise at every $t\in \mathbb{R}^d$,  %(\textit{cf}.~Lemma~\ref{lem:prop_proximity}(\hyperlink{lem:prop_proximity.iii}{iii})),~
        but Newton differentiable in  $\mathbb{R}^d$ with Newton derivative $\partial_{\mathrm{N}}  \smash{\operatorname{prox}_{\gamma f}}\colon \mathbb{R}^d\to  2^{\mathbb{R}^d}$, for every $t\in \mathbb{R}^d$~given~via\vspace{-0.5mm} 
        \begin{align}\label{expl:proximity.4}
            \partial_{\mathrm{N}} \operatorname{prox}_{\gamma f}(t)=\begin{cases}
                \tfrac{1}{\vert t\vert}(\mathbbone-\tfrac{t\otimes t}{\vert t\vert^2})&\text{ if }\vert t\vert >1+\gamma\,,\\[-0.25mm]
               \tfrac{1}{1+\gamma}(\mathbbone- [0,1]\tfrac{t\otimes t}{\vert t\vert^2})&\text{ if }\vert t\vert =1+\gamma\,,\\[-0.25mm]
                \tfrac{1}{1+\gamma}\mathbbone&\text{ if }\vert t\vert <1+\gamma\,.
            \end{cases}
        \end{align}
        \end{itemize}\vspace{-4mm}
    \end{examples}

    \begin{figure}[H]\vspace{-1mm}
        \centering
        \includegraphics[width=0.5\linewidth]{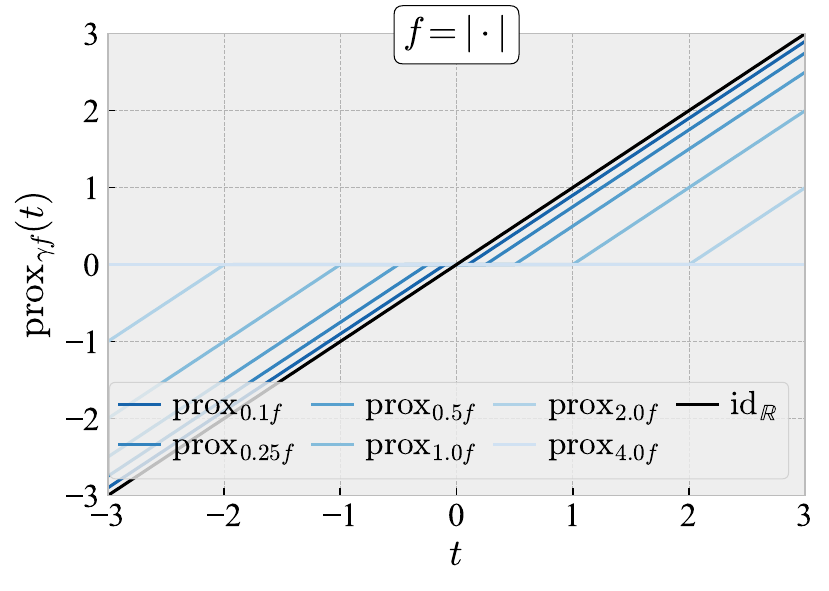}\includegraphics[width=0.5\linewidth]{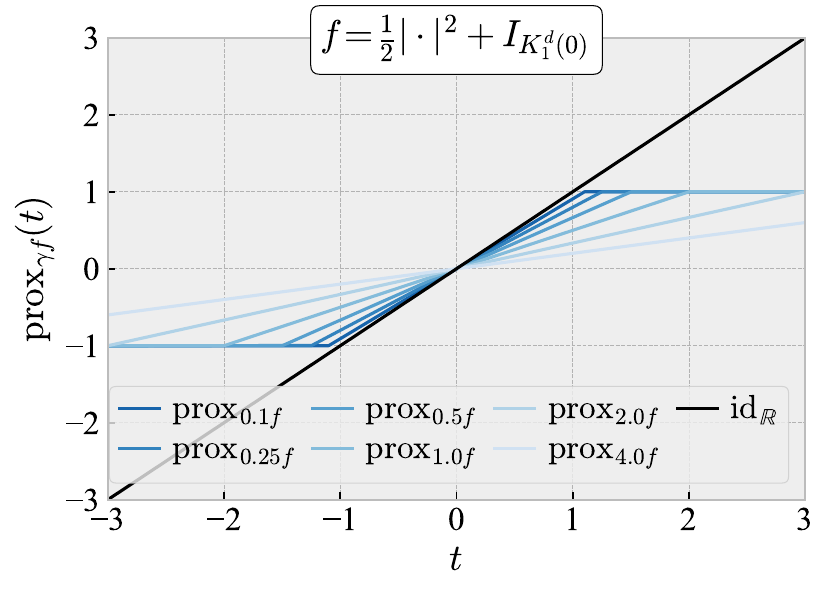}\vspace{-2.5mm}
        \caption{\textit{left:} Euclidean distance $f\hspace{-0.15em}\coloneqq\hspace{-0.15em} \vert \cdot\vert \hspace{-0.15em}\in\hspace{-0.15em} \Gamma_0(\mathbb{R})$; \textit{right:} the functional $f\hspace{-0.15em}\coloneqq\hspace{-0.15em} \frac{1}{2}\vert \cdot\vert^2+I_{\smash{K_1^d(0)}}\hspace{-0.15em}\in \hspace{-0.15em}\Gamma_0(\mathbb{R})$; each with proximity operators $\operatorname{prox}_{\gamma f}\colon \mathbb{R}\to \mathbb{R}$, $\gamma\in \{0.1(\text{{\color{blue1}\raisebox{2pt}{\rule{0.5em}{1pt}}}}), 0.25(\text{{\color{blue2}\raisebox{2pt}{\rule{0.5em}{1pt}}}}), 0.5(\text{{\color{blue3}\raisebox{2pt}{\rule{0.5em}{1pt}}}}), 1(\text{{\color{blue4}\raisebox{2pt}{\rule{0.5em}{1pt}}}}), 2(\text{{\color{blue5}\raisebox{2pt}{\rule{0.5em}{1pt}}}}), 4(\text{{\color{blue6}\raisebox{2pt}{\rule{0.5em}{1pt}}}})\}$, which, consistent with Lemma \ref{lem:resolvent}(\protect\hyperlink{lem:resolvent.ii}{ii}), approximate the identity operator $\operatorname{id}_{\mathbb{R}}\in \mathcal{L}(\mathbb{R}) \text{ ({\color{black}\raisebox{2pt}{\rule{0.5em}{1pt}}})}$.\vspace{-1mm}}
        \label{fig:proximity_operators}
    \end{figure}

    The precise relation between the Moreau envelope and the proximity operators --in particular, with regard to their derivatives-- can be found in the following lemma.\vspace{-0.5mm}\enlargethispage{5mm}

    \begin{lemma}\label{lem:relations}%[Relations between Moreau envelope and proximity operator]
        Let $f\in \Gamma_0(X)$ be a functional. Then, the following statements apply:
        \begin{itemize}[noitemsep,topsep=2pt,leftmargin=!,labelwidth=\widthof{(iii)}]
            \item[(i)] \hypertarget{lem:relations.i}{}For every $x\in X$ and $\gamma>0$, we have that $f^\gamma(x)=f(\mathrm{prox}_{\gamma f}(x))+\tfrac{1}{2\gamma}\|x-\mathrm{prox}_{\gamma f}(x)\|_X^2$;
            %\begin{align*}
            %    f^\gamma(x)=f(\mathrm{prox}_{\gamma f}(x))+\tfrac{1}{2\gamma}\|x-\mathrm{prox}_{\gamma f}(x)\|_X^2\,;
            %\end{align*}
            \item[(ii)]\hypertarget{lem:relations.ii}{} For every  $\gamma>0$, the Moreau envelope $f^\gamma\colon X\to \mathbb{R}$ is Fr\'echet differentiable with $\tfrac{1}{\gamma}$-Lipschitz continuous Fr\'echet derivative $\mathrm{D}f^{\gamma}\colon X\to X$, for every $x\in X$ given via\vspace{-0.5mm}
            \begin{align*}
                \mathrm{D}f^{\gamma}(x)=\tfrac{1}{\gamma}(x-\mathrm{prox}_{\gamma f}(x))\,.
            \end{align*}
            %In particular, the Fr\'echet derivative $\mathrm{D}f^{\gamma}\colon X\to X$ is $\tfrac{1}{\gamma}$-Lipschitz continuous;
            %\item[(iii)] For every $\lambda>0$, the functional $f\in \Gamma_0(X)$ is piece-wise linear-quadratic (in the sense of ??) if and only if is Moreau envelope $f^\gamma\colon X\to \mathbb{R}$ is piece-wise linear-quadratic if and only if the proximity operator is $\mathrm{prox}_{f}\colon X\to X$ is piece-wise linear.
            \item[(iii)]\hypertarget{lem:relations.iii}{} For every  $\lambda,\gamma>0$ and $x\in X$, we have that  
            \begin{align*}
            \operatorname{prox}_{\gamma f^\lambda}(x)=\tfrac{1}{\gamma+\lambda}(\lambda x+\gamma\operatorname{prox}_{(\gamma+\lambda)f}(x))\quad\text{ in }X\,. 
            \end{align*}
            In particular, the proximity operator $ \operatorname{prox}_{\gamma f^\lambda}\colon X\to X$ is $\frac{\lambda}{\gamma+\lambda}$-strongly monotone and, thus,  
            for every $x\in X$ and $\mathtt{L}\in \partial_{\mathrm{N}} \operatorname{prox}_{\gamma f^\lambda}(x)$, there holds $\mathtt{L}\succeq  \frac{\lambda}{\gamma+\lambda}\mathbbone_{X}$\footnote{For  
$\mathtt{A},\mathtt{B}\in \mathbb{R}^{d\times d}$, we write $\mathtt{A}\preceq \mathtt{B}$ (or $\mathtt{A}\succeq \mathtt{B}$) if 
$((\mathtt{B}-\mathtt{A})t)\cdot t\ge 0$ (or $((\mathtt{B}-\mathtt{A})t)\cdot t\le 0$)~for~all~$t\in \mathbb{R}^d$.}.
        \end{itemize}
    \end{lemma}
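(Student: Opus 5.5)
Parts (i) and (ii) are standard facts from convex analysis and can largely be taken from \cite{BauschkeCombettes2017}; the substantive part is (iii), which I would prove by hand from the optimality conditions. I treat the three parts in turn.

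\emph{(i)} Let $p\coloneqq\mathrm{prox}_{\gamma f}(x)$. By Definition \ref{def:proximity}, applied to $\gamma f\in\Gamma_0(X)$, the point $p$ is the unique minimizer of $y\mapsto \gamma f(y)+\tfrac12\|x-y\|_X^2$. This is the same as minimizing $f(y)+\tfrac{1}{2\gamma}\|x-y\|_X^2$, so the infimum in \eqref{eq:moreau} is attained at $p$. Plugging $p$ in gives (i).

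\emph{(ii)} Let $y$ be arbitrary and set $p_y\coloneqq\mathrm{prox}_{\gamma f}(y)$. Testing the infimum defining $f^\gamma(y)$ with $p$ gives $f^\gamma(y)\le f(p)+\tfrac1{2\gamma}\|y-p\|_X^2$. Using (i) at $x$, this yields
\[
f^\gamma(y)-f^\gamma(x)\le \tfrac1\gamma(x-p,y-x)_X+\tfrac1{2\gamma}\|y-x\|_X^2 .
\]
Exchanging the roles of $x$ and $y$ gives a lower bound
\[
f^\gamma(y)-f^\gamma(x)\ge \tfrac1\gamma(y-p_y,y-x)_X-\tfrac1{2\gamma}\|y-x\|_X^2 .
\]
Replacing $p_y$ by $p$ costs at most $\tfrac1\gamma\|y-x\|_X^2$, because $\mathbbone_X-\mathrm{prox}_{\gamma f}$ is $1$-Lipschitz by Lemma \ref{lem:prop_proximity}(ii). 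The two bounds together show that $f^\gamma(y)-f^\gamma(x)-\tfrac1\gamma(x-p,y-x)_X=\mathcal{O}(\|y-x\|_X^2)$, so $f^\gamma$ is Fr\'echet differentiable with the claimed gradient. The gradient is $\tfrac1\gamma$-Lipschitz again by Lemma \ref{lem:prop_proximity}(ii).

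\emph{(iii)} Fix $x$ and set $w\coloneqq\operatorname{prox}_{\gamma f^\lambda}(x)$. Since $f^\lambda$ is differentiable by (ii), $w$ is characterized by $x=w+\gamma\,\mathrm{D}f^\lambda(w)=w+\tfrac\gamma\lambda(w-q)$, where $q\coloneqq\operatorname{prox}_{\lambda f}(w)$.

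The key step, which I expect to be the main obstacle, is to identify $q$ with $\operatorname{prox}_{(\gamma+\lambda)f}(x)$. From $q=\operatorname{prox}_{\lambda f}(w)$ we get $w-q\in\lambda\partial f(q)$. The optimality condition for $w$ gives $x-w=\tfrac\gamma\lambda(w-q)$. Adding these, $x-q=(w-q)+(x-w)=\tfrac{\gamma+\lambda}{\lambda}(w-q)\in(\gamma+\lambda)\partial f(q)$. Hence $q=(\mathrm{id}+(\gamma+\lambda)\partial f)^{-1}(x)=\operatorname{prox}_{(\gamma+\lambda)f}(x)$. Solving the linear relation $(\lambda+\gamma)w=\lambda x+\gamma q$ for $w$ then gives the stated formula.

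For strong monotonicity, I write $\operatorname{prox}_{\gamma f^\lambda}=\tfrac{\lambda}{\gamma+\lambda}\mathbbone_X+\tfrac{\gamma}{\gamma+\lambda}\operatorname{prox}_{(\gamma+\lambda)f}$. The second summand is monotone by Lemma \ref{lem:prop_proximity}(ii), so the sum is $\tfrac{\lambda}{\gamma+\lambda}$-strongly monotone.

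For the Newton-derivative bound, I use that $\operatorname{prox}_{\gamma f^\lambda}$ is an affine combination of the identity and a proximity operator. Accordingly, I take Newton derivatives of the form $\mathtt L=\tfrac{\lambda}{\gamma+\lambda}\mathbbone_X+\tfrac{\gamma}{\gamma+\lambda}\widetilde{\mathtt L}$ with $\widetilde{\mathtt L}\in\partial_{\mathrm N}\operatorname{prox}_{(\gamma+\lambda)f}(x)$. Lemma \ref{lem:prop_proximity}(iii) gives $\widetilde{\mathtt L}\succeq\mathbbzero_X$, and hence $\mathtt L\succeq\tfrac{\lambda}{\gamma+\lambda}\mathbbone_X$.

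If one instead wants this bound for an arbitrary Newton derivative $\mathtt L$, I would argue directly from \eqref{def:newton_differentiable.1}. Strong monotonicity gives $(\operatorname{prox}(x+\tau)-\operatorname{prox}(x),\tau)_X\ge\tfrac{\lambda}{\gamma+\lambda}\|\tau\|_X^2$. I would replace $\tau$ by $t\tau$ with $t\to0^+$, divide by $t^2\|\tau\|_X^2$, and insert the Newton approximation. This yields $(\mathtt L_t\tau,\tau)_X\ge\tfrac{\lambda}{\gamma+\lambda}\|\tau\|_X^2-o(1)$ for $\mathtt L_t\in\partial_{\mathrm N}\operatorname{prox}(x+t\tau)$. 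This is the same limiting convention under which Lemma \ref{lem:prop_proximity}(iii) holds.
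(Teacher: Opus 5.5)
Your argument is correct, and it takes a different route from the paper, which gives no argument of its own. The paper cites \cite[Rem.~12.24, Prop.~12.30]{BauschkeCombettes2017} for (i) and (ii) and \cite[Cor.~6.64]{Beck2017} for the formula in (iii). It leaves the strong monotonicity and the bound on Newton derivatives without further justification.

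Your (i)--(ii) is the standard two-sided estimate. The Lipschitz step can even be replaced by the sign information $((y-p_y)-(x-p),y-x)_X\ge 0$ from Lemma~\ref{lem:prop_proximity}(ii). Your derivation of the formula in (iii), adding $w-q\in\lambda\partial f(q)$ to $x-w=\tfrac{\gamma}{\lambda}(w-q)$, is short and uses only Lemma~\ref{lem:resolvent}(i) and part (ii).

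What your route buys is that the ``in particular'' clause becomes transparent. The decomposition $\operatorname{prox}_{\gamma f^\lambda}=\tfrac{\lambda}{\gamma+\lambda}\mathbbone_X+\tfrac{\gamma}{\gamma+\lambda}\operatorname{prox}_{(\gamma+\lambda)f}$ gives the strong monotonicity. It also gives Newton derivatives of the form $\tfrac{\lambda}{\gamma+\lambda}\mathbbone_X+\tfrac{\gamma}{\gamma+\lambda}\widetilde{\mathtt{L}}$. This is exactly the structure of the selections used later; e.g., \eqref{eq:rof_proximity_derivatives.1} has this form with $\lambda=\varepsilon$ and $\gamma=\gamma_1$.

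Your final paragraph, however, does not prove $\mathtt{L}\succeq\tfrac{\lambda}{\gamma+\lambda}\mathbbone_X$ for an arbitrary Newton derivative. The estimate $(\mathtt{L}_t\tau,\tau)_X\ge\tfrac{\lambda}{\gamma+\lambda}\|\tau\|_X^2-o(1)$ is only asymptotic and only in the direction $\tau$. It says nothing about $\partial_{\mathrm{N}}\operatorname{prox}_{\gamma f^\lambda}(x)$ itself.

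No argument could do better. If $\dim X\ge 2$, subtracting a large multiple of the orthogonal projection onto $\{\tau\}^{\perp}$ from $\mathtt{L}(x+\tau)$ leaves \eqref{def:newton_differentiable.1} unchanged, since that projection annihilates $\tau$, but it destroys the lower bound. So the claim, like Lemma~\ref{lem:prop_proximity}(iii), has to be read for a structured class of derivatives, such as the affine ones in your first argument. That first argument is the one to keep.
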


    \begin{proof}
        \emph{ad \hspace{-0.1mm}(\hyperlink{lem:relations.i}{i}).} \hspace{-0.1mm}See \hspace{-0.1mm}\cite[\hspace{-0.1mm}Rem.\ \hspace{-0.1mm}12.24]{BauschkeCombettes2017}.\quad\hspace{-2mm}  \emph{ad \hspace{-0.1mm}(\hyperlink{lem:relations.ii}{ii}).} \hspace{-0.1mm}See \hspace{-0.1mm}\cite[\hspace{-0.1mm}Prop.\ \hspace{-0.1mm}12.30]{BauschkeCombettes2017}.\quad\hspace{-2mm} \hspace{-0.1mm}\emph{ad \hspace{-0.1mm}(\hyperlink{lem:relations.iii}{iii}).} \hspace{-0.1mm}See \hspace{-0.1mm}\cite[\hspace{-0.1mm}Cor.~\hspace{-0.1mm}6.64]{Beck2017}.
        %See \cite[Rem.\ 12.24 \& Prop.\ 12.29]{BauschkeCombettes2017} and \cite[Prop.\ 12.30]{RockafellarWets1998}.
    \end{proof}

    We  recall two related concepts from monotone operator theory: the resolvent and the Yosida approximation, which provide equivalent regularizations of subdifferentials~of~convex functionals.\enlargethispage{5mm}\vspace{-0.5mm}

    %\newpage
    \begin{definition}[Resolvent and Yosida approximation]
        Let $f\in \Gamma_0(X)$ be a functional with subdifferential $\partial f\colon X\to \smash{2^X}$, where $\smash{2^X}$ denotes the power set of $X$, for every $x\in X$ defined by
        \begin{align*}
            \partial f(x)\coloneqq \begin{cases}
                \{x^*\in X\mid  f-f(x)\ge ( x^*,(\cdot)-x)_X\text{ in }X\}&\text{ if }f(x)<+\infty\,,\\
                \emptyset &\text{ else}\,. 
            \end{cases}
        \end{align*}
        Then, the \emph{Yosida approximation} of $\partial f\colon X\to \smash{2^X}$ is defined by
        $(\partial f)^\gamma\coloneqq \frac{1}{\gamma} (\mathbbone_X-J_{\gamma\partial f})\colon X\to X$, where $\smash{J_{\gamma\partial f}\coloneqq (\mathbbone_X+\gamma\partial f)^{-1}\colon X\to X}$ denotes the \emph{resolvent} of $\partial f\colon X\to \smash{2^X}$. 
    \end{definition}

        The precise relations between the Moreau envelope, proximity operator,  resolvent, and  Yosida approximation are presented in the following lemma.\enlargethispage{2.5mm}\vspace{-0.5mm}
        
    \begin{lemma}\label{lem:resolvent}
        Let $f\in \Gamma_0(X)$ be a functional. Then, the following statements apply:
        \begin{itemize}[noitemsep,topsep=2pt,leftmargin=!,labelwidth=\widthof{(iii)}]
            \item[(i)] \hypertarget{lem:resolvent.i}{} For every $x\in X$ and $\gamma>0$, there holds $ J_{\gamma \partial f}(x)=\operatorname{prox}_{\gamma f}(x)$ and 
            $(\partial f)^\gamma(x)=\mathrm{D} f^\gamma(x)$~in~$X$;
            %\vspace{-0.5mm}
            %\begin{alignat*}{2}
            %    J_{\gamma \partial f}(x)&=\operatorname{prox}_{\gamma f}(x)&\quad\text{ in }X\,,\\
            %    (\partial f)^\gamma(x)&=\nabla f^\gamma(x)&\quad\text{ in }X\,;
            %\end{alignat*}
            \item[(ii)] \hypertarget{lem:resolvent.ii}{} For every $x\in X$, we have that\vspace{-0.5mm}
            \begin{alignat*}{3}
                J_{\gamma \partial f}(x)&\to x&&\quad\text{ in }X&&\quad(\gamma\to 0^+)\,,\\
               (\partial f)^\gamma(x)&\to \operatorname{arg\,min}_{y\in \partial f(x)}{\{\|y\|_X\}}&&\quad\text{ in }X&&\quad (\gamma\to 0^+)\,;
            \end{alignat*} 
             \item[(iii)] \hypertarget{lem:resolvent.iii}{} If the functional $f\in \Gamma_0(X)$ is \emph{$\mu$-strongly convex} for some $\mu>0$, \textit{i.e.}, for every $x,y\in X$ and $x^*\in \partial f(x)$, there holds
         \begin{align*}
             f(y)\ge f(x)+(x^*,y-x)_X+\tfrac{\mu}{2}\|x-y\|_X^2\,,
         \end{align*}
         then the proximity operator $\mathrm{prox}_{f}\colon X\to X$ is $\frac{1}{1+\mu}$-Lipschitz continuous. In particular, for every $x\in X$ and  $\mathtt{L}\in \partial_{\mathrm{N}} \operatorname{prox}_{f}(x)$, there holds $\|\mathtt{L}\|_{\mathcal{L}(X)}\leq \frac{1}{1+\mu}$.
        \end{itemize}
    \end{lemma}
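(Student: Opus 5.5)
For (i), I would identify the resolvent and the proximity operator through the first-order optimality condition, and then get the Yosida identity from Lemma \ref{lem:relations}(\hyperlink{lem:relations.ii}{ii}). The point $y=\operatorname{prox}_{\gamma f}(x)$ minimizes the proper, convex, lower semi-continuous functional $f+\tfrac{1}{2\gamma}\|x-\cdot\|_X^2$. Since the quadratic part is continuous and real-valued, the subdifferential sum rule applies. Fermat's rule then gives $0\in\partial f(y)+\tfrac1\gamma(y-x)$, that is, $x\in(\mathbbone_X+\gamma\partial f)(y)$. Conversely, any such $y$ is a minimizer, which is unique by strong convexity. Hence $(\mathbbone_X+\gamma\partial f)^{-1}$ is single-valued and equals $\operatorname{prox}_{\gamma f}$. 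Comparing $(\partial f)^\gamma(x)=\tfrac1\gamma(x-J_{\gamma\partial f}(x))$ with the formula for $\mathrm{D}f^\gamma(x)$ in Lemma \ref{lem:relations}(\hyperlink{lem:relations.ii}{ii}) yields the second identity.

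For (ii), I would rely on monotonicity of $\partial f$. First suppose $x\in\operatorname{dom}\partial f$, and let $x^*_0$ be the minimal-norm element of the closed convex set $\partial f(x)$. Write $x_\gamma\coloneqq J_{\gamma\partial f}(x)$ and $y_\gamma\coloneqq(\partial f)^\gamma(x)=\tfrac1\gamma(x-x_\gamma)\in\partial f(x_\gamma)$.
\begin{itemize}[noitemsep,topsep=2pt]
\item Monotonicity gives $(y_\gamma-x^*_0,x_\gamma-x)_X\ge0$, i.e.\ $-\gamma(y_\gamma-x^*_0,y_\gamma)_X\ge0$, so $\|y_\gamma\|_X\le\|x^*_0\|_X$ and hence $\|x_\gamma-x\|_X\le\gamma\|x^*_0\|_X\to0$.
\item Since $\{y_\gamma\}$ is bounded, every sequence has a weakly convergent subsequence $y_{\gamma_n}\rightharpoonup y$.
\item The graph of the maximal monotone operator $\partial f$ is strongly--weakly closed. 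Together with $x_\gamma\to x$ this gives $y\in\partial f(x)$.
\item Weak lower semicontinuity of the norm gives $\|y\|_X\le\|x_0^*\|_X$, so $y=x^*_0$ by uniqueness of the minimal-norm element. Thus the whole net converges weakly.
\item Combined with $\limsup\|y_\gamma\|_X\le\|x^*_0\|_X$, weak convergence upgrades to strong convergence.
\end{itemize}
For general $x$ in the closure of $\operatorname{dom}\partial f$ (the statement implicitly concerns points where the limits make sense), I would use that $J_{\gamma\partial f}$ is $1$-Lipschitz (Lemma \ref{lem:prop_proximity}(\hyperlink{lem:prop_proximity.ii}{ii})). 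With an $\varepsilon/3$-argument this passes the first limit from $\operatorname{dom}\partial f$ to its closure. The main obstacle is this domain issue: the second limit requires $\partial f(x)\neq\emptyset$, and the first requires $x\in\overline{\operatorname{dom} f}$. I would state these restrictions explicitly, or cite \cite[Cor.\ 23.46]{BauschkeCombettes2017} for them.

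For (iii), let $p_i=\operatorname{prox}_f(x_i)$, so that $x_i-p_i\in\partial f(p_i)$. Adding the two strong convexity inequalities gives strong monotonicity:
\begin{align*}
((x_1-p_1)-(x_2-p_2),p_1-p_2)_X\ge\mu\|p_1-p_2\|_X^2\,.
\end{align*}
Rearranging and applying Cauchy--Schwarz yields $(1+\mu)\|p_1-p_2\|_X^2\le(x_1-x_2,p_1-p_2)_X\le\|x_1-x_2\|_X\|p_1-p_2\|_X$, which is the $\tfrac{1}{1+\mu}$-Lipschitz bound.

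For the bound on Newton derivatives, fix $\mathtt{L}\in\partial_{\mathrm N}\operatorname{prox}_f(x)$. Taking $\tau=th$ with $t\to0^+$ in \eqref{def:newton_differentiable.1} gives
\begin{align*}
\|\mathtt{L}h\|_X\le\tfrac{1}{t}\|\operatorname{prox}_f(x+th)-\operatorname{prox}_f(x)\|_X+o(1)\le\tfrac{1}{1+\mu}\|h\|_X+o(1)\,.
\end{align*}
This needs care, since $\mathtt{L}$ belongs to the Newton derivative at the shifted point rather than at $x$. I would therefore read the claim, as in Lemma \ref{lem:prop_proximity}(\hyperlink{lem:prop_proximity.iii}{iii}), for the canonical Newton derivative furnished by Lipschitz continuity via \cite{ChenNashedQi2000}. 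For that choice, the Lipschitz constant bounds every element by construction.
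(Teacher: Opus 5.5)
Your proposal is correct. It differs from the paper mainly in being self-contained. The paper gives no argument of its own and simply cites \cite{BauschkeCombettes2017} (Expl.~23.3, Cor.~23.46 \& Thm.~23.48, Prop.~23.13). You instead write out the standard arguments behind those citations: Fermat's rule with the sum rule; monotonicity, demiclosedness of the graph and the Kadec--Klee property for the Yosida approximation; and summing two strong-convexity inequalities. The citation route is shorter and implicitly inherits the precise hypotheses of the cited results, which the printed statement drops. Your route exposes those hypotheses, and both caveats you raise are necessary rather than cosmetic.

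In (ii), the first limit genuinely fails off $\overline{\operatorname{dom} f}$. For $f=I_K$ with $K\subsetneq X$ closed and convex, $J_{\gamma\partial f}=P_K$ for every $\gamma>0$; in general the limit is $P_{\overline{\operatorname{dom}}\,\partial f}x$. For $x\notin\operatorname{dom}\partial f$ one even has $\|(\partial f)^\gamma(x)\|_X\to+\infty$. To match the set you actually treat, $\overline{\operatorname{dom}\partial f}$, with the one you state, $\overline{\operatorname{dom} f}$, invoke the Br{\o}ndsted--Rockafellar density of $\operatorname{dom}\partial f$ in $\operatorname{dom} f$.

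In (iii), the bound on $\mathtt{L}$ is false for an arbitrary Newton derivative. The condition \eqref{def:newton_differentiable.1} only controls $\mathtt{L}\tau$ up to $o(\|\tau\|_X)$ and imposes nothing at $\tau=0$. For instance, take $X=\mathbb{R}$ and $f=\frac{\mu}{2}|\cdot|^2$, so that $\operatorname{prox}_f=\frac{1}{1+\mu}\operatorname{id}_{\mathbb{R}}$. Then the choice $\partial_{\mathrm{N}}\operatorname{prox}_f(x+\tau)\coloneqq\{\frac{1}{1+\mu}+|\tau|\}$ is admissible but violates the bound for every $\tau\neq0$. Restricting to a specific selection is therefore needed to make the claim true. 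The rank-one slanting function $\mathtt{L}(x+\tau)v=\|\tau\|_X^{-2}(\tau,v)_X(\operatorname{prox}_f(x+\tau)-\operatorname{prox}_f(x))$ from \cite{ChenNashedQi2000} works, since its norm is at most $\frac{1}{1+\mu}$ by your Lipschitz estimate. Your displayed directional estimate controls $\mathtt{L}$ only along $\tau$ and only up to $o(1)$, so it can be dropped.
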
 

    \begin{proof}
        \emph{ad (\hyperlink{lem:resolvent.i}{i}).} See \cite[Expl.\ 23.3]{BauschkeCombettes2017}.\quad \emph{ad (\hyperlink{lem:resolvent.ii}{ii}).} See \cite[Cor.\ 23.46 \& Thm.\ 23.48]{BauschkeCombettes2017}.\quad 
        \emph{ad (\hyperlink{lem:resolvent.iii}{iii}).}  See \cite[Prop.\ 23.13]{BauschkeCombettes2017}. %(or \cite[Thm.\ 12.56]{RockafellarWets1998}
    \end{proof}\vspace{-1.5mm}
    
    \begin{figure}[H]\vspace{-2.5mm}
        \centering
        \includegraphics[width=0.515\linewidth]{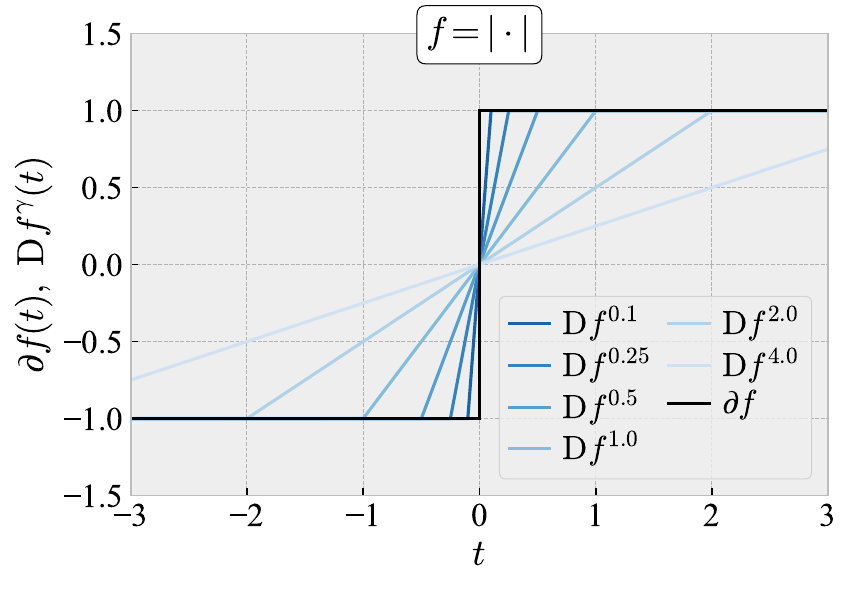}\includegraphics[width=0.5\linewidth]{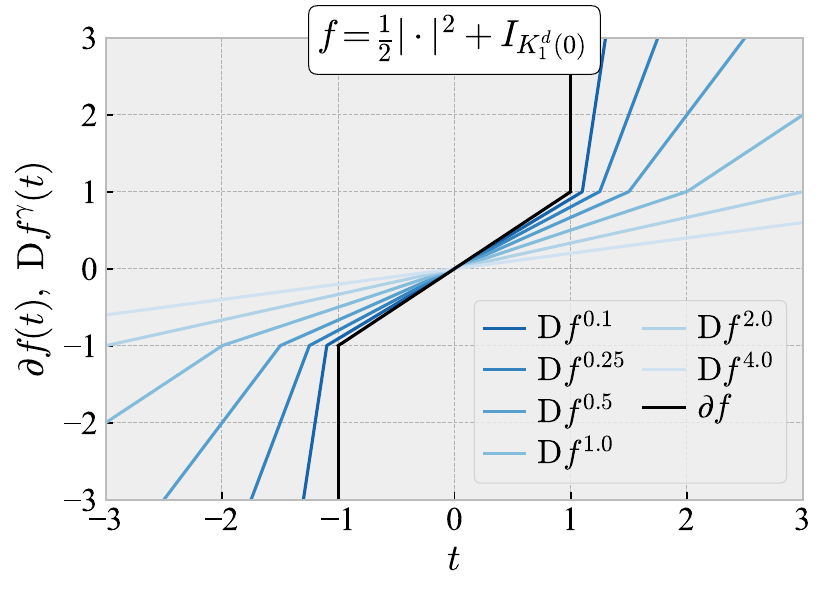}\hspace*{-2.5mm}
        \caption{\textit{left:} Euclidean distance $f\hspace{-0.15em}\coloneqq\hspace{-0.15em} \vert \cdot\vert \hspace{-0.15em}\in\hspace{-0.15em} \Gamma_0(\mathbb{R})$; \textit{right:} the functional $f\hspace{-0.15em}\coloneqq\hspace{-0.15em} \frac{1}{2}\vert \cdot\vert^2+I_{\smash{K_1^d(0)}}\hspace{-0.15em}\in \hspace{-0.15em}\Gamma_0(\mathbb{R})$;\linebreak each with  derivatives of Moreau envelopes ${\mathrm{D} f^\gamma\colon \hspace{-0.15em} \mathbb{R}\hspace{-0.15em}\to \hspace{-0.15em}\mathbb{R}}$,~${\gamma\hspace{-0.15em}\in\hspace{-0.15em} \{0.1(\text{{\color{blue1}\raisebox{2pt}{\rule{0.5em}{1pt}}}}), 0.25(\text{{\color{blue2}\raisebox{2pt}{\rule{0.5em}{1pt}}}}), 0.5(\text{{\color{blue3}\raisebox{2pt}{\rule{0.5em}{1pt}}}}), 1(\text{{\color{blue4}\raisebox{2pt}{\rule{0.5em}{1pt}}}}), 2(\text{{\color{blue5}\raisebox{2pt}{\rule{0.5em}{1pt}}}}), 4(\text{{\color{blue6}\raisebox{2pt}{\rule{0.5em}{1pt}}}})\}}$, which approximate the subdifferential $\partial f\colon \mathbb{R}\to 2^{\mathbb{R}}\text{ ({\color{black}\raisebox{2pt}{\rule{0.5em}{1pt}}})}$ in the sense of Lemma \ref{lem:resolvent}(\protect\hyperlink{lem:resolvent.ii}{ii}).}\vspace{-1mm}
        \label{fig:nabla_moreau}
    \end{figure}\newpage

    \subsection{Continuous function spaces}\vspace{-0.5mm}

    \hspace{5mm}Throughout the paper, if not otherwise specified, let $\Omega\subseteq \mathbb{R}^d$, $d\in \mathbb{N}$, be a bounded polyhedral Lipschitz\hspace{-0.1mm} domain \hspace{-0.1mm}with \hspace{-0.1mm}(topological) \hspace{-0.1mm}boundary \hspace{-0.1mm}$\partial\Omega$ \hspace{-0.1mm}split \hspace{-0.1mm}into \hspace{-0.1mm}a \hspace{-0.1mm}relatively~\hspace{-0.1mm}open~\hspace{-0.1mm}\mbox{Dirichlet}~\hspace{-0.1mm}part~\hspace{-0.1mm}${\Gamma_D\hspace{-0.175em}\subseteq \hspace{-0.175em}\partial\Omega}$\linebreak and a relatively open Neumann part $\Gamma_N\subseteq \partial\Omega$; \textit{i.e.}, we have that $\overline{\Gamma}_D\cup\overline{\Gamma}_N=\partial\Omega$~and~$\Gamma_D\cap \Gamma_N= \emptyset$.\linebreak 
    For $p\in [1,+\infty]$, the space of $p$-integrable (Lebesgue) measurable functions is denoted~by~$L^p(\Omega)$ and equipped with the norm $\|\cdot\|_{p,\Omega}\coloneqq (\int_{\Omega}{\vert \cdot\vert^p\,\mathrm{d}x})^{\smash{\frac{1}{p}}}$ if $p<+\infty$ and $\|\cdot\|_{\infty,\Omega}\coloneqq \textup{ess\,sup}_{x\in \Omega}{\{\vert (\cdot)(x)\vert\}}$.

    Then, for $p,p'\hspace{-0.15em}\in\hspace{-0.15em} [1,+\infty]$, where $\smash{\frac{1}{p}+\frac{1}{p'}}\hspace{-0.15em}=\hspace{-0.15em}1$ with $\frac{1}{\infty}\hspace{-0.15em}\coloneqq\hspace{-0.15em} 0$, we employ the standard~\mbox{function}~spaces\vspace{-0.5mm}
    \begin{align*}
        W^{1,p}(\Omega)&\coloneqq \big\{v\in L^p(\Omega)\mid \nabla v\in (L^p(\Omega))^d\big\}\,,\\ 
        W^{\smash{p'}}(\operatorname{div};\Omega)&\coloneqq \big\{y\in (L^{\smash{p'}}(\Omega))^d\mid \operatorname{div}y\in L^{\smash{p'}}(\Omega)\big\}\,,
    \end{align*}
    where  
    \hspace{-0.1mm}$\nabla\! \coloneqq\!\smash{(\partial_{x_1},\ldots,\partial_{x_d})}$ \hspace{-0.1mm}and 
    \hspace{-0.1mm}$\operatorname{div}\!\coloneqq\!\smash{\sum_{i=1}^d{\partial_{x_i}}}\!$ 
    \hspace{-0.1mm}initially \hspace{-0.1mm}are \hspace{-0.1mm}to \hspace{-0.1mm}be \hspace{-0.1mm}understood~\hspace{-0.1mm}in~\hspace{-0.1mm}a~\hspace{-0.1mm}\mbox{distributional}~\hspace{-0.1mm}sense.

    If we denote by $\textup{tr}(\cdot)\colon W^{1,p}(\Omega)\to W^{\smash{1-\frac{1}{p}},p}(\partial\Omega)$ and ${\textup{tr}((\cdot)\cdot n)\colon W^{\smash{p'}}(\operatorname{div};\Omega)\to (W^{\smash{1-\frac{1}{p}},p}(\partial\Omega))^*}$ the trace   and the normal trace operator, respectively, where ${n\colon\partial\Omega\to\mathbb{S}^{d-1}\coloneqq\{x\in  \mathbb{R}^d\mid \vert x\vert=1\}}$\linebreak is the outward unit normal vector field to $\Omega$,  then, for every $v\in W^{1,p}(\Omega)$ and $y\in W^{\smash{p'}}(\operatorname{div};\Omega)$, there holds the integration-by-parts formula (\textit{cf}.\ \cite[Sec.\ 4.3, (4.12)]{EG21II})\vspace{-0.5mm} 
	\begin{align}\label{eq:pi_cont}
		%\smash{
        \int_{\Omega}{\nabla v\cdot y\,\mathrm{d}x}+\int_{\Omega}{v\operatorname{div}y\,\mathrm{d}x}=\langle \textup{tr}(y\cdot n),\textup{tr}(v)\rangle_{W^{\smash{1-\frac{1}{p}},p}(\partial\Omega)}\,.%}
	\end{align} 
    Then, we introduce the spaces of functions in $\smash{W^{1,p}(\Omega)}$ with vanishing trace on $\Gamma_D$ and~of~vector fields in $W^{p'}(\operatorname{div};\Omega)$ with vanishing normal trace in $\Gamma_N$ (in~a~\mbox{generalized}~sense),~respectively,~\textit{i.e.},\vspace{-0.5mm}
	\begin{align*} 
		W^{1,p}_D(\Omega)&\coloneqq  \big\{v\in 	W^{1,p}(\Omega) \mid \textup{tr}(v)=0\textup{ a.e.\ on }\Gamma_D\big\}\,,\\
		\smash{W^{p'}_N(\operatorname{div};\Omega)}&\coloneqq  \big\{y\in W^{\smash{p'}}(\operatorname{div};\Omega)\mid \langle\textup{tr}(y\cdot n),\textup{tr}(v)\rangle_{\smash{W^{\smash{1-\frac{1}{p}},p}(\partial\Omega)}} =0\text{ for all  }v\in W^{1,p}_D(\Omega)\big\}\,.
	\end{align*}  

    \subsection{Discrete function spaces}\vspace{-0.5mm}\enlargethispage{2.5mm}

    \hspace{5mm}Throughout the paper, if not otherwise specified, let 
    $\{\mathcal{T}_h\}_{h>0}$ be a family of shape-regular triangulations of 
    $\Omega\subseteq \mathbb{R}^d$, ${d\in\mathbb{N}}$, (\textit{cf}.\  {\cite[Def.\ 11.2]{EG21I}}), where
	$h>0$ refers to the \textit{maximal mesh-size}, \textit{i.e.}, $h 
	\coloneqq \max_{T\in \mathcal{T}_h}{\{h_T\coloneqq \textup{diam}(T)\}}$. The \emph{set of 
    nodes}  (of~the~triangulation~$\mathcal{T}_h$)~is~denoted~by~$\mathcal{N}_h$, the  \emph{set of Dirichlet nodes} by $\mathcal{N}_h^D\coloneqq\{\nu\in \mathcal{N}_h\mid \nu\in \Gamma_D\}$, and the \emph{set of~free~nodes}~by~${\mathcal{N}_h^F\coloneqq \mathcal{N}_h\setminus\mathcal{N}_h^D}$.

    For $k\in \{0,1\}$ and $T\in \mathcal{T}_h$, let $\mathbb{P}^k(T)$ denote the space of polynomials of maximal~degree~$k$~on~$T$. Then, 
	the \emph{space of element-wise constant functions} is defined by
	\begin{align*}
		\smash{\mathcal{L}^0(\mathcal{T}_h) \coloneqq  \big\{v_h\in L^\infty(\Omega)\mid v_h|_T\in\mathbb{P}^0(T)\text{ for all }T\in \mathcal{T}_h\big\}\,,}
	\end{align*}
    while 
	the \emph{space of globally continuous \mbox{element-wise} affine functions} is defined by
    \begin{align*}
		\smash{\mathcal{S}^1(\mathcal{T}_h) \coloneqq  \big\{v_h\in C^0(\overline{\Omega})\mid v_h|_T\in\mathbb{P}^1(T)\text{ for all }T\in \mathcal{T}_h\big\}\,.}
	\end{align*}
    Next, we introduce the 
    \emph{space of globally continuous element-wise affine functions vanishing~on~$\Gamma_D$} 
    \begin{align*}
		\smash{\mathcal{S}^1_D(\mathcal{T}_h) \coloneqq \mathcal{S}^1(\mathcal{T}_h)\cap W^{1,1}_D(\Omega)\,,}
	\end{align*}
    along with the (node-wise) \emph{nodal interpolation operator}  $\smash{I_h^{p1}\colon \smash{(\mathbb{R}\cup\{+\infty\})^{\smash{\mathcal{N}_h}}}\to \mathcal{S}^1_D(\mathcal{T}_h)\cup\{+\infty\}}$, where $\smash{(\mathbb{R}\cup\{+\infty\})^{\smash{\mathcal{N}_h}}}\coloneqq \{v_h \mid v_h\colon\mathcal{N}_h\to \mathbb{R}\cup\{+\infty\}\}$, for every $v_h\in \smash{(\mathbb{R}\cup\{+\infty\})^{\smash{\mathcal{N}_h}}}$ defined by\vspace{-0.5mm}
    \begin{align}\label{def:nodal_interpolation_operator}
        I_h^{p1}v_h\coloneqq\sum_{\nu\in \smash{\mathcal{N}_h^F}}{v_h(\nu)\varphi_\nu}\,,
    \end{align}
    where $(\varphi_\nu)_{\nu\in \mathcal{N}_h}$ denotes the shape basis of $\smash{\mathcal{S}^1(\mathcal{T}_h)}$, which may be used to approximate~the~$L^2(\Omega)$-inner product through the \emph{mass-lumped inner product}, for every $v_h,w_h\hspace{-0.15em}\in\hspace{-0.15em} \smash{(\mathbb{R}\hspace{-0.1em}\cup\hspace{-0.1em}\{+\infty\})^{\smash{\mathcal{N}_h}}}$~\mbox{defined}~by
    \begin{align}\label{def:mass_lumping_inner_product}
        (v_h,w_h)_h\coloneqq \int_{\Omega}{I_h^{p1}\{v_hw_h\}\,\mathrm{d}x}=\sum_{\nu\in  \smash{\mathcal{N}_h^F}}{\biggl(\beta_\nu\coloneqq\smash{\int_{\Omega}{\varphi_\nu\,\mathrm{d}x}}\biggr)v_h(\nu)w_h(\nu)}\in \mathbb{R}\cup\{+\infty\}\,.
    \end{align}
    Furthermore, we canonically extend the nodal interpolation operator \eqref{def:nodal_interpolation_operator} and the mass-lumped inner product \eqref{def:mass_lumping_inner_product} to functions $v,w\in C^0(\overline{\Omega})$ via  $\smash{I_h^{p1}v\coloneqq I_h^{p1}(v|_{\mathcal{N}_h})}$~and~${(v,w)_h\coloneqq (v|_{\mathcal{N}_h},w|_{\mathcal{N}_h})_h}$.% and denote by $\smash{\|\cdot\|_h\coloneqq ((\cdot,\cdot)_h)^{\smash{\frac{1}{2}}}}$ the induced norm.

	\newpage
    \section{Continuous (Fenchel) duality for integral functionals}\label{sec:continuous_duality}

    \hspace{5mm}In this section, we introduce a (Fenchel) duality framework for a broad class of convex integral functionals. For a detailed presentation, we refer to the textbook \cite{EkelandTemam1999} (see also \cite{Rockafellar1968,Rockafellar1971,RockafellarWets1998}). 

    To this end, let $\phi\colon \Omega\times \mathbb{R}^d\to \mathbb{R}\cup\{+\infty\}$ and $\psi\colon \Omega\times \mathbb{R}\to \mathbb{R}\cup\{+\infty\}$ be energy densities~such~that the following assumptions are satisfied.\enlargethispage{12.5mm}
    
    \begin{assumption}[on $\phi$ and $\psi$]\label{ass:energy_densities}
    	The energy densities 
        $\phi\colon \Omega\times \mathbb{R}^d\to \mathbb{R}\cup\{+\infty\}$ and $\psi\colon \Omega\times \mathbb{R}\to \mathbb{R}\cup\{+\infty\}$ are such that for an integrability exponent $p\in [1,\infty)$ the following conditions~are~satisfied:
    	\begin{itemize}[noitemsep,topsep=2pt,leftmargin=!,labelwidth=\widthof{(iii)}]
    		\item[(i)]\hypertarget{ass:energy_densities.i}{} \emph{Convex normal integrands} (\textit{cf}.\ \cite[Sec.\ 2]{Rockafellar1968}): %the energy densities $\phi\colon \Omega\times \mathbb{R}^d\to \mathbb{R}\cup\{+\infty\}$ and $\psi\colon \Omega\times \mathbb{R}\to \mathbb{R}\cup\{+\infty\}$ are convex normal integrands:
    		\begin{itemize}[noitemsep,topsep=2pt,leftmargin=!,labelwidth=\widthof{(i.b)}]
    			\item[(i.a)]\hypertarget{ass:energy_densities.i.a}{} $\phi$ is $ \mathcal{M}(\mathrm{d}x;\Omega)\otimes \mathcal{B}(\mathbb{R}^d)$-measurable\footnote{Here, $\mathcal{M}(\mathrm{d}x;\Omega)$ denotes the \emph{Lebesgue $\sigma$-algebra} on $\Omega$ and $\mathcal{B}(\mathbb{R}^d)$ the \emph{Borel $\sigma$-algebra} on $\mathbb{R}^d$.} and for a.e.\ $x\in \Omega$, we have that $\phi(x,\cdot)\in \Gamma_0(\mathbb{R}^d)$; 
    			\item[(i.b)]\hypertarget{ass:energy_densities.i.b}{} $\psi$ is $\mathcal{M}(\mathrm{d}x;\Omega)\otimes \mathcal{B}(\mathbb{R})$-measurable and for a.e.\ $x\in \Omega$, we have that  $\psi(x,\cdot)\in \Gamma_0(\mathbb{R})$.
    		\end{itemize}
    		\item[(ii)] \hypertarget{ass:energy_densities.ii}{}\emph{Lower compactness property} (\textit{cf}.\ \cite[Sec.\ 2]{Ioffe1977I}):  
    		\begin{itemize}[noitemsep,topsep=2pt,leftmargin=!,labelwidth=\widthof{(ii.b)}]
    			\item[(ii.a)] \hypertarget{ass:energy_densities.ii.a}{} 
                If $\{y_n\}_{n\in \mathbb{N}}\subseteq (L^p(\Omega))^d$ is a sequence
                such that $y_n\to y$ in $(L^p(\Omega))^d$ $(n\to \infty)$ and 
                $\sup_{n\in \mathbb{N}}{\{\int_{\Omega}{\phi(\cdot,y_n)\,\mathrm{d}x}\}}\hspace{-0.1em}<\hspace{-0.1em}+\infty$, then $\{(\phi(\cdot,y_n))_-\}_{n\in \mathbb{N}}\hspace{-0.1em}\subseteq\hspace{-0.1em}  L^1(\Omega)$\footnote{Here, $(\cdot)_-\coloneqq -\min\{0,\cdot\}\colon\mathbb{R}\to\mathbb{R}_{\ge 0}$.} is uniformly~\mbox{integrable}; 
    			\item[(ii.b)] 
                \hypertarget{ass:energy_densities.ii.b}{}
                If $\{v_n\}_{n\in \mathbb{N}}\subseteq L^p(\Omega)$ is a sequence 
                such that $v_n\to v$ in $L^p(\Omega)$ $(n\to \infty)$ and $\sup_{n\in \mathbb{N}}{\{\int_{\Omega}{\psi(\cdot,v_n)\,\mathrm{d}x}\}}\hspace{-0.1em}<\hspace{-0.1em}+\infty$, then $\{(\psi(\cdot,v_n))_-\}_{n\in \mathbb{N}} \hspace{-0.1em}\subseteq \hspace{-0.1em}L^1(\Omega)$~is~uniformly~integrable.
    		\end{itemize}
            \item[(iii)]
            \hypertarget{ass:energy_densities.iii}{}
            \emph{Non-triviality:} there exists a function $u^{\star}\in W^{1,p}_D(\Omega)$ such that $\phi(\cdot,\nabla u^{\star}),\psi(\cdot,u^{\star})\in L^1(\Omega)$.            
    	\end{itemize} 
    \end{assumption}

    Then, if Assumption \ref{ass:energy_densities} is satisfied, the \emph{primal energy functional} $I\colon \smash{W_D^{1,p}(\Omega)}\to \mathbb{R}\cup\{+\infty\}$, for every $v\in \smash{W_D^{1,p}(\Omega)}$, is defined by\vspace*{-0.5mm}
    \begin{align}\label{eq:primal}
        I(v)\coloneqq \int_{\Omega}{\phi(\cdot,\nabla v)\,\mathrm{d}x}+\int_{\Omega}{\psi(\cdot,v)\,\mathrm{d}x}\,,\\[-6mm]\notag
    \end{align}
    and we refer to the problem that seeks to minimize the primal energy functional \eqref{eq:primal}~as~the~\emph{primal problem}. In order to refrain from imposing further restrictive assumptions on the energy densities $\phi\colon \Omega\times \mathbb{R}^d\to \mathbb{R}\cup\{+\infty\}$ and $\psi\colon \Omega\times \mathbb{R}\to \mathbb{R}\cup\{+\infty\}$ (in addition to Assumption \ref{ass:energy_densities}) that would just be sufficient for the existence of a minimizer $u\in W^{1,p}_D(\Omega)$ of the primal~energy~functional~\eqref{eq:primal}, in this section, we assume the   
    existence of such a minimizer and refer to it  as the \emph{primal solution}.

    \begin{remark}[on Assumption \ref{ass:energy_densities}] Assumption \ref{ass:energy_densities} precisely ensures that $I\in \Gamma_0(W^{1,p}_D(\Omega))$:
        \begin{itemize}[noitemsep,topsep=2pt,leftmargin=!,labelwidth=\widthof{(ii)}]
             \item[(i)] Assumption \ref{ass:energy_densities}(\hyperlink{ass:energy_densities.iii}{iii}) ensures that the primal energy functional \eqref{eq:primal} is proper;
            \item[(ii)] Assumption \ref{ass:energy_densities}(\hyperlink{ass:energy_densities.i}{i}),(\hyperlink{ass:energy_densities.ii}{ii}) ensure that the primal energy functional \eqref{eq:primal} is well-defined, convex, and lower semi-continuous (\textit{cf}.\ \cite[Thm.\ 1]{Ioffe1977I}).
           
            %\item[(iii)] In total, Assumption \ref{ass:wellposedness} ensures that $I\in \Gamma_0(W^{1,p}_D(\Omega))$.
        \end{itemize}
    \end{remark}
    
    Denote by $\phi^*\colon \Omega\times \mathbb{R}^d\to \mathbb{R}\cup\{+\infty\}$ and $\psi^*\colon \Omega\times \mathbb{R}\to \mathbb{R}\cup\{+\infty\}$ the (Fenchel) conjugates (with respect to the second argument) of the energy densities $\phi\colon \Omega\times \mathbb{R}^d\to \mathbb{R}\cup\{+\infty\}$ and $\psi\colon \Omega\times \mathbb{R}\to \mathbb{R}\cup\{+\infty\}$, respectively. Then, if the following additional assumption on the validity of a convex conjugation formula is satisfied,
    a (Fenchel) dual problem (in the sense of \cite[Rem.~4.2, p. 60/61]{EkelandTemam1999}) to the minimization of \eqref{eq:primal} likewise is given via the maximization~of~an~integral~functional.
    
    \begin{assumption}[Convex conjugation formula]\label{ass:energy_densities_conjugates}
    	The (Fenchel) conjugate $\psi^*\colon \Omega\times \mathbb{R}\to \mathbb{R}\cup\{+\infty\}$ is  such that\vspace{-1mm} %the following conditions are satsified:
    	%\begin{itemize}[noitemsep,topsep=2pt,leftmargin=!,labelwidth=\widthof{(ii)}]
    		%\item[(i)] \hypertarget{ass:energy_densities_conjugates.i}{} \emph{integrability from below:} %for the (Fenchel) conjugates $\phi\colon \Omega\times \mathbb{R}^d\to \mathbb{R}\cup\{+\infty\}$ and $\psi\colon \Omega\times \mathbb{R}\to \mathbb{R}\cup\{+\infty\}$, the following integrability conditions are satisfied:
    		%\begin{itemize}[noitemsep,topsep=2pt,leftmargin=!,labelwidth=\widthof{(i.b)}]
    		%	\item[(i.a)] \hypertarget{ass:energy_densities_conjugates.i.a}{}For every $y^*\in (L^{p'}(\Omega))^d$, we have that $(\phi^*(\cdot,y^*))^-\in L^1(\Omega)$;
    		%	\item[(i.b)] \hypertarget{ass:energy_densities_conjugates.i.b}{}For every $v^*\in L^{p'}(\Omega)$, we have that $(\psi^*(\cdot,v^*))^-\in L^1(\Omega)$.
    		%\end{itemize} 
    		%\item[(ii)] \hypertarget{ass:energy_densities_conjugates.ii}{} \emph{convex conjugation:} 
            for every $v^*\in (W^{1,p}_D(\Omega))^*$, there holds the \emph{convex conjugation formula}
    		\begin{align}\label{eq:convex_conjugation_formula}
    			\sup_{v\in W^{1,p}_D(\Omega)}{\left\{\langle v^*,v\rangle_{\smash{W^{1,p}_D(\Omega)}}-\int_{\Omega}{\psi(\cdot,v)\,\mathrm{d}x}\right\}}=
                \begin{cases}
                    \displaystyle\int_{\Omega}{\psi^*(\cdot,w^*)\,\mathrm{d}x}&\text{if }
                    \left\{\begin{aligned} 
                     &(\textup{id}_{\smash{W^{1,p}_D(\Omega)}})^*w^*=v^*%\text{ in }(W^{1,p}_D(\Omega))^*
                     \\&\text{for }w^*\in L^{p'}(\Omega)\,,   
                    \end{aligned}\right.\\
                    +\infty&\text{else}\,.
                \end{cases}
                %\int_{\Omega}{\psi^*(\cdot,v^*)\,\mathrm{d}x}\,.
    		\end{align} 
    	%\end{itemize} 
    \end{assumption}

    \begin{remark}[on Assumption \ref{ass:energy_densities_conjugates}]\label{rem:energy_densities_conjugates}
        Assumption \ref{ass:energy_densities} implicitly also ensures that the integral on the right-hand side of the convex conjugation formula \eqref{eq:convex_conjugation_formula} is well-defined:
    	\begin{itemize}[noitemsep,topsep=2pt,leftmargin=!,labelwidth=\widthof{(ii)}]
    		\item[(i)] 
            Due to Assumption \ref{ass:energy_densities}(\hyperlink{ass:energy_densities.i}{i}), the (Fenchel) conjugates $\phi^*\colon \Omega\times \mathbb{R}^d\to \mathbb{R}\cup\{+\infty\}$ and $\psi^*\colon \Omega\times \mathbb{R}\to \mathbb{R}\cup\{+\infty\}$ are convex normal integrands~as~well (\textit{cf}.\ \cite[Lem.\ 5]{Rockafellar1968});%,so that it is not necessary assume the latter;

            \item[(ii)] Due to Assumption \ref{ass:energy_densities}(\hyperlink{ass:energy_densities.i}{i}),(\hyperlink{ass:energy_densities.iii}{iii}), the (Fenchel) conjugates $\phi^*\colon \Omega\times \mathbb{R}^d\to \mathbb{R}\cup\{+\infty\}$~and~$\psi^*\colon \Omega\times \mathbb{R}\to \mathbb{R}\cup\{+\infty\}$ have the lower compactness property~as~well, where~$p$~is~replaced~with~$p'$,
            which follows from the Fenchel--Young inequality (\textit{cf}.\ \cite[Prop.\ 51.2]{Zeidler1985III}).\vspace{-1mm}% (\textit{cf}.\ \cite[Prop.\ 5.1, p.\ 21]{EkelandTemam1999}).\vspace{-1mm}
    	\end{itemize}
    \end{remark}\newpage

    The following lemma provides a general sufficient condition on the energy density $\psi\colon\Omega\times \mathbb{R}\to \mathbb{R}\cup\{+\infty\}$ that ensures the validity of the convex conjugation formula \eqref{eq:convex_conjugation_formula} in Assumption \ref{ass:energy_densities_conjugates}.

    \begin{lemma}[Sufficient condition for Assumption \ref{ass:energy_densities_conjugates}]\label{lem:sufficient_for_conjugation}
        Let $\psi\colon \Omega\times\mathbb{R}\to \mathbb{R}\cup\{+\infty\}$ be an energy density such that the extended functional $\overline{F}\colon L^p(\Omega)\to \mathbb{R}$, for every $v\in L^p(\Omega)$ defined by
        \begin{align*}
            \overline{F}(v)\coloneqq \int_{\Omega}{\psi(\cdot,v)\,\mathrm{d}x}\,,
        \end{align*}
        is well-defined, continuous, and bounded (\textit{i.e.}, maps bounded sets in $L^p(\Omega)$ into bounded~sets~in~$\mathbb{R}$). Then, the convex conjugation formula \eqref{eq:convex_conjugation_formula} in Assumption \ref{ass:energy_densities_conjugates} applies.
    \end{lemma}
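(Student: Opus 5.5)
The plan is to factor the functional $G\colon W^{1,p}_D(\Omega)\to\mathbb{R}$, $G(v)\coloneqq\int_\Omega\psi(\cdot,v)\,\mathrm{d}x$, as $G=\overline{F}\circ \iota$, where $\iota\coloneqq\mathrm{id}_{W^{1,p}_D(\Omega)}\colon W^{1,p}_D(\Omega)\to L^p(\Omega)$ is the (bounded, linear) embedding. I would then compute $G^*$ by a chain rule for conjugates of compositions with bounded linear operators, and compute $\overline{F}^*$ by Rockafellar's interchange theorem for integral functionals.

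\textbf{Step 1 (conjugate of the composition).} $\overline{F}$ is convex and, by assumption, finite and continuous on all of $L^p(\Omega)$. The qualification condition of the Fenchel--Rockafellar chain rule (continuity of $\overline{F}$ at some point of $\operatorname{ran}\iota$, e.g.\ at $0$) therefore holds. I would invoke \cite[Prop.~5.7/Thm.~4.1, Chap.~III]{EkelandTemam1999} or \cite[Thm.~15.27]{BauschkeCombettes2017} to obtain, for every $v^*\in(W^{1,p}_D(\Omega))^*$,
\begin{align*}
G^*(v^*)=\min\big\{\overline{F}^*(w^*)\mid w^*\in L^{p'}(\Omega),\ \iota^*w^*=v^*\big\}\,,
\end{align*}
with the convention $\min\emptyset=+\infty$. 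The minimum is attained whenever the set is non-empty. If $v^*\notin\operatorname{ran}\iota^*$, this already yields $+\infty$, which is the ``else'' branch of \eqref{eq:convex_conjugation_formula}.

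\textbf{Step 2 (representing the dual element).} Since $\iota$ is injective and $\mathcal{S}$ (indeed $C_c^\infty$) is dense in $L^p(\Omega)$ when $p<\infty$, the image $\iota(W^{1,p}_D(\Omega))$ is dense in $L^p(\Omega)$. Hence $\iota^*$ is injective, so the $w^*$ with $\iota^*w^*=v^*$ is unique. The minimum in Step 1 therefore reduces to $\overline{F}^*(w^*)$ for that unique $w^*$.

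\textbf{Step 3 (interchange of conjugation and integration).} It remains to show $\overline{F}^*(w^*)=\int_\Omega\psi^*(\cdot,w^*)\,\mathrm{d}x$ for every $w^*\in L^{p'}(\Omega)$. By Assumption \ref{ass:energy_densities}(\hyperlink{ass:energy_densities.i}{i}) and Remark \ref{rem:energy_densities_conjugates}, $\psi$ is a normal integrand, $L^p(\Omega)$ is decomposable, and $\overline{F}$ is finite at some point. So Rockafellar's theorem \cite[Thm.~2]{Rockafellar1968} (see also \cite[Thm.~14.60]{RockafellarWets1998}) gives exactly this identity, with the integral well-defined in $\mathbb{R}\cup\{+\infty\}$.

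I expect the main obstacle to be Step 1, specifically verifying the precise hypotheses of the chain-rule formula, including that attainment and exactness hold without further regularity. The boundedness hypothesis is what I would use here. A continuous convex function on a Banach space that is bounded on bounded sets is locally Lipschitz and has bounded subdifferentials, and this secures the constraint qualification together with the attainment of the infimum. If the cited chain rule needs a slightly different condition, I would fall back on a direct Fenchel duality argument for $\inf_v\{\overline{F}(\iota v)-\langle v^*,v\rangle\}$ using perturbation functions as in \cite[Chap.~III]{EkelandTemam1999}. Stability there follows from continuity of $\overline{F}$ at $\iota(0)$.
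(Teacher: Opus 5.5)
Your proof is correct, but it follows a different route from the paper. The paper splits into two cases. If $v^*=\iota^*w^*$, it uses the density of $W^{1,p}_D(\Omega)$ in $L^p(\Omega)$ and the continuity of $\overline{F}$ to replace the supremum over $W^{1,p}_D(\Omega)$ by the supremum over $L^p(\Omega)$, and then applies Rockafellar's interchange theorem. If $v^*\notin\iota^*(L^{p'}(\Omega))$, it picks $\{v_n\}\subseteq W^{1,p}_D(\Omega)$ bounded in $L^p(\Omega)$ with $\langle v^*,v_n\rangle\to+\infty$ and uses the boundedness of $\overline{F}$ to conclude that the supremum is $+\infty$.

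You instead recognise \eqref{eq:convex_conjugation_formula} as an instance of the conjugate-of-a-composition rule $(\overline{F}\circ\iota)^*(v^*)=\min\{\overline{F}^*(w^*)\mid \iota^*w^*=v^*\}$. This handles both cases at once. You also make explicit the injectivity of $\iota^*$, which the right-hand side of \eqref{eq:convex_conjugation_formula} tacitly needs in order to be well-defined. The paper's argument is more elementary, relying only on density, a sequence construction, and the interchange theorem. Yours is more structural and in fact proves more.

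The only qualification condition your chain rule needs is continuity of $\overline{F}$ at $0=\iota(0)$. If the dual feasible set is empty, the Fenchel duality theorem forces the primal infimum to be $-\infty$, so $G^*(v^*)=+\infty$. Hence the hypothesis that $\overline{F}$ be bounded on bounded sets is never used. Your remark that boundedness is what secures the constraint qualification and dual attainment is therefore misplaced: continuity alone gives local Lipschitz continuity and attainment. The same observation applies to the paper's second case. Rescaling $v_n$ into a small $L^p$-ball around $0$, on which $\overline{F}$ is bounded by continuity, already suffices.

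One citation needs care. \cite{BauschkeCombettes2017} works in Hilbert spaces, so for $p\neq 2$ you should rely on the Banach-space Fenchel duality theorem from \cite{EkelandTemam1999}, the same one the paper uses in the proof of Proposition~\ref{prop:duality}.
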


    \begin{remark}[on Lemma \ref{lem:sufficient_for_conjugation}]
        By the Krasnoselskii theorem (\textit{cf}.\ \cite[Prop.\ 1.1]{Krasnoselskii1964}), the assumptions of Lemma \ref{lem:sufficient_for_conjugation} are %, \textit{e.g.}, 
        satisfied if $\psi\colon \Omega\times\mathbb{R}\to \mathbb{R}$ is a Carath\'eodory mapping (\textit{i.e.}, $\psi(x,\cdot)\in  C^0(\mathbb{R})$ for a.e.\  $x\in \Omega$ and $\psi(\cdot,s)\in L^0(\Omega)$ for all $s\in \mathbb{R}$)~and,~for~\mbox{every}~${v\in L^p(\Omega)}$,~there~holds~${\psi(\cdot,v)\in  L^1(\Omega)}$.
    \end{remark}

    \begin{proof}[Proof (of Lemma \ref{lem:sufficient_for_conjugation}).] Let $v^*\hspace{-0.1em}\in \hspace{-0.1em}(W^{1,p}_D(\Omega))^*$ be fixed, but arbitrary. Then, 
        we distinguish~\mbox{between} the cases $v^*\in (\textup{id}_{\smash{W^{1,p}_D(\Omega)}})^*(L^{p'}(\Omega))$ and $v^*\notin (\textup{id}_{\smash{W^{1,p}_D(\Omega)}})^*(L^{p'}(\Omega))$:

    \emph{$\bullet$ Case $v^*\in (\textup{id}_{\smash{W^{1,p}_D(\Omega)}})^*(L^{p'}(\Omega))$.} In this case, there exists a function $w^*\in L^{p'}(\Omega)$~such~that $v^*\hspace{-0.15em}=\hspace{-0.15em}(\textup{id}_{\smash{W^{1,p}_D(\Omega)}})^*w^*$ \hspace{-0.1mm}in \hspace{-0.1mm}$(W^{1,p}_D(\Omega))^*$. \hspace{-0.1mm}Then, \hspace{-0.1mm}by \hspace{-0.1mm}the \hspace{-0.1mm}density \hspace{-0.1mm}of \hspace{-0.1mm}$W^{1,p}_D(\Omega)$ \hspace{-0.1mm}in \hspace{-0.1mm}$L^p(\Omega)$ \hspace{-0.1mm}and %together with 
   \hspace{-0.1mm}the~\hspace{-0.1mm}\mbox{continuity}~\hspace{-0.1mm}of $\overline{F}\colon L^p(\Omega)\to \mathbb{R}$, the convex conjugation formula for integral functionals (\textit{cf}.\ \cite[Thm.\ 2]{Rockafellar1968})~yields
    \begin{align*}
            \sup_{v\in W^{1,p}_D(\Omega)}{\left\{\langle v^*,v\rangle_{\smash{W^{1,p}_D(\Omega)}}-\int_{\Omega}{\psi(\cdot,v)\,\mathrm{d}x}\right\}}&=\sup_{v\in L^p(\Omega)}{\left\{\int_{\Omega}{w^*v\,\mathrm{d}x}-\int_{\Omega}{\psi(\cdot,v)\,\mathrm{d}x}\right\}}
            \\[-0.5mm]&=\int_{\Omega}{\psi^*(\cdot,w^*)\,\mathrm{d}x}\,,
        \end{align*}
        which is the claimed convex conjugation formula \eqref{eq:convex_conjugation_formula} in the case $\smash{v^*\in (\textup{id}_{\smash{W^{1,p}_D(\Omega)}})^*(L^{p'}(\Omega))}$.

    \emph{$\bullet$ Case $v^*\hspace{-0.15em}\notin\hspace{-0.15em} (\textup{id}_{\smash{W^{1,p}_D(\Omega)}})^*(L^{p'}(\Omega))$.} In this case, there exists a sequence $\{v_n\}_{n\in \mathbb{N}}\hspace{-0.15em}\subseteq \hspace{-0.15em}W^{1,p}_D(\Omega)$~boun\-ded in $L^p(\Omega)$ such that $\langle v^*,v_n\rangle_{\smash{W^{1,p}_D(\Omega)}}\to +\infty$ $(n\to \infty)$. By the~boundedness~of~$\overline{F}\colon L^p(\Omega)\to \mathbb{R}$, the sequence $\{\overline{F}(v_n)\}_{n\in \mathbb{N}}\subseteq \mathbb{R}$ is bounded as well, so that
   %     \begin{align*}
   %         \langle v^*,v_n\rangle_{\smash{W^{1,p}_D(\Omega)}}\to +\infty\quad (n\to \infty)\,,\\
   %        \sup_{n\in \mathbb{N}}{\big\{\|v_n\|_{p,\Omega}\big\}}<+\infty\,.
   %     \end{align*}
   % Since $\overline{F}\colon L^p(\Omega)\to \mathbb{R}$ maps bounded sets into bounded sets, we infer that
   % \begin{align*}
   %          \sup_{n\in \mathbb{N}}{\big\{\overline{F}(v_n)\big\}}<+\infty\,.
   %     \end{align*}
   %     In summary, we have that
        \begin{align*}
            \sup_{v\in W^{1,p}_D(\Omega)}{\left\{\langle v^*,v\rangle_{\smash{W^{1,p}_D(\Omega)}}-\int_{\Omega}{\psi(\cdot,v)\,\mathrm{d}x}\right\}}\ge \langle v^*,v_n\rangle_{\smash{W^{1,p}_D(\Omega)}}-\overline{F}(v_n)\to +\infty\,,
        \end{align*}
        which is the claimed convex conjugation formula \eqref{eq:convex_conjugation_formula} in the case $\smash{v^*\notin (\textup{id}_{\smash{W^{1,p}_D(\Omega)}})^*(L^{p'}(\Omega))}$.
    \end{proof}

	Then, if  Assumptions \ref{ass:energy_densities} and  \ref{ass:energy_densities_conjugates} are satisfied, a (Fenchel) dual problem (in the sense of \cite[Rem. 4.2, p. 60/61]{EkelandTemam1999}) to the minimization of \eqref{eq:primal} is given via the maximization of the \emph{dual energy functional} $D\colon W^{p'}_N(\operatorname{div};\Omega)\to \mathbb{R}\cup\{-\infty\}$, for every $y\in W^{p'}_N(\operatorname{div};\Omega)$ defined by\vspace{-0.5mm}
	\begin{align}\label{eq:dual}
		D(y)\coloneqq -\int_{\Omega}{\phi^*(\cdot,y)\,\mathrm{d}x}-\int_{\Omega}{\psi^*(\cdot,\operatorname{div}y)\,\mathrm{d}x}\,,
	\end{align}
	which is established along with the existence of a maximizer $z\in \smash{W_N^{p'}(\operatorname{div};\Omega)}$, called \emph{dual solution}, a strong duality relation, and primal optimality inclusions in the following proposition. 
	
	\begin{proposition}[Identification of dual problem, strong duality, and primal optimality inclusions]\label{prop:duality}
	 		Let Assumptions \ref{ass:energy_densities} and \ref{ass:energy_densities_conjugates} be satisfied. Then, the following statements apply:
	 		\begin{itemize}[noitemsep,topsep=2pt,leftmargin=!,labelwidth=\widthof{(ii)}]
	 			\item[(i)] \hypertarget{prop:duality.i}{} A (Fenchel) dual problem to the minimization of \eqref{eq:primal} is given via the maximization~of~\eqref{eq:dual};
	 			\item[(ii)] \hypertarget{prop:duality.ii}{} If, in addition, $(y\mapsto \int_{\Omega}{\phi(\cdot,y)\,\mathrm{d}x})\colon (L^p(\Omega))^d\to \mathbb{R}\cup\{+\infty\}$ is continuous at $\nabla u^{\star}\in (L^p(\Omega))^d$, then a dual solution exists, a \emph{strong duality relation} applies, \textit{i.e.}, we have that $I(u)=D(z)$, which is precisely equivalent to the \emph{primal  optimality inclusions}
	 			\begin{subequations}\label{prop:duality.2} 
	 			\begin{alignat}{2}
	 				z&\in \partial_t \phi(\cdot,\nabla u)&&\quad\text{ a.e.\ in }\Omega\,,\label{prop:duality.2.1}\\
	 				\operatorname{div}z&\in \partial_s\psi(\cdot,u )&&\quad\text{ a.e.\ in }\Omega\,,\label{prop:duality.2.2} 
	 			\end{alignat}
	 		\end{subequations}
            where the subdifferentials $\partial_t$, $\partial_s$ are formed with respect to the second arguments, respectively.
	 		\end{itemize}
	\end{proposition}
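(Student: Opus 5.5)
The plan is to follow the Ekeland--Temam perturbation framework (\textit{cf}.\ \cite[Chap.\ III, Rem.\ 4.2]{EkelandTemam1999}). I take $V\coloneqq W^{1,p}_D(\Omega)$, $Y\coloneqq (L^p(\Omega))^d$, the operator $\Lambda\coloneqq\nabla\colon V\to Y$, and write $I(v)=G(\Lambda v)+F(v)$ with
\begin{align*}
G(y)\coloneqq\int_\Omega\phi(\cdot,y)\,\mathrm{d}x\,,\qquad F(v)\coloneqq\int_\Omega\psi(\cdot,v)\,\mathrm{d}x\,.
\end{align*}
By Assumption \ref{ass:energy_densities} and \cite[Thm.\ 1]{Ioffe1977I}, both $F$ and $G$ are proper, convex, and lower semi-continuous. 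The abstract Fenchel dual is the maximization of $-G^*(y^*)-F^*(-\Lambda^*y^*)$ over $y^*\in Y^*=(L^{p'}(\Omega))^d$, so (i) reduces to identifying these two conjugates.

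For $G^*$, Rockafellar's conjugation theorem for integral functionals on decomposable spaces \cite[Thm.\ 2]{Rockafellar1968} gives $G^*(y^*)=\int_\Omega\phi^*(\cdot,y^*)\,\mathrm{d}x$. By Remark \ref{rem:energy_densities_conjugates}, $\phi^*$ is a normal integrand, and properness of $G$ is ensured by Assumption \ref{ass:energy_densities}(iii). For $F^*$, Assumption \ref{ass:energy_densities_conjugates} applied to $v^*\coloneqq-\Lambda^*y^*\in (W^{1,p}_D(\Omega))^*$ gives $F^*(-\Lambda^*y^*)=\int_\Omega\psi^*(\cdot,w^*)\,\mathrm{d}x$ whenever $-\Lambda^*y^*$ is represented by some $w^*\in L^{p'}(\Omega)$, and $+\infty$ otherwise. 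Representability means $-\int_\Omega y^*\cdot\nabla v\,\mathrm{d}x=\int_\Omega w^*v\,\mathrm{d}x$ for all $v\in W^{1,p}_D(\Omega)$. Testing with $C_c^\infty(\Omega)$ yields $w^*=\operatorname{div}y^*$ distributionally, so $y^*\in W^{p'}(\operatorname{div};\Omega)$. The integration-by-parts formula \eqref{eq:pi_cont} then shows that the remaining boundary pairing vanishes for all $v\in W^{1,p}_D(\Omega)$, i.e.\ $y^*\in W^{p'}_N(\operatorname{div};\Omega)$. Conversely, every such $y^*$ is representable with $w^*=\operatorname{div}y^*$. Hence the abstract dual functional equals $D$ on $W^{p'}_N(\operatorname{div};\Omega)$ and $-\infty$ elsewhere, which proves (i).

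For (ii), continuity of $G$ at $\Lambda u^\star$, together with $F(u^\star)<+\infty$, is exactly the qualification condition in \cite[Chap.\ III, Prop.\ 4.1/Thm.\ 4.1]{EkelandTemam1999}. That result yields $\inf I=\sup D$ and existence of a dual maximizer $z$; since the primal solution $u$ is assumed to exist, $I(u)=D(z)$. For the equivalence with \eqref{prop:duality.2}, I will write
\begin{align*}
I(u)-D(z)=\int_\Omega\big[\phi(\cdot,\nabla u)+\phi^*(\cdot,z)-z\cdot\nabla u\big]\,\mathrm{d}x+\int_\Omega\big[\psi(\cdot,u)+\psi^*(\cdot,\operatorname{div}z)-u\operatorname{div}z\big]\,\mathrm{d}x\,,
\end{align*}
where I have added $0=\int_\Omega(z\cdot\nabla u+u\operatorname{div}z)\,\mathrm{d}x$, which holds by \eqref{eq:pi_cont} and the boundary conditions. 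By the Fenchel--Young inequality both integrands are non-negative a.e., so $I(u)=D(z)$ holds if and only if both vanish a.e. Pointwise equality in Fenchel--Young is equivalent to the subdifferential inclusions, which gives \eqref{prop:duality.2}.

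I expect the main obstacle to be integrability bookkeeping. The splitting of $I(u)-D(z)$ needs all terms finite: $I(u)<\infty$ is clear, $D(z)>-\infty$ follows from strong duality, and the lower integrability of the negative parts comes from Assumption \ref{ass:energy_densities}(ii) and Remark \ref{rem:energy_densities_conjugates}(ii). In addition, $z\cdot\nabla u\in L^1$ and $u\operatorname{div}z\in L^1$ hold by Hölder's inequality. A second delicate point is that the identification of the $\Lambda^*$-domain requires the normal-trace characterization of $W^{p'}_N(\operatorname{div};\Omega)$ exactly as that space is defined.
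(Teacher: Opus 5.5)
Your proposal is correct and follows essentially the same route as the paper. The paper also works in Ekeland--Temam's framework: it identifies $G^*$ via Rockafellar's conjugation theorem and $F^*(-\nabla^*y)$ via Assumption~\ref{ass:energy_densities_conjugates} together with the characterization of $W^{p'}_N(\operatorname{div};\Omega)$. For (ii) it applies the Fenchel duality theorem and then uses pointwise equality in the Fenchel--Young inequality. The differences are minor: the paper cites Ekeland--Temam's extremality relations (4.22)--(4.23) to split the duality gap, whereas you derive the splitting by hand from the integration-by-parts formula and the vanishing normal trace. You also verify, rather than assert, that representability of $-\nabla^*y^*$ by some $w^*\in L^{p'}(\Omega)$ is equivalent to $y^*\in W^{p'}_N(\operatorname{div};\Omega)$ with $\operatorname{div}y^*=w^*$.
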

	
	\begin{proof}
		\emph{ad (\hyperlink{prop:duality.i}{i}).} To begin with, we introduce the functionals $G\hspace{-0.1em}\in\hspace{-0.1em} \smash{\Gamma_0((L^p(\Omega))^d)}$ and~$\smash{F\hspace{-0.1em}\in\hspace{-0.1em} \Gamma_0(W^{1,p}_D(\Omega))}$, for every $y\in (L^p(\Omega))^d$ and $v\in \smash{W^{1,p}_D(\Omega)}$, respectively, defined by\vspace{-0.5mm}
		\begin{align}\label{prop:duality.0.1}
			G(y)\coloneqq \int_{\Omega}{\phi(\cdot,y)\,\mathrm{d}x}\,,\qquad F(v)\coloneqq \int_{\Omega}{\psi(\cdot,v)\,\mathrm{d}x}\,.\\[-6mm]\notag
		\end{align}
		Then, \hspace{-0.1mm}according \hspace{-0.1mm}to \hspace{-0.1mm}\cite[Rem.\ \hspace{-0.1mm}4.2, \hspace{-0.1mm}p.\ \hspace{-0.1mm}60/61]{EkelandTemam1999}, \hspace{-0.1mm}the \hspace{-0.1mm}(Fenchel) \hspace{-0.1mm}dual \hspace{-0.1mm}problem \hspace{-0.1mm}to \hspace{-0.1mm}the \hspace{-0.1mm}minimization~\hspace{-0.1mm}of~\hspace{-0.1mm}\eqref{eq:primal} is given via the maximization of $D\colon  (L^{p'}(\Omega))^d\to \mathbb{R}\cup\{-\infty\}$,~for~every~${y\in (L^{p'}(\Omega))^d}$~given~via\vspace{-0.5mm}
		\begin{align}\label{prop:duality.3}
			D(y)\coloneqq-G^*(y)-F^*(\nabla^* y)\,,\\[-6mm]\notag
		\end{align}
		where $\nabla^*\colon \smash{(L^{p'}(\Omega))^d}\to (W^{1,p}_D(\Omega))^*$ is the adjoint operator to the gradient operator $\nabla\colon W^{1,p}_D(\Omega)\to (L^p(\Omega))^d$. Thus, it only remains to establish that the claimed integral representation \eqref{eq:dual} applies:
        \begin{itemize}[noitemsep,topsep=2pt,leftmargin=!,labelwidth=\widthof{$\bullet$}]
            \item[$\bullet$] Due to Assumption \ref{ass:energy_densities}(\hyperlink{ass:energy_densities.i}{i}.\hyperlink{ass:energy_densities.i.a}{a}),(\hyperlink{ass:energy_densities.ii}{ii}.\hyperlink{ass:energy_densities.ii.a}{a}) and Remark \ref{rem:energy_densities_conjugates}, the convex conjugation formula for integral functionals (\textit{cf}.\ \cite[Thm.\ 2]{Rockafellar1968}) is applicable and, for every $y\in (L^{p'}(\Omega))^d$,  yields\vspace{-0.75mm}
            \begin{align}\label{prop:duality.4}
                G^*(y)\coloneqq \int_{\Omega}{\phi^*(\cdot,y)\,\mathrm{d}x}\,.\\[-6mm]\notag
            \end{align}

             \item[$\bullet$] Due to Assumption \ref{ass:energy_densities_conjugates}, for every $y\in \smash{(L^{p'}(\Omega))^d}$, we have that\vspace{-0.5mm}
             \begin{align}\label{prop:duality.5}
                \begin{aligned}
                 F^*(-\nabla^* y)&=\sup_{v\in W^{1,p}_D(\Omega)}{\left\{\langle -\nabla^* y,v\rangle_{\smash{W^{1,p}_D(\Omega)}}-\int_{\Omega}{\psi(\cdot,v)\,\mathrm{d}x}\right\}}
                 \\&=\begin{cases}
                    \displaystyle\int_{\Omega}{\psi^*(\cdot,w^*)\,\mathrm{d}x}&\text{ if }
                    \left\{\begin{aligned} 
                     &(\textup{id}_{\smash{W^{1,p}_D(\Omega)}})^*w^*=-\nabla^* y%\text{ in }(W^{1,p}_D(\Omega))^*
                     \\&\text{for }w^*\in L^{p'}(\Omega)\,,   
                    \end{aligned}\right.\\
                    +\infty&\text{ else}\,,
                \end{cases}
                \\&=\begin{cases}
                    \displaystyle\int_{\Omega}{\psi^*(\cdot,\operatorname{div}y)\,\mathrm{d}x}&\text{ if }y\in W^{p'}_N(\operatorname{div};\Omega)\,,\\
                    +\infty&\text{ else}\,,
                \end{cases}
                \end{aligned}\\[-6mm]\notag
             \end{align}
            so that $\textup{dom}(-D)\subseteq \smash{W^{p'}_N(\operatorname{div};\Omega)}$, where we used in the last step that $(\textup{id}_{\smash{W^{1,p}_D(\Omega)}})^*w^*=-\nabla^* y$ in $(W^{1,p}_D(\Omega))^*$ for some $w^*\in L^{p'}(\Omega)$ is equivalent to that $y\in \smash{W^{p'}_N(\operatorname{div};\Omega)}$~with~${\operatorname{div}y=w^*}$~a.e.~in~$\Omega$.\vspace{0.5mm}
        \end{itemize}
         Using  \eqref{prop:duality.4} and \eqref{prop:duality.5} in \eqref{prop:duality.3},  we conclude that the claimed integral representation \eqref{eq:dual} applies.\vspace{1mm}
		
		\emph{ad (\hyperlink{prop:duality.ii}{ii}).} By Assumption \ref{ass:energy_densities}(\hyperlink{ass:energy_densities.iii}{iii}), we have that $G(\nabla u^{\star})<+\infty$, $F(u^{\star})<+\infty$, and, by the additional assumption of this proposition, $G\colon (L^p(\Omega))^d\to \mathbb{R}\cup\{+\infty\}$ is continuous~at~${\nabla u^{\star}\in (L^p(\Omega))^d}$. Therefore, the Fenchel duality theorem (\textit{cf}.\ \cite[Rem.\ 4.1, eqs.\ (4.21), p.\ 61]{EkelandTemam1999}) yields the existence of a dual solution $z\in \textup{dom}(-D)\subseteq \smash{W^{p'}_N(\operatorname{div};\Omega)}$ as well as that a strong duality relation applies. %, \textit{i.e.}. we have that $I(u)=D(z)$. 
        According to \cite[Rem.\ 4.1, eqs.\ (4.22),(4.23), p.\ 61]{EkelandTemam1999}, the strong duality relation is equivalent to\vspace{-0.5mm} %that
        \begin{subequations}\label{prop:duality.6}
        \begin{align}\label{prop:duality.6.1}
            \int_{\Omega}{(\phi^*(\cdot,z)-z\cdot\nabla u+\phi(\cdot,\nabla u))\,\mathrm{d}x}&=0\,,\\
            \int_{\Omega}{(\psi^*(\cdot,\operatorname{div}z)-\operatorname{div}z\, u+\psi(\cdot, u))\,\mathrm{d}x}&=0\,.\label{prop:duality.6.2}
        \end{align}\\[-3mm]\notag
        \end{subequations}
        By the Fenchel--Young inequality (\textit{cf}.\ \cite[Prop.\ 51.2]{Zeidler1985III}), the integrands of the integrals in \eqref{prop:duality.6.1} and \eqref{prop:duality.6.2} are point-wise a.e.\ non-negative, so that \eqref{prop:duality.6} is precisely equivalent to\vspace{-0.5mm}
        \begin{subequations}\label{prop:duality.7}
        \begin{alignat}{2}\label{prop:duality.7.1}
           \phi^*(\cdot,z)-z\cdot\nabla u+\phi(\cdot,\nabla u)&=0&&\quad\text{ a.e.\ in }\Omega\,,\\
           \psi^*(\cdot,\operatorname{div}z)-\operatorname{div}z\, u+\psi(\cdot, u)&=0&&\quad\text{ a.e.\ in }\Omega\,,\label{prop:duality.7.2}
        \end{alignat}\\[-5mm]\notag
        \end{subequations}
        which, by the equality condition in the Fenchel--Young inequality (\textit{cf}.\ \cite[Prop.\ 51.2]{Zeidler1985III}),~in~turn, is equivalent to the claimed primal optimality inclusions \eqref{prop:duality.2}.\enlargethispage{5mm}\vspace{-0.25mm}
	\end{proof}

    \begin{remark}[Proximity operators of \eqref{prop:duality.0.1}]\label{rem:proximity_integral_functionals} If Assumptions \ref{ass:energy_densities} and \ref{ass:energy_densities_conjugates} are satisfied and $p=2$, due \hspace{-0.1mm}to \hspace{-0.1mm}\cite[Thm.\ \hspace{-0.1mm}14.60]{RockafellarWets1998}, \hspace{-0.1mm}for \hspace{-0.1mm}the \hspace{-0.1mm}functionals \hspace{-0.1mm}\eqref{prop:duality.0.1}, \hspace{-0.1mm}for \hspace{-0.1mm}every \hspace{-0.1mm}$y\hspace{-0.15em}\in\hspace{-0.15em} (L^2(\Omega))^d$~\hspace{-0.1mm}and~\hspace{-0.1mm}${v\hspace{-0.15em}\in \hspace{-0.15em}L^2(\Omega)}$,~\hspace{-0.1mm}we~\hspace{-0.1mm}have~\hspace{-0.1mm}that\vspace{-4.5mm}
    \begin{subequations} 
    \begin{alignat}{2}
        \operatorname{prox}_{G}(y)(x)&=\operatorname{prox}_{\phi(x,\cdot)}(y(x))&&\quad\text{ for a.e.\ }x\in \Omega\,,\\
        \operatorname{prox}_{F}(v)(x)&=\operatorname{prox}_{\psi(x,\cdot)}(v(x))&&\quad\text{ for a.e.\ }x\in \Omega\,.
    \end{alignat} 
    \end{subequations}
    \end{remark}
	\newpage

    \section{Discrete duality for first-order continuous Lagrange approximations}\label{sec:discrete_duality}

    \hspace{5mm}In this section, we transfer the convex duality framework from Section \ref{sec:continuous_duality} to a discrete setting; more precisely, to a first-order continuous Lagrange approximation of the primal problem \eqref{eq:primal} and  a zeroth-order discontinuous Lagrange approximation~of~the~dual~\mbox{problem}~\eqref{eq:dual}.~In~this~\mbox{connection}, for the rest of the paper, we %employ 
    consider the following two Hilbert spaces
    \begin{align*}
        V_h\coloneqq \mathcal{S}^1_D(\mathcal{T}_h)\,,\qquad Y_h\coloneqq (\mathcal{L}^0(\mathcal{T}_h))^d\,,
    \end{align*} 
    where \hspace{-0.175mm}$(\cdot,\hspace{-0.075em}\cdot)_{V_h}\hspace{-0.2em}\in\hspace{-0.175em} \{(\cdot,\hspace{-0.075em}\cdot)_{L^2(\Omega)},(\cdot,\hspace{-0.075em}\cdot)_h\}$ \hspace{-0.175mm}and \hspace{-0.175mm}$(\cdot,\hspace{-0.075em}\cdot)_{Y_h}\hspace{-0.175em}\coloneqq\hspace{-0.175em}(\cdot,\hspace{-0.075em}\cdot)_{(L^2(\Omega))^d}$; \hspace{-0.175mm}\textit{i.e.}, \hspace{-0.175mm}$\|\cdot\|_{V_h}\hspace{-0.175em}\coloneqq\hspace{-0.175em} \smash{(\cdot,\hspace{-0.075em}\cdot)_{V_h}^{1/2}}$~\hspace{-0.175mm}and~\hspace{-0.175mm}${\|\hspace{-0.175em}\cdot\hspace{-0.175em}\|_{Y_h}\hspace{-0.15em}\coloneqq\hspace{-0.15em} \smash{(\cdot,\hspace{-0.075em}\cdot)_{Y_h}^{1/2}}}\hspace{-0.15em}$.
    
    Whereas the canonical discretization of the energy density $\phi\colon \Omega\times \mathbb{R}^d\to \mathbb{R}\cup\{+\infty\}$ is an element-wise constant one, the discretization of the energy density $\psi\colon \Omega\times \mathbb{R}\to \mathbb{R}\cup\{+\infty\}$ is not uniquely prescribed and, hence,  permits %substantially greater 
    more flexibility. The main requirements on the discretization of the energy density $\psi\colon \Omega\times \mathbb{R}\to \mathbb{R}\cup\{+\infty\}$ are that a discrete convex conjugation formula (\textit{cf}.~Lemma \ref{lem:discrete_convex_conjugation}) applies and that its subdifferential maps $V_h$~into~its~power~set~(\textit{cf}.~Remark~\ref{lem:subdiff_in_Vh}).
In this section, we consider two discretization strategies for the energy density~$\psi\colon \Omega\times \mathbb{R}\to \mathbb{R}\cup\{+\infty\}$ that satisfy these criteria: the first employs mass lumping and is applicable to a broad~class~of energy densities~$\psi_h$;~the~second avoids mass lumping but is, thus, restricted to at most~quadratic~$\psi_h$.

 In this connection, for the rest of the paper, in order to consider both discretization strategies simultaneously, we will employ a discrete operator $I_{V_h}\colon \psi_h(\cdot,V_h)\to \{L^1(\Omega),+\infty\}$~and a discrete inner product $(\cdot,\cdot)_{V_h}$ associated with $V_h$, which, depending on the employed discretization~strategy, serve as placeholders for the following two concrete choices:

 \begin{itemize}[noitemsep,topsep=2pt,leftmargin=!,labelwidth=\widthof{$\bullet$}]
     \item[$\bullet$] \emph{Mass lumping (ML):}\hypertarget{ML}{} In this case, we employ $I_{V_h}=I_h^{p1}$ and $(\cdot,\cdot)_{V_h}=(\cdot,\cdot)_h$;
     \item[$\bullet$] \emph{No mass lumping (NML):}\hypertarget{NML}{} In this case, we employ $I_{V_h}=\operatorname{id}_{\mathbb{R}}$ and $(\cdot,\cdot)_{V_h}=(\cdot,\cdot)_{L^2(\Omega)}$.
 \end{itemize}

    %In doing so, we want to present two ways for how to discretize the lower-order part in the primal energy functional \eqref{eq:primal}.
    
    Then, depending on whether mass lumping is employed or not, let $\phi_h\colon \Omega\times \mathbb{R}^d\to \mathbb{R}\cup\{+\infty\}$ and $\psi_h\colon \Omega\times \mathbb{R}\to \mathbb{R}\cup\{+\infty\}$ be approximations of $\phi\colon \Omega\times \mathbb{R}^d\to \mathbb{R}\cup\{+\infty\}$ and $\psi\colon \Omega\times \mathbb{R}\to \mathbb{R}\cup\{+\infty\}$, respectively, such that 
    the following  assumptions are satisfied.
    \begin{assumption}[on $\phi_h$ and $\psi_h$]\label{ass:energy_densities_discrete}
    	The discrete energy densities 
        $\phi_h\colon \Omega\times \mathbb{R}^d\to \mathbb{R}\cup\{+\infty\}$ and $\psi_h\colon \Omega\times \mathbb{R}\to \mathbb{R}\cup\{+\infty\}$ are such that the following conditions are satisfied:
    	\begin{itemize}[noitemsep,topsep=2pt,leftmargin=!,labelwidth=\widthof{(iii)}]
    		\item[(i)]\hypertarget{ass:energy_densities_discrete.i}{} \emph{Assumptions on $\phi_h$:} 
            \begin{itemize}[noitemsep,topsep=2pt,leftmargin=!,labelwidth=\widthof{(i.b)}]
                \item[(i.a)] \hypertarget{ass:energy_densities_discrete.i.a}{} For every $t\in \mathbb{R}^d$, we have that $\phi_h(\cdot,t)\in \mathcal{L}^0(\mathcal{T}_h)$;
                \item[(i.b)] \hypertarget{ass:energy_densities_discrete.i.b}{} For a.e.\ $x\in \Omega$, we have that $\phi_h(x,\cdot)\in \Gamma_0(\mathbb{R}^d)$.
            \end{itemize}    
    		\item[(ii)] \hypertarget{ass:energy_densities_discrete.i}{}\emph{Assumptions on $\psi_h$:}  
    		\begin{itemize}[noitemsep,topsep=2pt,leftmargin=!,labelwidth=\widthof{(ii.b)}]
    			\item[(ii.a)] \hypertarget{ass:energy_densities_discrete.ii.a}{} \emph{Mass lumping:} If (\hyperlink{ML}{ML}) applies, 
                for every $\nu\in \mathcal{N}_h^F$, we have that
                $\psi_h(\nu,\cdot)\in \Gamma_0(\mathbb{R})$;
    			\item[(ii.b)] 
                \hypertarget{ass:energy_densities_discrete.ii.b}{}
              \emph{No mass lumping:} If (\hyperlink{NML}{NML}) applies, there exists a constant $a_h\hspace{-0.1em}\ge \hspace{-0.1em}0$~as~well~as~functions $b_h\in V_h$
                and $c_h\in L^1(\Omega)$ such that for a.e.\ $x\in \Omega$ and every $s\in \mathbb{R}$, we have that
                \begin{align*}
                    \psi_h(x,s)=a_hs^2+b_h(x)s +c_h(x)\,.
                \end{align*}
    		\end{itemize}   
            \item[(iii)]
            \hypertarget{ass:energy_densities_discrete.iii}{}
            \emph{Non-triviality:} there exists a function $u_h^{\star}\in V_h$ such that $\phi_h(\cdot,\nabla u_h^{\star}),I_{V_h}\{\psi_h(\cdot,u_h^{\star})\}\in L^1(\Omega)$.~~        
    	\end{itemize} 
    \end{assumption}

    Then, if Assumption \ref{ass:energy_densities_discrete} is satisfied, the \emph{discrete primal energy functional} $I_h\colon V_h\to \mathbb{R}\cup\{+\infty\}$, for every $v_h\in V_h$, is defined by
    \begin{align}\label{eq:primal_discrete}
        I_h(v_h)\coloneqq \int_{\Omega}{\phi_h(\cdot,\nabla  v_h)\,\mathrm{d}x}+\int_{\Omega}{I_{V_h}\{\psi_h(\cdot,v_h)\}\,\mathrm{d}x}\,,
    \end{align}
    and we refer to the problem that seeks to minimize the discrete primal energy functional~\eqref{eq:primal_discrete}~as~the \emph{discrete primal problem}. In order to refrain from imposing further restrictive assumptions on the discrete energy densities $\phi_h\colon \Omega\times \mathbb{R}^d\to \mathbb{R}\cup\{+\infty\}$ and $\psi_h\colon \Omega\times \mathbb{R}\to \mathbb{R}\cup\{+\infty\}$ (in~addition~to Assumption \ref{ass:energy_densities_discrete}) that would just be sufficient for the existence of a minimizer $u_h\in V_h$~of~the discrete primal energy functional \eqref{eq:primal_discrete}, in this section, we assume the existence of such~a~minimizer~and refer to it as the \emph{discrete primal solution}.

    \begin{remark}\label{lem:subdiff_in_Vh}
        Assumption \ref{ass:energy_densities_discrete}(\hyperlink{ass:energy_densities_discrete.ii}{ii}) also ensures that the inclusion $ I_{V_h}\{\partial_s \psi_h(\cdot,V_h)\}\subseteq 2^{V_h}$ applies; more precisely, for every $v_h\in V_h$, there holds $I_{V_h}\{\partial_s \psi_h(\cdot,v_h)\}\subseteq V_h$. 
    \end{remark}

    \hspace{-1mm}In \hspace{-0.175mm}order \hspace{-0.175mm}to \hspace{-0.175mm}identify \hspace{-0.175mm}a \hspace{-0.175mm}(Fenchel) \hspace{-0.175mm}dual \hspace{-0.175mm}problem \hspace{-0.175mm}(in \hspace{-0.175mm}the \hspace{-0.175mm}sense \hspace{-0.175mm}of \cite[Rem.\ \hspace{-0.175mm}4.2, \hspace{-0.175mm}p.\ \hspace{-0.175mm}60/61]{EkelandTemam1999})~\hspace{-0.175mm}to~\hspace{-0.175mm}the~\hspace{-0.175mm}minimi\-zation of \eqref{eq:primal_discrete}, we introduce the \textit{discrete divergence operator (with~respect~to $(\cdot,\cdot)_{V_h}$)} ${\operatorname{div}_h\colon \hspace{-0.1em} Y_h\hspace{-0.1em}\to \hspace{-0.1em}V_h}$, for every $y_h\in Y_h$ and $v_h\in V_h$ defined by\vspace{-0.25mm}
    \begin{align}\label{def:discrete_divergence}
        (\operatorname{div}_hy_h,v_h)_{V_h}\coloneqq -\int_{\Omega}{y_h\cdot\nabla v_h\,\mathrm{d}x}\,.\\[-6.25mm]\notag
    \end{align} 
    Then, if Assumption \ref{ass:energy_densities_discrete}(\hyperlink{ass:energy_densities_discrete.ii.b}{ii}) is satisfied, a discrete counterpart of the convex conjugation~formula \eqref{prop:duality.5} in Assumption \ref{ass:energy_densities_conjugates} applies.\enlargethispage{1mm}

    \begin{lemma}[Discrete convex conjugation formula]\label{lem:discrete_convex_conjugation}
    Let Assumption \ref{ass:energy_densities_discrete}(\hyperlink{ass:energy_densities_discrete.ii.b}{ii}) be satisfied. Then, for every $y_h\in Y_h$, there holds the \emph{discrete convex conjugation formula}\vspace{-0.5mm}
    \begin{align}\label{lem:discrete_convex_conjugation.1}
        \sup_{v_h\in V_h}{\left\{\int_{\Omega}{I_{V_h}\{\operatorname{div}_hy_hv_h-\psi_h(\cdot,v_h)\}\,\mathrm{d}x}\right\}}=\int_{\Omega}{I_{V_h}\{\psi_h^*(\cdot,\operatorname{div}_hy_h)\}\,\mathrm{d}x}\,.\\[-6mm]\notag
    \end{align}
    \end{lemma}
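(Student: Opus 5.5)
We treat the two discretization strategies (\hyperlink{ML}{ML}) and (\hyperlink{NML}{NML}) separately. In both cases the plan is the same: reduce the supremum over $V_h$ to a family of decoupled one-dimensional (or explicitly quadratic) maximization problems, and identify the pointwise suprema with the Fenchel conjugate $\psi_h^*$. Throughout, we abbreviate $w_h\coloneqq \operatorname{div}_hy_h\in V_h$. Only the fact that $w_h\in V_h$ is used, not the definition \eqref{def:discrete_divergence} itself.

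\emph{Case (\hyperlink{ML}{ML}).} Here $I_{V_h}=I_h^{p1}$, and by \eqref{def:mass_lumping_inner_product} we have
\begin{align*}
\int_{\Omega}{I_h^{p1}\{w_hv_h-\psi_h(\cdot,v_h)\}\,\mathrm{d}x}=\sum_{\nu\in\mathcal{N}_h^F}{\beta_\nu\big(w_h(\nu)v_h(\nu)-\psi_h(\nu,v_h(\nu))\big)}\,.
\end{align*}
The map $v_h\mapsto (v_h(\nu))_{\nu\in\mathcal{N}_h^F}$ is a bijection from $V_h$ onto $\mathbb{R}^{\mathcal{N}_h^F}$, since Dirichlet nodal values vanish. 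Moreover, $\beta_\nu>0$ for all $\nu$. Hence the supremum splits into a sum of independent suprema:
\begin{align*}
\sum_{\nu\in\mathcal{N}_h^F}{\beta_\nu\sup_{s\in\mathbb{R}}\{w_h(\nu)s-\psi_h(\nu,s)\}}=\sum_{\nu\in\mathcal{N}_h^F}{\beta_\nu\,\psi_h^*(\nu,w_h(\nu))}\,.
\end{align*}
Reading \eqref{def:mass_lumping_inner_product} backwards, this equals $\int_\Omega I_h^{p1}\{\psi_h^*(\cdot,w_h)\}\,\mathrm{d}x$.

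Two points need care in this case. The first is the extended-valued arithmetic: if $\psi_h(\nu,\cdot)\in\Gamma_0(\mathbb{R})$, then each conjugate lies in $\mathbb{R}\cup\{+\infty\}$ and is never $-\infty$, because properness gives an affine minorant of the conjugate. Consequently the sum is well-defined, and the interchange of $\sup$ and $\sum$ is valid, including when some term equals $+\infty$. The second is the value at Dirichlet nodes: the nodal interpolant only involves free nodes, by \eqref{def:nodal_interpolation_operator}, so these values never enter.

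\emph{Case (\hyperlink{NML}{NML}).} Here $\psi_h(x,s)=a_hs^2+b_h(x)s+c_h(x)$, and the left-hand side equals
\begin{align*}
\sup_{v_h\in V_h}\Big\{\int_\Omega (w_h-b_h)v_h-a_hv_h^2\,\mathrm{d}x\Big\}-\int_\Omega c_h\,\mathrm{d}x\,.
\end{align*}
We distinguish two subcases.

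If $a_h>0$, this is a strictly concave quadratic problem on $V_h$. Its first-order condition is $2a_h(v_h,\cdot)_{L^2}=(w_h-b_h,\cdot)_{L^2}$ on $V_h$. Because $w_h-b_h\in V_h$ (this is exactly where $b_h\in V_h$ is needed), the maximizer is $v_h=\frac{1}{2a_h}(w_h-b_h)$, which belongs to $V_h$. Consequently, the discrete maximizer coincides with the pointwise maximizer $s=\frac{w_h(x)-b_h(x)}{2a_h}$ of $s\mapsto w_h(x)s-\psi_h(x,s)$. Thus the value is $\int_\Omega\big(\frac{(w_h-b_h)^2}{4a_h}-c_h\big)\,\mathrm{d}x=\int_\Omega\psi_h^*(\cdot,w_h)\,\mathrm{d}x$.

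If $a_h=0$, then $\psi_h^*(x,r)=-c_h(x)$ when $r=b_h(x)$, and $+\infty$ otherwise. If $w_h=b_h$, both sides equal $-\int_\Omega c_h\,\mathrm{d}x$. Otherwise, we take $v_h=\lambda(w_h-b_h)\in V_h$ with $\lambda\to+\infty$: the left-hand side is then $+\infty$. On the right-hand side, $w_h\neq b_h$ on a set of positive measure (continuous piecewise affine functions that differ, differ on an open set), so the integral is also $+\infty$.

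\emph{Main obstacle.} The key step is the (\hyperlink{NML}{NML}) case with $a_h>0$. The argument hinges on the observation that the $L^2$-projection onto $V_h$ is not needed, because $w_h-b_h$ already lies in $V_h$. This is precisely why the structural assumption $b_h\in V_h$, together with the restriction to quadratic $\psi_h$, is imposed.
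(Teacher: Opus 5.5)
Your proposal is correct and follows the paper's proof essentially step by step. In the (ML) case you decouple the supremum node by node. In the (NML) case you split into $a_h=0$, where the conjugate is indicator-type and the supremum is unbounded along $\lambda(w_h-b_h)$ whenever $w_h\neq b_h$, and $a_h>0$, where you use the explicit maximizer $\tfrac{1}{2a_h}(w_h-b_h)\in V_h$, which relies on $b_h\in V_h$. Your extra remarks make explicit what the paper leaves implicit: the extended-valued arithmetic, the positivity of the weights $\beta_\nu$, and the positive-measure argument when $w_h\neq b_h$.
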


    \begin{proof}\let\qed\relax
    We distinguish the cases with and without mass lumping (\textit{i.e.}, Assumption \ref{ass:energy_densities_discrete}(\hyperlink{ass:energy_densities_discrete.ii.a}{ii.a}/\hyperlink{ass:energy_densities_discrete.ii.b}{b})):

        $\bullet$ \emph{Case 1: (\hyperlink{ML}{ML}).} In this case, for every $y_h\in Y_h$, we have that\vspace{-1mm}
        \begin{align*}
        \sup_{v_h\in V_h}{\left\{\int_{\Omega}{I_h^{p1}{\{\operatorname{div}_hy_hv_h-\psi_h(\cdot,v_h)\}}\,\mathrm{d}x}\right\}}
            &=\sup_{v_h\in V_h}{\left\{\sum_{\nu\in \mathcal{N}_h^F}{\beta_\nu(\operatorname{div}_hy_h(\nu)v_h(\nu)-\psi_h(\nu,v_h(\nu)))}\right\}}
            \\&=\sum_{\nu\in \smash{\mathcal{N}_h^F}}{\beta_\nu\sup_{s\in \mathbb{R}}{\{\operatorname{div}_hy_h(\nu)s-\psi_h(\nu,s)\}}}
            \\&=\sum_{\nu\in \smash{\mathcal{N}_h^F}}{\beta_\nu\psi_h^*(\nu,\operatorname{div}_hy_h(\nu))}
            \\&=\int_{\Omega}{I_h^{p1}{\{\psi_h^*(\cdot,\operatorname{div}_h y_h)\}}\,\mathrm{d}x}\,,\\[-5.5mm]\notag
        \end{align*}
        which is the claimed discrete convex conjugation formula  \eqref{lem:discrete_convex_conjugation.1} in the case with mass lumping. % (\textit{cf}.\ Assumption \ref{ass:energy_densities_discrete}(\hyperlink{ass:energy_densities_discrete.i.b}{i})).

          $\bullet$ \emph{Case 2: (\hyperlink{NML}{NML}).} We distinguish the cases $a_h=0$ and $a_h> 0$: 

          $\bullet$ \emph{Case 2.1: ($a_h=0$).} In this case, the (Fenchel) conjugate (with respect to~the~second~argument) $\psi_h^*\colon \Omega\times\mathbb{R}\to \mathbb{R}\cup\{+\infty\}$, for a.e.\ $x\in \Omega$ and every $s^*\in \mathbb{R}$, is given via\vspace{-0.5mm}
        \begin{align*}
            \psi_h^*(x,s^*)= \begin{cases}
                -c_h(x)&\text{ if }s^*=b_h(x)\,,\\
                +\infty& \text{ else}\,.
            \end{cases}\\[-6mm]\notag
        \end{align*}
        In particular, for every $y_h\in Y_h$, due to $\operatorname{div}_hy_h-b_h\in V_h$, we have that\vspace{-0.5mm}
        \begin{align*}
            \sup_{v_h\in V_h}{\left\{\int_{\Omega}{(\operatorname{div}_hy_hv_h-\psi_h(\cdot,v_h))\,\mathrm{d}x}\right\}}&=\sup_{v_h\in V_h}{\left\{\int_{\Omega}{(\operatorname{div}_hy_h-b_h)v_h\,\mathrm{d}x}\right\}}-\int_{\Omega}{c_h\,\mathrm{d}x}
            \\&=\begin{cases}
                \displaystyle-\int_{\Omega}{c_h\,\mathrm{d}x}&\text{ if }\operatorname{div}_hy_h=b_h\text{ in }\Omega\,,\\
                +\infty& \text{ else}\,,
            \end{cases}
            \\&=\int_{\Omega}{\psi_h^*(\cdot,\operatorname{div}_hy_h)\,\mathrm{d}x}\,.\\[-6mm]\notag
        \end{align*}

        $\bullet$ \emph{Case 2.2: ($a_h>0$).} In this case, the (Fenchel) conjugate (with respect to~the~second~argument) $\psi_h^*\colon \Omega\times\mathbb{R}\to \mathbb{R}\cup\{+\infty\}$, for a.e.\ $x\in \Omega$ and every $s^*\in \mathbb{R}$, is given via\vspace{-0.5mm}
        \begin{align*}
            \smash{\psi_h^*(x,s^*)= \tfrac{1}{4a_h}(s^*-b_h(x))^2-c_h(x)}\,.\\[-6mm]\notag
        \end{align*}
        In particular, for every $y_h\in Y_h$, due to $\operatorname{div}_hy_h-b_h\in V_h$, we have that 
        \begin{align*}
            \sup_{v_h\in V_h}{\left\{\int_{\Omega}{(\operatorname{div}_hy_hv_h-\psi_h(\cdot,v_h))\,\mathrm{d}x}\right\}}&=\sup_{v_h\in V_h}{\left\{\int_{\Omega}{(\operatorname{div}_hy_h-a_hv_h-b_h)v_h\,\mathrm{d}x}\right\}}
            -\int_{\Omega}{c_h\,\mathrm{d}x}\,.
        \end{align*}\newpage
        \noindent Using that, by the direct method in the calculus of variations, the supremum is attained~at~some $v_h^*\hspace{-0.1em}\coloneqq \hspace{-0.1em}v_h^*(y_h)\hspace{-0.15em}\in\hspace{-0.15em} V_h$, \hspace{-0.1mm}which, \hspace{-0.1mm}by \hspace{-0.1mm}the \hspace{-0.1mm}Fr\'echet \hspace{-0.1mm}differentiability \hspace{-0.1mm}of \hspace{-0.1mm}the \hspace{-0.1mm}functional, \hspace{-0.1mm}for \hspace{-0.1mm}every~\hspace{-0.1mm}${v_h\hspace{-0.15em}\in\hspace{-0.15em} V_h}$,~\hspace{-0.1mm}\mbox{satisfies} $\int_{\Omega}{(\operatorname{div}_hy_h-2a_hv_h^*-b_h)v_h\,\mathrm{d}x}=0$
        %\begin{align*}
        %    \int_{\Omega}{(\operatorname{div}_hy_h-2a_hv_h^*-b_h)v_h\,\mathrm{d}x}=0\,,
        %\end{align*}
        and, consequently, taking into account that ${\operatorname{div}_hy_h-b_h\in V_h}$, is given via $v_h^*=\smash{\frac{1}{2a_h}}(\operatorname{div}_hy_h-b_h)\in V_h$, we find that\vspace{-0.5mm}
        \begin{align*}
            \sup_{v_h\in V_h}{\left\{\int_{\Omega}{(\operatorname{div}_hy_hv_h-\psi_h(\cdot,v_h))\,\mathrm{d}x}\right\}}&=\int_{\Omega}{(\operatorname{div}_hy_h-a_hv_h^*-b_h)v_h^*\,\mathrm{d}x}
            -\int_{\Omega}{c_h\,\mathrm{d}x}
            \\&=\int_{\Omega}{\bigl(\tfrac{1}{4a_h}(\operatorname{div}_hy_h-b_h)^2-c_h\bigr)\,\mathrm{d}x}
            \\&=\int_{\Omega}{\psi_h^*(\cdot,\operatorname{div}_hy_h)\,\mathrm{d}x}\,,\\[-6mm]\notag
        \end{align*}
        which is the claimed discrete convex conjugation formula \eqref{lem:discrete_convex_conjugation.1} in the case without  mass lumping.~$\qedsymbol$ % (\textit{cf}.\ Assumption \ref{ass:energy_densities_discrete}(\hyperlink{ass:energy_densities_discrete.ii.b}{ii})).
    \end{proof}

    Then, if  Assumption \ref{ass:energy_densities_discrete} is satisfied, the (Fenchel) dual problem (in the sense of \cite[Rem.~4.2, p. 60/61]{EkelandTemam1999}) to the minimization of \eqref{eq:primal_discrete} is given via the maximization of the \emph{discrete dual energy functional} $D_h\colon Y_h^*\cong Y_h\to \mathbb{R}\cup\{-\infty\}$, for every $y_h\in Y_h$ defined by\vspace{-0.75mm}
    \begin{align}\label{eq:dual_discrete}
        D_h(y_h)\coloneqq -\int_{\Omega}{\phi_h^*(\cdot,y_h)\,\mathrm{d}x}-\int_{\Omega}{I_{V_h}\{\psi_h^*(\cdot,\operatorname{div}_h y_h)\}\,\mathrm{d}x}\,,\\[-6mm]\notag
    \end{align}
    which is established along with the existence of a maximizer $z_h\in Y_h$, called \emph{discrete~dual~solution}, a discrete strong duality relation, and discrete primal optimality inclusions in the following proposition.\enlargethispage{6mm}\vspace{-0.5mm}

    \begin{proposition}[Identification of dual problem, strong duality, and primal optimality inclusions]\label{prop:discrete_duality}
        Let Assumption \ref{ass:energy_densities_discrete} be satisfied. Then, the following statements apply:
        \begin{itemize}[noitemsep,topsep=2pt,leftmargin=!,labelwidth=\widthof{(ii)}]
            \item[(i)] \hypertarget{prop:discrete_duality.i}{} The \hspace{-0.1mm}(Fenchel) \hspace{-0.1mm}dual \hspace{-0.1mm}problem \hspace{-0.1mm}to \hspace{-0.1mm}the \hspace{-0.1mm}minimization \hspace{-0.1mm}of \hspace{-0.1mm}\eqref{eq:primal_discrete} \hspace{-0.1mm}is \hspace{-0.1mm}given \hspace{-0.1mm}via \hspace{-0.1mm}the~\hspace{-0.1mm}\mbox{maximization}~\hspace{-0.1mm}of~\hspace{-0.1mm}\eqref{eq:dual_discrete};
            \item[(ii)] \hypertarget{prop:discrete_duality.ii}{} If, in addition, $(y_h\mapsto \int_{\Omega}{\phi_h(\cdot,y_h)\,\mathrm{d}x})\colon Y_h\to \mathbb{R}\cup\{+\infty\}$ is continuous at $\nabla u_h^{\star}\in Y_h$,~then~a discrete dual solution exists, a \emph{discrete strong duality relation} applies, \textit{i.e.}, we have that $I_h(u_h)=D_h(z_h)$, which is precisely equivalent to the \emph{discrete~primal~optimality~inclusions}\vspace{-0.5mm}
            \begin{subequations}\label{prop:discrete_duality.1}
           \begin{alignat}{2}\label{prop:discrete_duality.1.1}
        z_h&\in \partial_t \phi_h(\cdot,\nabla u_h)&&\quad \text{ a.e.\ in }\Omega\,,\\
        \operatorname{div}_hz_h&\in I_{V_h}{\{\partial_s\psi_h(\cdot,u_h)\}}&&\quad \text{ in }\Omega\,.\label{prop:discrete_duality.1.2}
        \end{alignat}
         \end{subequations}
        \end{itemize}
    \end{proposition}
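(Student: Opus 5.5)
The plan is to mirror the proof of Proposition \ref{prop:duality} in finite dimensions. Introduce $G_h\colon Y_h\to\mathbb{R}\cup\{+\infty\}$ by $G_h(y_h)\coloneqq\int_\Omega\phi_h(\cdot,y_h)\,\mathrm{d}x$ and $F_h\colon V_h\to\mathbb{R}\cup\{+\infty\}$ by $F_h(v_h)\coloneqq\int_\Omega I_{V_h}\{\psi_h(\cdot,v_h)\}\,\mathrm{d}x$, so that $I_h=G_h\circ\nabla+F_h$ with $\nabla\colon V_h\to Y_h$. Both functionals are proper by Assumption \ref{ass:energy_densities_discrete}(iii). Since $V_h,Y_h$ are finite-dimensional, convexity and lower semi-continuity follow directly from Assumption \ref{ass:energy_densities_discrete}(i),(ii): element-wise, resp.\ node-wise, we have finite sums of $\Gamma_0$-functions, and in the (NML) case a convex quadratic. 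By \cite[Rem.\ 4.2, p.\ 60/61]{EkelandTemam1999}, the dual problem is then the maximization of $-G_h^*(y_h)-F_h^*(-\nabla^*y_h)$ over $y_h\in Y_h$. Here $Y_h$ is identified with its dual via $(\cdot,\cdot)_{Y_h}$ and $V_h$ via $(\cdot,\cdot)_{V_h}$.

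Next we identify both conjugates. For $G_h$, Assumption \ref{ass:energy_densities_discrete}(i.a) says $\phi_h(\cdot,t)$ is element-wise constant, so $\sup_{y_h\in Y_h}$ decouples into $\sup_{t\in\mathbb{R}^d}$ on each $T\in\mathcal{T}_h$. This gives $G_h^*(y_h)=\sum_T|T|\phi_h^*(T,y_h|_T)=\int_\Omega\phi_h^*(\cdot,y_h)\,\mathrm{d}x$. For $F_h$, the definition \eqref{def:discrete_divergence} gives $-\nabla^*y_h=\operatorname{div}_hy_h$ with respect to $(\cdot,\cdot)_{V_h}$. Hence
$F_h^*(-\nabla^*y_h)=\sup_{v_h\in V_h}\{(\operatorname{div}_hy_h,v_h)_{V_h}-F_h(v_h)\}$.
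In both cases (ML) and (NML) we have $(\operatorname{div}_hy_h,v_h)_{V_h}=\int_\Omega I_{V_h}\{\operatorname{div}_hy_h\,v_h\}\,\mathrm{d}x$, so this supremum is exactly the left-hand side of \eqref{lem:discrete_convex_conjugation.1}. Lemma \ref{lem:discrete_convex_conjugation} then yields \eqref{eq:dual_discrete}, which proves (i).

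For (ii), we use the assumed continuity of $G_h$ at $\nabla u_h^\star$, where also $F_h(u_h^\star)<+\infty$. The Fenchel duality theorem \cite[Rem.\ 4.1, (4.21)]{EkelandTemam1999} then gives existence of a maximizer $z_h\in Y_h$ and $I_h(u_h)=D_h(z_h)$, using that a primal minimizer $u_h$ exists by standing assumption. By \cite[(4.22),(4.23)]{EkelandTemam1999}, strong duality is equivalent to
$G_h(\nabla u_h)+G_h^*(z_h)=(z_h,\nabla u_h)_{Y_h}$ and $F_h(u_h)+F_h^*(\operatorname{div}_hz_h)=(\operatorname{div}_hz_h,u_h)_{V_h}$.
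Writing these as integrals of Fenchel--Young defects, the first becomes $\int_\Omega(\phi_h(\cdot,\nabla u_h)+\phi_h^*(\cdot,z_h)-z_h\cdot\nabla u_h)\,\mathrm{d}x=0$. The second becomes $\int_\Omega I_{V_h}\{\psi_h(\cdot,u_h)+\psi_h^*(\cdot,\operatorname{div}_hz_h)-\operatorname{div}_hz_h\,u_h\}\,\mathrm{d}x=0$.

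The main obstacle is localizing these identities pointwise; the argument splits by case. In the first identity and in the (ML) case of the second, the integrands are non-negative by Fenchel--Young. In (ML) the weights $\beta_\nu>0$, so the defect vanishes at every free node $\nu$; equivalently $\operatorname{div}_hz_h(\nu)\in\partial_s\psi_h(\nu,u_h(\nu))$, which is \eqref{prop:discrete_duality.1.2} read through $I_h^{p1}$. Similarly, $z_h\in\partial_t\phi_h(\cdot,\nabla u_h)$ element-wise.

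In the (NML) case the defect $\psi_h+\psi_h^*-s^*s$ is a non-negative function, so its vanishing integral forces it to vanish a.e. For $a_h>0$ this means $\operatorname{div}_hz_h=2a_hu_h+b_h$. For $a_h=0$ the defect is finite only if $\operatorname{div}_hz_h=b_h$. In both subcases this is exactly $\operatorname{div}_hz_h\in\partial_s\psi_h(\cdot,u_h)$, whose right-hand side lies in $V_h$ by Remark \ref{lem:subdiff_in_Vh}.

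Conversely, the inclusions \eqref{prop:discrete_duality.1} give equality in Fenchel--Young pointwise. Integrating, summing, and using \eqref{def:discrete_divergence} to cancel the coupling terms recovers $I_h(u_h)=D_h(z_h)$. This establishes the claimed equivalence.
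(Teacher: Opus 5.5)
Your proposal is correct and follows essentially the same route as the paper: the splitting $I_h=G_h\circ\nabla_h+F_h$, the element-wise computation of $G_h^*$, the discrete convex conjugation lemma for $F_h^*(-\nabla_h^*y_h)$, the Ekeland--Temam Fenchel duality theorem under continuity of $G_h$ at $\nabla u_h^{\star}$, and localization of the two extremality relations via non-negativity of the Fenchel--Young defects. Your explicit case split and your explicit converse only spell out details the paper treats uniformly: the case split uses the positive lumping weights $\beta_\nu$ in the mass-lumped case and the completed-square quadratic defect in the non-mass-lumped case, and the converse cancels the coupling terms via the definition of $\operatorname{div}_h$.
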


    \begin{proof}
        \emph{ad (\hyperlink{prop:discrete_duality.i}{i}).}  To begin with, we introduce the functionals $G_h\in \Gamma_0(Y_h)$ and $F_h\in \Gamma_0(V_h)$, for every $y_h\in Y_h$ and $v_h\in V_h$, respectively, defined by\vspace{-0.5mm}
        \begin{align}\label{prop:discrete_duality.0.1}
            G_h(y_h)\coloneqq \int_{\Omega}{\phi_h(\cdot,y_h)\,\mathrm{d}x}\,,\qquad
            F_h(v_h)\coloneqq\int_{\Omega}{I_{V_h}{\{\psi_h(\cdot,v_h)\}}\,\mathrm{d}x}\,.\\[-6mm]\notag
        \end{align}
        Then, \hspace{-0.1mm}a \hspace{-0.1mm}(Fenchel) \hspace{-0.1mm}dual \hspace{-0.1mm}problem \hspace{-0.1mm}(in \hspace{-0.1mm}the \hspace{-0.1mm}sense \hspace{-0.1mm}of \hspace{-0.1mm}\cite[Rem.\ \hspace{-0.1mm}4.2, \hspace{-0.1mm}p.\ \hspace{-0.1mm}60/61]{EkelandTemam1999}) \hspace{-0.1mm}to \hspace{-0.1mm}the~\hspace{-0.1mm}\mbox{minimization}~\hspace{-0.1mm}of~\hspace{-0.1mm}\eqref{eq:primal_discrete}  is given via the maximization of $D_h\colon Y_h\to \mathbb{R}\cup\{-\infty\}$, for every $y_h\in Y_h$~defined~by\vspace{-0.5mm}
        \begin{align}\label{prop:discrete_duality.0.2}
            D_h(y_h)\coloneqq -G_h^*(y_h)-F_h^*(-\nabla_h^*y_h)\,,\\[-6mm]\notag
        \end{align}
        where \hspace{-0.1mm}$\nabla_h^*\colon \hspace{-0.12em}Y_h\hspace{-0.12em}\to\hspace{-0.12em} V_h^*$ \hspace{-0.1mm}is \hspace{-0.1mm}the \hspace{-0.1mm}adjoint \hspace{-0.1mm}operator \hspace{-0.1mm}to \hspace{-0.1mm}the \hspace{-0.1mm}restricted \hspace{-0.1mm}gradient \hspace{-0.1mm}operator \hspace{-0.1mm}${\nabla_h\hspace{-0.12em}\coloneqq \hspace{-0.12em}\nabla|_{V_h}\colon\hspace{-0.12em} V_h\hspace{-0.12em}\to \hspace{-0.12em}Y_h}$. 
        Thus, it only remains to establish that the claimed integral representation \eqref{eq:dual_discrete} applies:\vspace{-0.5mm}

        \begin{itemize}[noitemsep,topsep=2pt,leftmargin=!,labelwidth=\widthof{$\bullet$}]
            \item[$\bullet$]  Since \hspace{-0.15mm}$\phi_h(\cdot,t)\hspace{-0.15em}\in\hspace{-0.15em} \mathcal{L}^0(\mathcal{T}_h)$ \hspace{-0.15mm}for \hspace{-0.15mm}all \hspace{-0.15mm}$t\hspace{-0.15em}\in\hspace{-0.15em} \mathbb{R}^d$ \hspace{-0.15mm}(\textit{cf}.~Assumption~\ref{ass:energy_densities_discrete}(\hyperlink{ass:energy_densities_discrete.i}{i}.\hyperlink{ass:energy_densities_discrete.i.a}{a})), 
            \hspace{-0.15mm}denoting \hspace{-0.15mm}by \hspace{-0.15mm}${x_T\hspace{-0.15em}\coloneqq \hspace{-0.15em}\frac{1}{d+1}\sum_{\nu \in \mathcal{N}_h}{\nu}\hspace{-0.15em}\in\hspace{-0.15em} T}$ the barycenter of an element $T\in \mathcal{T}_h$,
            for every $y_h\in Y_h$,~we~find~that\vspace{-0.5mm}
            \begin{align}\label{prop:discrete_duality.2}
            \begin{aligned} 
                G_h^*(y_h)&=\sup_{\widehat{y}_h\in Y_h}{\left\{\int_{\Omega}{y_h\cdot \widehat{y}_h\,\mathrm{d}x}-\int_{\Omega}{\phi_h(\cdot,\widehat{y}_h)\,\mathrm{d}x}\right\}}
                \\[-0.25mm]&=\sum_{T\in \mathcal{T}_h}{\vert T\vert\sup_{t\in \mathbb{R}^d}{\{y_h(x_T)\cdot t-\phi_h(x_T,t)\}}}
                \\[-0.25mm]&=\sum_{T\in \mathcal{T}_h}{\vert T\vert\phi_h^*(x_T,y_h(x_T))}
                \\[-0.25mm]&=\int_{\Omega}{\phi_h^*(\cdot,y_h)\,\mathrm{d}x}\,;
            \end{aligned}\\[-6mm]\notag
            \end{align}
            
            \item[$\bullet$] By the definition of the discrete divergence \eqref{def:discrete_divergence} and Lemma \ref{lem:discrete_convex_conjugation}, for every $y_h\in Y_h$,~we~find~that\vspace{-0.5mm}
            \begin{align}\label{prop:discrete_duality.3}
            \begin{aligned} 
                F_h^*(-\nabla_h^* y_h)&=\sup_{v_h\in V_h}{\left\{\int_{\Omega}{-y_h\cdot \nabla v_h\,\mathrm{d}x}-\int_{\Omega}{I_{V_h}\{\psi_h(\cdot,v_h)\}\,\mathrm{d}x}\right\}}
                \\&=\sup_{v_h\in V_h}{\left\{\int_{\Omega}{I_{V_h}\{\operatorname{div}_hy_h v_h-\psi_h(\cdot,v_h)\}\,\mathrm{d}x}\right\}}
                \\&=\int_{\Omega}{I_{V_h}\{\psi_h^*(\cdot,\operatorname{div}_hy_h)\}\,\mathrm{d}x}\,.
            \end{aligned}\\[-6mm]\notag
            \end{align} 
        \end{itemize} 
        Using  \eqref{prop:discrete_duality.2} and \eqref{prop:discrete_duality.3} in \eqref{prop:discrete_duality.0.2},  we conclude that the claimed integral representation \eqref{eq:dual_discrete} applies.

        \emph{ad (\hyperlink{prop:discrete_duality.ii}{ii}).} 
        By Assumption \ref{ass:energy_densities_discrete}(\hyperlink{ass:energy_densities_discrete.iii}{iii}), we have that $G_h(\nabla u_h^{\star})<+\infty$, $F_h(u_h^{\star})<+\infty$, and, by the additional assumption of this proposition, $G_h\colon Y_h\to \mathbb{R}\cup\{+\infty\}$ is continuous~at~${\nabla u_h^{\star}\in Y_h}$. Therefore, the Fenchel duality theorem (\textit{cf}.\ \cite[Rem.\ 4.1, eqs.\ (4.21), p.\ 61]{EkelandTemam1999}) yields the existence of a discrete dual solution $z_h\in Y_h$ as well as that a discrete strong duality relation applies. %, \textit{i.e.}. we have that $I(u)=D(z)$. 
        According to \cite[Rem.\ 4.1, eqs.\ (4.22),(4.23), p.\ 61]{EkelandTemam1999}, the discrete strong duality relation~is~equivalent to\vspace{-0.5mm} %that
        \begin{subequations}\label{prop:discrete_duality.6}
        \begin{align}\label{prop:discrete_duality.6.1}
            \int_{\Omega}{(\phi_h^*(\cdot,z_h)-z_h\cdot\nabla u_h+\phi_h(\cdot,\nabla u_h))\,\mathrm{d}x}&=0\,,\\
            \int_{\Omega}{I_{V_h}\{\psi^*_h(\cdot,\operatorname{div}_hz_h)-\operatorname{div}_hz_h u_h+\psi_h(\cdot, u_h)\}\,\mathrm{d}x}&=0\,.\label{prop:discrete_duality.6.2}
        \end{align}\\[-3mm]\notag
        \end{subequations}
        By the Fenchel--Young inequality (\textit{cf}.\ \cite[Prop.\ 51.2]{Zeidler1985III}), the integrands of the integrals in \eqref{prop:discrete_duality.6.1} and \eqref{prop:discrete_duality.6.2}~are~point-wise (a.e.) non-negative, so that \eqref{prop:discrete_duality.6} is precisely equivalent to\vspace{-0.5mm}
        \begin{subequations}\label{prop:discrete_duality.7}
        \begin{alignat}{2}\label{prop:discrete_duality.7.1}
           \phi^*_h(\cdot,z_h)-z_h\cdot\nabla u_h+\phi_h(\cdot,\nabla u_h)&=0&&\quad\text{ a.e.\ in }\Omega\,,\\
           I_{V_h}\{\psi^*_h(\cdot,\operatorname{div}_hz_h)-\operatorname{div}_hz_h\, u_h+\psi_h(\cdot, u_h)\}&=0&&\quad\text{ in }\Omega\,,\label{prop:discrete_duality.7.2}
        \end{alignat}\\[-4.5mm]\notag
        \end{subequations}
        which, by the equality condition in the Fenchel--Young inequality (\textit{cf}.\ \cite[Prop.\ 51.2]{Zeidler1985III}),~in~turn, is equivalent to the claimed discrete primal optimality inclusions \eqref{prop:discrete_duality.1}.
    \end{proof} 

    \begin{remark}[Discrete \emph{a posteriori} error identity]\label{rem:primal-dual_gap_identity}
         Inspired by \cite{BartelsGudiKaltenbach2025,AntilBartelsKaltenbachKhandelwal2025,DieningStorn2025,BartelsKaltenbach2026},
        the~discrete~strong duality relation in Proposition \ref{prop:discrete_duality}(\hyperlink{prop:discrete_duality.ii}{ii}) implies a discrete \emph{a posteriori} error identity, which~may~be~used in a stopping criterion enabling explicit error control or in line-search strategies~in~\mbox{iterative}~\mbox{solvers}:
        \begin{itemize}[noitemsep,topsep=2pt,leftmargin=!,labelwidth=\widthof{$\bullet$}]
            \item[$\bullet$] \emph{Estimator:} The \emph{discrete primal-dual gap estimator} $\eta_{\textup{gap},h}^2\colon V_h\times Y_h\to [0,+\infty]$, for every $v_h\in V_h$ and $y_h\in Y_h$ defined by $\eta_{\textup{gap},h}^2(v_h,y_h)\coloneqq I_h(v_h)-D_h(y_h)$, measures~the~discrete~\mbox{primal-dual}~gap; 
        
        \item[$\bullet$]  \emph{Primal error:} The \emph{discrete optimal convexity measure} $\rho_{I_h}^2\colon V_h\to [0,+\infty]$, for every $v_h\in V_h$ defined by $\rho_{I_h}^2(v_h)\coloneqq I_h(v_h)-I_h(u_h)$, 
        measures the strong convexity of the discrete primal energy functional \hspace{-0.1mm}\eqref{eq:primal_discrete} \hspace{-0.1mm}at \hspace{-0.1mm}a \hspace{-0.1mm}discrete \hspace{-0.1mm}primal \hspace{-0.1mm}solution \hspace{-0.1mm}$u_h\hspace{-0.175em}\in\hspace{-0.175em} V_h$ \hspace{-0.1mm}and \hspace{-0.1mm}serves \hspace{-0.1mm}as~\hspace{-0.1mm}a~\hspace{-0.1mm}\mbox{measure}~\hspace{-0.1mm}for~\hspace{-0.1mm}the~\hspace{-0.1mm}\mbox{primal}~\hspace{-0.1mm}\mbox{error};
        \item[$\bullet$]  \emph{Dual error:} The \emph{discrete optimal convexity measure} $\rho_{-D_h}^2\colon Y_h\to [0,+\infty]$, for every $y_h\in Y_h$ defined by $\rho_{-D_h}^2(y_h)\coloneqq -D_h(y_h)+D_h(z_h)$, 
        measures the strong concavity of the discrete dual energy \hspace{-0.1mm}functional \hspace{-0.1mm}\eqref{eq:dual_discrete} \hspace{-0.1mm}at \hspace{-0.1mm}a \hspace{-0.1mm}discrete \hspace{-0.1mm}dual \hspace{-0.1mm}solution \hspace{-0.1mm}$z_h\hspace{-0.15em}\in \hspace{-0.15em} Y_h$ \hspace{-0.1mm}and \hspace{-0.1mm}serves~\hspace{-0.1mm}as~\hspace{-0.1mm}a~\hspace{-0.1mm}\mbox{measure}~\hspace{-0.1mm}for~\hspace{-0.1mm}the~\hspace{-0.1mm}dual~\hspace{-0.1mm}\mbox{error}.\enlargethispage{2.5mm}
        \end{itemize} 
        
        If Assumption \ref{ass:energy_densities_discrete} is satisfied, by Proposition \ref{prop:discrete_duality}(\hyperlink{prop:discrete_duality.ii}{ii}), a discrete strong duality relation~applies, which, for every $v_h\in V_h$ and $y_h\in Y_h$, implies the \emph{discrete \emph{a posteriori} error identity}\vspace{-0.5mm}
        \begin{align}\label{rem:primal-dual_gap_identity.1}
             \smash{\rho_{I_h}^2(v_h)+
             \rho_{-D_h}^2(y_h)=\eta_{\textup{gap},h}^2(v_h,y_h)\,.}\\[-6mm]\notag
        \end{align}
        By the integral representations of the discrete primal energy functional \eqref{eq:primal_discrete} and dual energy functional \eqref{eq:dual_discrete} as well as the definition of the discrete divergence operator \eqref{def:discrete_divergence},~the~discrete~primal-dual gap estimator, for every $v_h\in V_h$ and $y_h\in Y_h$, admits the integral representation\vspace{-0.5mm}
        \begin{align}\label{rem:primal-dual_gap_identity.1}
        \begin{aligned}
\eta_{\textup{gap},h}^2(v_h,y_h)&=\int_{\Omega}{(\phi_h^*(\cdot,y_h)-y_h\cdot \nabla v_h+\phi_h(\cdot,\nabla v_h))\,\mathrm{d}x}
            \\&\quad +\int_{\Omega}{I_{V_h}\{\psi_h^*(\cdot,\operatorname{div}_hy_h)-\operatorname{div}_hy_hv_h+\psi_h(\cdot,v_h)\}\,\mathrm{d}x}\,,
        \end{aligned}\\[-6mm]\notag
        \end{align}
        which \hspace{-0.15mm}reveals \hspace{-0.15mm}that \hspace{-0.15mm}it \hspace{-0.15mm}precisely \hspace{-0.15mm}measures \hspace{-0.15mm}the \hspace{-0.15mm}violation \hspace{-0.15mm}of \hspace{-0.15mm}the \hspace{-0.15mm}discrete \hspace{-0.15mm}primal \hspace{-0.15mm}optimality~\hspace{-0.15mm}\mbox{inclusions}~\hspace{-0.15mm}\eqref{prop:discrete_duality.1}, the starting point of the $\operatorname{prox}$-based semi-smooth Newton method proposed~in~the~next~section.
    \end{remark}

   As in Remark~\ref{rem:proximity_integral_functionals}, the proximity
operators of the discrete energy functionals
\eqref{prop:discrete_duality.0.1} inherit a local decoupling structure
and admit point-wise representations in terms of the proximity~operators
(with respect to the second argument) of the underlying discrete energy
densities.

    \begin{proposition}[Proximity operators of the discrete energy functionals \eqref{prop:discrete_duality.0.1}]\label{prop:proximity_of_integral_functionals}
        Let Assumption~\ref{ass:energy_densities_discrete} be satisfied. Then, the following statements apply:
        \begin{itemize}[noitemsep,topsep=2pt,leftmargin=!,labelwidth=\widthof{(ii)}]
            \item[(i)] \hypertarget{prop:proximity_of_integral_functionals.i}{} The proximity operators $\operatorname{prox}_{G_h}\colon \hspace{-0.1em}Y_h\hspace{-0.1em}\to\hspace{-0.1em} Y_h$ and $\operatorname{prox}_{\phi_h(x,\cdot)}\colon \hspace{-0.1em}\mathbb{R}^d\hspace{-0.1em}\to\hspace{-0.1em}\mathbb{R}^d$, $x\hspace{-0.1em}\in\hspace{-0.1em} \Omega$,~for~every~$y_h\hspace{-0.1em}\in\hspace{-0.1em} Y_h$, are related via
            \begin{align}\label{prop:proximity_of_integral_functionals.i}
               \operatorname{prox}_{G_h}(y_h)(x)= \operatorname{prox}_{\phi_h(x,\cdot)}(y_h(x))\quad \text{ for a.e.\ }x\in \Omega\,.
            \end{align}
            
            \item[(ii)] \hypertarget{prop:proximity_of_integral_functionals.ii}{} The proximity operators $\operatorname{prox}_{F_h}\colon V_h\to V_h$ and $\operatorname{prox}_{\psi_h(x,\cdot)}\colon \mathbb{R}\to\mathbb{R}$, $x\in \Omega$, for every $v_h\in V_h$ are related via
            \begin{align}\label{prop:proximity_of_integral_functionals.ii}
               \operatorname{prox}_{F_h}(v_h)(x)&= I_{V_h}\{\operatorname{prox}_{\psi_h}(v_h)\}(x)\quad \text{ for all }x\in \Omega\,,
            \end{align}
            where $\operatorname{prox}_{\psi_h}(v_h)(x)\coloneqq \operatorname{prox}_{\psi_h(x,\cdot)}(v_h(x))$ for all $x\in \Omega$.
        \end{itemize}
    \end{proposition}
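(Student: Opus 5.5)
The plan is to compute both proximity operators directly from Definition~\ref{def:proximity}, using the inner products fixed on $Y_h$ and $V_h$. In each case the minimization problem defining the proximity operator should decouple into independent finite-dimensional problems, one per element $T\in\mathcal{T}_h$ or one per free node $\nu\in\mathcal{N}_h^F$. Only the case (\hyperlink{NML}{NML}) needs a separate argument.

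\emph{ad (i).} Let $y_h\in Y_h$. Then $\operatorname{prox}_{G_h}(y_h)$ minimizes
\begin{align*}
\widehat{y}_h\mapsto \sum_{T\in\mathcal{T}_h}|T|\big(\phi_h(x_T,\widehat{y}_h(x_T))+\tfrac12|y_h(x_T)-\widehat{y}_h(x_T)|^2\big)
\end{align*}
over $Y_h$, by Assumption~\ref{ass:energy_densities_discrete}(i.a) and $(\cdot,\cdot)_{Y_h}=(\cdot,\cdot)_{(L^2(\Omega))^d}$. The values $\widehat{y}_h(x_T)\in\mathbb{R}^d$ are free and independent across elements, so the minimum is attained elementwise at $\operatorname{prox}_{\phi_h(x_T,\cdot)}(y_h(x_T))$. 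These minimizers exist and are unique because $\phi_h(x_T,\cdot)\in\Gamma_0(\mathbb{R}^d)$. Since $\phi_h(x,\cdot)=\phi_h(x_T,\cdot)$ for a.e.\ $x\in T$, this gives \eqref{prop:proximity_of_integral_functionals.i}.

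\emph{ad (ii), case (\hyperlink{ML}{ML}).} By \eqref{def:mass_lumping_inner_product}, the functional to be minimized over $\widehat{v}_h\in V_h$ is
\begin{align*}
\sum_{\nu\in\mathcal{N}_h^F}\beta_\nu\big(\psi_h(\nu,\widehat{v}_h(\nu))+\tfrac12|v_h(\nu)-\widehat{v}_h(\nu)|^2\big)\,.
\end{align*}
The nodal values at free nodes parametrize $V_h$ bijectively, and $\beta_\nu>0$. Hence the minimizer has nodal values $\operatorname{prox}_{\psi_h(\nu,\cdot)}(v_h(\nu))$. By \eqref{def:nodal_interpolation_operator}, this minimizer is $I_h^{p1}\{\operatorname{prox}_{\psi_h}(v_h)\}$, and this holds for all $x\in\Omega$ because both sides are continuous piecewise affine functions with the same nodal values.

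\emph{ad (ii), case (\hyperlink{NML}{NML}).} This is the step needing care, since the $L^2$ inner product does not decouple nodally. Here I use the quadratic structure of Assumption~\ref{ass:energy_densities_discrete}(ii.b) instead. The functional $\widehat{v}_h\mapsto\int_\Omega(a_h\widehat{v}_h^2+b_h\widehat{v}_h+c_h+\frac12(v_h-\widehat{v}_h)^2)\,\mathrm{d}x$ is strictly convex and differentiable on $V_h$. Its minimizer $\widehat{v}_h$ is therefore characterized by
\begin{align*}
\int_\Omega\big((1+2a_h)\widehat{v}_h+b_h-v_h\big)w_h\,\mathrm{d}x=0\quad\text{for all }w_h\in V_h\,.
\end{align*}
Since $b_h,v_h\in V_h$, the function $(1+2a_h)\widehat{v}_h+b_h-v_h$ lies in $V_h$, so testing with it shows that it vanishes. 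This is the same argument as in Case~2.2 of the proof of Lemma~\ref{lem:discrete_convex_conjugation}. Thus $\widehat{v}_h=\frac{1}{1+2a_h}(v_h-b_h)$.

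On the other hand, solving the scalar optimality condition $\hat s-s+2a_h\hat s+b_h(x)=0$ gives $\operatorname{prox}_{\psi_h(x,\cdot)}(s)=\frac{1}{1+2a_h}(s-b_h(x))$ pointwise. So $\operatorname{prox}_{\psi_h}(v_h)=\widehat{v}_h$ everywhere, which is \eqref{prop:proximity_of_integral_functionals.ii} with $I_{V_h}=\operatorname{id}$. The only place where $V_h$-membership matters is this closure of $V_h$ under the affine pointwise prox, and it is exactly what Assumption~\ref{ass:energy_densities_discrete}(ii.b), with $b_h\in V_h$, provides.
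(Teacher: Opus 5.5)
Your proposal is correct and follows the paper's proof. You decouple element-wise for (i), decouple node-wise via the mass-lumped inner product for (ii) in case (ML), and compute the minimizer of the quadratic functional explicitly in case (NML). In the (NML) case, your argument via the first-order condition and $b_h\in V_h$ simply spells out the step the paper states in one line.
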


    \begin{proof}
        \emph{ad (\hyperlink{prop:proximity_of_integral_functionals.i}{i}).} For every $y_h\in Y_h$, we have that
        \begin{align*}
            \operatorname{prox}_{G_h}(y_h)&=\underset{\widehat{y}_h\in Y_h}{\operatorname{arg\,min}}{\big\{G_h(\widehat{y}_h)+\tfrac{1}{2}\|y_h-\widehat{y}_h\|_{2,\Omega}^2\big\}}
            \\&=\underset{\widehat{y}_h\in Y_h}{\operatorname{arg\,min}}{\bigg\{\int_{\Omega}{(\phi_h(\cdot,\widehat{y}_h)+\tfrac{1}{2}\vert y_h-\widehat{y}_h\vert^2)\,\mathrm{d}x}\bigg\}}
            \\&=\sum_{T\in \mathcal{T}_h} {\chi_T \underset{t\in \mathbb{R}^d}{\operatorname{arg\,min}}{\{\phi_h(x_T,t)+\tfrac{1}{2}\vert y_h(x_T)-t\vert^2\}}}
            \\&=\sum_{T\in \mathcal{T}_h} {\chi_T\operatorname{prox}_{\phi_h(x_T,\cdot)}(y_h(x_T))} \,,
            %\\&=\operatorname{prox}_{\phi_h(\cdot,\cdot)}(y_h)\,.
        \end{align*}
        which implies the claimed local  relation \eqref{prop:proximity_of_integral_functionals.i}.

         \emph{ad (\hyperlink{prop:proximity_of_integral_functionals.ii}{ii}).} We distinguish the cases with and without mass lumping (\textit{i.e.}, Assumption \ref{ass:energy_densities_discrete}(\hyperlink{ass:energy_densities_discrete.ii.a}{ii.a}/\hyperlink{ass:energy_densities_discrete.ii.b}{b})):

        $\bullet$ \emph{Case 1: (\hyperlink{ML}{ML}).} In this case, for every $v_h\in V_h$, we have that
        \begin{align*}
            \operatorname{prox}_{F_h}(v_h)&=\underset{\widehat{v}_h\in V_h}{\operatorname{arg\,min}}{\big\{F_h(\widehat{v}_h)+\tfrac{1}{2}\|v_h-\widehat{v}_h\|_h^2\big\}}
            \\&=\underset{\widehat{v}_h\in V_h}{\operatorname{arg\,min}}{\bigg\{\int_{\Omega}{I_h^{p1}\{\psi_h(\cdot,\widehat{v}_h)+\tfrac{1}{2}\vert v_h-\widehat{v}_h\vert^2\}\,\mathrm{d}x}\bigg\}}
            \\&=\sum_{\nu\in \mathcal{N}_h^F} {\varphi_\nu \underset{s\in \mathbb{R}}{\operatorname{arg\,min}}{\{\psi_h(\nu,s)+\tfrac{1}{2}\vert v_h(\nu)-s\vert^2\}}}
            \\&=\sum_{\nu\in \mathcal{N}_h^F} {\varphi_\nu \operatorname{prox}_{\psi_h(\nu,\cdot)}(v_h(\nu))} \,,
            %\\&=I_h^{p1}\operatorname{prox}_{\psi_h(\cdot,\cdot)}(v_h)\,,
        \end{align*}
        which implies the claimed local relation \eqref{prop:proximity_of_integral_functionals.ii} in the case with mass lumping.

        $\bullet$ \emph{Case 2: (\hyperlink{NML}{NML}).} In this case, for every $v_h\in V_h$, we have that
        \begin{align}\label{prop:proximity_of_integral_functionals.3}
            \begin{aligned} 
            \operatorname{prox}_{F_h}(v_h)&=\underset{\widehat{v}_h\in V_h}{\operatorname{arg\,min}}{\big\{F_h(\widehat{v}_h)+\tfrac{1}{2}\|v_h-\widehat{v}_h\|_{2,\Omega}^2\big\}}
            \\&=\underset{\widehat{v}_h\in V_h}{\operatorname{arg\,min}}{\bigg\{\int_{\Omega}{(a_h\vert\widehat{v}_h\vert^2+b_h\widehat{v}_h+c_h +\tfrac{1}{2}\vert v_h-\widehat{v}_h\vert^2)\,\mathrm{d}x}\bigg\}}
            \\&=\tfrac{1}{1+2a_h}(v_h-b_h)
            \\&=\operatorname{prox}_{\psi_h}(v_h)\,,
             \end{aligned}
        \end{align}
        which implies the claimed local relation \eqref{prop:proximity_of_integral_functionals.ii} in the case without mass lumping.
    \end{proof}

    The following result shows that the Newton derivative selections of the proximity
operators associated with the discrete integral functionals \eqref{prop:discrete_duality.0.1} inherit the
local decoupling structure established in
Proposition~\ref{prop:proximity_of_integral_functionals}. In particular,
they can be characterized element-wise or node-wise in terms of the
Newton derivatives of the corresponding point-wise proximity operators.

    \begin{corollary}[Newton derivatives of proximity operators of the discrete integral~\mbox{functionals}~\eqref{prop:discrete_duality.0.1}]\label{prop:derivative_proximity_of_integral_functionals}
        Let Assumption \ref{ass:energy_densities_discrete} be satisfied. Then, the following statements apply:
        \begin{itemize}[noitemsep,topsep=2pt,leftmargin=!,labelwidth=\widthof{(ii)}]
            \item[(i)] \hypertarget{prop:derivative_proximity_of_integral_functionals.i}{} For every $y_h\in Y_h$, a mapping $\mathtt{J}_{\smash{\operatorname{prox}_{\smash{G_h}}}}\colon B_{\varepsilon_h}^{Y_h}(y_h)\to \mathcal{L}(Y_h)$, $\varepsilon_h>0$, is a Newton derivative~selection  of the proximity operator $\operatorname{prox}_{G_h}\colon Y_h\to  Y_h$ at $y_h\in Y_h$~if~and~only~if,~for~every~$T\in \mathcal{T}_h$, the mapping
            $\mathtt{J}_{\smash{\operatorname{prox}_{\smash{\phi_h(x_T,\cdot)}}}}\colon B_{\varepsilon_h}^d(y_h(x_T))\to \mathbb{R}^{d\times d}$, for every $t\in B_{\varepsilon_h}^d(y_h(x_T))$ defined by 
            \begin{align}\label{prop:derivative_proximity_of_integral_functionals.1}
                \mathtt{J}_{\operatorname{prox}_{\smash{\phi_h(x_T,\cdot)}}}(t)\coloneqq \mathtt{J}_{\smash{\operatorname{prox}_{\smash{G_h}}}}(t\chi_T)\,,
            \end{align}
            is \hspace{-0.1mm}a \hspace{-0.1mm}Newton \hspace{-0.1mm}derivative \hspace{-0.1mm}selection \hspace{-0.1mm}of \hspace{-0.1mm}the \hspace{-0.1mm}proximity \hspace{-0.1mm}operator \hspace{-0.1mm}$\operatorname{prox}_{\phi_h(x_T,\cdot)}\colon\hspace{-0.15em}\mathbb{R}^d\hspace{-0.15em}\to\hspace{-0.15em} \mathbb{R}^d$~\hspace{-0.1mm}at~\hspace{-0.1mm}${y_h(x_T)\hspace{-0.15em}\in\hspace{-0.15em} \mathbb{R}^d}$;
            
            \item[(ii)] \hypertarget{prop:derivative_proximity_of_integral_functionals.ii}{} For every $v_h\in V_h$, a mapping $\mathtt{J}_{\smash{\operatorname{prox}_{\smash{F_h}}}}\colon B_{\varepsilon_h}^{V_h}(v_h)\to \mathcal{L}(V_h)$, $\varepsilon_h>0$, is a Newton~\mbox{derivative}~selection  of the proximity operator $\operatorname{prox}_{F_h}\colon V_h\to  V_h$ at $v_h\in V_h$ if and only if,~for~every~$\nu\in \mathcal{N}_h^F$, the mapping
            $\mathtt{J}_{\smash{\operatorname{prox}_{\smash{\psi_h(\nu,\cdot)}}}}\colon B_{\varepsilon_h}^1(v_h(\nu))\to \mathbb{R}$, for every $s\in B_{\varepsilon_h}^1(v_h(\nu))$~defined~by 
            \begin{align}\label{prop:derivative_proximity_of_integral_functionals.2}
                \mathtt{J}_{\smash{\operatorname{prox}_{\smash{\psi_h(\nu,\cdot)}}}}(s)\coloneqq \mathtt{J}_{\smash{\operatorname{prox}_{\smash{F_h}}}}(s\varphi_\nu)\,,
            \end{align}
            is a Newton derivative selection of the proximity operator $\operatorname{prox}_{\psi_h(\nu,\cdot)}\colon\mathbb{R}\to \mathbb{R}$ at $v_h(\nu)\in \mathbb{R}$.
        \end{itemize}
    \end{corollary}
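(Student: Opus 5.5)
The plan is to exploit the decoupling of Proposition~\ref{prop:proximity_of_integral_functionals} through isometric coordinate isomorphisms, so that Newton differentiability of $\operatorname{prox}_{G_h}$ (resp.\ $\operatorname{prox}_{F_h}$) becomes a finite family of independent Newton differentiability statements in the coordinates.

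For (i), introduce the isomorphism $\Phi\colon Y_h\to \prod_{T\in\mathcal{T}_h}\mathbb{R}^d$, $y_h\mapsto (y_h(x_T))_{T\in\mathcal{T}_h}$. Equip the target with the weighted norm $\sum_T |T|\,|t_T|^2$, which makes $\Phi$ an isometry. By \eqref{prop:proximity_of_integral_functionals.i}, the conjugated map $\Phi\circ\operatorname{prox}_{G_h}\circ\Phi^{-1}$ is the block-diagonal map $(t_T)_T\mapsto(\operatorname{prox}_{\phi_h(x_T,\cdot)}(t_T))_T$. The component in block $T$ depends only on $t_T$.

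Since all norms on the finite-dimensional spaces involved are equivalent, the limit condition \eqref{def:newton_differentiable.1} for the whole map holds if and only if it holds for each block component. For the ``if'' direction, take the block-diagonal operator $\mathtt{J}_{\operatorname{prox}_{G_h}}(\widehat y_h)\coloneqq\Phi^{-1}\operatorname{diag}\bigl(\mathtt{J}_{\operatorname{prox}_{\phi_h(x_T,\cdot)}}(\widehat y_h(x_T))\bigr)_T\Phi$. The residual norm squared is then a weighted sum of the blockwise residuals squared, each $o(|\tau_T|)\le o(\|\tau\|)$. For the ``only if'' direction, restrict \eqref{def:newton_differentiable.1} to perturbations $\tau=s\chi_T$ supported on a single element and read off the $T$-block. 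This identifies \eqref{prop:derivative_proximity_of_integral_functionals.1} as a Newton derivative of $\operatorname{prox}_{\phi_h(x_T,\cdot)}$ at $y_h(x_T)$.

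I expect the main obstacle to be the precise meaning of \eqref{prop:derivative_proximity_of_integral_functionals.1}. There $\mathtt{J}_{\operatorname{prox}_{G_h}}(t\chi_T)$ is an operator on $Y_h$, and it must be read as its $T$-to-$T$ block, namely $\mathbb{R}^d\ni r\mapsto (\mathtt{J}_{\operatorname{prox}_{G_h}}(\cdot)(r\chi_T))|_T$. The argument should also be the local perturbation $y_h+(t-y_h(x_T))\chi_T$, which is what the identification $t\chi_T$ abbreviates via the decoupling. One must further check that the off-diagonal blocks of a genuine Newton derivative selection do not spoil the equivalence. This follows because the map is block-diagonal: testing with single-block perturbations forces the off-diagonal blocks applied to $\tau$ to be $o(\|\tau\|)$, and the diagonal blocks alone carry the derivative information. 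I would state this block interpretation explicitly, and, if needed, replace a given selection by its block-diagonal part, which is again a selection.

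For (ii), I proceed identically with the nodal isomorphism $\Psi\colon V_h\to\mathbb{R}^{\mathcal{N}_h^F}$, $v_h\mapsto (v_h(\nu))_\nu$, which is an isometry for the weights $\beta_\nu$ in case (\hyperlink{ML}{ML}). By \eqref{prop:proximity_of_integral_functionals.ii}, $\operatorname{prox}_{F_h}$ acts node-wise, and perturbations $s\varphi_\nu$ isolate single nodes. In case (\hyperlink{NML}{NML}), the inner product is the $L^2$ one rather than diagonal. Here norm equivalence in finite dimensions suffices, since \eqref{prop:proximity_of_integral_functionals.3} gives an affine map $v_h\mapsto\frac{1}{1+2a_h}(v_h-b_h)$. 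This map is Fréchet differentiable with derivative $\frac{1}{1+2a_h}\mathbbone$, which is trivially diagonal in the nodal basis, so both directions follow from Remark~\ref{rem:newton_differentiable}(\hyperlink{rem:newton_differentiable.ii}{ii}) and the same single-node restriction argument.
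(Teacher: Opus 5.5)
Your core computation is the paper's. The paper expands $\|\operatorname{prox}_{G_h}(y_h+\tau_h)-\operatorname{prox}_{G_h}(y_h)-\mathtt{L}_h\tau_h\|_{Y_h}^2$ into the $|T|$-weighted sum of element-wise residuals, and analogously into the $\beta_\nu$-weighted nodal sum in case (ML), together with the dof-norm equivalence in case (NML). This is exactly your isometry $\Phi$ (resp.\ $\Psi$), and your ``only if'' direction via single-element perturbations is fine.

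The gap is in how you dispose of a \emph{general} $\mathtt{J}_{\operatorname{prox}_{G_h}}$. Single-element tests control $\mathtt{J}_{\operatorname{prox}_{G_h}}$ only at the points $y_h+(t-y_h(x_T))\chi_T$, and there only on directions $s\chi_T$. They force nothing for perturbations spread over several elements. In particular, the block-diagonal part of a Newton derivative selection is in general \emph{not} a selection. Take $\phi_h\equiv 0$, so $\operatorname{prox}_{G_h}=\operatorname{id}_{Y_h}$, and set $\mathtt{J}(y_h+\tau_h)\sigma_h\coloneqq\|\tau_h\|_{Y_h}^{-2}(\tau_h,\sigma_h)_{Y_h}\tau_h$. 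This has identically vanishing residual. However, its block-diagonal part maps every $\tau_h$ supported on two elements $T_1,T_2$ with $|T_1|\,|\tau_h(x_{T_1})|^2=|T_2|\,|\tau_h(x_{T_2})|^2$ to $\frac{1}{2}\tau_h$, leaving residual $\frac{1}{2}\tau_h$.

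Worse, the ``if'' direction itself fails for general $\mathtt{J}$, even for block-diagonal ones. Again take $\phi_h\equiv 0$. Set $\mathtt{J}(\widehat y_h)\coloneqq\mathbbone_{\mathcal{L}(Y_h)}$ if $\widehat y_h-y_h$ vanishes outside a single element, and $\mathtt{J}(\widehat y_h)\coloneqq\mathbbzero_{\mathcal{L}(Y_h)}$ otherwise. Read \eqref{prop:derivative_proximity_of_integral_functionals.1} as you propose, as the $T$-block of $\mathtt{J}(y_h+(t-y_h(x_T))\chi_T)$. Then every local map equals $\mathbbone$, which is a valid selection for $\operatorname{id}_{\mathbb{R}^d}$. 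Yet for two-element perturbations the residual of $\mathtt{J}$ equals $\tau_h$.

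What your ``if'' argument actually proves is that the \emph{assembled} block-diagonal operator is a selection. That coincides with the claim only if $\mathtt{J}_{\operatorname{prox}_{G_h}}$ is decoupled, i.e., $(\mathtt{J}_{\operatorname{prox}_{G_h}}(\widehat y_h)\sigma_h)|_T=\mathtt{J}_T(\widehat y_h(x_T))\,\sigma_h(x_T)$ for all $T\in\mathcal{T}_h$. This is the class the paper tacitly works in, by taking $\mathtt{L}_h\in(\mathcal{L}^0(\mathcal{T}_h))^{d\times d}$ acting point-wise, and nodal multipliers $\ell_h\in V_h$ in (ii). To repair the proof:
\begin{itemize}
\item restrict the statement to decoupled selections;
\item drop the claims that off-diagonal blocks are harmless and that one may pass to the block-diagonal part.
\end{itemize}
Your weighted-sum computation then gives both directions of (i), and verbatim of (ii).
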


    \begin{proof}
        \emph{ad (\hyperlink{prop:derivative_proximity_of_integral_functionals.i}{i}).} 
        Using Proposition \ref{prop:proximity_of_integral_functionals}(\hyperlink{prop:proximity_of_integral_functionals.i}{i}),
        for every $\mathtt{L}_h\in (\mathcal{L}^0(\mathcal{T}_h))^{d\times d}$ and $\tau_h\in Y_h$, we find that
        \begin{align*}%\label{prop:derivative_proximity_of_integral_functionals.3}
            &\|\operatorname{prox}_{G_h}(y_h+\tau_h)-\operatorname{prox}_{G_h}(y_h)-\mathtt{L}_h\tau_h\|_{Y_h}^2\\
            &\quad=\sum_{T\in \mathcal{T}_h}{\vert T\vert\vert \operatorname{prox}_{\phi_h(x_T,\cdot)}(y_h(x_T)+\tau_h(x_T))-\operatorname{prox}_{\phi_h(x_T,\cdot)}(y_h(x_T))-\mathtt{L}_h(x_T)\tau_h(x_T)\vert^2}\,,
        \end{align*}
        %and $\|\tau_h\|_{Y_h}^2=\sum_{T\in \mathcal{T}_h}{\vert T\vert \vert \tau_h(x_T)\vert^2}$, 
        which readily implies the assertion.
        %Therefore, if $\mathtt{J}_{\smash{\operatorname{prox}_{\smash{G_h}}}}\colon B_{\varepsilon_h}^{Y_h}(y_h)\to \mathcal{L}(Y_h)$, $\varepsilon_h>0$, is a Newton derivative~selection  of the proximity operator $\operatorname{prox}_{G_h}\colon Y_h\to  Y_h$ at $y_h\in Y_h$, choosing $\tau_h=t\chi_T\in Y_h$, for arbitrary $t\in \mathbb{R}^d$ and $T\in \mathcal{T}_h$, and $\mathtt{L}_h=\mathtt{J}_{\smash{\operatorname{prox}_{\smash{G_h}}}}(y_h+t\chi_T)$ in \eqref{prop:derivative_proximity_of_integral_functionals.3}, we find that
        %\begin{align*}
        %    \lim_{h\to 0^+}{\big\{\tfrac{1}{\vert T\vert \vert t\vert^2}\vert \operatorname{prox}_{\phi_h(x_T,\cdot)}(y_h(x_T)+t)-\operatorname{prox}_{\phi_h(x_T,\cdot)}(y_h(x_T))-\mathtt{J}_{\smash{\operatorname{prox}_{\smash{G_h}}}}(y_h(x_t)+t)t\vert\big\}}=0\,.
        %\end{align*}

        \emph{ad (\hyperlink{prop:proximity_of_integral_functionals.ii}{ii}).} We distinguish the cases with and without mass lumping (\textit{i.e.}, Assumption \ref{ass:energy_densities_discrete}(\hyperlink{ass:energy_densities_discrete.ii.a}{ii.a}/\hyperlink{ass:energy_densities_discrete.ii.b}{b})):

        $\bullet$ \emph{Case 1: (\hyperlink{ML}{ML}).} Using Proposition \ref{prop:proximity_of_integral_functionals}(\hyperlink{prop:proximity_of_integral_functionals.ii}{ii}),
        for every $\ell_h\in V_h$ and $w_h\in V_h$,~we~find that
        \begin{align*}
            &\|\operatorname{prox}_{F_h}(v_h+w_h)-\operatorname{prox}_{F_h}(v_h)-\ell_hw_h\|_{V_h}^2\\
            &\quad=\sum_{\nu\in \mathcal{N}_h^F}{\beta_\nu\vert \operatorname{prox}_{\psi_h(\nu,\cdot)}(v_h(\nu)+w_h(\nu))-\operatorname{prox}_{\psi_h(\nu,\cdot)}(v_h(\nu))-\ell_h(\nu)w_h(\nu)\vert^2}\,,
        \end{align*}
        which readily implies the assertion.

        $\bullet$ \emph{Case 2: (\hyperlink{NML}{NML}).} Using Proposition~\ref{prop:proximity_of_integral_functionals}(\hyperlink{prop:proximity_of_integral_functionals.ii}{ii}) (more precisely, the representation formula~\eqref{prop:proximity_of_integral_functionals.3}), for every $\ell_h\in V_h$ and $w_h\in V_h$, we find that
    \begin{align}\label{prop:derivative_proximity_of_integral_functionals.3}
        \|\operatorname{prox}_{F_h}(v_h+w_h)-\operatorname{prox}_{F_h}(v_h)-\ell_hw_h\|_{V_h}^2
        &= \big\|\bigl(\tfrac{1}{1+2a_h}\mathbbone_{\mathcal{L}(V_h)}-\ell_h\bigr)w_h\big\|_{V_h}^2\, .
    \end{align}
    On the other hand, by the standard equivalence of $\|\cdot\|_{V_h}$ to a dof-based norm (\textit{cf}.\ \cite[Prop.~12.5]{EG21I}), for every $\ell_h\in V_h$ and $w_h\in V_h$, we have that
     \begin{align}\label{prop:derivative_proximity_of_integral_functionals.4}
        \begin{aligned} 
        &\big\|\bigl(\tfrac{1}{1+2a_h}\mathbbone_{\mathcal{L}(V_h)}-\ell_h\bigr)w_h\big\|_{V_h}^2\\&\quad\sim \sum_{\nu\in \mathcal{N}_h^{F}}{h_\nu^d\big\vert \tfrac{1}{1+2a_h}w_h(\nu)-\ell_h(\nu) w_h(\nu)\big\vert^2}
        \\&\quad=\sum_{\nu\in \mathcal{N}_h^F}{\beta_\nu\vert \operatorname{prox}_{\psi_h(\nu,\cdot)}(v_h(\nu)+w_h(\nu))-\operatorname{prox}_{\psi_h(\nu,\cdot)}(v_h(\nu))-\ell_h(\nu)w_h(\nu)\vert^2}\,,
        \end{aligned}
    \end{align}
    where $h_\nu\hspace{-0.1em}\coloneqq\hspace{-0.1em} \textup{diam}(\operatorname{supp}\varphi_\nu)\hspace{-0.1em}\sim\hspace{-0.1em} \beta_\nu^{\frac{1}{d}}$ for all $\nu\hspace{-0.1em}\in\hspace{-0.1em} \mathcal{N}_h^{F}$ and the implicit constants in $\sim$ do not~depend~on~$h$. As a consequence, by combining \eqref{prop:derivative_proximity_of_integral_functionals.3} and \eqref{prop:derivative_proximity_of_integral_functionals.4}, we conclude the assertion analogously to the case with mass~lumping.
    \end{proof}
     
     \section{A $\operatorname{prox}$-based semi-smooth Newton method}\label{sec:semi-smooth_newton}

     \hspace{5mm}In \hspace{-0.1mm}this \hspace{-0.1mm}section, \hspace{-0.1mm}we \hspace{-0.1mm}derive \hspace{-0.1mm}the \hspace{-0.1mm}$\operatorname{prox}$-based \hspace{-0.1mm}semi-smooth \hspace{-0.1mm}Newton \hspace{-0.1mm}method. \hspace{-0.1mm}In \hspace{-0.1mm}doing \hspace{-0.1mm}so,~\hspace{-0.1mm}for~\hspace{-0.1mm}fixed,~\hspace{-0.1mm}but arbitrary proximity parameters  $\gamma_1,\gamma_2>0$, the 
     discrete energy  densities~${\phi_h\colon \Omega \times  \mathbb{R}^d\to\mathbb{R}\cup\{+\infty\}}$ and $\psi_h\colon \Omega\times \mathbb{R}\to \mathbb{R}\cup\{+\infty\}$ such that Assumption \ref{ass:energy_densities_discrete} is satisfied, for Newton derivative selections at $\nabla u_h+\gamma_1 z_h\in Y_h$ and $u_h+\gamma_2 \operatorname{div}_hz_h\in V_h$
     (the existence of which is ensured~by~Lemma~\ref{lem:prop_proximity}(\hyperlink{lem:prop_proximity.iii}{iii})) of the proximity operators $\operatorname{prox}_{\gamma_1 G_h}\colon  Y_h\to  Y_h$ and $\operatorname{prox}_{\gamma_2 F_h}\colon  V_h\to  V_h$ of the  
     discrete energy functionals \eqref{prop:discrete_duality.0.1}, respectively, we employ the notation%abbreviations
     \begin{subequations}\label{eq:abbreviations}
     \begin{align}
         %\mathtt{J}_{\gamma_1 G_h}&\coloneqq 
         \mathtt{J}_{\operatorname{prox}_{\gamma_1 G_h}}&\colon  B_{\varepsilon_h}^{Y_h}(\nabla u_h+\gamma_1 z_h)\to \mathcal{L}(Y_h)\,,\\
        % \mathtt{J}_{\gamma_2 F_h}&\coloneqq 
         \mathtt{J}_{\operatorname{prox}_{\gamma_2 F_h}}&\colon B_{\varepsilon_h}^{V_h}(u_h+\gamma_2 \operatorname{div}_hz_h)\to \mathcal{L}(V_h)\,.
     \end{align}
     \end{subequations}
     Here, $\varepsilon_h>0$ is a possibly mesh-dependent radius.\enlargethispage{2.5mm}
     
    Then, the $\operatorname{prox}$-based semi-smooth Newton method is derived through~the~following~steps: 
    To begin with, the discrete primal optimality inclusions \eqref{prop:discrete_duality.1} equivalently may~be~rewritten~as 
    \begin{subequations}\label{sec:semi-smooth_newton.1}
    \begin{alignat}{2}
       \nabla u_h+ \gamma_1z_h&\in (\operatorname{id}_{\mathbb{R}^d}+\gamma_1\partial_t \phi_h)(\cdot,\nabla u_h)&&\quad \text{ a.e.\ in }\Omega\,,\label{sec:semi-smooth_newton.1.1}\\
        u_h+\gamma_2\operatorname{div}_hz_h&\in I_{V_h}\{(\operatorname{id}_{\mathbb{R}}+ \gamma_2\partial_s \psi_h)(\cdot,u_h)\}&&\quad \text{ in }\Omega\,.\label{sec:semi-smooth_newton.1.2}
    \end{alignat}
    \end{subequations}
    Then, by Lemma \ref{lem:resolvent}(\hyperlink{lem:resolvent.i}{i}),  the inclusions in \eqref{sec:semi-smooth_newton.1} are each equivalent to the \emph{discrete proximal optimality conditions}
    \begin{subequations}\label{sec:semi-smooth_newton.2}
    \begin{alignat}{2}\label{sec:semi-smooth_newton.2.1}
       \nabla u_h&=\operatorname{prox}_{\gamma_1 \phi_h}(\cdot,\nabla u_h+ \gamma_1z_h)&&\quad \text{ a.e.\ in }\Omega\,,\\
        u_h&=I_{V_h}\{\operatorname{prox}_{\gamma_2 \psi_h}(\cdot,u_h+\gamma_2\operatorname{div}_hz_h)\}&&\quad \text{  in }\Omega\,,\label{sec:semi-smooth_newton.2.2}
    \end{alignat} 
    \end{subequations}
    respectively.
    Thus, based on Proposition \ref{prop:proximity_of_integral_functionals}, the discrete primal optimality~\mbox{inclusions}~\eqref{prop:discrete_duality.1}~equiva\-lently may be rewritten as a root-finding problem for the nonlinear mapping $\mathtt{F}_h\colon Y_h\times V_h\to  Y_h\times V_h$, for every $(y_h,v_h)\in Y_h\times V_h$ defined by
    \begin{align}\label{sec:semi-smooth_newton.3}
        \mathtt{F}_h(y_h,v_h)\coloneqq \left[\begin{array}{c}
            \nabla v_h- \operatorname{prox}_{\gamma_1 G_h}(\nabla v_h+ \gamma_1y_h)  \\ 
             v_h- \operatorname{prox}_{\gamma_2 F_h}(v_h+\gamma_2\operatorname{div}_hy_h)
        \end{array}\right]\quad\text{ in }Y_h\times V_h\,.
    \end{align}
    %Then, by Assumption \ref{ass:wellposedness}(i), 
    Due to the point-wise Newton differentiability of the proximity operators $\operatorname{prox}_{\gamma_1 G_h}\colon  Y_h\to  Y_h$~and $\operatorname{prox}_{\gamma_2 F_h}\colon  V_h\to  V_h$ (\textit{cf}.\ Lemma~\ref{lem:prop_proximity}(\hyperlink{lem:prop_proximity.iii}{iii})), the mapping
    \eqref{sec:semi-smooth_newton.3}~is~point-wise~Newton~\mbox{differentiable}~and, %employing the abbreviations \eqref{eq:abbreviations}, 
    a Newton derivative selection $\mathtt{J}_{\mathtt{F}_h}\colon B_{\varepsilon_h}^{Y_h\times V_h}(z_h,u_h)\to \mathcal{L}(Y_h\times V_h)$ at $(z_h,u_h)\in  Y_h\times V_h$, for every $(y_h,v_h)\in  B_{\varepsilon_h}^{Y_h\times V_h}(z_h,u_h)$, is defined by
     \begin{align}\label{sec:semi-smooth_newton.4} 
     \mathtt{J}_{\mathtt{F}_h}(y_h,v_h)\coloneqq \left[\begin{array}{cc}
     -\gamma_1\mathtt{J}_{\operatorname{prox}_{\gamma_1G_h}}(a_h)  & (\mathbbone_{\mathcal{L}(Y_h)} - \mathtt{J}_{\operatorname{prox}_{\gamma_1G_h}}(a_h))\nabla_h
    \\
    -\gamma_2\mathtt{J}_{\operatorname{prox}_{\gamma_2F_h}}(b_h)\operatorname{div}_h  &   \mathbbone_{\mathcal{L}(V_h)}- \mathtt{J}_{\operatorname{prox}_{\gamma_2F_h}}(b_h)   
        \end{array}\right]\quad\text{ in }\mathcal{L}(Y_h\times V_h)\,,
    \end{align} 
    where $a_h\coloneqq \nabla v_h+ \gamma_1y_h\in Y_h$, $b_h\coloneqq v_h+\gamma_2\operatorname{div}_hy_h\in V_h$ and, again, $\nabla_h\coloneqq \nabla|_{V_h}\colon V_h\to Y_h$~denotes the restricted gradient operator.
    
    These observations motivate the formulation of the following $\operatorname{prox}$-based 
    semi-smooth Newton method:

    \begin{algorithm}[$\operatorname{prox}$-based semi-smooth Newton method]\label{alg:semi-smooth_newton}
        Let $\gamma_1,\gamma_2>0$ be proximity parameters, let $\varepsilon_{\mathtt{abs}}^h,\varepsilon_{\mathtt{rel}}^h>0$ be  stopping parameters, let $(z_h^0,u_h^0)\in Y_h\times V_h$ be an initial iterate, and let $k_{\mathtt{max}}^h\in \mathbb{N}\cup\{+\infty\}$ be a maximum number of iterations. Then, for $k=0,\ldots,k_{\mathtt{max}}^h$, perform the following iteration loop:
        \begin{itemize}[noitemsep,topsep=2pt,leftmargin=!,labelwidth=\widthof{(ii)}]

             %\item[(i)] \hypertarget{alg:semi-smooth_newton.i}{} If $\mathtt{F}_h(z^k,u^k)=0_{Y_h\times V_h}$ in $Y_h\times V_h$, then \textup{STOP}; otherwise, continue with Step (\hyperlink{semi-smooth_newton.ii}{ii});

             %\item[(i)] %Check if $\mathtt{F}_h\colon Y_h\times V_h\to Y_h\times V_h$ is Newton differentiable at $(z^k,u^k)\in Y_h\times V_h$ and 
             %Compute a Newton derivative selection $ \mathtt{J}_{\mathtt{F}_h}(z_h^k,u_h^k)\in \mathcal{L}(Y_h\times V_h)$;
        
            \item[(i)] \hypertarget{alg:semi-smooth_newton.i}{} Compute the \emph{primal-dual update direction} $(\delta z_h^k,\delta u_h^k)\in Y_h\times V_h$ such that
            \begin{align}\label{alg:semi-smooth_newton.1}
                \mathtt{J}_{\mathtt{F}_h}(z_h^k,u_h^k)(\delta z_h^k,\delta u_h^k)=-\mathtt{F}_h(z_h^k,u_h^k)\quad \text{ in }Y_h\times V_h\,,
            \end{align}
            and the updated iterate $(z_h^{k+1},u_h^{k+1})\coloneqq(z_h^k,u_h^k)+\alpha_k(\delta z_h^k,\delta u_h^k)\in Y_h\times V_h$, where~${\alpha_k>0}$~is~a (possibly variable) step size;
            \item[(ii)] \hypertarget{alg:semi-smooth_newton.ii}{} If $\|\mathtt{F}_h(z_h^{k+1},u_h^{k+1})\|_{Y_h\times V_h}<\max\{ \varepsilon_{\mathtt{abs}}^h,\varepsilon_{\mathtt{rel}}^h\|\mathtt{F}_h(z_h^0,u_h^0)\|_{Y_h\times V_h}\}$, then \textup{STOP}; otherwise, $k\to k+1$ and continue with Step (\hyperlink{alg:semi-smooth_newton.i}{i}).
        \end{itemize}
    \end{algorithm}\pagebreak

    \begin{remark}[Dual invariance in Algorithm \ref{alg:semi-smooth_newton}]
        Instead of the discrete primal optimality inclusions \eqref{prop:discrete_duality.1}, one could start from the equivalent \emph{discrete dual optimality inclusions}
    \begin{subequations}\label{prop:dual_discrete_duality.1}
           \begin{alignat}{2}\label{prop:dual_discrete_duality.1.1}
        \nabla u_h&\in \partial_t \phi_h^*(\cdot,z_h)&&\quad \text{ a.e.\ in }\Omega\,,\\
       u_h &\in I_{V_h}{\{\partial_s\psi_h^*(\cdot,\operatorname{div}_hz_h)\}}&&\quad \text{ in }\Omega\,.\label{prop:dual_discrete_duality.1.2}
        \end{alignat}
         \end{subequations}
        % which can be regarded as the  optimality inclusions of the discrete dual~problem~\eqref{eq:dual_discrete}. 
        The \hspace{-0.1mm}discrete \hspace{-0.1mm}dual \hspace{-0.1mm}optimality \hspace{-0.1mm}inclusions \hspace{-0.1mm}\eqref{prop:dual_discrete_duality.1}, \hspace{-0.1mm}in \hspace{-0.1mm}turn, \hspace{-0.1mm}for \hspace{-0.1mm}$\gamma_1, \gamma_2\hspace{-0.1em}>\hspace{-0.1em}0$, \hspace{-0.1mm}equivalently~\hspace{-0.1mm}may~\hspace{-0.1mm}be~\hspace{-0.1mm}\mbox{rewritten}~\hspace{-0.1mm}as\vspace{-4.5mm}
         \begin{subequations}\label{sec:dual_semi-smooth_newton.1}
    \begin{alignat}{2}
      z_h+ \gamma_1\nabla u_h&\in (\operatorname{id}_{\mathbb{R}^d}+\gamma_1\partial_t \phi_h^*)(\cdot,z_h)&&\quad \text{ a.e.\ in }\Omega\,,\label{sec:dual_semi-dual_smooth_newton.1.1}\\
     \operatorname{div}_hz_h+  \gamma_2 u_h&\in I_{V_h}\{(\operatorname{id}_{\mathbb{R}}+ \gamma_2\partial_s \psi_h^*)(\cdot,\operatorname{div}_hz_h)\}&&\quad \text{ in }\Omega\,.\label{sec:dual_semi-smooth_newton.1.2}
    \end{alignat}
    \end{subequations}
    Then, by Lemma \ref{lem:resolvent}(\hyperlink{lem:prop_proximity.i}{i}),  the inclusions in \eqref{sec:dual_semi-smooth_newton.1} are each equivalent to the proximal optimality conditions
    \begin{subequations}\label{sec:dual_semi-smooth_newton.2}
    \begin{alignat}{2}\label{sec:dual_semi-smooth_newton.2.1}
      z_h&=\operatorname{prox}_{\gamma_1 \phi_h^*}(\cdot,z_h+\gamma_1\nabla u_h)&&\quad \text{ a.e.\ in }\Omega\,,\\
    \operatorname{div}_hz_h&=I_{V_h}\{\operatorname{prox}_{\gamma_2 \psi_h^*}(\cdot,\operatorname{div}_hz_h+\gamma_2u_h)\}&&\quad \text{  in }\Omega\,,\label{sec:dual_semi-smooth_newton.2.2}
    \end{alignat} 
    \end{subequations}
    respectively. However, by the Moreau decomposition (\textit{cf}.\ Lemma \ref{lem:prop_proximity}(\hyperlink{lem:relations.i}{i})), the identities in \eqref{sec:dual_semi-smooth_newton.2} are each equivalent to
     \begin{subequations}\label{sec:semi-smooth_newton.2}
    \begin{alignat}{2}\label{sec:semi-smooth_newton.2.1}
       \nabla u_h&=\operatorname{prox}_{\smash{\frac{1}{\gamma_1}} \phi_h}(\cdot,\nabla u_h+ \smash{\tfrac{1}{\gamma_1}}z_h)&&\quad \text{ a.e.\ in }\Omega\,,\\
        u_h&=I_{V_h}\{\operatorname{prox}_{\smash{\frac{1}{\gamma_2}} \psi_h}(\cdot,u_h+\smash{\tfrac{1}{\gamma_2}}\operatorname{div}_hz_h)\}&&\quad \text{  in }\Omega\,,\label{sec:semi-smooth_newton.2.2}
    \end{alignat} 
    \end{subequations}
    respectively, so that, up to replacing the regularization parameters %$\gamma_1,\gamma_2>0$
    by their~reciprocals, %~$\frac{1}{\gamma_1},\frac{1}{\gamma_2}>0$, 
    we~arrive~at the same mapping $\mathtt{F}_h\colon \hspace{-0.1em} Y_h\hspace{-0.1em}\times\hspace{-0.1em} V_h\hspace{-0.1em}\to\hspace{-0.1em}  Y_h\hspace{-0.1em}\times \hspace{-0.1em}V_h$, defined by \eqref{sec:semi-smooth_newton.3}, reflecting the hidden dual~invariance~in Algorithm \ref{alg:semi-smooth_newton}  based on the discrete strong duality relation $I_h(u_h)\hspace{-0.15em}=\hspace{-0.15em}D_h(z_h)$~(\textit{cf}.~\mbox{Proposition}~\ref{prop:discrete_duality}(\hyperlink{prop:discrete_duality.ii}{ii})).
    \end{remark}

    \begin{remark}[Alternative stopping criteria]\label{rem:stopping}
        Instead of the \emph{nonlinear residual-based stopping~cri\-terion} $\|\mathtt{F}_h(z_h^{k^*+1},u_h^{k^*+1})\|_{Y_h\times V_h}<\max\{ \varepsilon_{\mathtt{abs}}^h,\varepsilon_{\mathtt{rel}}^h\|\mathtt{F}_h(z_h^0,u_h^0)\|_{Y_h\times V_h}\}$ for some~${k^*\in \mathbb{N}}$~in~Step~(\hyperlink{alg:semi-smooth_newton.ii}{ii}), the following stopping criteria (or combinations thereof) may be employed in~Algorithm~\ref{alg:semi-smooth_newton}:
        \begin{itemize}[noitemsep,topsep=2pt,leftmargin=!,labelwidth=\widthof{(ii)}]
            \item[(i)] \hypertarget{rem:stopping.i}{}\emph{Incremental stopping criterion:} %in Step (\hyperlink{alg:semi-smooth_newton.i}{i}), we \textup{STOP} 
            if $\|(\delta z_h^{k^*},\delta u_h^{k^*})\|_{Y_h\times V_h}<\max\{ \varepsilon_{\mathtt{abs}}^h,\varepsilon_{\mathtt{rel}}^h\|( z_h^{0},u_h^{0})\|_{Y_h\times V_h}\}$ for some $k^*\in \mathbb{N}$ in Step (\hyperlink{alg:semi-smooth_newton.ii}{ii}), then \textup{STOP};
            \item[(ii)] \hypertarget{rem:stopping.ii}{}\emph{Duality-based stopping criterion:}   if $\eta_{\textup{gap},h}^2(u_h^{k^*+1}, z_h^{k^*+1})<\max\{\varepsilon_{\mathtt{abs}}^h,\varepsilon_{\mathtt{rel}}^h\eta_{\textup{gap},h}^2(u_h^0,z_h^0)\}$ for some $k^*\in \mathbb{N}$ in Step (\hyperlink{alg:semi-smooth_newton.ii}{ii}), then \textup{STOP}. 
        \end{itemize}
    \end{remark}

    \begin{remark}[Line-search strategies]\label{rem:linesearch}
In Algorithm~\ref{alg:semi-smooth_newton}, the  step sizes $\alpha_k>0$, $k=0,\ldots,k_{\mathtt{max}}^h$, in Step~(\hyperlink{alg:semi-smooth_newton.i}{i}) can be chosen by a
line-search strategy that is consistent with the residuals listed in Remark~\ref{rem:stopping}. We point out three typical variants:
\begin{itemize}[noitemsep,topsep=2pt,leftmargin=!,labelwidth=\widthof{(iii)}]
\item[(i)] \emph{Residual-based Armijo--Goldstein line-search:} If the nonlinear residual-based~\mbox{criterion}~of Remark~\ref{rem:stopping} is used, we
consider $\Phi_h \coloneqq  \tfrac{1}{2}\| \mathtt{F}_h(\cdot,\cdot) \|_{Y_h\times V_h}^2\colon Y_h\times V_h\to \mathbb{R}_{\ge 0}$, 
where~$\mathtt{F}_h\colon Y_h\times V_h$ $\to Y_h\times V_h $ is  
defined by~\eqref{sec:semi-smooth_newton.3}. Then, for $\beta,\sigma\in (0,1)$, choose the maximal
${\alpha_k\in  \{\beta^{\ell}\mid \ell\in  \mathbb{N}_0\}}$ such that
\begin{align*}
    \Phi_h(z_h^k+\alpha_k \delta z_h^k,\,u_h^k+\alpha_k \delta u_h^k)
  \le (1- 2\sigma \alpha_k)\Phi_h(z_h^k,u_h^k)\,;
\end{align*}  

\item[(ii)] \emph{Increment-based damping:}
If the incremental criterion of Remark~\ref{rem:stopping}(\hyperlink{rem:stopping.i}{i}) is used, then, for $\beta,\theta \in (0,1)$, the step size $\alpha_k>0$ is accepted whenever
\begin{align*}
    \| (\delta z_h^{k+1},\delta u_h^{k+1})\|_{Y_h\times V_h}
  \le \theta \| (\delta z_h^{k},\delta u_h^{k})\|_{Y_h\times V_h}\,;
\end{align*}
otherwise, $\alpha_k \gets \beta \alpha_k$;  

\item[(iii)] \emph{Duality-based line-search:}
If the duality-based criterion of Remark~\ref{rem:stopping}(\hyperlink{rem:stopping.ii}{ii}) is used,
we consider $\Phi_h \coloneqq \eta_{\mathrm{gap},h}^2\colon V_h\times Y_h \to \mathbb{R}_{\ge 0}$, 
where $ \eta_{\mathrm{gap},h}^2\colon V_h\times Y_h\to [0,+\infty]$ is given via \eqref{rem:primal-dual_gap_identity.1}. Then, for $\beta\in (0,1)$, choose the maximal
${\alpha_k\in  \{\beta^{\ell}\mid \ell\in  \mathbb{N}_0\}}$~such~that
\begin{align*}
    \Phi_h(u_h^k+\alpha_k \delta u_h^k,z_h^k+\alpha_k \delta z_h^k)
  \le \Phi_h(u_h^k,z_h^k)\,.
\end{align*}  
\end{itemize}   
\end{remark}\newpage

\begin{remark}[Gradient-flow initializations]
\label{rem:gradient_flow_initializations}
In the case that the energy density $\psi_h\colon \Omega\times \mathbb{R}\to \mathbb{R}\cup\{+\infty\}$ is at most quadratic (\textit{cf}.\ Assumption \ref{ass:energy_densities_discrete}(\hyperlink{ass:energy_densities_discrete.ii.b}{ii.b})), depending on the growth properties of the energy density $\phi_h\colon \Omega \times \mathbb{R}^d\to \mathbb{R}\cup\{+\infty\}$ reliable initial iterates for the $\operatorname{prox}$-based semi-smooth Newton method (\textit{cf}.\ Algorithm \ref{alg:semi-smooth_newton}) may be generated by means of performing a few steps of a
semi-implicit discretized $L^2$-gradient flow. More precisely, with respect to the growth properties of the energy density $\phi_h\colon \Omega \times \mathbb{R}^d\to \mathbb{R}\cup\{+\infty\}$, we distinguish the following two cases:

\medskip
$\bullet$ \emph{Case 1: subquadratic growth.} If there exists a mapping $\widehat\varphi_h\colon \Omega\times \mathbb{R}_{\ge 0}\to \mathbb{R}$ such that 
\begin{align*}
     \varphi_h(x,\cdot)=\widehat\varphi_h(x,|\cdot|)\quad \text{ and }\quad\widehat\varphi_h(x,\cdot)\in C^1(\mathbb{R}_{\ge 0})
    \qquad\text{for a.e. } x\in\Omega\,,
\end{align*}
we employ the following semi-implicit discretized primal $L^2$-gradient flow
algorithm:

\begin{algorithm}[Semi-implicit discretized primal $L^2$-gradient flow]\upshape
\label{alg:primal_gradient_flow}
Let $\varepsilon_{\mathtt{init}}^h>0$ be a stopping parameter, let $\tau>0$ be a step size, 
 let $\widetilde u_h^0\in V_h$ be an initial iterate, and let $\ell_{\mathtt{max}}^h\in\mathbb N\cup\{+\infty\}$ be a maximum number of iterations. Then, for 
$\ell=0,\ldots,\ell_{\mathtt{max}}^h$, perform the~following~iteration~loop:
\begin{itemize}[noitemsep,topsep=2pt,leftmargin=!,labelwidth=\widthof{(ii)}]
\item[(i)] \hypertarget{alg:primal_gradient_flow.i}{} Compute $\widetilde u_h^{\ell+1}\in V_h$ such that for every $v_h\in V_h$, there holds
\begin{align*}\int_{\Omega}{\mathrm{d}_\tau \widetilde u_h^{\ell+1}v_h\,\mathrm{d}x}
    +
    \int_\Omega
     \tfrac{\mathrm{D}_r\widehat\varphi_h(\cdot,|\nabla \widetilde u_h^\ell|)}{|\nabla \widetilde u_h^\ell|}
    \nabla \widetilde u_h^{\ell+1}\cdot \nabla v_h\,\mathrm{d}x
   +
    \int_\Omega
    I_{V_h}\bigl\{
    \mathrm{D}_s\psi_h(\cdot,\widetilde u_h^{\ell+1})v_h
    \bigr\}\,\mathrm{d}x=0\,,
\end{align*}
where $\mathrm{d}_\tau \widetilde u_h^{\ell+1}
    \coloneqq
    \frac{1}{\tau}
    (\widetilde u_h^{\ell+1}-\widetilde u_h^\ell)\in V_h$;

\item[(ii)] \hypertarget{alg:primal_gradient_flow.ii}{} If $\|\mathtt{F}_h(\widetilde z_h^{\ell+1},
                  \widetilde u_h^{\ell+1})\|_{Y_h\times V_h}
   < \varepsilon_{\mathtt{init}}^h$, where $\widetilde z_h^{\ell+1}
    \coloneqq \mathrm{D}_r\widehat\varphi_h(\cdot,|\nabla \widetilde u_h^\ell|)|\nabla \widetilde u_h^\ell|^{-1}  
    \nabla \widetilde u_h^{\ell+1}
    \in Y_h$, then $\mathrm{STOP}$; otherwise, $\ell\mapsto \ell+1$ and continue with Step (\hyperlink{alg:primal_gradient_flow.i}{i}).
\end{itemize}
\end{algorithm}

If the mapping $(r\mapsto\mathrm{D}_r\widehat\varphi_h(x,r)r^{-1})\colon \mathbb{R}_{\ge 0}\to \mathbb{R}_{>0}$ is well-defined, non-increasing,~and~continuous (\textit{cf}.\ \cite[Prop.\ 5.2]{Bartels2021}), Algorithm \ref{alg:primal_gradient_flow} is globally well-posed, unconditionally strongly stable, and terminates after a finite number of iterations.

\medskip
$\bullet$ \emph{Case 2: superquadratic growth.} If there exists a mapping $\widehat\varphi_h^*\colon \Omega\times \mathbb{R}_{\ge 0}\to \mathbb{R}$ such that 
\begin{align*}
     \varphi_h^*(x,\cdot)=\widehat\varphi_h^*(x,|\cdot|)\quad \text{ and }\quad\widehat\varphi_h^*(x,\cdot)\in C^1(\mathbb{R}_{\ge 0})
    \qquad\text{for a.e. } x\in\Omega\,,
\end{align*}
we employ the following semi-implicit discretized dual $L^2$-gradient flow
algorithm:

\begin{algorithm} [Semi-implicit discretized dual $L^2$-gradient flow]\upshape
\label{alg:dual_gradient_flow}
Let $\varepsilon_{\mathtt{init}}^h>0$ be a stopping parameter, let $\tau>0$ be a step size, 
 let $\widetilde z_h^0\in Y_h$ be an initial iterate, and let $\ell_{\mathtt{max}}^h\in\mathbb N\cup\{+\infty\}$ be a maximum number of iterations. Then, for 
$\ell=0,\ldots,\ell_{\mathtt{max}}^h$, perform~the~following~iteration~loop:
\begin{itemize}[noitemsep,topsep=2pt,leftmargin=!,labelwidth=\widthof{(ii)}]
\item[(i)] \hypertarget{alg:dual_gradient_flow.i}{} Compute $\widetilde z_h^{\ell+1}\in Y_h$ such that for every $y_h\in Y_h$, there holds
\begin{align*}
    \int_{\Omega}{\mathrm{d}_\tau \widetilde{z}_h^{\ell+1}\cdot y_h\,\mathrm{d}x} 
    +
    \int_\Omega
    \tfrac{\mathrm{D}_r\widehat\varphi_h^*(\cdot,|\widetilde z_h^\ell|)}{|\widetilde z_h^\ell|} 
    \widetilde z_h^{\ell+1}\cdot y_h\,\mathrm{d}x
    +
    \int_\Omega
    I_{V_h}\bigl\{
    \mathrm{D}_s\psi_h^*(\cdot,\operatorname{div}_h\widetilde z_h^{\ell+1})
    \operatorname{div}_h y_h
    \bigr\}\,\mathrm{d}x=0\,,
\end{align*}
where $\mathrm{d}_\tau \widetilde z_h^{\ell+1}
    \coloneqq
    \frac{1}{\tau}
    (\widetilde z_h^{\ell+1}-\widetilde z_h^\ell)\in Y_h$; 

\item[(ii)] \hypertarget{alg:dual_gradient_flow.ii}{} If $\|\mathtt{F}_h(\widetilde z_h^{\ell+1},
                  \widetilde u_h^{\ell+1})\|_{Y_h\times V_h}
    < \varepsilon_{\mathtt{init}}^h$, where $\widetilde{u}_h^{\ell+1}
    \coloneqq
    I_{V_h}\{
    \mathrm{D}_s\psi_h^*(\cdot,\operatorname{div}_h\widetilde z_h^{\ell+1})\}
    \in V_h$, then $\mathrm{STOP}$; otherwise, $\ell\mapsto \ell+1$ and continue with Step (\hyperlink{alg:dual_gradient_flow.i}{i}).
\end{itemize}
\end{algorithm}

If the mapping $(r\mapsto\mathrm{D}_r\widehat\varphi_h^*(x,r)r^{-1})\colon \mathbb{R}_{\ge 0}\to \mathbb{R}_{>0}$ is well-defined, non-increasing,~and~continuous (\textit{cf}.\ \cite[Prop.\ 5.2]{Bartels2021}), Algorithm \ref{alg:dual_gradient_flow} is globally well-posed, unconditionally strongly stable, and terminates after a finite number of iterations.
\end{remark}\pagebreak

    The following assumptions on the invertibility of the Newton derivative selections \eqref{eq:abbreviations} of the proximity operators $\operatorname{prox}_{\gamma_1 G_h}\colon  Y_h\to  Y_h$ and $\operatorname{prox}_{\gamma_2 F_h}\colon  V_h\to  V_h$ are sufficient, but not necessary (\textit{cf}.\ Subsubsection \ref{subsec:rof_newton}), to ensure the global well-posedness of the~\mbox{$\operatorname{prox}$-based}~\mbox{semi-smooth} Newton method (\textit{cf}.\ Algorithm \ref{alg:semi-smooth_newton}), \textit{i.e.}, if $k_{\mathtt{max}}\hspace{-0.1em}=\hspace{-0.1em}+\infty$, for an arbitrary initial iterate ${(z^0_h,u^0_h)\hspace{-0.1em}\in\hspace{-0.1em} Y_h\hspace{-0.1em}\times\hspace{-0.1em} V_h}$, all iterates $\{(z_h^k,u_h^k)\}_{k\in \mathbb{N}}\subseteq Y_h\times V_h$ are computable.\enlargethispage{5mm}

    \begin{assumption}[for global well-posedness of Algorithm \ref{alg:semi-smooth_newton}]\label{ass:wellposedness}  The discrete energy densities $\phi_h\colon \Omega\times \mathbb{R}^d\to \mathbb{R}\cup\{+\infty\}$ and $\psi_h\colon \Omega\times \mathbb{R}\to \mathbb{R}\cup\{+\infty\}$ are such that Assumption \ref{ass:energy_densities_discrete}  and the following conditions are satisfied:
        \begin{itemize}[noitemsep,topsep=2pt,leftmargin=!,labelwidth=\widthof{(ii)}]
            \item[(i)] \hypertarget{ass:wellposedness.i}{} \emph{Assumptions \hspace{-0.15mm}on \hspace{-0.15mm}$\phi_h$:}
            \hspace{-0.15mm}The \hspace{-0.15mm}proximity \hspace{-0.15mm}operator \hspace{-0.15mm}$\operatorname{prox}_{\gamma_1 G_h}\colon\hspace{-0.15em} Y_h\hspace{-0.15em}\to\hspace{-0.15em}  Y_h$ \hspace{-0.15mm}is~\hspace{-0.15mm}(globally)~\hspace{-0.15mm}Newton~\hspace{-0.15mm}differentia\-ble with a global Newton derivative selection  $\mathtt{J}_{\smash{\operatorname{prox}_{\gamma_1G_h}}}\colon \hspace{-0.1em}Y_h\hspace{-0.1em}\to\hspace{-0.1em} \mathcal{L}(Y_h)$ such that~for~\mbox{every}~${y_h\hspace{-0.1em}\in\hspace{-0.1em} Y_h}$, there holds\footnote{For  
$\mathtt{A},\mathtt{B}\in \mathbb{R}^{d\times d}$, we write $\mathtt{A}\prec \mathtt{B}$ (or $\mathtt{A}\succ \mathtt{B}$) if 
$((\mathtt{A}-\mathtt{B})t)\cdot t< 0$ (or $((\mathtt{A}-\mathtt{B})t)\cdot t> 0$)~for~all~$t\in \mathbb{R}^d$.}
            \begin{align*}
               \mathbbzero_{\mathcal{L}(Y_h)} \prec\mathtt{J}_{\smash{\operatorname{prox}_{\gamma_1G_h}}}(y_h)\prec \mathbbone_{\mathcal{L}(Y_h)}\,;
            \end{align*} 

            \item[(ii)] \hypertarget{ass:wellposedness.ii}{}  \emph{Assumptions on $\psi_h$:}
            \begin{itemize}[noitemsep,topsep=2pt,leftmargin=!,labelwidth=\widthof{(ii.a)}]
                \item[(ii.a)] \hypertarget{ass:wellposedness.ii.a}{} \emph{Mass lumping:} If (\hyperlink{ML}{ML}) applies, the proximity operator  $\operatorname{prox}_{\gamma_2 F_h}\colon V_h\to  V_h$ is (globally) Newton differentiable with a global Newton derivative selection $\mathtt{J}_{\smash{\operatorname{prox}_{\gamma_2 F_h}}}\colon  V_h\to  \mathcal{L}(V_h)$ such that for every $v_h\in  V_h$, there holds
                \begin{align*}
               \mathbbzero_{\mathcal{L}(V_h)} \prec\mathtt{J}_{\smash{\operatorname{prox}_{\gamma_2F_h}}}(v_h)\,;
            \end{align*} 
            
                \item[(ii.b)] \hypertarget{ass:wellposedness.ii.b}{} \emph{No mass lumping:} If (\hyperlink{NML}{NML}) applies, on the basis of Proposition \ref{prop:proximity_of_integral_functionals}\eqref{prop:proximity_of_integral_functionals.3},~as~a~Newton derivative selection, we employ $\mathtt{J}_{\smash{\operatorname{prox}_{\gamma_2F_h}}}\coloneqq \frac{1}{1+2\gamma_2 a_h}\mathbbone_{\mathcal{L}(V_h)}\colon V_h\to  \mathcal{L}(V_h)$.
            \end{itemize}
        \end{itemize}
     \end{assumption}

    One set of sufficient conditions for Assumption \ref{ass:wellposedness}, relying on 
    $C^2$-regularity of the discrete energy densities, is presented in the following remark. Further conditions are given~in~Remark~\ref{rem:convergence}.
    
    \begin{remark}[on Assumption \ref{ass:wellposedness}] 
     \begin{itemize}[noitemsep,topsep=2pt,leftmargin=!,labelwidth=\widthof{(ii)}]
         \item[(i)] \emph{Sufficient conditions for Assumption \ref{ass:wellposedness}(\hyperlink{ass:wellposedness.i}{i}):} If~$\phi_h\in C^2(\mathbb{R}^d)$, then, by Lemma \ref{lem:prop_proximity}(\hyperlink{lem:prop_proximity.iv}{iv}), 
         $\mathrm{D}\!\operatorname{prox}_{\gamma_1\phi_h}(t)=(\mathbbone+\mathrm{D}^2 \gamma_1\phi_h(\operatorname{prox}_{\gamma_1\phi_h}(t)))^{-1}$~for~all $t\in \mathbb{R}^d$. Therefore, %since $\phi_h$ has locally bounded curvature, 
          $\mathrm{D}\!\operatorname{prox}_{\gamma_1\phi_h}(t) \succ \mathbbzero$ for all $t\in \mathbb{R}^d$ and, thus, $\mathtt{J}_{\smash{\operatorname{prox}_{\gamma_1G_h}}}(y_h)\succ \mathbbzero_{\mathcal{L}(Y_h)}$~for~all~${y_h\in Y_h}$.
         
         If, in addition, $\phi_h$ is locally strongly convex, \textit{i.e.}, we have that $\mathrm{D}^2 \phi_h(t)\succ \mathbbzero$ for all $t\in \mathbb{R}^d$, then $\mathrm{D}\!\operatorname{prox}_{\gamma_1\phi_h}(t)\prec \mathbbone$ for all $t\in \mathbb{R}^d$ and, thus, 
            $\mathtt{J}_{\smash{\operatorname{prox}_{\gamma_1G_h}}}(y_h) \prec \mathbbone_{\mathcal{L}(Y_h)}$ for~all~${y_h\in Y_h}$.
            
          \item[(ii)] \emph{Sufficient conditions for Assumption \ref{ass:wellposedness}(\hyperlink{ass:wellposedness.ii}{ii.a}):} If $\psi_h\in C^2(\mathbb{R})$, then, by Lemma \ref{lem:prop_proximity}(\hyperlink{lem:prop_proximity.iv}{iv}), %we have that  
          $\mathrm{D}\!\operatorname{prox}_{\gamma_2\psi_h}(s)> 0$ for all $s\in  \mathbb{R}$ and, thus, $\mathtt{J}_{\smash{\operatorname{prox}_{\gamma_2F_h}}}(v_h)\succ\mathbbzero_{\mathcal{L}(V_h)}$~for~all~${v_h\in V_h}$.

     \end{itemize} 
   \end{remark}

     Finally, given Assumption  \ref{ass:wellposedness}, let us establish that Algorithm~\ref{alg:semi-smooth_newton} is globally well-posed.

    \begin{theorem}[Global well-posedness of Algorithm \ref{alg:semi-smooth_newton}]\label{prop:well-posedness}
        If Assumption \ref{ass:wellposedness} is satisfied, then Algorithm~\ref{alg:semi-smooth_newton} is globally well-posed, \textit{i.e.}, if $k_{\mathtt{max}}^h=+\infty$, given an arbitrary initial iterate $(z_h^0,u_h^0)\in Y_h\times V_h$, all iterates 
        $\{(z_h^{k},u_h^{k})\}_{k\in \mathbb{N}}\subseteq Y_h\times V_h$ are computable.
    \end{theorem}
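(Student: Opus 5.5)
The plan is to reduce global well-posedness of Algorithm~\ref{alg:semi-smooth_newton} to the invertibility of $\mathtt{J}_{\mathtt{F}_h}(y_h,v_h)\in \mathcal{L}(Y_h\times V_h)$, defined by \eqref{sec:semi-smooth_newton.4}, at every $(y_h,v_h)\in Y_h\times V_h$. Then each linear system \eqref{alg:semi-smooth_newton.1} is uniquely solvable, whatever the step size $\alpha_k>0$, and all iterates are computable by induction on $k$. Assumption~\ref{ass:wellposedness} provides \emph{global} Newton derivative selections, so \eqref{sec:semi-smooth_newton.4} is defined everywhere and not only near $(z_h,u_h)$. Since $Y_h\times V_h$ is finite-dimensional, it suffices to prove injectivity.

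Fix $(y_h,v_h)$ and abbreviate $\mathtt{A}\coloneqq \mathtt{J}_{\operatorname{prox}_{\gamma_1G_h}}(a_h)$ and $\mathtt{B}\coloneqq \mathtt{J}_{\operatorname{prox}_{\gamma_2F_h}}(b_h)$. Suppose $(\delta z_h,\delta u_h)$ lies in the kernel, i.e.,
\begin{align*}
\gamma_1\mathtt{A}\delta z_h=(\mathbbone_{\mathcal{L}(Y_h)}-\mathtt{A})\nabla\delta u_h\,,\qquad \gamma_2\mathtt{B}\operatorname{div}_h\delta z_h=(\mathbbone_{\mathcal{L}(V_h)}-\mathtt{B})\delta u_h\,.
\end{align*}
By Assumption~\ref{ass:wellposedness}, $\mathtt{A}\succ\mathbbzero$ and $\mathtt{B}\succ\mathbbzero$, so both are invertible; in the (\hyperlink{NML}{NML}) case $\mathtt{B}$ is the positive multiple $\frac{1}{1+2\gamma_2a_h}\mathbbone_{\mathcal{L}(V_h)}$ of the identity. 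Solving gives
\begin{align*}
\delta z_h=\gamma_1^{-1}\mathtt{A}^{-1}(\mathbbone-\mathtt{A})\nabla\delta u_h\,,\qquad \operatorname{div}_h\delta z_h=\gamma_2^{-1}\mathtt{B}^{-1}(\mathbbone-\mathtt{B})\delta u_h\,.
\end{align*}
Testing the second identity with $\delta u_h$ in $(\cdot,\cdot)_{V_h}$ and using the definition \eqref{def:discrete_divergence} of $\operatorname{div}_h$ yields
\begin{align*}
-\gamma_1^{-1}\big(\mathtt{A}^{-1}(\mathbbone-\mathtt{A})\nabla\delta u_h,\nabla\delta u_h\big)_{Y_h}=\gamma_2^{-1}\big(\mathtt{B}^{-1}(\mathbbone-\mathtt{B})\delta u_h,\delta u_h\big)_{V_h}\,.
\end{align*}

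The key (and hardest) step is a sign argument: the left-hand side should be $\le0$ and the right-hand side $\ge0$. For this I use that the element-wise blocks of $\mathtt{A}$ and the node-wise entries of $\mathtt{B}$ are symmetric, which I take from Corollary~\ref{prop:derivative_proximity_of_integral_functionals}. Symmetry is natural here: by Lemma~\ref{lem:relations}(ii), $\operatorname{prox}_{\gamma f}=\operatorname{id}-\gamma\mathrm{D}f^\gamma$ is the gradient of a convex function, so its generalized Jacobians are symmetric, as in Examples~\ref{expl:proximity}. Given symmetry, $\mathtt{A}$ and $\mathbbone-\mathtt{A}$ commute and $\mathbbzero\prec\mathtt{A}\prec\mathbbone$, so $\mathtt{A}^{-1}(\mathbbone-\mathtt{A})\succ\mathbbzero$ has eigenvalues $(1-\lambda)/\lambda>0$. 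Similarly $\mathtt{B}^{-1}(\mathbbone-\mathtt{B})\succeq\mathbbzero$, because $\mathtt{B}\preceq\mathbbone$ by Lemma~\ref{lem:prop_proximity}(iii). In the (\hyperlink{ML}{ML}) case this is diagonal in the nodal basis with respect to the lumped inner product, and in the (\hyperlink{NML}{NML}) case it is a nonnegative scalar. If symmetry were not available, this is the point where I would have to strengthen Assumption~\ref{ass:wellposedness}(i) accordingly.

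It follows that both sides vanish. Hence $\nabla\delta u_h=0$, and then $\delta z_h=0$ by the formula above. For $\delta u_h$, I would first use that $\nabla\delta u_h=0$ forces $\delta u_h$ to be constant, which is zero whenever $\Gamma_D\neq\emptyset$. If $\Gamma_D=\emptyset$, the second kernel equation with $\delta z_h=0$ gives $(\mathbbone-\mathtt{B})\delta u_h=0$; this must be resolved via $\mathtt{B}\prec\mathbbone$ for the relevant entries or via $a_h>0$ in the (\hyperlink{NML}{NML}) case. I flag this degenerate pure-Neumann case as a secondary obstacle. This gives injectivity, hence invertibility of $\mathtt{J}_{\mathtt{F}_h}(y_h,v_h)$, and the theorem follows.
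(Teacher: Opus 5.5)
Your argument is correct under the same tacit hypotheses as the paper (see below). It is the kernel (energy) form of the paper's argument rather than the same computation.

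The paper left-multiplies $\mathtt{J}_{\mathtt{F}_h}(y_h,v_h)$ by the block-diagonal scaling $\mathtt{T}_h$, whose blocks are $-(\mathbbone-\mathtt{J}_h^1)^{-1}$ and $-\tfrac{1}{\gamma_2}(\mathtt{J}_h^2)^{-1}$. This gives a perturbed saddle-point matrix $\mathtt{M}_h$ with $(1,1)$-block $\gamma_1(\mathbbone-\mathtt{J}_h^1)^{-1}\mathtt{J}_h^1$, off-diagonal blocks $-\nabla_h$ and $\operatorname{div}_h=(-\nabla_h)^*$, and $(2,2)$-block $-\tfrac{1}{\gamma_2}(\mathtt{J}_h^2)^{-1}(\mathbbone-\mathtt{J}_h^2)$. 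An LDU factorization then reduces invertibility to negative definiteness of the Schur complement.

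The identity you obtain by eliminating $\delta z_h$ and testing with $\delta u_h$ is exactly the quadratic form of that Schur complement. Equivalently, it is the coercivity of the reduced problem in Remark~\ref{rem:reduced_problem}. Your version is shorter and shows precisely which structural facts are used. The factorization gives more: the explicit inverse $\mathtt{J}_{\mathtt{F}_h}(y_h,v_h)^{-1}=\mathtt{M}_h^{-1}\mathtt{T}_h$ with blockwise bounds, which Theorem~\ref{prop:convergence} reuses for the uniform bound on the inverse. A pure injectivity argument gives no such bound.

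The two issues you flag are real, but they are tacit in the paper's proof as well, so they are not gaps of yours relative to it.

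First, the paper simply asserts that $-\nabla_h$ is injective, which presupposes $\Gamma_D\neq\emptyset$. For $\Gamma_D=\emptyset$ the conclusion genuinely fails. In the (NML) case with $a_h=0$ one has $\mathtt{J}_{\operatorname{prox}_{\gamma_2F_h}}=\mathbbone_{\mathcal{L}(V_h)}$, and then $(\mathtt{0}_d,c)$ with $c$ constant lies in the kernel. So add $\Gamma_D\neq\emptyset$ as a hypothesis rather than trying to close that case.

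Second, the paper's definiteness claims for the diagonal blocks of $\mathtt{M}_h$ need the same self-adjointness you use. Suppose $\prec$ is read only as a statement about quadratic forms. Take $\mathtt{J}=\tfrac12\mathbbone+a\mathtt{R}$, with $\mathtt{R}$ the rotation by $\pi/2$ in $\mathbb{R}^2$ and $\tfrac12<|a|\le\tfrac{\sqrt3}{2}$. Then $\mathbbzero\prec\mathtt{J}\prec\mathbbone$ and $\mathtt{J}$ has operator norm at most $1$, yet the symmetric part of $\mathtt{J}^{-1}(\mathbbone-\mathtt{J})$ equals $(\tfrac{1/2}{1/4+a^2}-1)\mathbbone\prec\mathbbzero$.

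Corollary~\ref{prop:derivative_proximity_of_integral_functionals} gives only the element-wise and node-wise structure, not symmetry. Your Clarke-Jacobian justification covers every selection actually used in Section~\ref{sec:applications}. In general, though, symmetry should be stated as a property of the chosen selection.
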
 

    \begin{proof}
        In order to prove the existence of primal-dual update directions $(\delta z_h^{k},\delta u_h^{k})\in Y_h\times V_h$, solving \eqref{alg:semi-smooth_newton.1}, for each $k\in \mathbb{N}$, it is sufficient to establish the invertibility of the Newton derivative selection $\mathtt{J}_{\mathtt{F}_h}(z_h^k,u_h^k)\in \mathcal{L}(Y_h\times V_h)$. 
        To this end, let %$k\in \{0,\ldots,k_{\mathtt{max}}^h\}$~
       $(y_h,v_h)\in Y_h\times V_h$~be~fixed,~but~arbitrary. 
        To~begin~with, employing the abbreviated notations
        \begin{align}\label{prop:well-posedness.1}
            \mathtt{J}_h^1&\coloneqq\mathtt{J}_{\smash{\operatorname{prox}_{\gamma_1 G_h}}}(\nabla v_h+ \gamma_1y_h)\in \mathcal{L}(Y_h)\,,\\
            \mathtt{J}_h^2&\coloneqq \mathtt{J}_{\smash{\operatorname{prox}_{\gamma_2 F_h}}}(v_h+\gamma_2\operatorname{div}_hy_h)\in \mathcal{L}(V_h)\,,
        \end{align}
        on the basis of Assumption \ref{ass:wellposedness}, which guarantees that the operators $\mathbbone_{\mathcal{L}(Y_h)}-\mathtt{J}_h^1\in \mathcal{L}(Y_h)$ and $\mathtt{J}_h^2\in \mathcal{L}(V_h)$ are both positive definite and, consequently, invertible with positive definite inverse, we introduce the invertible block diagonal operator
        \begin{align}
         \mathtt{T}_h\coloneqq\left[\begin{array}{cc}
            -(\mathbbone_{\mathcal{L}(Y_h)} - \mathtt{J}_h^1)^{-1} & \mathbbzero_{\mathcal{L}(V_h)}\\\mathbbzero_{\mathcal{L}(Y_h)}
            &  -\tfrac{1}{\gamma_2}(\mathtt{J}_h^2)^{-1}
        \end{array}\right]\in  \mathcal{L}(Y_h\times V_h)\,.\label{prop:well-posedness.2}
        \end{align}
        By means of this invertible operator, we observe that the invertibility of the Newton derivative selection $\mathtt{J}_{\mathtt{F}_h}(y_h,v_h)\in \mathcal{L}(Y_h\times V_h)$ is equivalent to  the invertibility of the $2\times2$-block matrix
        \begin{align}\label{prop:well-posedness.3}
        \begin{aligned}
            \mathtt{M}_h&\coloneqq\left[\begin{array}{cc}
           \mathtt{A}_h & \mathtt{B}_h\\ \mathtt{C}_h
            & \mathtt{D}_h
        \end{array}\right]\\[-0.5mm]&\coloneqq\left[\begin{array}{cc}
          \smash{\gamma_1(\mathbbone_{\mathcal{L}(Y_h)} - \mathtt{J}_h^1)^{-1}\mathtt{J}_h^1} & -\nabla_h\\ \operatorname{div}_h 
            &\smash{-\tfrac{1}{\gamma_2}(\mathtt{J}_h^2)^{-1}(\mathbbone_{\mathcal{L}(V_h)} - \mathtt{J}_h^2)}
        \end{array}\right]\in  \mathcal{L}(Y_h\times V_h)\,,
        \end{aligned}
        \end{align}
        where the single blocks have the following properties:
        \begin{itemize}[noitemsep,topsep=2pt,leftmargin=!,labelwidth=\widthof{$\bullet$}]
            \item[$\bullet$] the $(1,1)$-block $ \mathtt{A}_h\in  \mathcal{L}(Y_h)$ is positive definite (\textit{cf}.\ Assumption \ref{ass:wellposedness}(\hyperlink{ass:wellposedness.i}{i}));
            \item[$\bullet$] the $(1,2)$-block $ \mathtt{B}_h\in \mathcal{L}(V_h;Y_h)$ is injective;
            \item[$\bullet$] the $(2,1)$-block $ \mathtt{C}_h\in \mathcal{L}(Y_h;V_h)$ is surjective with $\mathtt{C}_h=\mathtt{B}_h^*$;
            \item[$\bullet$] the $(2,2)$-block $ \mathtt{D}_h\in \mathcal{L}(V_h)$ is negative semi-definite (\textit{cf}.\ Assumption \ref{ass:wellposedness}(\hyperlink{ass:wellposedness.ii}{ii})).
        \end{itemize}
        The Newton derivative selection $\mathtt{J}_{\mathtt{F}_h}(y_h,v_h)\hspace{-0.175em}\in\hspace{-0.175em} \mathcal{L}(Y_h\times V_h)$ and the $2\times2$-block~matrix~${\mathtt{M}_h\hspace{-0.175em}\in \hspace{-0.175em} \mathcal{L}(Y_h\hspace{-0.1em}\times\hspace{-0.1em} V_h)}$, defined by \eqref{prop:well-posedness.3}, are related via\vspace{-0.5mm} 
        \begin{align}\label{prop:well-posedness.4}
            \mathtt{M}_h=\mathtt{T}_h\mathtt{J}_{\mathtt{F}_h}(y_h,v_h)\quad\text{ in }\mathcal{L}(Y_h\times V_h)\,.
        \end{align}
        %Using these two operators, we find that
        %\begin{align*}
        %    \textup{det}(\mathtt{J}_{\mathtt{F}_h}(u_h^k,z_h^k))=\textup{det}\left(\left[\begin{array}{cc}
        %   \mathtt{A}_h & -\nabla\\ \operatorname{div}_h 
        %    &-\mathtt{C}_h
        %\end{array}\right]\right)\,.
        %\end{align*}
        By the standard theory of invertibility of block matrices (\textit{cf}.\ \cite[Thm.\ 2.1]{LuShiou2002}), the $2\times 2$-block~matrix $\mathtt{M}_h\in  \mathcal{L}(Y_h\times V_h)$ (and, thus, $\mathtt{J}_{\mathtt{F}_h}(y_h,v_h)\in \mathcal{L}(Y_h\times V_h)$) is invertible if and only if the $(1,1)$-block $\mathtt{A}_h \in \mathcal{L}(Y_h)$ as well as the Schur complement $\mathtt{M}_h/\mathtt{A}_h\coloneqq\mathtt{D}_h-\mathtt{C}_h\mathtt{A}_h^{-1}\mathtt{B}_h\in \mathcal{L}(V_h)$ are~\mbox{invertible};~which is a consequence of the following LDU factorization of the $2\times 2$-block matrix $\mathtt{M}_h\in  \mathcal{L}(Y_h\times V_h)$:
        \begin{align}\label{prop:well-posedness.5}
        \begin{aligned} 
           \mathtt{M}_h &=\left[\begin{array}{cc}
           \mathbbone_{\mathcal{L}(Y_h)}  & \mathbbzero_{\mathcal{L}(V_h)}\\ 
         \mathtt{C}_h\mathtt{A}_h^{\smash{-1}}   & \mathbbone_{\mathcal{L}(V_h)}
        \end{array}\right]\left[\begin{array}{cc}
           \mathtt{A}_h  & \mathbbzero_{\mathcal{L}(V_h)}\\ \mathbbzero_{\mathcal{L}(Y_h)}
            & \mathtt{M}_h/\mathtt{A}_h
        \end{array}\right]\left[\begin{array}{cc}
           \mathbbone_{\mathcal{L}(Y_h)}  & \mathtt{A}_h^{\smash{-1}}\mathtt{B}_h\\ 
           \mathbbzero_{\mathcal{L}(Y_h)} & \mathbbone_{\mathcal{L}(V_h)}
        \end{array}\right]\\[-0.5mm]
        &\eqqcolon \mathtt{L}_h\mathtt{D}_h\mathtt{U}_h \in \mathcal{L}(Y_h\times V_h)\,.
        \end{aligned}
        \end{align}
        On the one hand, inasmuch as the $(1,1)$-block $\mathtt{A}_h \in \mathcal{L}(Y_h)$ is positive definite, it is invertible~as~well. On the other hand, the Schur complement 
        $\mathtt{M}_h/\mathtt{A}_h \in \mathcal{L}(V_h)$ is negative definite~and,~thus,~invertible, since  
        the $(2,2)$-block~$\mathtt{D}_h\in \mathcal{L}(V_h)$ is negative semi-definite and  $\mathtt{C}_h\mathtt{A}_h^{-1}\mathtt{B}_h\in \mathcal{L}(V_h)$ is positive definite,
        which follows from the positive definiteness of 
        $\mathtt{A}_h^{-1} \in   \mathcal{L}(Y_h)$,  %of the $(1,1)$-block, 
        the~injectivity~of~${\mathtt{B}_h\in  \mathcal{L}(V_h;Y_h)}$,~and that, by the definition of the discrete divergence operator \eqref{def:discrete_divergence}, we~have~that~$\mathtt{C}_h=\mathtt{B}_h^*$.
    \end{proof}

    The following assumption ensures the local well-posedness and local super-linear convergence~of Algorithm \ref{alg:semi-smooth_newton}.\vspace{-0.5mm}\enlargethispage{5.5mm}

    \begin{assumption}[for local well-posedness and local super-linear convergence of Algorithm~\ref{alg:semi-smooth_newton}]\label{ass:convergence} The discrete energy densities $\phi_h\colon \Omega\times \mathbb{R}^d\to \mathbb{R}\cup\{+\infty\}$ and $\psi_h\colon \Omega\times \mathbb{R}\to \mathbb{R}\cup\{+\infty\}$ are such that Assumption \ref{ass:energy_densities_discrete} and the following conditions are satisfied: 
        \begin{itemize}[noitemsep,topsep=2pt,leftmargin=!,labelwidth=\widthof{(iii)}]
            \item[(i)] \hypertarget{ass:wellposedness.i}{} \emph{Assumptions on $\phi_h$:}
            There exist  $0<\smash{\underline{\mu}}_h^1\leq \smash{\overline{\mu}}_h^1<1$ such that 
            for every $y_h\in B_{\varepsilon_h}^{Y_h}(\nabla u_h +\gamma_1z_h)$, there holds\vspace{-1.5mm}
            \begin{align*}
                \smash{\smash{\underline{\mu}}_h^1\mathbbone_{\mathcal{L}(Y_h)}\preceq  \mathtt{J}_{\smash{\operatorname{prox}_{\gamma_1 G_h}}}(y_h) \preceq \smash{\overline{\mu}}_h^1\mathbbone_{\mathcal{L}(Y_h)}\,;}
            \end{align*}

            \item[(ii)] \hypertarget{ass:wellposedness.ii}{}  \emph{Assumptions on $\psi_h$:}
            \begin{itemize}[noitemsep,topsep=2pt,leftmargin=!,labelwidth=\widthof{(ii.a)}]
                \item[(ii.a)] \hypertarget{ass:wellposedness.ii.a}{} \emph{Mass lumping:} There exist  $0<\underline{\mu}_h^2\leq 1$ such that for every $v_h\in  B_{\varepsilon_h}^{V_h}(u_h +\gamma_2\operatorname{div}_hz_h)$, there holds\vspace{-1.5mm}
            \begin{align*}
                \smash{\underline{\mu}_h^2\mathbbone_{\mathcal{L}(V_h)}\preceq\mathtt{J}_{\smash{\operatorname{prox}_{\gamma_2 F_h}}}(v_h)}\,;
            \end{align*}
        
                \item[(ii.b)] \hypertarget{ass:wellposedness.ii.b}{} \emph{No mass lumping:} If (\hyperlink{NML}{NML}) applies, on the basis of Proposition \ref{prop:proximity_of_integral_functionals}\eqref{prop:proximity_of_integral_functionals.3},~as~a~Newton derivative selection, we employ $\mathtt{J}_{\smash{\operatorname{prox}_{\gamma_2 F_h}}}\coloneqq \smash{\frac{1}{1+2\gamma_2 a_h}}\mathbbone_{\mathcal{L}(V_h)}\colon V_h\to  \mathcal{L}(V_h)$. 
            \end{itemize}
        \end{itemize} 
     \end{assumption}
     
     \begin{remark}[on Assumption \ref{ass:convergence}] \label{rem:convergence}
     \begin{itemize}[noitemsep,topsep=2pt,leftmargin=!,labelwidth=\widthof{(ii)}]
         \item[(i)] \emph{Sufficient conditions for Assumption \ref{ass:convergence}(\hyperlink{ass:wellposedness.i}{i}):}
         \begin{itemize}[noitemsep,topsep=2pt,leftmargin=!,labelwidth=\widthof{(ii.a)}]
            \item[(i.a)] \emph{Lower bound.} If $\phi_h$ is given via a Moreau envelope of $\phi$ (with respect to the second argument), \textit{i.e.}, for some $\gamma>0$, there holds $\phi_h(x,\cdot)\coloneqq (\phi(x,\cdot))^\gamma$ for a.e.\ $x\in \Omega$, then, by Lemma~\ref{lem:relations}(\hyperlink{lem:relations.iii}{iii}), for every $y_h\in Y_h$, we have that $\mathtt{J}_{\smash{\operatorname{prox}_{\gamma_1 G_h}}}(y_h)\succeq \frac{\gamma}{\gamma+\gamma_1}\mathbbone_{\mathcal{L}(Y_h)}$.

            \item[(i.b)] \emph{Upper bound.} If $\phi_h$ is $\mu$-strongly convex for some $\mu>0$ (with respect to the second argument), by Lemma \ref{lem:resolvent}(\hyperlink{lem:resolvent.iii}{iii}), for every $y_h\hspace{-0.1em}\in\hspace{-0.1em} Y_h$, we have that ${\mathtt{J}_{\smash{\operatorname{prox}_{\gamma_1 G_h}}}(y_h)\hspace{-0.1em}\preceq\hspace{-0.1em} \smash{\frac{1}{1+\mu}}\mathbbone_{\mathcal{L}(Y_h)}}$. 
         \end{itemize}
          \item[(ii)] \emph{Sufficient conditions for Assumption \ref{ass:convergence}(\hyperlink{ass:wellposedness.ii}{ii.a}):}
          If $\psi_h$ is given via a Moreau envelope of $\psi$ (with respect to the second argument), \textit{i.e.}, for some $\gamma>0$, there holds $\psi_h(x,\cdot)\coloneqq (\psi(x,\cdot))^\gamma$ for a.e.\ $x\in \Omega$, then, by Lemma~\ref{lem:relations}(\hyperlink{lem:relations.iii}{iii}), for every $v_h\in V_h$,  we have that $\mathtt{J}_{\smash{\operatorname{prox}_{\gamma_2 F_h}}}(v_h)\succeq \frac{\gamma}{\gamma+\gamma_2}\mathbbone_{\mathcal{L}(V_h)}$.
         
     \end{itemize} 
   \end{remark}

    Finally, given Assumption  \ref{ass:convergence}, let us establish that Algorithm~\ref{alg:semi-smooth_newton} is locally  well-posed~and  locally super-linearly convergent.\enlargethispage{7mm}

    \begin{theorem}[Local well-posedness and local super-linear convergence of Algorithm \ref{alg:semi-smooth_newton}]\label{prop:convergence}
        If Assumption \ref{ass:convergence} is satisfied, then Algorithm~\ref{alg:semi-smooth_newton} (with $\alpha_k=1$ for all $k\in \mathbb{N}$) is locally well-posed and locally super-linearly convergent to a root $(z_h,u_h)\in Y_h\times V_h$.
    \end{theorem}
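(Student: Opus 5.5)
The plan is to apply Theorem~\ref{thm:wellposed_convergent} with $X=Y=Y_h\times V_h$, $F=\mathtt{F}_h$ from \eqref{sec:semi-smooth_newton.3} and root $x=(z_h,u_h)$. Two hypotheses must be checked.

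First, $\mathtt{F}_h$ must be Newton differentiable at $(z_h,u_h)$ with the selection \eqref{sec:semi-smooth_newton.4}. The map $(y_h,v_h)\mapsto(\nabla v_h+\gamma_1y_h,\,v_h+\gamma_2\operatorname{div}_hy_h)$ is linear and bounded, since $Y_h\times V_h$ is finite-dimensional. By Lemma~\ref{lem:prop_proximity}(\hyperlink{lem:prop_proximity.iii}{iii}), the operators $\operatorname{prox}_{\gamma_1G_h}$ and $\operatorname{prox}_{\gamma_2F_h}$ are Newton differentiable at $a_h^\ast\coloneqq\nabla u_h+\gamma_1z_h$ and $b_h^\ast\coloneqq u_h+\gamma_2\operatorname{div}_hz_h$, with the selections \eqref{eq:abbreviations}. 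In case (NML) we use \eqref{prop:proximity_of_integral_functionals.3}, where the map is affine, so the selection from Assumption~\ref{ass:convergence}(ii.b) is exact. A chain rule with an inner linear map and a sum with identity or linear maps preserve Newton differentiability, and they produce exactly the block operator \eqref{sec:semi-smooth_newton.4}. The remainder estimate \eqref{def:newton_differentiable.1} follows because $\|\tau\|$ controls the perturbations of $a_h$ and $b_h$. For $(y_h,v_h)$ close to $(z_h,u_h)$, the arguments $a_h,b_h$ stay in the balls $B_{\varepsilon_h}$; we shrink the radius by the operator norm of the linear map.

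Second, we need pointwise invertibility of $\mathtt{J}_{\mathtt{F}_h}(y_h,v_h)$ together with a uniform bound on the inverses over a ball. Following the proof of Theorem~\ref{prop:well-posedness}, write $\mathtt{J}_{\mathtt{F}_h}=\mathtt{T}_h^{-1}\mathtt{M}_h$, with $\mathtt{T}_h$ from \eqref{prop:well-posedness.2} and $\mathtt{M}_h$ from \eqref{prop:well-posedness.3}. Assumption~\ref{ass:convergence} gives uniform spectral bounds:
\begin{itemize}
\item $\mathtt{J}_h^1$ has spectrum in $[\underline\mu_h^1,\overline\mu_h^1]$. Hence $(\mathbbone-\mathtt{J}_h^1)^{-1}$ has norm at most $(1-\overline\mu_h^1)^{-1}$, and $\mathtt{A}_h\succeq\gamma_1\underline\mu_h^1(1-\underline\mu_h^1)^{-1}$ when $\underline\mu_h^1<1$, which holds since $\underline\mu_h^1\le\overline\mu_h^1<1$. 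This requires $\mathtt{J}_h^1$ to be self-adjoint, or at least its symmetric part; I expect self-adjointness holds for Clarke-type derivatives of prox of convex functions, or it can simply be assumed for the chosen selection. Also $\|\mathtt{A}_h^{-1}\|\le(1-\underline\mu_h^1)/(\gamma_1\underline\mu_h^1)$.
\item $\mathtt{J}_h^2\succeq\underline\mu_h^2$ with $\|\mathtt{J}_h^2\|\le1$. Hence $\|(\mathtt{J}_h^2)^{-1}\|\le1/\underline\mu_h^2$ and $\mathtt{D}_h\preceq0$.
\end{itemize}
Then the Schur complement satisfies $\mathtt{M}_h/\mathtt{A}_h\preceq-\mathtt{B}_h^\ast\mathtt{A}_h^{-1}\mathtt{B}_h\preceq-c\,\nabla_h^\ast\nabla_h$, with $c\coloneqq\|\mathtt{A}_h\|^{-1}$ bounded below uniformly. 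Since $\nabla_h$ is injective on $V_h$ (Dirichlet part, finite dimension), we get $\|(\mathtt{M}_h/\mathtt{A}_h)^{-1}\|\le(c\lambda_{\min}(\nabla_h^\ast\nabla_h))^{-1}$ uniformly.

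Inverting the LDU factorization \eqref{prop:well-posedness.5} gives $\mathtt{M}_h^{-1}=\mathtt{U}_h^{-1}\mathtt{D}_h^{-1}\mathtt{L}_h^{-1}$. Every factor is bounded uniformly in terms of the constants above and $\|\nabla_h\|$, $\|\operatorname{div}_h\|$. Together with $\|\mathtt{T}_h\|\le\max\{(1-\overline\mu_h^1)^{-1},\|\mathtt{J}_h^2\|/\gamma_2\}\le\max\{(1-\overline\mu_h^1)^{-1},1/\gamma_2\}$, this yields $\sup\|\mathtt{J}_{\mathtt{F}_h}^{-1}\|<\infty$ on the ball. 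Theorem~\ref{thm:wellposed_convergent} then gives the claim.

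The main obstacle I expect is the uniform bound, and specifically the self-adjointness issue. The positivity arguments use $\preceq$ as quadratic forms, and for nonsymmetric selections the lower bound on the symmetric part of $\mathtt{A}_h$ does not follow directly. If needed, I would bound $(\mathtt{A}_hy,y)$ through the quadratic-form bounds on $\mathtt{J}_h^1$ via the identity $\mathtt{A}_h=\gamma_1[(\mathbbone-\mathtt{J}_h^1)^{-1}-\mathbbone]$, substituting $w=(\mathbbone-\mathtt{J}_h^1)^{-1}y$.
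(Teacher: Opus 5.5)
Your proposal is correct and follows essentially the same route as the paper. You write $\mathtt{J}_{\mathtt{F}_h}(y_h,v_h)^{-1}=\mathtt{M}_h^{-1}\mathtt{T}_h$, bound $\mathtt{T}_h$ and the three LDU factors of $\mathtt{M}_h^{-1}$ uniformly (the Schur complement via $\mathtt{D}_h\preceq 0$ and $\sigma_{\mathtt{min}}(\nabla_h)>0$), and invoke Theorem~\ref{thm:wellposed_convergent}; your chain-rule check of Newton differentiability at the root and the shrinking of the radius are details the paper leaves implicit.

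One slip needs fixing. The $(2,2)$-block of $\mathtt{T}_h$ is $-\tfrac{1}{\gamma_2}(\mathtt{J}_h^2)^{-1}$, not a multiple of $\mathtt{J}_h^2$, so the correct estimate is $\|\mathtt{T}_h\|_{\mathcal{L}(Y_h\times V_h)}\le\max\{(1-\overline{\mu}_h^1)^{-1},(\gamma_2\underline{\mu}_h^2)^{-1}\}$, as in the paper. This is exactly where the lower bound $\underline{\mu}_h^2$ of Assumption~\ref{ass:convergence} enters quantitatively, and you already have the needed bound $\|(\mathtt{J}_h^2)^{-1}\|\le 1/\underline{\mu}_h^2$ from your second bullet.
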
 

    \begin{proof}
        To begin with, we note that, analogously to the proof of Theorem \ref{prop:well-posedness},~we~observe~that, %Algorithm~\ref{alg:semi-smooth_newton} is well-posed and, 
        for every $(y_h,v_h)\in B_{\varepsilon_h}^{Y_h\times V_h}(z_h,u_h)$, the Newton derivative selection $\mathtt{J}_{\mathtt{F}_h}(y_h,v_h)\in \mathcal{L}(Y_h\times V_h)$ is invertible. Apart from that,
        employing the same notation as in the proof of Theorem \ref{prop:well-posedness}, for every $(y_h,v_h)\in B_{\varepsilon_h}^{Y_h\times V_h}(z_h,u_h)$, 
        its inverse can be represented by\vspace{-0.5mm}
        \begin{align}\label{prop:convergence.1}
           \smash{\mathtt{J}_{\mathtt{F}_h}(y_h,v_h)^{-1}=  \mathtt{M}_h^{-1}\mathtt{T}_h\,,}
        \end{align}
        where %the operators 
        $\mathtt{M}_h,\mathtt{T}_h\in  \mathcal{L}(Y_h\times V_h)$  depend on  $(y_h,v_h)\in B_{\varepsilon_h}^{Y_h\times V_h}(z_h,u_h)$ (\textit{cf}.\ \eqref{prop:well-posedness.2},~\eqref{prop:well-posedness.3}),~so~that 
        \begin{align}\label{prop:convergence.2}
            \smash{\|\mathtt{J}_{\mathtt{F}_h}(y_h,v_h)^{-1}\|_{\mathcal{L}(Y_h\times V_h)}\leq \|\mathtt{M}_h^{-1}\|_{\mathcal{L}(Y_h\times V_h)}\|\mathtt{T}_h\|_{\mathcal{L}(Y_h\times V_h)}\,.}
        \end{align}
        
        Therefore, it only remains to establish that the operator norms on the right-hand side of \eqref{prop:convergence.2} can be estimated independently of $(y_h,v_h)\in B_{\varepsilon_h}^{Y_h\times V_h}(z_h,u_h)$:
        \begin{itemize}[noitemsep,topsep=2pt,leftmargin=!,labelwidth=\widthof{$\bullet$}]
            \item[$\bullet$] By Assumption \ref{ass:convergence}, for every $(y_h,v_h)\in B_{\varepsilon_h}^{Y_h\times V_h}(z_h,u_h)$, we have that\vspace{-0.5mm} 
        \begin{align}\label{prop:convergence.3}
            \begin{aligned} 
            \|\mathtt{T}_h\|_{\mathcal{L}(Y_h\times V_h)}&\leq \max\{\|(\mathbbone_{\mathcal{L}(Y_h)}-\mathtt{J}_h^1)^{-1}\|_{\mathcal{L}(Y_h)},\tfrac{1}{\gamma_2}\|(\mathtt{J}_h^2)^{-1}\|_{\mathcal{L}(V_h)}\}
            \\[-0.5mm]&\leq \max\smash{\big\{\tfrac{1}{1-\smash{\overline{\mu}}^1_h},\tfrac{1}{\gamma_2\underline{\mu}^2_h}\big\}}\,.
            \end{aligned}
        \end{align}

            \item[$\bullet$] 
            Due to the representation theorem for inverses of $2\times 2$-block matrices (\textit{cf}.\ \cite[Thm.\ 2.1]{LuShiou2002}), which is based on the LDU factorization \eqref{prop:well-posedness.5}, for every $(y_h,v_h)\in B_{\varepsilon_h}^{Y_h\times V_h}(z_h,u_h)$,~we~have~that\vspace{-0.5mm}
        \begin{align}\label{prop:convergence.4}
           \hspace{-2mm} \begin{aligned} \mathtt{M}_h^{-1}&=\mathtt{U}_h^{-1}\mathtt{D}_h^{-1}\mathtt{L}_h^{-1}
            \\[-0.5mm]&=\left[\begin{array}{cc}
           \mathbbone_{\mathcal{L}(Y_h)}  & -\mathtt{A}_h^{-1}\mathtt{B}_h\\ 
         \mathbbzero_{\mathcal{L}(Y_h)}   & \mathbbone_{\mathcal{L}(V_h)}
        \end{array}\right]\hspace{-1mm}\left[\begin{array}{cc}
           \mathtt{A}_h^{-1}  & \mathbbzero_{\mathcal{L}(V_h)}\\ \mathbbzero_{\mathcal{L}(Y_h)}
            & (\mathtt{M}_h/\mathtt{A}_h)^{-1}
        \end{array}\right]\hspace{-1mm}\left[\begin{array}{cc}
           \mathbbone_{\mathcal{L}(Y_h)}  & \mathbbzero_{\mathcal{L}(V_h)}\\ 
         -\mathtt{C}_h\mathtt{A}_h^{-1}   & \mathbbone_{\mathcal{L}(V_h)}
        \end{array}\right]%\\&=
       % \left[\begin{array}{cc}
       %    \mathtt{A}_h^{-1}- \mathtt{A}_h^{-1}\mathtt{B}_h(\mathtt{M}_h/\mathtt{A}_h)^{-1}\mathtt{C}_h\mathtt{A}_h^{-1}  & - \mathtt{A}_h^{-1}\mathtt{B}_h(\mathtt{M}_h/\mathtt{A}_h)^{-1}\\ -(\mathtt{M}_h/\mathtt{A}_h)^{-1}\mathtt{C}_h\mathtt{A}_h^{-1}
      %      & (\mathtt{M}_h/\mathtt{A}_h)^{-1}
      %  \end{array}\right]
      \in \mathcal{L}(Y_h\times V_h)\,,
        \end{aligned}\hspace{-1mm}
        \end{align}
        so that\vspace{-0.5mm}
        \begin{align}\label{prop:convergence.5}
            \smash{\|\mathtt{M}_h^{-1}\|_{\mathcal{L}(Y_h\times V_h)}\leq \|\mathtt{U}_h^{-1}\|_{\mathcal{L}(Y_h\times V_h)}\|\mathtt{D}_h^{-1}\|_{\mathcal{L}(Y_h\times V_h)}\|\mathtt{L}_h^{-1}\|_{\mathcal{L}(Y_h\times V_h)}\,,}
        \end{align}
        where\vspace{-0.5mm}
\begin{subequations}\label{prop:convergence.6} 
        \begin{alignat}{2}\label{prop:convergence.6.1} 
            &\begin{aligned}
            \|\mathtt{U}_h^{-1}\|_{\mathcal{L}(Y_h\times V_h)}&\leq 1+\|\mathtt{A}_h^{-1}\|_{\mathcal{L}(Y_h)}\|\mathtt{B}_h\|_{\mathcal{L}(V_h;Y_h)}
            \\[-0.5mm]&\leq 1 +\tfrac{2}{\gamma_1\smash{\underline{\mu}}_h^1}\|\mathtt{B}_h\|_{\mathcal{L}(V_h;Y_h)}\,;\end{aligned}&&\quad\Bigg\}\\[-0.5mm]&\label{prop:convergence.6.2} \begin{aligned}
            \|\mathtt{D}_h^{-1}\|_{\mathcal{L}(Y_h\times V_h)}&\leq \max\{\|\mathtt{A}_h^{-1}\|_{\mathcal{L}(Y_h)},\|(\mathtt{M}_h/\mathtt{A}_h)^{-1}\|_{\mathcal{L}(V_h)}\}
            \\&\leq \max\big\{ \tfrac{2}{\gamma_1\smash{\underline{\mu}}_h^1}, \tfrac{\gamma_1\smash{\smash{\overline{\mu}}_h^1}}{\sigma_{\mathtt{min}}^2(\mathtt{B}_h)(1-\smash{\overline{\mu}}_h^1)}\big\}
            \,;\end{aligned}&&\quad\Bigg\}\\[-0.5mm]&\label{prop:convergence.6.3}\begin{aligned}
            \|\mathtt{L}_h^{-1}\|_{\mathcal{L}(Y_h\times V_h)}&\leq 1+\|\mathtt{C}_h\|_{\mathcal{L}(Y_h;V_h)}\|\mathtt{A}_h^{-1}\|_{\mathcal{L}(Y_h)}\\&
            \leq 1 +\tfrac{2}{\gamma_1\smash{\underline{\mu}}_h^1}\|\mathtt{B}_h\|_{\mathcal{L}(V_h;Y_h)}\,.\end{aligned}&&\quad\Bigg\}
        \end{alignat}
         \end{subequations}
         In the second inequality in \eqref{prop:convergence.6.2}, we used that $\vert \lambda_{\mathtt{min}}(\mathtt{M}_h/\mathtt{A}_h)\vert \ge \lambda_{\mathtt{min}}(\mathtt{D}_h)+\lambda_{\mathtt{min}}(\mathtt{C}_h\mathtt{A}_h^{-1}\mathtt{B}_h)$ and %that
        $\lambda_{\mathtt{min}}(\mathtt{C}_h\mathtt{A}_h^{-1}\mathtt{B}_h)\hspace{-0.15em}\ge\hspace{-0.15em} \sigma_{\mathtt{min}}^2(\mathtt{B}_h)\lambda_{\mathtt{min}}(\mathtt{A}_h)$, where $\sigma_{\mathtt{min}}(\mathtt{B}_h)$ is the smallest~singular~value~of~${\mathtt{B}_h\hspace{-0.15em}\in\hspace{-0.15em} \mathcal{L}(V_h;Y_h)}$.
        \end{itemize}

        In summary, noting that  $\|\mathtt{B}_h\|_{\mathcal{L}(V_h;Y_h)},\sigma_{\mathtt{min}}(\mathtt{B}_h)$ are independent of $(y_h,v_h)\in B_{\varepsilon_h}^{Y_h\times V_h}(z_h,u_h)$, from \eqref{prop:convergence.2}--\eqref{prop:convergence.6.3}, it follows that\vspace{-1mm}
        \begin{align*}
            \sup_{(y_h,v_h)\in B_{\varepsilon_h}^{Y_h\times V_h}(z_h,u_h)}{\big\{\|\mathtt{J}_{\mathtt{F}_h}(y_h,v_h)^{-1}\|_{\mathcal{L}(Y_h\times V_h)}\big\}}<+\infty\,.\\[-6mm]
        \end{align*}
        Hence, the local well-posedness and  super-linear convergence result for semi-smooth Newton methods 
        (\textit{cf}.\ Theorem \ref{thm:wellposed_convergent}) is applicable and yields that the $\operatorname{prox}$-based semi-smooth Newton~method (\textit{cf}.\ Algorithm \ref{alg:semi-smooth_newton}) is locally well-posed and super-linearly convergent~to~a~root~${(z_h,u_h)\in Y_h\times V_h}$.
    \end{proof}

    \begin{remark}[Reduced formulation of primal-dual update direction computation]\label{rem:reduced_problem}
        The perturbed saddle point problem \eqref{alg:semi-smooth_newton.1}, which, for every $k\in\{0,\ldots,k_{\mathtt{max}}^h\}$, computes the primal-dual update direction $(\delta z_{h}^{k},\delta u_{h}^{k})\in Y_{h}\times V_{h}$ from~the~most~recent iterate $(z_{h}^{k},u_{h}^{k})\in Y_{h}\times V_{h}$ can be reduced to a linear elliptic problem:

        In fact, for every $k\in\{0,\ldots,k_{\mathtt{max}}^h\}$, employing the abbreviations
        \begin{align*}
            \smash{\mathtt{J}_{h}^{1,k}}&\coloneqq \mathtt{J}_{\smash{\operatorname{prox}_{\gamma_1G_h}}}(\nabla u_{h}^k+\gamma_1z_{h}^k)\in \mathcal{L}(Y_h)\,,\\
            \smash{\mathtt{J}_{h}^{2,k}}&\coloneqq \mathtt{J}_{\smash{\operatorname{prox}_{\gamma_2F_h}}}( u_{h}^k+\gamma_2\operatorname{div}_hz_{h}^k)\in \mathcal{L}(V_h)\,,
        \end{align*}
        the perturbed saddle point system \eqref{alg:semi-smooth_newton.1} reads %(again, abbreviating $a_h\coloneqq \nabla v_h+\gamma y_h\in Y_{h}$ as in \eqref{def:J_F_h_eps})\vspace{-4.5mm}
    \begin{subequations} \label{rem:reduced.1}
    \begin{alignat}{2}\label{rem:reduced.1.1}
       -\gamma_1\smash{\mathtt{J}_{h}^{1,k}} \delta z_{h}^k+(\mathbbone_{\mathcal{L}(Y_h)} - \smash{\mathtt{J}_{h}^{1,k}}) \nabla \delta u_{h}^k&=-\nabla u_{h}^k+\operatorname{prox}_{\gamma_1G_h}(\nabla u_{h}^k+\gamma_1 z_{h}^k)&&\quad\text{ in }Y_h\,,\\
       -\gamma_2 \smash{\mathtt{J}_{h}^{2,k}}\operatorname{div}_h  \delta z_{h}^k+(\mathbbone_{\mathcal{L}(V_h)} - \smash{\mathtt{J}_{h}^{2,k}})  \delta u_{h}^k &= -u_{h}^k+\operatorname{prox}_{\gamma_2F_h}(u_{h}^k+\gamma_2 \operatorname{div}_{h}z_{h}^k)&&\quad\text{ in }V_h\,,\label{rem:reduced.1.2}
    \end{alignat}
    \end{subequations}
    which, using the abbreviations
    \begin{align*}
            f_{h}^{1,k}&\coloneqq- \nabla u_{h}^k+\operatorname{prox}_{\gamma_1G_h}(\nabla u_{h}^k+\gamma_1 z_{h}^k)\in Y_h\,,\\
            f_{h}^{2,k}&\coloneqq -u_{h}^k+\operatorname{prox}_{\gamma_2F_h}(u_{h}^k+\gamma_2 \operatorname{div}_{h}z_{h}^k)\in V_h\,,
        \end{align*}
    and assuming that $ \smash{\mathtt{J}_{h}^{1,k}}\succ \mathbbzero_{\mathcal{L}(Y_h)}$ and $\smash{\mathtt{J}_{h}^{2,k}}\succ \mathbbzero_{\mathcal{L}(V_h)}$,
    %\begin{align*}
    %    0\prec \smash{\mathtt{J}_{h}^{1,k}}\quad\text{and}\quad0< \smash{\mathtt{J}_{h}^{2,k}}\,,
    %\end{align*} 
    may be reformulated as
    \begin{subequations} \label{rem:reduced.2}
    \begin{alignat}{2}\label{rem:reduced.2.1}
      \delta z_{h}^k&=\tfrac{1}{\gamma_1}(\smash{\mathtt{J}_{h}^{1,k}})^{-1}\{(\mathbbone_{\mathcal{L}(Y_h)} - \smash{\mathtt{J}_{h}^{1,k}}) \nabla \delta u_{h}^k-f_h^{1,k}\}&&\quad\text{ in }Y_h\,,\\
      \operatorname{div}_h  \delta z_{h}^k&=I_{V_h}\bigl\{\tfrac{1}{\gamma_2}(\smash{\mathtt{J}_{h}^{2,k}})^{-1}\{(\mathbbone_{\mathcal{L}(V_h)} - \smash{\mathtt{J}_{h}^{2,k}})  \delta u_{h}^k-f_h^{2,k}\}\bigr\} &&\quad\text{ in }V_h\,.\label{rem:reduced.2.2}
    \end{alignat}
    \end{subequations}
    Therefore, for every $k\in\{0,\ldots,k_{\mathtt{max}}^h\}$, inserting \eqref{rem:reduced.2.1} in \eqref{rem:reduced.2.2}, we arrive at the discrete linear elliptic problem
    \begin{align}\label{rem:reduced.3}
        \hspace{-2.5mm}\left.\begin{aligned} 
        -\operatorname{div}_h\bigl(\tfrac{1}{\gamma_1}(\smash{\mathtt{J}_{h}^{1,k}})^{-1}(\mathbbone_{\mathcal{L}(Y_h)} - \smash{\mathtt{J}_{h}^{1,k}}) \nabla \delta u_{h}^k\bigr)&+I_{V_h}\bigl\{\tfrac{1}{\gamma_2}(\smash{\mathtt{J}_{h}^{2,k}})^{-1}(\mathbbone_{\mathcal{L}(V_h)} - \smash{\mathtt{J}_{h}^{2,k}})  \delta u_{h}^k\bigr\}\\&=-\operatorname{div}_h\bigl(\tfrac{1}{\gamma_1}(\smash{\mathtt{J}_{h}^{1,k}})^{-1}f_{h}^{1,k}\bigr)\\&\quad-I_{V_h}\bigl\{\tfrac{1}{\gamma_2}(\smash{\mathtt{J}_{h}^{2,k}})^{-1}f_h^{2,k}\bigr\}
        \end{aligned}\;\;\right\}\;\;\text{ in }V_h\,,\hspace{-1mm}
    \end{align}
    or equivalently, by the definition of the discrete divergence operator \eqref{def:discrete_divergence}, for every $v_h\in V_h$, there holds
    \begin{align}\label{rem:reduced.4}
        \begin{aligned} 
        \int_{\Omega}{\tfrac{1}{\gamma_1}(\smash{\mathtt{J}_{h}^{1,k}})^{-1}(\mathbbone_{\mathcal{L}(Y_h)} - \smash{\mathtt{J}_{h}^{1,k}})\nabla \delta u_{h}^{k}\cdot\nabla v_{h}\,\mathrm{d}x}&+\int_{\Omega}{I_{V_h}\{\tfrac{1}{\gamma_2}(\smash{\mathtt{J}_{h}^{2,k}})^{-1}(\mathbbone_{\mathcal{L}(V_h)} - \smash{\mathtt{J}_{h}^{2,k}})\delta u_{h}^{k} v_{h}\}\,\mathrm{d}x}\\&\quad=\int_{\Omega}{\tfrac{1}{\gamma_1}(\smash{\mathtt{J}_{h}^{1,k}})^{-1}f_h^{1,k}\cdot \nabla v_{h}\,\mathrm{d}x}\\&\qquad+\int_{\Omega}{I_{V_h}\{\tfrac{1}{\gamma_2}(\smash{\mathtt{J}_{h}^{2,k}})^{-1}f_h^{2,k}v_{h}\}\,\mathrm{d}x}\,.
        \end{aligned}\hspace{-2.5mm}
    \end{align}
    In view of these observations, for every $k\in\{0,\ldots,k_{\mathtt{max}}^h\}$, instead of solving the perturbed saddle point problem \eqref{alg:semi-smooth_newton.1} directly, we may compute the primal-dual update direction ${(\delta z_{h}^{k},\delta u_{h}^{k})\hspace{-0.1em}\in \hspace{-0.1em}Y_h\hspace{-0.15em}\times\hspace{-0.15em} V_h}$,  solving \eqref{alg:semi-smooth_newton.1}, in two steps:
    \begin{itemize}[noitemsep,topsep=2pt,leftmargin=!,labelwidth=\widthof{(2)}]
        \item[(1)] Compute \hspace{-0.125mm}the \hspace{-0.125mm}\emph{primal \hspace{-0.125mm}update \hspace{-0.125mm}direction} \hspace{-0.125mm}$\delta u_{h}^{k}\hspace{-0.15em}\in\hspace{-0.15em} V_h$, \hspace{-0.125mm}solving \hspace{-0.125mm}the \hspace{-0.125mm}discrete \hspace{-0.125mm}linear~\hspace{-0.125mm}elliptic~\hspace{-0.1mm}\mbox{problem}~\hspace{-0.125mm}\eqref{rem:reduced.3}; 
    \item[(2)] Compute the \emph{dual update direction} $\delta z_{h}^{k}\in  Y_h$, given via the  representation~formula~\eqref{rem:reduced.2.1}.
    \end{itemize}
    
    If the Newton derivative selections
    $\smash{\mathtt{J}_{h}^{1,k}}\in\mathcal{L}(Y_h)$ and
    $\smash{\mathtt{J}_{h}^{2,k}}\in\mathcal{L}(V_h)$ are self-adjoint, the
    discrete  linear elliptic problem \eqref{rem:reduced.3} may be solved efficiently by direct sparse Cholesky
    factorizations for moderate problem sizes or by preconditioned conjugate
    gradient (CG) methods~for~larger~problem sizes. 
    
    Otherwise, it may be solved efficiently by direct sparse LU factorizations
    for moderate problem
    sizes, by general Krylov subspace methods such as GMRES or BiCGStab for larger problem sizes, or one may even solve the original
    saddle point formulation \eqref{alg:semi-smooth_newton.1} directly.
    \end{remark}

    \section{Applications}\vspace{-0.5mm}\label{sec:applications}

    \hspace{5mm}In this section, we apply the %general 
    $\operatorname{prox}$-based semi-smooth Newton method (\textit{cf}.\ Algorithm~\ref{alg:semi-smooth_newton}) derived in Section~\ref{sec:semi-smooth_newton} to three benchmark problems: the TV-minimization problem (linear growth),~the $p$-Dirichlet problem ($p$-growth, $p\in(1,+\infty)$), and the elasto-plastic torsion~\mbox{problem}~(\mbox{$\infty$-growth}).\vspace{-1.5mm}

    \subsection{TV-minimization problem}\vspace{-0.5mm}\label{subsec:tv-min}

    \hspace{5mm}In this subsection, we consider the \emph{Rudin--Osher--Fatemi (ROF) image denoising~model}~(\textit{cf}.~\cite{RudinOsherFatemi1992}) --or \emph{TV-minimization problem}--, which is formulated as the minimization of an energy functional combining total variation (TV) regularization with a lower-order $L^2$-data fidelity term, promoting piece-wise smooth reconstructions while suppressing noise.\vspace{-1.5mm}\enlargethispage{0.5mm}

    \subsubsection{The continuous problem}\vspace{-0.5mm}

    \hspace{5mm}Given \hspace{-0.1mm}a \hspace{-0.1mm}\textit{noisy \hspace{-0.1mm}image} \hspace{-0.1mm}$g\hspace{-0.1em}\in\hspace{-0.1em} L^2(\Omega)$,  \hspace{-0.1mm}\textit{fidelity \hspace{-0.1mm}parameter} \hspace{-0.1mm}$\alpha\hspace{-0.1em}>\hspace{-0.1em}0$, \hspace{-0.1mm}and \hspace{-0.1mm}full \hspace{-0.1mm}Neumann~\hspace{-0.1mm}boundary~\hspace{-0.1mm}\mbox{condition} (\textit{i.e.}, $\Gamma_N=\partial\Omega$), 
    the TV-minimization problem seeks a \emph{denoised image} $u\in BV(\Omega)\cap L^2(\Omega)$ that minimizes the primal energy functional $I\colon BV(\Omega)\cap L^2(\Omega)\to \mathbb{R}$, for every $v\in BV(\Omega)\cap L^2(\Omega)$ defined by
    \begin{align}\label{eq:rof_primal}
        I(v)\coloneqq \vert \mathrm{D}v\vert(\Omega)+\frac{\alpha}{2}\int_{\Omega}{\vert v-g\vert^2\,\mathrm{d}x}\,,
    \end{align}
   where $\vert \mathrm{D}(\cdot)\vert(\Omega)\colon L^1_{\textup{loc}}(\Omega)\to \mathbb{R}\cup\{+\infty\}$, for every $v \in  L^1_{\textup{loc}}(\Omega)$ defined by 
    \begin{align*}
        \vert \mathrm{D}v\vert(\Omega)\coloneqq  \sup_{\substack{y\in (C_{\mathrm{c}}^\infty(\Omega))^d\;\vcentcolon\;\|y\|_{\infty,\Omega}\leq 1}}{\bigg\{\int_{\Omega}{v\operatorname{div}y\,\mathrm{d}x}\bigg\}}\,,
    \end{align*}
    denotes the \textit{total variation (TV) functional} and $BV(\Omega)\coloneqq  \{v \in L^1(\Omega) \mid \vert \mathrm{D}v\vert(\Omega) <+\infty\}$ the~\textit{space of functions with bounded variation}. In \cite[Thms.\ 10.5 \& 10.6]{Bartels2015}, it has been established~that there exists a unique primal solution $u\in BV(\Omega)\cap L^2(\Omega)$, \textit{i.e.}, a minimizer of \eqref{eq:rof_primal}. 

    A (Fenchel) dual problem (in the sense of \cite[Rem.\ 4.2, p.\ 60/61]{EkelandTemam1999}) to the minimization~of~\eqref{eq:rof_primal} was identified in \cite[Sec.\ 2]{HintermuellerKunisch2004} (see also \cite[Sec.\ 10.1.3, p.\ 305]{Bartels2015}) and 
    is given via the maximization of the dual energy functional $D\colon W^2_N(\operatorname{div};\Omega)\to \mathbb{R}\cup\{-\infty\}$, for every $y\in  W^2_N(\operatorname{div};\Omega)$  defined by
    \begin{align}\label{eq:rof_dual}
        D(y)\coloneqq  -I_{K_1^d(0)}^{\Omega}(y)-\frac{1}{2\alpha}\int_{\Omega}{\vert\!\operatorname{div}y+\alpha g\vert^2\,\mathrm{d}x}+\frac{\alpha}{2}\int_{\Omega}{\vert g\vert^2\,\mathrm{d}x}\,.
    \end{align}
    Here, the indicator functional $I_{K_1^d(0)}^{\Omega}\colon (L^2(\Omega))^d\to \mathbb{R}\cup\{+\infty\}$, for every $\widehat{y}\in (L^2(\Omega))^d$,~is~defined~by
    \begin{align*}
        I_{K_1^d(0)}^{\Omega}(\widehat{y})\coloneqq \begin{cases}
            0&\text{ if }\vert \widehat{y}\vert \leq 1\text{ a.e.\ in }\Omega\,,\\
            +\infty&\text{ else}\,.
        \end{cases}
    \end{align*}
    By \cite[Prop.\ 2.1 \& Thm.\ 2.2]{HintermuellerKunisch2004} (see also \cite[Prop.\ 10.3]{Bartels2015}), there exists~a~(not~necessarily~unique) dual solution $z\in W^2_N(\operatorname{div};\Omega)$, \textit{i.e.}, a maximizer of \eqref{eq:rof_dual}, and a strong~\mbox{duality}~\mbox{relation}~\mbox{applies},~\textit{i.e.}, we have that $I(u)=D(z)$, which is equivalent to the primal  optimality relations (\textit{cf}.~\cite[Prop.~10.4]{Bartels2015})%\vspace{-4mm}
\begin{subequations}\label{eq:rof_optimality}
    \begin{alignat}{2}\label{eq:rof_optimality.1}
         \vert \mathrm{D}u\vert(\Omega)&=-\int_{\Omega}{u\operatorname{div}z\,\mathrm{d}x}\,,\\
        \operatorname{div}z&=\alpha (u-g)\quad \text{ a.e.\ in }\Omega\,.\label{eq:rof_optimality.2}
    \end{alignat}
    \end{subequations}
    %Note that the dual solution $z\in W^2_N(\operatorname{div};\Omega)$ is possibly non-unique (\textit{cf}.\ \cite[Expl.\ {BartelsToveyWassmer2022}
    In the  case  $u\in W^{1,1}(\Omega)\cap L^2(\Omega)$, which cannot  be expected in general, by the equality condition in the Fenchel--Young inequality (\textit{cf}.\ \cite[Prop.\ 51.2]{Zeidler1985III}), the primal optimality relation \eqref{eq:rof_optimality.1} is equivalent to
    \begin{align*}
        z\in \left\{\begin{aligned}
            \,\big\{\tfrac{1}{\vert \nabla u\vert}\nabla u\}&\quad\text{ if }\vert \nabla u\vert>0\,,\,\\
            \,K_1^d(0)&\quad\text{ if }\vert \nabla u\vert=0\,
        \end{aligned}\right\}\quad \text{ a.e.\ in }\Omega\,.
    \end{align*} 

    \subsubsection{The discrete problem}\vspace{-0.5mm}

    \hspace{5mm}Given a \emph{discrete noisy image} $g_h\hspace{-0.15em}\in\hspace{-0.15em} V_h$, \textit{i.e.}, an approximation of the actual~noisy~\mbox{image}~${g\hspace{-0.15em}\in\hspace{-0.15em} L^2(\Omega)}$, %and a \textit{fidelity parameter} $\alpha>0$, 
    the discrete TV-minimization problem seeks a \emph{discrete denoised~image} $u_h\in V_h$ that minimizes the discrete primal energy functional $I_h\colon V_h\to \mathbb{R}$, for every $v_h\in V_h$ defined by\vspace{-0.5mm}
    \begin{align}\label{eq:rof_discrete_primal}
        I_h(v_h)\coloneqq \int_{\Omega}{\phi_\varepsilon(\nabla v_h)\,\mathrm{d}x}+\frac{\alpha}{2}\int_{\Omega}{I_{V_h}\{\vert v_h-g_h\vert^2\}\,\mathrm{d}x}\,.\\[-6.25mm]\notag
    \end{align}
    Here, \hspace{-0.15mm}the \hspace{-0.15mm}discrete \hspace{-0.15mm}energy \hspace{-0.15mm}density \hspace{-0.15mm}$\phi_\varepsilon 
    \hspace{-0.15em} \coloneqq\hspace{-0.15em} \min\{\tfrac{1}{2\varepsilon}\vert\cdot\vert^2,\vert \cdot\vert-\tfrac{\varepsilon}{2}\}\colon\hspace{-0.15em} \mathbb{R}^d\hspace{-0.15em}\to\hspace{-0.15em} \mathbb{R}$ \hspace{-0.15mm}denotes \hspace{-0.15mm}the \hspace{-0.15mm}Huber~\hspace{-0.15mm}\mbox{regularization}\linebreak (\textit{cf}.\ \cite{Huber1964}) obtained as Moreau envelope of the energy density~${\phi\coloneqq\vert\cdot\vert\colon \mathbb{R}^d\to \mathbb{R}}$~(\textit{cf}.~\mbox{Example}~\ref{expl:moreau_envelopes}(\hyperlink{expl:moreau_envelopes.i}{i})). The direct method~in~the~\mbox{calculus} of variations yields the existence of a unique discrete primal solution $u_h\in V_h$, \textit{i.e.}, a minimizer~of~\eqref{eq:rof_discrete_primal}.

    According \hspace{-0.15mm}to \hspace{-0.15mm}Proposition \hspace{-0.15mm}\ref{prop:discrete_duality}\hspace{-0.15mm}(\hyperlink{prop:discrete_duality.i}{i}), \hspace{-0.15mm}a \hspace{-0.15mm}(Fenchel) \hspace{-0.15mm}dual \hspace{-0.15mm}problem \hspace{-0.15mm}(in \hspace{-0.15mm}the \hspace{-0.15mm}sense \hspace{-0.15mm}of \hspace{-0.15mm}\cite[Rem.\ \hspace{-0.15mm}4.2,~\hspace{-0.15mm}p.~\hspace{-0.15mm}60/61]{EkelandTemam1999}) to the minimization of \eqref{eq:rof_discrete_primal} is given via the maximization of the discrete dual energy functional $D_h\colon Y_h\to \mathbb{R}\cup\{-\infty\}$, for every $y_h\in Y_h$ defined by\vspace{-0.75mm}
    \begin{align}\label{eq:rof_discrete_dual}
        D_h(y_h)\coloneqq-\int_{\Omega}{\phi_\varepsilon^*(y_h)\,\mathrm{d}x}-\frac{1}{2\alpha}\int_{\Omega}{I_{V_h}\{\vert\!\operatorname{div}_hy_h+\alpha g_h\vert^2\}\,\mathrm{d}x}+\frac{\alpha}{2}\int_{\Omega}{I_{V_h}\{\vert g_h\vert^2\}\,\mathrm{d}x}\,,\\[-6mm]\notag
    \end{align}
    where the Fenchel conjugate $\smash{\phi_\varepsilon^*\colon \mathbb{R}^d\to \mathbb{R}\cup\{+\infty\}}$, %of the discrete energy density $\phi_\varepsilon  \colon \mathbb{R}^d\to \mathbb{R}$, 
    for every $t^*\in \mathbb{R}^d$, is given via\vspace{-0.75mm}
    \begin{align}\label{eq:rof_phih_prime}
        \smash{\phi_\varepsilon^*(t^*)=\tfrac{\varepsilon}{2}\vert t^*\vert^2+I_{K_1^d(0)}(t^*)\,.}\\[-6mm]\notag
    \end{align}
   %In consequence, inserting \eqref{eq:rof_phih_prime} in \eqref{eq:rof_discrete_dual}, the discrete dual energy functional $D_h\colon Y_h\to \mathbb{R}\cup\{-\infty\}$, for every $y_h\in Y_h$, admits the form
   % \begin{align*}
   %     D_h(y_h)=-I_{K_1^d(0)}^{\Omega}(y_h)-\frac{\varepsilon}{2}\int_{\Omega}{\vert y_h\vert^2\,\mathrm{d}x}-\frac{1}{2\alpha}\int_{\Omega}{I_{V_h}\{\vert\!\operatorname{div}_hy_h+\alpha g_h\vert^2\}\,\mathrm{d}x}+\frac{\alpha}{2}\int_{\Omega}{I_{V_h}\{\vert g_h\vert^2\}\,\mathrm{d}x}\,.
   % \end{align*}
    By Proposition \ref{prop:discrete_duality}(\hyperlink{prop:discrete_duality.ii}{ii}), there exists a (due to the strict convexity of $\smash{\phi_\varepsilon^*\colon \mathbb{R}^d\to \mathbb{R}\cup\{+\infty\}}$, unique)
    %(not necessarily 
     %) 
    discrete dual solution $z_h\in Y_h$, \textit{i.e.}, a maximizer~of~\eqref{eq:rof_discrete_dual}, and  discrete strong duality applies, \textit{i.e.}, we have that $I_h(u_h)=D_h(z_h)$, which is equivalent to the discrete primal optimality relations\enlargethispage{3.5mm}\vspace{-0.5mm}
    \begin{subequations}\label{eq:rof_discrete_optimality}
    \begin{alignat}{2}
         z_h&=\mathrm{D}\phi_\varepsilon(\nabla u_h)
         =\smash{\min\{\tfrac{1}{\vert \nabla u_h\vert},\tfrac{1}{\varepsilon}\}\nabla u_h}
         %=\left.\begin{cases}
         %    \tfrac{1}{\varepsilon}\nabla u_h&\text{ if }\vert \nabla u_h\vert \leq \varepsilon\,,\\
         %   \tfrac{1}{\vert \nabla u_h\vert}\nabla u_h&\text{ if }\vert \nabla u_h\vert > \varepsilon\,,
        % \end{cases}\right\}
         &&\quad\text{ a.e.\ in }\Omega\,,\label{eq:rof_discrete_optimality.1}\\
        \operatorname{div}_hz_h&=\alpha (u_h-g_h)&&\quad \text{ in }\Omega\,.\label{eq:rof_discrete_optimality.2}
    \end{alignat}
    \end{subequations}

    \subsubsection{The semi-smooth Newton method}\label{subsec:rof_newton}\vspace{-0.5mm}

    \hspace{5mm}Given proximity parameters $\gamma_1,\gamma_2>0$, following the reasoning in the beginning~of~\mbox{Section}~\ref{sec:semi-smooth_newton}, we observe that the primal optimality relations \eqref{eq:rof_discrete_optimality} are equivalent to the existence of a root of the mapping $\mathtt{F}_h\colon Y_h\times V_h\to Y_h\times V_h$, defined by \eqref{sec:semi-smooth_newton.3}, where, by virtue of Proposition~\ref{prop:proximity_of_integral_functionals}, Lemma~\ref{lem:relations}(\hyperlink{lem:relations.iii}{iii}), and Example \ref{expl:proximity}(\hyperlink{expl:proximity.i}{i}), for every $y_h\in Y_h$ and $v_h\in V_h$, respectively, we have that\vspace{-0.5mm}
    \begin{subequations}\label{eq:rof_proximity}
    \begin{alignat}{2}\label{eq:rof_proximity.1}
        \operatorname{prox}_{\gamma_1 G_h}(y_h)&=\smash{(1-\min\{\tfrac{\gamma_1}{\vert y_h\vert},\tfrac{\gamma_1}{\gamma_1+\varepsilon}\})}y_h
    %  \tfrac{1}{\gamma_1+h}\{h+\gamma_1 \max\{0,1-\tfrac{\gamma_1+h}{\vert y_h\vert}\}\}y_h
    &&\quad\text{ a.e.\ in }\Omega\,,\\ 
        \operatorname{prox}_{\gamma_2 F_h}(v_h)&=\tfrac{1}{1+\smash{\gamma_2}\alpha}(v_h+\smash{\gamma_2}\alpha g_h)&&\quad\text{ in }\Omega\,.\label{eq:rof_proximity.2}
    \end{alignat}\\[-4mm]\notag
    \end{subequations}
    A Newton derivative selection $\mathtt{J}_{\mathtt{F}_h}\colon Y_h\times V_h\to \mathcal{L}(Y_h\times V_h)$ of the mapping $F_h\colon Y_h\times V_h\to Y_h\times V_h$ %for every $(y_h,v_h)\in Y_h\times V_h$, 
    is given via \eqref{sec:semi-smooth_newton.4}, where, for every $y_h\in Y_h$ and $v_h\in V_h$, respectively, we have that\vspace{-0.75mm}
    \begin{subequations}\label{eq:rof_proximity_derivatives}
    \begin{alignat}{2}\label{eq:rof_proximity_derivatives.1}
        \mathtt{J}_{\smash{\operatorname{prox}_{\gamma_1 G_h}}}(y_h)&=\left.\begin{cases}
           \mathbbone_{\mathcal{L}(Y_h)}-\frac{\gamma_1}{\vert y_h\vert}(  \mathbbone_{\mathcal{L}(Y_h)}-\tfrac{y_h\otimes y_h}{\vert y_h\vert^2})&\text{ if }\vert  y_h\vert\ge \varepsilon+\gamma_1\,,\\ 
          \tfrac{\varepsilon}{\gamma_1+\varepsilon}\mathbbone_{\mathcal{L}(Y_h)}&\text{ if } \vert  y_h\vert<\varepsilon+\gamma_1
        \end{cases}\right\}&&\quad\text{ a.e.\ in }\Omega\,,\\ 
        \mathtt{J}_{\smash{\operatorname{prox}_{\gamma_2 F_h}}}(v_h)&=\tfrac{1}{1+\smash{\gamma_2}\alpha}\mathbbone_{\mathcal{L}(V_h)}&&\quad\text{ in }\Omega\,.\label{eq:rof_proximity_derivatives.2}
    \end{alignat}
    \end{subequations}

    Note that the upper bound condition 
$\mathtt{J}_{\smash{\operatorname{prox}_{\gamma_1G_h}}}(y_h)\hspace{-0.15em}\prec\hspace{-0.15em} \mathbbone_{\mathcal{L}(Y_h)}$ for all $y_h\hspace{-0.15em}\in\hspace{-0.15em} Y_h$ in Assumption~\ref{ass:wellposedness}(\hyperlink{ass:wellposedness.i}{i})\linebreak is, in general, not satisfied for the Newton derivative selection given via \eqref{eq:rof_proximity_derivatives.1}. In~fact,~there~holds
$\mathtt{J}_{\smash{\operatorname{prox}_{\smash{\gamma_1G_h}}}}(y_h)=\mathbbone_{\mathcal{L}(Y_h)}-\smash{\frac{\gamma_1}{|y_h|}}\mathtt{P}_{\smash{y_h}}$ a.e.\ in  $\{|y_h|>\gamma_1+\varepsilon\}$, where
$\mathtt{P}_{\smash{y_h}}\coloneqq\mathbbone_{\mathcal{L}(Y_h)}-\smash{\frac{y_h\otimes y_h}{|y_h|^2}}\in \mathcal{L}(Y_h)$
denotes the orthogonal projection onto $\smash{(\mathbb{R}y_h)^\perp}$. In particular, since
$\mathtt{P}_{\smash{y_h}} y_h =\mathtt{0}_d$ a.e.\ in $\Omega$, we obtain
$\mathtt{J}_{\smash{\operatorname{prox}_{\smash{\gamma_1G_h}}}}(y_h)y_h=y_h$  a.e.\ in $\Omega$,
\textit{i.e.}, the eigenvalue $1$ is attained in the radial direction $\mathbb{R}y_h$.  
However, thanks to the lower-order quadratic fidelity term, resorting to a~discrete~inverse~estimate, one can still prove global well-posedness and  local super-linear convergence of Algorithm~\ref{alg:semi-smooth_newton}.

    \begin{theorem}%[Global well-posedness and local super-linear convergence of Algorithm \ref{alg:semi-smooth_newton}]
    \label{thm:wellposedness_tv}
    Algorithm~\ref{alg:semi-smooth_newton} employing the representations \eqref{eq:rof_proximity} and  \eqref{eq:rof_proximity_derivatives} is globally well-posed and locally super-linearly convergent (with $\alpha_k=1$ for all $k\in \mathbb{N}$) to a root~${(z_h,u_h)\in Y_h\times V_h}$.\vspace{-0.5mm}
    \end{theorem}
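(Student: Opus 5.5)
We prove both claims by reusing the block structure from the proofs of Theorem~\ref{prop:well-posedness} and Theorem~\ref{prop:convergence}. Since the upper bound $\mathtt{J}_h^1\prec\mathbbone_{\mathcal{L}(Y_h)}$ fails, we cannot invert $\mathbbone-\mathtt{J}_h^1$. Instead we eliminate the dual variable using $\mathtt{J}_h^1$ itself. By \eqref{eq:rof_proximity_derivatives.1}, for every $y_h\in Y_h$ the selection $\mathtt{J}_h^1=\mathtt{J}_{\operatorname{prox}_{\gamma_1G_h}}(y_h)$ is element-wise symmetric and satisfies
\begin{align*}
\tfrac{\varepsilon}{\gamma_1+\varepsilon}\mathbbone_{\mathcal{L}(Y_h)}\preceq\mathtt{J}_h^1\preceq\mathbbone_{\mathcal{L}(Y_h)}\,.
\end{align*}
In the region $|y_h|\ge\varepsilon+\gamma_1$ the eigenvalues are $1$ and $1-\gamma_1/|y_h|\ge\varepsilon/(\gamma_1+\varepsilon)$. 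Hence $\mathtt{J}_h^1$ is uniformly positive definite and $\mathbbone-\mathtt{J}_h^1\succeq\mathbbzero$ is only positive semi-definite. By \eqref{eq:rof_proximity_derivatives.2}, $\mathtt{J}_h^2=\frac{1}{1+\gamma_2\alpha}\mathbbone$ is a fixed multiple of the identity.

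\emph{Step 1 (reduction).} Let $\mathtt{J}_{\mathtt{F}_h}(y_h,v_h)(\delta y,\delta v)=(r_1,r_2)$. Because $\mathtt{J}_h^1$ is invertible, the first row gives
\begin{align*}
\delta y=\tfrac{1}{\gamma_1}(\mathtt{J}_h^1)^{-1}\bigl((\mathbbone-\mathtt{J}_h^1)\nabla\delta v-r_1\bigr)\,,
\end{align*}
as in \eqref{rem:reduced.2.1}. Because $\mathtt{J}_h^2$ is invertible, the second row can be written in the form \eqref{rem:reduced.2.2}. Substituting gives the reduced problem \eqref{rem:reduced.4} with bilinear form
\begin{align*}
a_h(\delta v,w_h)=\tfrac{1}{\gamma_1}\int_\Omega \mathtt{K}_h\nabla\delta v\cdot\nabla w_h\,\mathrm{d}x+\alpha\int_\Omega I_{V_h}\{\delta v\,w_h\}\,\mathrm{d}x\,,
\end{align*}
where $\mathtt{K}_h\coloneqq(\mathtt{J}_h^1)^{-1}(\mathbbone-\mathtt{J}_h^1)$ and we used $\frac{1}{\gamma_2}(\mathtt{J}_h^2)^{-1}(\mathbbone-\mathtt{J}_h^2)=\alpha$. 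The matrix $\mathtt{K}_h$ is symmetric and positive semi-definite, since $\mathtt{J}_h^1$ is symmetric with eigenvalues in $(0,1]$. It is also bounded: $0\preceq\mathtt{K}_h\preceq\frac{\gamma_1}{\varepsilon}\mathbbone$. Therefore $a_h$ is symmetric and coercive on $V_h$, with $a_h(w,w)\ge\alpha\|w_h\|_{V_h}^2$; the lower-order fidelity term supplies the coercivity that the possibly degenerate $\mathtt{K}_h$ cannot. This coercivity constant does not depend on $(y_h,v_h)$, so $\mathtt{J}_{\mathtt{F}_h}(y_h,v_h)$ is invertible for every argument. This yields global well-posedness for arbitrary initial iterates. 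Equivalently, with $\mathtt{A}_h$ replaced by $\gamma_1(\mathtt{J}_h^1)^{-1}$ one could reuse the Schur complement argument, now with a negative definite $(2,2)$-block.

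\emph{Step 2 (uniform bound on the inverse).} This is the step where the discrete inverse estimate enters, and it is the main obstacle: no uniform spectral gap below $1$ is available, so the bound on $(y_h,v_h)\mapsto\|\mathtt{J}_{\mathtt{F}_h}(y_h,v_h)^{-1}\|$ has to come from the reduced problem. First, testing the reduced equation with $\delta v$ gives
\begin{align*}
\|\delta v\|_{V_h}\lesssim \alpha^{-1}\bigl(\|\nabla_h^*\tfrac{1}{\gamma_1}(\mathtt{J}_h^1)^{-1}r_1\|+\|r_2\|\bigr)\,.
\end{align*}
Second, the inverse estimate $\|\nabla w_h\|_{Y_h}\le c_{\mathrm{inv}}h_{\min}^{-1}\|w_h\|_{V_h}$ bounds the first term by $c_{\mathrm{inv}}h_{\min}^{-1}\frac{\gamma_1+\varepsilon}{\gamma_1\varepsilon}\|r_1\|$. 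Third, the formula for $\delta y$ together with $\|(\mathtt{J}_h^1)^{-1}\|\le(\gamma_1+\varepsilon)/\varepsilon$, $\|\mathbbone-\mathtt{J}_h^1\|\le1$, and the same inverse estimate for $\nabla\delta v$ bounds $\|\delta y\|$. All resulting constants depend only on $h,\varepsilon,\gamma_1,\gamma_2,\alpha$, not on the point of evaluation. Hence
\begin{align*}
\sup_{(y_h,v_h)\in Y_h\times V_h}\|\mathtt{J}_{\mathtt{F}_h}(y_h,v_h)^{-1}\|_{\mathcal{L}(Y_h\times V_h)}<+\infty\,.
\end{align*}

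\emph{Step 3 (Newton differentiability and conclusion).} A root $(z_h,u_h)$ exists by Proposition~\ref{prop:discrete_duality}(ii) together with the equivalence \eqref{sec:semi-smooth_newton.2}. It remains to check that \eqref{eq:rof_proximity_derivatives.1} is a Newton derivative selection of \eqref{eq:rof_proximity.1}. By Corollary~\ref{prop:derivative_proximity_of_integral_functionals} this reduces to the element-wise map $t\mapsto(1-\min\{\gamma_1/|t|,\gamma_1/(\gamma_1+\varepsilon)\})t$. This map is piecewise smooth, with smooth pieces glued continuously along the sphere $|t|=\gamma_1+\varepsilon$, so the selection works there, in analogy with Example~\ref{expl:proximity}. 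The affine map \eqref{eq:rof_proximity.2} is trivially Newton differentiable. Theorem~\ref{thm:wellposed_convergent} then gives local super-linear convergence with $\alpha_k=1$.
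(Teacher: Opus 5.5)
Your proof is correct and follows the route the paper indicates. The paper only cites \cite[Thm.\ 4.2]{BartelsKaltenbach2026}, but says that the argument rests on the lower-order fidelity term, which gives coercivity of the reduced problem even though $\mathbbone-\mathtt{J}_h^1$ is only semi-definite, and on a discrete inverse estimate for the uniform bound on the inverse; these are exactly your Steps~1 and~2.

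The parenthetical Schur-complement aside is slightly off as written: scaling the first row by $-(\mathtt{J}_h^1)^{-1}$ gives the blocks $\gamma_1\mathbbone$ and $-\mathtt{K}_h\nabla_h$, and the negative definite $(2,2)$-block $-\alpha\mathbbone$ then carries the argument. Your direct elimination in Step~1 does not depend on it.
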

    
    \begin{proof}
        See \cite[Thm.\ 4.2]{BartelsKaltenbach2026}.  
    \end{proof}\newpage
    
    \subsection{$p$-Dirichlet problem}\label{subsec:p-laplace} 

    \hspace{5mm}In this subsection, we consider the \emph{$p$-Dirichlet problem} (\textit{cf}.\ \cite{Ladyzhenskaya1968,Lindqvist2017}), a nonlinear generalization of the classical Dirichlet problem arising in the modelling of non-Newtonian fluid flow and nonlinear elasticity. Originating from the early studies of power-law rheology (\textit{cf}.\ \cite{Ostwald1925I,deWaele1923}), the model is formulated as the minimization of the \emph{$p$-Dirichlet energy functional}, which combines nonlinear diffusion with prescribed mixed (homogeneous) Dirichlet and Neumann~boundary~conditions, and leads to the \emph{$p$-Laplace equation}, describing stationary states in media whose effective~conductivity or viscosity depends nonlinearly on the gradient magnitude. In contrast to the~linear~case~$p=2$, however, the design of robust nonlinear solvers for the $p$-Dirichlet problem is delicate, and~there appears to be no universally preferred solution strategy across the full range $p\in(1,+ \infty)$.\vspace{-1mm}\enlargethispage{7.5mm}

    \subsubsection{The continuous problem} 

    \hspace{5mm}Given an \emph{exponent} $p\in (1,+\infty)$, a \emph{right-hand side} $f\in L^{p'}(\Omega)$, and a homogeneous Dirichlet boundary \hspace{-0.15mm}condition \hspace{-0.15mm}on \hspace{-0.15mm}a \hspace{-0.15mm}non-empty \hspace{-0.15mm}Dirichlet \hspace{-0.15mm}boundary \hspace{-0.15mm}part \hspace{-0.15mm}$\Gamma_D\hspace{-0.175em}\subseteq\hspace{-0.175em} \partial\Omega$, \hspace{-0.15mm}the~\hspace{-0.15mm}\mbox{$p$-Dirichlet}~\hspace{-0.15mm}\mbox{problem}~\hspace{-0.15mm}seeks a function $u\in W^{1,p}_D(\Omega)$ that minimizes the primal energy functional $I\colon W^{1,p}_D(\Omega)\to \mathbb{R}$, for every $v\in W^{1,p}_D(\Omega)$ defined by\vspace{-0.5mm}
    \begin{align}\label{eq:pdirichlet_primal}
        I(v)\coloneqq \frac{1}{p}\int_{\Omega}{\vert\nabla v\vert^p\,\mathrm{d}x}-\int_{\Omega}{fv\,\mathrm{d}x}\,.\\[-6mm]\notag
    \end{align}
    The direct method in the calculus of variations yields the existence of a unique primal solution $u\in W^{1,p}_D(\Omega)$, \textit{i.e.}, a minimizer of \eqref{eq:pdirichlet_primal}.

    A (Fenchel) dual problem (in the sense of \cite[Rem.\ 4.2,  p.\ 60/61]{EkelandTemam1999}) to the  minimization~of~\eqref{eq:pdirichlet_primal} (\textit{e.g.}, derived in \cite[Subsec.\ 2.2, p.\ 81/82]{EkelandTemam1999}) is given via  the maximization of the dual~energy~functional $D\colon \smash{W^{p'}_N(\operatorname{div};\Omega)}\to \mathbb{R}\cup\{-\infty\}$, for every $y\in  \smash{W^{p'}_N(\operatorname{div};\Omega)}$ defined by\vspace{-0.5mm}
    \begin{align}\label{eq:pdirichlet_dual}
        D(y)\coloneqq -\frac{1}{p'}\int_{\Omega}{\vert y\vert^{p'}\,\mathrm{d}x}-I_{\{-f\}}^{\Omega}(\operatorname{div}y)\,,\\[-6mm]\notag
    \end{align}
    where the indicator functional $\smash{I_{\{-f\}}^{\Omega}}\colon \smash{L^{p'}(\Omega)}\to \mathbb{R}\cup\{+\infty\}$, for every $\widehat{v}\in \smash{L^{p'}(\Omega)}$, is defined by\vspace{-0.5mm}
    \begin{align}
        I_{\{-f\}}^{\Omega}(\widehat{v})\coloneqq\begin{cases}
            0&\text{ if }\widehat{v}=-f\text{ a.e.\ in }\Omega\,,\\
            +\infty&\text{ else}\,.
        \end{cases}\\[-6.5mm]\notag
    \end{align}
    By \cite[Prop.\ 2.2, p.\ 82]{EkelandTemam1999}, there exists a (due to the strict convexity of $\phi^*=\smash{\frac{1}{p'}}\vert \cdot\vert^{p'}\colon \mathbb{R}^d\to \mathbb{R}$,~unique) dual solution $z\in \smash{W^{p'}_N(\operatorname{div};\Omega)}$, \textit{i.e.}, a maximizer of \eqref{eq:pdirichlet_dual}, and a strong duality relation applies, \textit{i.e.}, we have that $I(u)=D(z)$, which is equivalent to the primal optimality relations\vspace{-0.5mm}
    \begin{subequations} \label{eq:pdirichlet_optimality}
    \begin{alignat}{2}\label{eq:pdirichlet_optimality.1}
        z&=\vert \nabla u\vert^{p-2}\nabla u&&\quad \text{ a.e.\ in }\Omega\,,\\
        \operatorname{div}z&=-f&&\quad \text{ a.e.\ in }\Omega\,.\label{eq:pdirichlet_optimality.2}\\[-6.5mm]\notag
    \end{alignat}
    \end{subequations}

    \subsubsection{The discrete problem} 

    \hspace{5mm}Given \hspace{-0.15mm}a \hspace{-0.15mm}\emph{discrete \hspace{-0.15mm}right-hand \hspace{-0.15mm}side} \hspace{-0.15mm}$f_h\hspace{-0.15em}\in\hspace{-0.15em} V_h$, \hspace{-0.15mm}the \hspace{-0.15mm}\emph{discrete \hspace{-0.15mm}$p$-Dirichlet \hspace{-0.15mm}problem} \hspace{-0.15mm}seeks~\hspace{-0.15mm}a~\hspace{-0.15mm}\mbox{function}~\hspace{-0.15mm}${u_h\hspace{-0.15em}\in\hspace{-0.15em} V_h}$ that minimizes the discrete primal energy functional   $I_h\colon V_h\to \mathbb{R}$, for every $v_h\in V_h$ defined by\vspace{-0.5mm}
    \begin{align}\label{eq:pdirichlet_primal_discrete}
        I_h(v_h)\coloneqq \int_{\Omega}{\phi_\varepsilon(\nabla v_h)\,\mathrm{d}x}-\int_{\Omega}{I_{V_h}\{f_hv_h\}\,\mathrm{d}x}\,.\\[-6mm]\notag
    \end{align}
    Here, $\phi_\varepsilon\coloneqq \smash{\frac{1}{p}(\varepsilon^2+\vert \cdot\vert^2)^{\frac{p}{2}}}\colon \mathbb{R}^d\to \mathbb{R}$ denotes the standard regularization of the $p$-Dirichlet density $\phi\coloneqq \smash{\tfrac{1}{p}\vert\cdot\vert^p}\colon \mathbb{R}^d\to \mathbb{R}$ (\textit{cf}.\ \cite[Expl.\ 2.1(i)]{BartelsDieningNochetto2018}). The direct method in the calculus of variations yields the existence of a unique discrete primal solution $u_h\in V_h$, \textit{i.e.}, a minimizer of \eqref{eq:pdirichlet_primal_discrete}.

      According \hspace{-0.15mm}to \hspace{-0.15mm}Proposition \hspace{-0.15mm}\ref{prop:discrete_duality}\hspace{-0.15mm}(\hyperlink{prop:discrete_duality.i}{i}), \hspace{-0.15mm}a \hspace{-0.15mm}(Fenchel) \hspace{-0.15mm}dual \hspace{-0.1mm}problem \hspace{-0.15mm}(in \hspace{-0.15mm}the \hspace{-0.1mm}sense \hspace{-0.15mm}of \hspace{-0.15mm}\cite[Rem.\ \hspace{-0.15mm}4.2,~\hspace{-0.15mm}p.~\hspace{-0.15mm}60/61]{EkelandTemam1999}) to the minimization~of~\eqref{eq:pdirichlet_primal_discrete} is given via the maximization of the discrete dual energy functional $D_h\colon Y_h\to \mathbb{R}\cup\{-\infty\}$, for every $y_h\in  Y_h$ defined by\vspace{-0.5mm}
    \begin{align}\label{eq:pdirichlet_dual_discrete}
        D_h(y_h)\coloneqq -\int_{\Omega}{\phi_\varepsilon^*(y_h)\,\mathrm{d}x}-I_{\{-f_h\}}^{\Omega}(\operatorname{div}_hy_h)\,.
    \end{align}
    Note that, in general, there is no explicit representation for the Fenchel conjugate $\phi_\varepsilon^*\colon \mathbb{R}^d\to \mathbb{R}$.
     By Proposition \ref{prop:discrete_duality}(\hyperlink{prop:discrete_duality.ii}{ii}), there exists a (due to the strict convexity of $\phi_\varepsilon^*\colon \mathbb{R}^d\to \mathbb{R}$, unique) discrete dual solution $z_h\in Y_h$, \textit{i.e.}, a maximizer of \eqref{eq:pdirichlet_dual_discrete}, discrete strong duality applies, \textit{i.e.},~we~have~that $I_h(u_h)=D_h(z_h)$, which is equivalent to the discrete primal optimality relations\vspace{-1mm}
    \begin{subequations} \label{eq:pdirichlet_discrete_optimality}
    \begin{alignat}{2}\label{eq:pdirichlet_discrete_optimality.1}
         z_h&=\mathrm{D}\phi_\varepsilon(\nabla u_h)=(\varepsilon^2+\vert \nabla u_h\vert^2)^{\frac{p-2}{2}}\nabla u_h&&\quad\text{ a.e.\ in }\Omega\,,\\
         \operatorname{div}_hz_h&=-f_h&&\quad\text{ in }\Omega\,.
         \label{eq:pdirichlet_discrete_optimality.2}
    \end{alignat}
    \end{subequations}

    \subsubsection{The semi-smooth Newton method}\vspace{-0.5mm}

    \hspace*{5mm}Given proximity parameters $\gamma_1,\gamma_2>0$, following the reasoning in the beginning~of~\mbox{Section}~\ref{sec:semi-smooth_newton}, we observe that the primal optimality relations \eqref{eq:pdirichlet_discrete_optimality} are equivalent to the existence of a root of the mapping $\mathtt{F}_h\colon Y_h\times V_h\to Y_h\times V_h$, defined by \eqref{sec:semi-smooth_newton.3}, where, by virtue of Proposition \ref{prop:proximity_of_integral_functionals}, for every $y_h\in Y_h$ and $v_h\in V_h$, respectively, we have that\vspace{-0.5mm}
    \begin{subequations}\label{eq:pdirichlet_proximity}
\begin{alignat}{2}
\operatorname{prox}_{\gamma_1 G_h}(y_h)(x)
    &=\operatorname{prox}_{\gamma_1 \phi_h(x,\cdot)}(y_h(x))
    &&\quad\text{ for a.e.\ }x\in \Omega\,,
    \label{eq:pdirichlet_proximity.1}
    \\
\operatorname{prox}_{\gamma_2 F_h}(v_h)
    &=v_h+\smash{\gamma_2}f_h
    &&\quad\text{ in }\Omega\,.
    \label{eq:pdirichlet_proximity.2}
\end{alignat}
\end{subequations} 

For the proximity operator 
$\operatorname{prox}_{\gamma_1 G_h}\colon Y_h\to Y_h$,
in general, no explicit representation~formula~is available.
In fact, due to the non-quadratic $p$-growth of the regularized~energy~density~${\phi_\varepsilon\hspace{-0.1em}\in\hspace{-0.1em} C^1(\mathbb{R}^d)}$, closed formulas can merely be derived in particular cases,
\textit{e.g.}, for $p\in\{1,2,3,4\}$.
For this reason, for a given $y_h\in Y_h$, the value
$\operatorname{prox}_{\gamma_1 G_h}(y_h)\in Y_h$
has to be computed numerically.

\begin{remark}[on computation of $\operatorname{prox}_{\gamma_1 G_h}(y_h)\in Y_h$]\label{rem:comput_prox} For a given $y_h\in Y_h$, due to the definition of the proximity operator (\textit{cf}.\ Definition \ref{def:proximity}) and $\phi_\varepsilon\in C^1(\mathbb{R}^d)$, the value $\widehat y_h\coloneqq\operatorname{prox}_{\gamma_1 G_h}(y_h)\in Y_h$
is characterized as the unique solution of
\begin{align}\label{eq:pdirichlet_prox_equation}
    \smash{\mathrm{D}\phi_\varepsilon(\widehat y_h)
    +\tfrac{1}{\gamma_1}(\widehat y_h-y_h)=0
    \quad\text{in }Y_h\,.}
\end{align}
Since 
$Y_h$
consists of element-wise constant vector fields,
the system~\eqref{eq:pdirichlet_prox_equation}
decouples~\mbox{element-wise}, \textit{i.e.}, 
for every $T\in\mathcal T_h$,
 $\widehat y_h|_T\in\mathbb R^d$
depends only on $y_h|_T\in\mathbb R^d$
and is determined by a~system~in~$\mathbb R^d$.\linebreak
Moreover, exploiting the radial symmetry of $\phi_\varepsilon\colon \mathbb{R}^d\to \mathbb{R}$, for every $T\in\mathcal T_h$, the respective system reduces to a scalar equation for the modulus
$|\widehat y_h|_T|\in \mathbb{R}$.
As a consequence,~the~\mbox{computation} of $\widehat y_h=\operatorname{prox}_{\gamma_1 G_h}(y_h)\in Y_h$
reduces to the parallel solution of scalar equations on each element $T\in \mathcal{T}_h$. 
Due to the additional quadratic term in the definition of the proximity
operator (\textit{cf}.\ Definition~\ref{def:proximity}),\linebreak the  scalar equations are $\frac{1}{\gamma_1}$-strongly monotone.
Therefore,~they~admit~unique~\mbox{solutions}, their derivatives do not vanish at
the root, and the scalar Newton iterations~converge~locally~\mbox{quadratically}.
\end{remark}  

Since $\phi_\varepsilon\in C^2(\mathbb R^d)$, due to Lemma \ref{lem:prop_proximity}(\hyperlink{lem:prop_proximity.iv}{iv}), 
the proximity operator
$\operatorname{prox}_{\gamma_1 G_h}\colon Y_h\to Y_h$~is~even continuously
Fr\'echet differentiable. In particular, for every $y_h\in Y_h$ and $v_h\in V_h$, respectively, we have that\vspace{-1mm}
\begin{subequations}\label{eq:pdirichlet_proximity_derivatives}
\begin{alignat}{2}
\mathtt{J}_{\smash{\operatorname{prox}_{\gamma_1 G_h}}}(y_h)
    &=
    \bigl(
        \mathbbone_{\mathcal L(Y_h)}
        +
        \gamma_1
        \mathrm{D}^2G_h(
            \operatorname{prox}_{\gamma_1 G_h}(y_h)
        )
    \bigr)^{-1}
    &&\quad\text{a.e.\ in }\Omega\,,
    \label{eq:pdirichlet_proximity_derivatives.1}
    \\
\mathtt{J}_{\smash{\operatorname{prox}_{\gamma_2 F_h}}}(v_h)
    &=\mathbbone_{\mathcal L(V_h)}
    &&\quad\text{in }\Omega\,.
    \label{eq:pdirichlet_proximity_derivatives.2}
\end{alignat}
\end{subequations}
In other words,  the evaluation of the Fr\'echet derivative
$\mathtt{J}_{\smash{\operatorname{prox}_{\gamma_1 G_h}}}(y_h)\in \mathcal{L}(Y_h)$
requires~only~the computation of the value
$\operatorname{prox}_{\gamma_1 G_h}(y_h)\in Y_h$ itself.

The explicit representations in \eqref{eq:pdirichlet_proximity_derivatives}
show that the assumptions of Theorems~\ref{prop:well-posedness}
and \ref{prop:convergence} are satisfied in the present setting.\enlargethispage{5mm}

\begin{theorem}%[Global well-posedness and local super-linear convergence of Algorithm~\ref{alg:semi-smooth_newton}]
\label{prop:wellposedness_pdirichlet}
Algorithm~\ref{alg:semi-smooth_newton} employing the representations \eqref{eq:pdirichlet_proximity} and  \eqref{eq:pdirichlet_proximity_derivatives} is globally well-posed and locally super-linearly convergent (with $\alpha_k=1$ for all $k\in \mathbb{N}$) to a root~${(z_h,u_h)\in Y_h\times V_h}$. 
\end{theorem}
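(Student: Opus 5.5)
We plan to deduce the statement from Theorem~\ref{prop:well-posedness} and Theorem~\ref{prop:convergence}. Both are stated for both discretization strategies, so it suffices to verify Assumption~\ref{ass:wellposedness} and Assumption~\ref{ass:convergence} for $\phi_h\coloneqq\phi_\varepsilon=\frac{1}{p}(\varepsilon^2+\vert\cdot\vert^2)^{\frac{p}{2}}$ and $\psi_h(\cdot,s)\coloneqq -f_hs$. We also need that a root $(z_h,u_h)\in Y_h\times V_h$ of $\mathtt{F}_h$ exists. For this we use Proposition~\ref{prop:discrete_duality}(\hyperlink{prop:discrete_duality.ii}{ii}). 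Since $\phi_\varepsilon$ is real-valued and convex, $y_h\mapsto\int_\Omega\phi_\varepsilon(y_h)\,\mathrm{d}x$ is continuous on the finite-dimensional space $Y_h$. Together with the existence of the discrete primal solution $u_h$, this gives a discrete dual solution $z_h$ satisfying \eqref{eq:pdirichlet_discrete_optimality}, and hence $\mathtt{F}_h(z_h,u_h)=0$ by the equivalences at the beginning of Section~\ref{sec:semi-smooth_newton}.

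\emph{The $\psi_h$-part.} By \eqref{eq:pdirichlet_proximity_derivatives.2}, $\mathtt{J}_{\operatorname{prox}_{\gamma_2F_h}}\equiv\mathbbone_{\mathcal{L}(V_h)}$.
In case (\hyperlink{ML}{ML}), this is positive definite with lower bound $\underline{\mu}_h^2=1$, which gives Assumption~\ref{ass:wellposedness}(ii.a) and Assumption~\ref{ass:convergence}(ii.a).
In case (\hyperlink{NML}{NML}), $\psi_h$ has the form of Assumption~\ref{ass:energy_densities_discrete}(\hyperlink{ass:energy_densities_discrete.ii.b}{ii.b}) with $a_h=0$, $b_h=-f_h$ and $c_h=0$. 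The prescribed selection $\frac{1}{1+2\gamma_2a_h}\mathbbone_{\mathcal{L}(V_h)}$ therefore coincides with $\mathbbone_{\mathcal{L}(V_h)}$, so (ii.b) holds in both assumptions. In the proofs of the two theorems, this makes the block $\mathtt{D}_h=0$, which is negative semi-definite as required, and gives $\|(\mathtt{J}_h^2)^{-1}\|=1$.

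\emph{The $\phi_h$-part.} By Lemma~\ref{lem:prop_proximity}(\hyperlink{lem:prop_proximity.iv}{iv}) and Proposition~\ref{prop:proximity_of_integral_functionals}(\hyperlink{prop:proximity_of_integral_functionals.i}{i}), the selection \eqref{eq:pdirichlet_proximity_derivatives.1} acts element-wise as the symmetric matrix $(\mathbbone+\gamma_1\mathrm{D}^2\phi_\varepsilon(\widehat t))^{-1}$ with $\widehat t\coloneqq\operatorname{prox}_{\gamma_1\phi_\varepsilon}(t)$. A direct computation gives
\begin{align*}
\mathrm{D}^2\phi_\varepsilon(s)=(\varepsilon^2+\vert s\vert^2)^{\frac{p-2}{2}}\Bigl(\mathbbone+(p-2)\tfrac{s\otimes s}{\varepsilon^2+\vert s\vert^2}\Bigr)\,.
\end{align*}
Its eigenvalues are $(\varepsilon^2+\vert s\vert^2)^{\frac{p-2}{2}}$ on $s^\perp$ and $(\varepsilon^2+\vert s\vert^2)^{\frac{p-4}{2}}(\varepsilon^2+(p-1)\vert s\vert^2)$ in the direction of $s$. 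Both are strictly positive because $p>1$ and $\varepsilon>0$, and both depend continuously on $s$. Consequently the eigenvalues of $\mathtt{J}_{\operatorname{prox}_{\gamma_1G_h}}(y_h)$ lie in $(0,1)$ on every element. With respect to $(\cdot,\cdot)_{Y_h}$ this yields $\mathbbzero_{\mathcal{L}(Y_h)}\prec\mathtt{J}_{\operatorname{prox}_{\gamma_1G_h}}(y_h)\prec\mathbbone_{\mathcal{L}(Y_h)}$ for all $y_h\in Y_h$, i.e., Assumption~\ref{ass:wellposedness}(\hyperlink{ass:wellposedness.i}{i}). Theorem~\ref{prop:well-posedness} then gives global well-posedness.

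\emph{Main obstacle: uniform spectral bounds for local convergence.} For Assumption~\ref{ass:convergence}(i) we need bounds that are uniform on the ball $B_{\varepsilon_h}^{Y_h}(\nabla u_h+\gamma_1z_h)$. We would argue by compactness. On the finite-dimensional space $Y_h$, the $L^2$-norm is equivalent to the element-wise maximum norm, with a mesh-dependent constant. Hence every $y_h$ in the ball satisfies $\vert y_h\vert_T\vert\le R_h$ for some $R_h<\infty$ and all $T\in\mathcal{T}_h$. Since $\operatorname{prox}_{\gamma_1\phi_\varepsilon}$ is $1$-Lipschitz with $\operatorname{prox}_{\gamma_1\phi_\varepsilon}(0)=0$, the points $\widehat t$ lie in the compact ball $K_{R_h}^d(0)$. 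On this ball the eigenvalues of $\mathrm{D}^2\phi_\varepsilon$ are bounded between some $0<\lambda_{\min}\le\lambda_{\max}<\infty$, by continuity and positivity. We therefore choose $\underline{\mu}_h^1\coloneqq(1+\gamma_1\lambda_{\max})^{-1}$ and $\overline{\mu}_h^1\coloneqq(1+\gamma_1\lambda_{\min})^{-1}<1$. These verify Assumption~\ref{ass:convergence}(i), and Theorem~\ref{prop:convergence} then yields local well-posedness and local super-linear convergence with $\alpha_k=1$.
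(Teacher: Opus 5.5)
Your proposal is correct and follows the paper's route: verify Assumptions~\ref{ass:wellposedness} and~\ref{ass:convergence} from the representations \eqref{eq:pdirichlet_proximity_derivatives} and then invoke Theorems~\ref{prop:well-posedness} and~\ref{prop:convergence}. You add details the paper leaves implicit: the explicit Hessian eigenvalues, the compactness argument giving uniform spectral bounds on the ball centred at $\nabla u_h+\gamma_1 z_h$ (the paper writes $B_{\varepsilon_h}^{Y_h}(0)$), and the existence of the root via Proposition~\ref{prop:discrete_duality}(ii).
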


    \begin{proof}
        From representation  \eqref{eq:pdirichlet_proximity_derivatives.1}, it follows that $\mathbbzero_{\mathcal{L}(Y_h)} \prec \mathtt{J}_{\smash{\operatorname{prox}_{\gamma_1 G_h}}}(y_h)\prec \mathbbone_{\mathcal{L}(Y_h)}$~for~all~$y_h\in  Y_h$\linebreak  and 
        $\underline{\smash{\mu}}_h^1\mathbbone_{\mathcal{L}(Y_h)}\prec \mathtt{J}_{\smash{\operatorname{prox}_{\gamma_1 G_h}}}(y_h)\prec\overline{\mu}_h^1\mathbbone_{\mathcal{L}(Y_h)}$ for all $y_h\in  B_{\varepsilon_h}^{Y_h}(0)$, where $0<\underline{\mu}_h^1\leq \overline{\mu}_h^1<1$~\mbox{depend}~on $B_{\varepsilon_h}^{Y_h}(0)$, so that, also using representation  \eqref{eq:pdirichlet_proximity_derivatives.2}, we conclude that
        Assumptions~\ref{ass:wellposedness}~and~\ref{ass:convergence} are satisfied.
    \end{proof}
    
    \newpage

    \subsection{Elasto-plastic torsion problem}\label{subsec:elasto-plastic}

    \hspace{5mm}In this subsection, we consider the \emph{elasto-plastic torsion problem} (\textit{cf}.\  \cite{DuvautLions1972}), a classical model problem in structural mechanics --originating in Prandtl’s torsion theory (\textit{cf}.\ \cite{Prandtl1903})--, describing the torsional deformation of a prismatic bar made of an elasto-plastic material, formulated as a variational problem with a point-wise gradient constraint reflecting the yield condition.

    \subsubsection{The continuous problem}

    \hspace{5mm}Given \hspace{-0.15mm}an \hspace{-0.15mm}\emph{external \hspace{-0.15mm}load} \hspace{-0.15mm}(or \hspace{-0.15mm}\emph{torque}) \hspace{-0.15mm}$f\hspace{-0.175em}\in \hspace{-0.175em}L^1(\Omega)$
    \hspace{-0.15mm}and \hspace{-0.15mm}a \hspace{-0.15mm}homogeneous \hspace{-0.15mm}Dirichlet~\hspace{-0.15mm}boundary~\hspace{-0.15mm}\mbox{condition}~\hspace{-0.15mm}on\linebreak a non-empty Dirichlet boundary part $\Gamma_D\subseteq \partial\Omega$,
 the elasto-plastic torsion problem seeks~a~\emph{Prandtl stress potential} $u\hspace{-0.15em}\in\hspace{-0.15em} W^{1,\infty}_D(\Omega)$ that minimizes the primal energy functional ${I\colon \hspace{-0.15em}W^{1,\infty}_D(\Omega)\hspace{-0.15em}\to \hspace{-0.15em}\mathbb{R}\hspace{-0.1em}\cup\hspace{-0.1em}\{+\infty\}}$, for every $v\in W^{1,\infty}_D(\Omega)$ defined by
    \begin{align}\label{eq:elasto_primal}
        I(v)\coloneqq \frac{1}{2}\int_{\Omega}{\vert \nabla v\vert^2\,\mathrm{d}x}+I_{K_1^d(0)}^{\Omega}(\nabla v)-\int_{\Omega}{fv\,\mathrm{d}x}\,.
    \end{align}
    The direct method in the calculus of variations yields the existence of a unique  primal solution $u\in W^{1,\infty}_D(\Omega)$ (\textit{cf}.\ \cite[Sec.\ 3]{AntilBartelsKaltenbachKhandelwal2025}), \textit{i.e.}, a minimizer~of~\eqref{eq:elasto_primal}.

    A (Fenchel) dual problem (in the sense of \cite[Rem.\ 4.2, p.\ 60/61]{EkelandTemam1999}) to the minimization~of~\eqref{eq:elasto_primal} was identified in \cite[Thm.\ 3.1(i)]{AntilBartelsKaltenbachKhandelwal2025} and is given via the maximization of the dual energy functional\linebreak $D\colon \hspace{-0.15em}(\mathrm{ba}(\Omega))^d\hspace{-0.15em}\to\hspace{-0.15em} \mathbb{R}\cup\{-\infty\}$, for every $\nu \hspace{-0.15em}=\hspace{-0.15em}y\otimes \mathrm{d}x+\nu^s\hspace{-0.15em}\in\hspace{-0.15em} (\mathrm{ba}(\Omega))^d$, where $y\hspace{-0.15em}\in\hspace{-0.15em} (L^1(\Omega))^d$~and~${\nu^s\hspace{-0.15em}\in\hspace{-0.15em} (\mathrm{ba}(\Omega))^d}$ with $\nu^s\perp y\otimes \mathrm{d}x$ form the unique Lebesgue decomposition of $\nu \in (\mathrm{ba}(\Omega))^d$ (\textit{cf}.\ \cite[Thm.~3.1]{Toland20}), defined by
    \begin{align}\label{eq:elasto_dual}
        D(\nu)\coloneqq -\frac{1}{2}\int_{\Omega}{\vert y\vert^2\,\mathrm{d}x}-\frac{1}{2}\int_{\Omega}{(\vert y\vert-1)_+^2\,\mathrm{d}x}-\vert\nu^s\vert(\Omega)-I_{K^*}(\nu)\,,
    \end{align}
    Here, $(\mathrm{ba}(\Omega))^d$ denotes the \emph{space of bounded
and finitely additive vector measures} on $\mathcal{M}(\mathrm{d}x;\Omega)$ (\textit{cf}.\ \cite[Def.\ 4.1]{Toland20}) and 
    the indicator functional $I_{K^*}\colon (\mathrm{ba}(\Omega))^d\to \mathbb{R}\cup\{+\infty\}$, for every $\widehat{\nu} \in (\mathrm{ba}(\Omega))^d$, is defined by 
    \begin{align*}
        I_{K^*}(\widehat{\nu})\coloneqq \begin{cases}
            0&\text{ if }\displaystyle\langle \widehat{\nu},\nabla v\rangle_{(L^\infty(\Omega))^d}=\int_{\Omega}{fv\,\mathrm{d}x}\text{ for all }v\in W^{1,\infty}_D(\Omega)\,,\\
            +\infty&\text{ else}\,,
        \end{cases}
    \end{align*}
    \textit{i.e.}, for every $\widehat{\nu}=\widehat{y}\otimes \mathrm{d}x \in (\mathrm{ba}(\Omega))^d$, where $\widehat{y}\in (L^1(\Omega))^d$, there holds
    \begin{align*}
        I_{K^*}(\widehat{\nu})=\begin{cases}
            0&\text{ if }\widehat{y}\in W^1_N(\operatorname{div};\Omega)\text{ with}\operatorname{div}\widehat{y}=-f\text{ a.e.\ in }\Omega \,,\\
            +\infty&\text{ else}\,.
        \end{cases}
    \end{align*}
    In particular, for every $\nu=y\otimes \mathrm{d}x \in (\mathrm{ba}(\Omega))^d$, where $y\in W_N^1(\operatorname{div};\Omega)$, we have that
    \begin{align}\label{eq:elasto_dual2}
        D(\nu)=-\frac{1}{2}\int_{\Omega}{\vert y\vert^2\,\mathrm{d}x}-\frac{1}{2}\int_{\Omega}{(\vert y\vert-1)_+^2\,\mathrm{d}x}-I_{\{-f\}}^{\Omega}(\operatorname{div}y)\,.
    \end{align}
    By \cite[Thm.\ 3.1(ii)]{AntilBartelsKaltenbachKhandelwal2025}, 
    there exists a (not necessarily unique) dual solution ${\mu\hspace{-0.1em}=\hspace{-0.1em}z\hspace{-0.1em}\otimes\hspace{-0.1em}\mathrm{d}x\hspace{-0.15em}+\hspace{-0.15em}\mu^s\hspace{-0.1em}\in\hspace{-0.1em} (\mathrm{ba}(\Omega))^d}$,  where $z\in (L^1(\Omega))^d$ and $\mu^s\in (\mathrm{ba}(\Omega))^d$ with $\mu^s\perp z\otimes \mathrm{d}x$ form the unique Lebesgue decomposition of $\mu \in (\mathrm{ba}(\Omega))^d$ (\textit{cf}.\ \cite[Thm.~3.1]{Toland20}),  \textit{i.e.}, a maximizer of \eqref{eq:elasto_dual}, and strong duality applies, \textit{i.e.}, we have that $I(u)=D(\mu)$, which is equivalent to the primal optimality relations~(\textit{cf}.~\mbox{\cite[Thm.~3.1(ii)]{AntilBartelsKaltenbachKhandelwal2025}})\vspace{-2.5mm}
    \begin{subequations}\label{eq:elasto_optimality}
    \begin{alignat}{2}\label{eq:elasto_optimality.1}
         \nabla u&=\smash{\min\{1,\tfrac{1}{\vert z\vert}\}}z&&\quad \text{ a.e.\ in }\Omega\,,\\[2.5mm]
        \vert \mu^s\vert(\Omega)&=\langle \mu^s,\nabla u\rangle_{(L^\infty(\Omega))^d}\,,\label{eq:elasto_optimality.2}\\
        \langle \mu,\nabla v\rangle_{(L^\infty(\Omega))^d}&=\int_{\Omega}{fv\,\mathrm{d}x}&&\quad\text{ for all }v\in W^{1,\infty}_D(\Omega)\,.\label{eq:elasto_optimality.3}
    \end{alignat}
    \end{subequations}

    \subsubsection{The discrete problem}\vspace{-0.5mm}

    \hspace{5mm}Given a \emph{discrete external load} (or \emph{torque}) $f_h\in V_h$, the  discrete elasto-plastic torsion~problem seeks a \emph{discrete Prandtl stress potential} $u_h\in V_h$ that minimizes the discrete primal~energy~functional $I_h\colon V_h\to \mathbb{R}$, for every $v_h\in V_h$ defined by\vspace{-0.5mm}
    \begin{align}\label{eq:elasto_discrete_primal}
        I_h(v_h)\coloneqq \int_{\Omega}{\phi_\varepsilon(\nabla v_h)\,\mathrm{d}x}-\int_{\Omega}{I_{V_h}\{f_hv_h\}\,\mathrm{d}x}\,.\\[-6.5mm]\notag
    \end{align}
    Here, the discrete energy density  $\phi_\varepsilon\coloneqq\tfrac{1}{2(1+\varepsilon)}\vert \cdot\vert^2+\tfrac{1}{2\varepsilon(1+\varepsilon)}(\vert \cdot\vert-(1+\varepsilon))_+^2\colon \hspace{-0.15em}\mathbb{R}^d\hspace{-0.15em}\to\hspace{-0.15em} \mathbb{R}$~\mbox{denotes}\linebreak the Moreau envelope of the energy density $\phi\coloneqq\tfrac{1}{2}\vert \cdot\vert^2+I_{K_1^d(0)}\colon \hspace{-0.15em}\hspace{-0.15em}\mathbb{R}^d\hspace{-0.15em}\to\hspace{-0.15em} \mathbb{R}\cup\{+\infty\}$~(\textit{cf}.~Example~\ref{expl:moreau_envelopes}(\hyperlink{expl:moreau_envelopes.ii}{ii})). The direct method~in~the~\mbox{calculus} of variations yields the existence of a unique discrete primal solution $u_h\in V_h$, \textit{i.e.}, a minimizer~of~\eqref{eq:elasto_discrete_primal}. 

      According \hspace{-0.15mm}to \hspace{-0.15mm}Proposition \hspace{-0.15mm}\ref{prop:discrete_duality}\hspace{-0.15mm}(\hyperlink{prop:discrete_duality.i}{i}), \hspace{-0.15mm}a \hspace{-0.15mm}(Fenchel) \hspace{-0.15mm}dual \hspace{-0.1mm}problem \hspace{-0.15mm}(in \hspace{-0.1mm}the \hspace{-0.1mm}sense \hspace{-0.15mm}of \hspace{-0.1mm}\cite[Rem.\ \hspace{-0.15mm}4.2,~\hspace{-0.15mm}p.~\hspace{-0.15mm}60/61]{EkelandTemam1999}) to the minimization of \eqref{eq:elasto_discrete_primal} is given via the maximization of the discrete dual energy functional $D_h\colon Y_h\to \mathbb{R}\cup\{-\infty\}$, for every $y_h\in Y_h$ defined by\vspace{-0.5mm}
    \begin{align}\label{eq:elasto_discrete_dual}
        D_h(y_h)\coloneqq -\int_{\Omega}{\phi_\varepsilon^*(y_h)\,\mathrm{d}x}-I_{\{-f_h\}}^{\Omega}(\operatorname{div}_hy_h)\,,\\[-6.5mm]\notag
    \end{align}
    where the Fenchel conjugate $\phi_\varepsilon^*\colon \hspace{-0.15em}\mathbb{R}^d\hspace{-0.15em}\to \hspace{-0.15em}\mathbb{R}$ of the discrete energy density $\phi_\varepsilon \colon\hspace{-0.15em}  \mathbb{R}^d\hspace{-0.15em}\to\hspace{-0.15em} \mathbb{R}$,~for~\mbox{every}~${t^*\hspace{-0.15em}\in\hspace{-0.15em} \mathbb{R}^d}$, is given via\vspace{-1mm}
    \begin{align}\label{eq:elasto_phih_prime}
        \phi_\varepsilon^*(t^*)=\tfrac{\varepsilon}{2}\vert t^*\vert^2+\vert t^*\vert-\tfrac{1}{2}+\tfrac{1}{2}(1-\vert t^*\vert)_+^2
        =\begin{cases}
            \frac{1+\varepsilon}{2}\vert t^*\vert^2&\text{ if }\vert t^*\vert \leq 1\,,\\
            \frac{\varepsilon}{2}\vert t^*\vert^2+\vert t^*\vert-\frac{1}{2}&\text{ if }\vert t^*\vert > 1\,.
        \end{cases}\\[-6.5mm]\notag
    \end{align}
   %In consequence, inserting \eqref{eq:elasto_phih_prime} in \eqref{eq:elasto_discrete_dual}, the discrete dual energy functional $D_h\colon Y_h\to \mathbb{R}\cup\{-\infty\}$, for every $y_h\in Y_h$, admits the form
   % \begin{align*}
   %     D_h(y_h)\coloneqq-\frac{\varepsilon}{2}\int_{\Omega}{\vert y_h\vert^2\,\mathrm{d}x}-\int_{\Omega}{\{\vert y_h\vert-\tfrac{1}{2}\}\,\mathrm{d}x}-\frac{1}{2}\int_{\Omega}{(\max\{0,1-\vert y_h\vert\})^2\,\mathrm{d}x}-I_{\{f_h\}}(\operatorname{div}_hy_h)\,.
   % \end{align*}
    By Proposition \ref{prop:discrete_duality}(\hyperlink{prop:discrete_duality.ii}{ii}), there exists a (not necessarily unique) discrete dual solution $z_h\in Y_h$, \textit{i.e.}, a maximizer~of~\eqref{eq:elasto_discrete_dual}, and discrete strong duality applies, \textit{i.e.}, we have that $I_h(u_h)=D_h(z_h)$, which is equivalent to the discrete primal optimality relations\vspace{-0.5mm}
    \begin{subequations} \label{eq:elasto_discrete_optimality}
    \begin{alignat}{2}\label{eq:elasto_discrete_optimality.1}
         z_h&=\mathrm{D}\phi_\varepsilon(\nabla u_h)=\smash{\tfrac{1}{\varepsilon}(1-\min\{\tfrac{1}{1+\varepsilon},\tfrac{1}{\vert \nabla u_h\vert}\})\nabla u_h}
         %=\left.\begin{cases}
         %    \tfrac{1}{1+\varepsilon}\nabla u_h&\text{ if }\vert \nabla u_h\vert \leq 1+\varepsilon\,,\\
        %     \tfrac{1}{\varepsilon}\{1-\tfrac{1}{\vert \nabla u_h\vert}\}\nabla u_h&\text{ if }\vert \nabla u_h\vert > 1+\varepsilon\,,
        % \end{cases}\right\}
        &&\quad \text{ a.e.\ in }\Omega\,,\\
         \operatorname{div}_hz_h&=-f_h&&\quad\text{ in }\Omega\,.\label{eq:elasto_discrete_optimality.2}
    \end{alignat}
    \end{subequations}\vspace{-6.5mm}
    
    \subsubsection{The semi-smooth Newton method}\vspace{-0.5mm}

    \hspace{5mm}Given proximity parameters $\gamma_1,\gamma_2>0$, following the reasoning in the beginning~of~\mbox{Section}~\ref{sec:semi-smooth_newton}, we observe that the primal optimality relations \eqref{eq:elasto_discrete_optimality} are equivalent to the existence of a root of the mapping $\mathtt{F}_h\colon Y_h\times V_h\to Y_h\times V_h$, defined by \eqref{sec:semi-smooth_newton.3}, where, by virtue of Proposition~\ref{prop:proximity_of_integral_functionals}, Lemma \ref{lem:relations}(\hyperlink{lem:relations.iii}{iii}), and Example \ref{expl:proximity}(\hyperlink{expl:proximity.ii}{ii}), for every $y_h\in Y_h$ and $v_h\in V_h$, respectively, we have that\vspace{-4.5mm}
    \begin{subequations}\label{eq:elasto_proximity}
    \begin{alignat}{2}\label{eq:elasto_proximity.1}
        \operatorname{prox}_{\gamma_1 G_h}(y_h)&=
        \min\{\tfrac{1+\varepsilon}{1+\varepsilon+\gamma_1},\tfrac{\varepsilon}{\varepsilon+\gamma_1}+\tfrac{\gamma_1}{(\varepsilon+\gamma_1)\vert y_h\vert}\}y_h&&\quad \text{ a.e.\ in }\Omega\,,\\[-0.25mm]
        \operatorname{prox}_{\gamma_2 F_h}(v_h)&=v_h+\smash{\gamma_2}f_h&&\quad \text{ in }\Omega\,.\label{eq:elasto_proximity.2}
    \end{alignat}
    \end{subequations}
    A global Newton derivative selection $\mathtt{J}_{\mathtt{F}_h}\colon Y_h\times V_h\to \mathcal{L}(Y_h\times V_h)$, for every $(y_h,v_h)\in Y_h\times V_h$, is given by \eqref{sec:semi-smooth_newton.4}, where for every $y_h\in Y_h$ and $v_h\in V_h$, respectively, we have that\vspace{-0.5mm}
    \begin{subequations}\label{eq:elasto_proximity_derivatives}
    \begin{alignat}{2}\label{eq:elasto_proximity_derivatives.1}
        \mathtt{J}_{\smash{\operatorname{prox}_{\gamma_1 G_h}}}(y_h)&=\left.\begin{cases}
          \frac{1+\varepsilon}{1+\varepsilon+\gamma_1} \mathbbone_{\mathcal{L}(Y_h)}&\text{ if }\vert  y_h\vert\le 1+\varepsilon+\gamma_1\,,\\
          \left.\begin{aligned}
              &\tfrac{\varepsilon}{\varepsilon+\gamma_1}\mathbbone_{\mathcal{L}(Y_h)}\\&+\tfrac{\gamma_1}{\varepsilon+\gamma_1}\tfrac{1}{\vert y_h\vert}(\mathbbone_{\mathcal{L}(Y_h)}-\tfrac{y_h\otimes y_h}{\vert y_h\vert^2})
          \end{aligned}\right\}&\text{ if } \vert  y_h\vert>1+\varepsilon+\gamma_1\,,
        \end{cases}\right\}&&\quad \text{ a.e.\ in }\Omega\,,\\[-0.25mm]
        \mathtt{J}_{\smash{\operatorname{prox}_{\gamma_2 F_h}}}(v_h)&=\mathbbone_{\mathcal{L}(V_h)}&&\quad \text{ in }\Omega\,.\label{eq:elasto_proximity_derivatives.2}
    \end{alignat}
    \end{subequations}

    The explicit representations in \eqref{eq:elasto_proximity_derivatives}
show that the assumptions of Theorems~\ref{prop:well-posedness}
and \ref{prop:convergence} are satisfied in the present setting.\enlargethispage{5mm}

     \begin{theorem} 
     %[Global well-posedness and local super-linear convergence of Algorithm \ref{alg:semi-smooth_newton}]
     \label{thm:wellposedness_elasto}
        Algorithm~\ref{alg:semi-smooth_newton} employing the representations \eqref{eq:elasto_proximity} and  \eqref{eq:elasto_proximity_derivatives} is globally well-posed and locally super-linearly convergent (with $\alpha_k=1$ for all $k\in \mathbb{N}$) to a root~${(z_h,u_h)\in Y_h\times V_h}$.
   \end{theorem}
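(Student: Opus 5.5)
The plan is to reduce the statement to Theorem~\ref{prop:well-posedness} and Theorem~\ref{prop:convergence}, as was done for the $p$-Dirichlet problem in Theorem~\ref{prop:wellposedness_pdirichlet}. It then suffices to check Assumptions~\ref{ass:wellposedness} and~\ref{ass:convergence} for the explicit Newton derivative selections \eqref{eq:elasto_proximity_derivatives}. Here $\psi_h(\cdot,s)=-f_hs$ is linear. In the (\hyperlink{NML}{NML}) setting this is Assumption~\ref{ass:energy_densities_discrete}(\hyperlink{ass:energy_densities_discrete.ii.b}{ii.b}) with $a_h=0$, so the prescribed selection is $\frac{1}{1+2\gamma_2a_h}\mathbbone_{\mathcal{L}(V_h)}=\mathbbone_{\mathcal{L}(V_h)}$. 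This agrees with \eqref{eq:elasto_proximity_derivatives.2}. In the (\hyperlink{ML}{ML}) setting the same identity gives $\mathbbzero_{\mathcal{L}(V_h)}\prec\mathtt{J}_{\operatorname{prox}_{\gamma_2F_h}}$ and the lower bound with $\underline{\mu}_h^2=1$. The $\psi_h$-parts of both assumptions therefore hold trivially.

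The substantive step is the element-wise spectral analysis of \eqref{eq:elasto_proximity_derivatives.1}. On elements with $|y_h|\le 1+\varepsilon+\gamma_1$ the matrix is the scalar $\frac{1+\varepsilon}{1+\varepsilon+\gamma_1}\mathbbone$, which lies strictly in $(0,1)$. On elements with $|y_h|>1+\varepsilon+\gamma_1$ the matrix is symmetric. It has eigenvalue $\frac{\varepsilon}{\varepsilon+\gamma_1}$ in the radial direction $y_h$. On $(\mathbb{R}y_h)^\perp$ it has eigenvalue $\frac{\varepsilon}{\varepsilon+\gamma_1}+\frac{\gamma_1}{(\varepsilon+\gamma_1)|y_h|}$.

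Since $|y_h|>1+\varepsilon+\gamma_1>1$, the tangential eigenvalue is smaller than $\frac{\varepsilon+\gamma_1}{\varepsilon+\gamma_1}=1$. It is in fact at most $\frac{\varepsilon}{\varepsilon+\gamma_1}+\frac{\gamma_1}{(\varepsilon+\gamma_1)(1+\varepsilon+\gamma_1)}$, which one checks equals $\frac{1+\varepsilon}{1+\varepsilon+\gamma_1}$. Hence, uniformly in $y_h\in Y_h$ and in the element, we get
\begin{align*}
\tfrac{\varepsilon}{\varepsilon+\gamma_1}\mathbbone_{\mathcal{L}(Y_h)}\preceq\mathtt{J}_{\operatorname{prox}_{\gamma_1G_h}}(y_h)\preceq\tfrac{1+\varepsilon}{1+\varepsilon+\gamma_1}\mathbbone_{\mathcal{L}(Y_h)}\,.
\end{align*}
These two bounds are the lower and upper bounds predicted by Remark~\ref{rem:convergence}(\hyperlink{ass:wellposedness.i}{i.a}). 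Here $\phi_\varepsilon$ is a Moreau envelope with parameter $\varepsilon$, and $\phi$ is $1$-strongly convex, so the envelope is $\frac{1}{1+\varepsilon}$-strongly convex. These remarks can serve as a cross-check.

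Because $Y_h$ carries the $L^2$ inner product and the selection acts element-wise, the pointwise matrix inequalities lift to operator inequalities on $Y_h$. With the strict inequalities $0<\frac{\varepsilon}{\varepsilon+\gamma_1}$ and $\frac{1+\varepsilon}{1+\varepsilon+\gamma_1}<1$, this yields Assumption~\ref{ass:wellposedness}(\hyperlink{ass:wellposedness.i}{i}). It also yields Assumption~\ref{ass:convergence}(\hyperlink{ass:wellposedness.i}{i}) with constants $\underline{\mu}_h^1=\frac{\varepsilon}{\varepsilon+\gamma_1}$ and $\overline{\mu}_h^1=\frac{1+\varepsilon}{1+\varepsilon+\gamma_1}$, which are valid on any ball.

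One further point needs care: that \eqref{eq:elasto_proximity_derivatives.1} really is a global Newton derivative selection. This holds because the prox map \eqref{eq:elasto_proximity.1} is piecewise smooth, being the minimum of two smooth radial branches that match at $|y_h|=1+\varepsilon+\gamma_1$. Selecting the branch derivatives gives a Newton derivative, in the same way as in Example~\ref{expl:proximity}(\hyperlink{expl:proximity.ii}{ii}) combined with Lemma~\ref{lem:relations}(\hyperlink{lem:relations.iii}{iii}). Element-wise decoupling via Corollary~\ref{prop:derivative_proximity_of_integral_functionals} transfers this to $\operatorname{prox}_{\gamma_1G_h}$. With both assumptions verified, Theorems~\ref{prop:well-posedness} and~\ref{prop:convergence} give the claim.

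The only step where something could go wrong is the tangential eigenvalue bound. Unlike in the TV case, it stays away from $1$ here, because the threshold $|y_h|>1+\varepsilon+\gamma_1$ forces $|y_h|>1$.
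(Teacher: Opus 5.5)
Your proposal is correct and is essentially the paper's proof. The paper likewise reads off $\tfrac{\varepsilon}{\varepsilon+\gamma_1}\mathbbone_{\mathcal{L}(Y_h)}\preceq\mathtt{J}_{\operatorname{prox}_{\gamma_1G_h}}(y_h)\preceq\tfrac{1+\varepsilon}{1+\varepsilon+\gamma_1}\mathbbone_{\mathcal{L}(Y_h)}$ for all $y_h\in Y_h$ from \eqref{eq:elasto_proximity_derivatives.1}, uses \eqref{eq:elasto_proximity_derivatives.2} for the $\psi_h$-part, and concludes that Assumptions~\ref{ass:wellposedness} and~\ref{ass:convergence} hold; your eigenvalue computation and Newton-differentiability justification fill in details the paper leaves implicit. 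Only your side cross-check is mis-attributed: the upper bound follows from the $\tfrac{\gamma_1}{1+\varepsilon}$-strong convexity of $\gamma_1\phi_\varepsilon$ via Lemma~\ref{lem:resolvent}(iii), where the factor $\gamma_1$ matters, whereas Remark~\ref{rem:convergence}(i.a) only gives the lower bound.
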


    \begin{proof}
    From representation  \eqref{eq:elasto_proximity_derivatives.1}, it follows that 
        $\smash{\tfrac{\varepsilon}{\varepsilon+\gamma_1}}\mathbbone_{\mathcal{L}(Y_h)}\hspace{-0.1em}\preceq \hspace{-0.1em} \mathtt{J}_{\smash{\operatorname{prox}_{\gamma_1 G_h}}}(y_h)\hspace{-0.1em}\preceq\hspace{-0.1em} \smash{\tfrac{1+\varepsilon}{1+\varepsilon+\gamma_1}}\mathbbone_{\mathcal{L}(Y_h)}$~for all $y_h\hspace{-0.1em}\in\hspace{-0.1em}  Y_h$, so that, also using representation  \eqref{eq:elasto_proximity_derivatives.2}, we conclude that
        Assumptions~\ref{ass:wellposedness}~and~\ref{ass:convergence} are satisfied. 
    \end{proof}

    \section{Numerical experiments}\label{sec:experiments}\vspace{-0.5mm}

    \hspace{5mm}In this section, we carry out numerical experiments reviewing the findings on the $\operatorname{prox}$-based semi-smooth Newton methods derived in Section \ref{sec:applications}, \textit{i.e.}, for the TV-minimization problem, the $p$-Dirichlet problem, and the elasto-plastic torsion problem.\vspace{-0.5mm}

    \subsection{Implementation details}\vspace{-0.5mm}

    \hspace{5mm}All experiments were conducted using the finite element software package \texttt{FEniCS}  (version 2019.1.0, \textit{cf}.\  \cite{LoggMardalWells2012}). All graphics were generated using the \texttt{Matplotlib}~library~(version~3.5.1,~\textit{cf}.~\cite{Hunter2007}). 

    \subsubsection{Linear solver}\vspace{-0.5mm}

    \hspace{5mm}The linear systems \eqref{alg:semi-smooth_newton.1} arising in the semi-smooth Newton iteration in Algorithm~\ref{alg:semi-smooth_newton} are solved according to the procedure outlined in Remark~\ref{rem:reduced_problem}. In particular, the symmetric positive definite linear systems \eqref{rem:reduced.4} for the computation of the primal update direction are solved by a sparse Cholesky factorization using \texttt{CHOLMOD}~\cite{cholmod} through the Python package \texttt{scikit-sparse} (version~0.5.0,~\textit{cf}.~\cite{scikit-sparse}). The same direct solver is used for the linear systems arising in the primal semi-smooth Newton method.\vspace{-0.5mm}\enlargethispage{2.5mm}

    \subsubsection{Computation of proximity operators and Newton derivatives}\vspace{-0.5mm} 

    \hspace{5mm}For the TV-minimization problem (\textit{cf}.\ Subsection \ref{subsec:tv-min}) and the elasto-plastic~torsion~\mbox{problem} (\textit{cf}.\ Subsection \ref{subsec:elasto-plastic}), the proximity operator and its Newton derivative are computed using the explicit representation formulas \eqref{eq:rof_proximity_derivatives} and \eqref{eq:elasto_proximity_derivatives}, respectively. In the case of the $p$-Dirichlet~\mbox{problem} (\textit{cf}.\ Subsection \ref{subsec:p-laplace}), the proximity operator is computed according to the procedure~outlined~in Remark~\ref{rem:comput_prox},  while its Newton derivative is evaluated via the implicit representation formula~\eqref{eq:pdirichlet_proximity_derivatives.1}.\vspace{-0.5mm}

    \subsubsection{Globalization strategies}\vspace{-0.5mm} 

\hspace{5mm}In Theorems \ref{thm:wellposedness_tv},
\ref{prop:wellposedness_pdirichlet}, and \ref{thm:wellposedness_elasto},
we established the global well-posedness and~local~\mbox{super-linear} convergence
\hspace{-0.1mm}of \hspace{-0.1mm}the \hspace{-0.1mm}$\operatorname{prox}$-based \hspace{-0.1mm}semi-smooth \hspace{-0.1mm}Newton \hspace{-0.1mm}methods \hspace{-0.1mm}(\textit{cf}.\ \hspace{-0.1mm}Algorithm~\hspace{-0.1mm}\ref{alg:semi-smooth_newton})~\hspace{-0.1mm}for~\hspace{-0.1mm}the~\hspace{-0.1mm}\mbox{TV-minimi-} zation problem, the $p$-Dirichlet problem and the elasto-plastic torsion
problem.~In~this~\mbox{connection}, we globalize these $\operatorname{prox}$-based semi-smooth Newton methods either by combining them with one of
the semi-implicit discretized (primal/dual) $L^2$-gradient-flow initializations~described~in~\mbox{Remark}~\ref{rem:gradient_flow_initializations}, or by incorporating a nonlinear residual-based Armijo--Goldstein-type step-size criterion: 
\begin{itemize}[noitemsep,topsep=2pt,leftmargin=!,labelwidth=\widthof{$\bullet$}]
    \item[$\bullet$] \emph{Strategy 1 ($L^2$-gradient-flow initialization):}
    \hypertarget{Strategy 1}{} First, depending on the growth properties of the discrete energy density
    $\phi_h\colon \Omega\times \mathbb{R}^d\to \mathbb{R}$, exploiting that for the TV-minimization problem, the $p$-Dirichlet problem, and the elasto-plastic torsion problem, the discrete energy density $\psi_h\colon\Omega\times \mathbb{R}\to \mathbb{R}$ is at most quadratic, we run either Algorithm~\ref{alg:primal_gradient_flow} (primal) with zero initial iterate $\widetilde{u}_h^0=0\in V_h$ or
    Algorithm~\ref{alg:dual_gradient_flow} (dual) with zero initial iterate~${\widetilde{z}_h^0=\mathtt{0}_d\in Y_h}$~until~for~some~$\ell_0\in\mathbb N$, there holds $\|\mathtt F_h(\widetilde z_h^{\ell_0+1},
        \widetilde u_h^{\ell_0+1})\|_{Y_h\times V_h}
       < \varepsilon_{\mathtt{init}}^h$, 
    where $\varepsilon_{\mathtt{init}}^h>0$ is a prescribed stopping parameter. 
    If the subsequent semi-smooth Newton iteration fails to converge, the initialization may be strengthened by decreasing $\varepsilon^h_{\mathtt{init}}$.\smallskip

    Second, we run Algorithm~\ref{alg:semi-smooth_newton} with initial iterate $(z_h^0,u_h^0)
       \hspace{-0.1em} \coloneqq\hspace{-0.1em}
        (\widetilde z_h^{\ell_0+1},\widetilde u_h^{\ell_0+1})
        \hspace{-0.1em}\in\hspace{-0.1em} Y_h\times V_h$~and~\mbox{constant} step size $\alpha_k=1$ for all $k\in \mathbb{N}$ until for some $k^*\in\mathbb N$, there holds ${\|\mathtt{F}_h(z_h^{k^*+1},u_h^{k^*+1})\|_{Y_h\times V_h}
        < \varepsilon_{\mathtt{abs}}^h}$; 

    \item[$\bullet$] \emph{Strategy 2 (Armijo--Goldstein backtracking line search):}
    \hypertarget{Strategy 2}{} We run Algorithm~\ref{alg:semi-smooth_newton}
    directly~with~zero initial iterate $(z_h^0,u_h^0)=(\mathtt{0}_d,0)\in Y_h\times V_h$ and step sizes
    $\alpha_k>0$, $k\in \mathbb{N}$, chosen by the residual-based Armijo--Goldstein backtracking
    line search described in Remark~\ref{rem:linesearch} until for~some~$k^*\in\mathbb N$, there holds $\|\mathtt{F}_h(z_h^{k^*+1},u_h^{k^*+1})\|_{Y_h\times V_h}
        < \varepsilon_{\mathtt{abs}}^h$. 
\end{itemize}
  
In all experiments below, for both Strategy~\hyperlink{Strategy 1}{1} and
Strategy~\hyperlink{Strategy 2}{2}, Algorithm~\ref{alg:semi-smooth_newton} is run with
$\varepsilon_{\mathtt{abs}}^h \coloneqq 1\mathrm{e}{-}12$,
$\varepsilon_{\mathtt{rel}}^h \coloneqq 0$, 
$k_{\mathtt{max}}^h \coloneqq 25$ for  Strategy~\hyperlink{Strategy 1}{1}, and
$k_{\mathtt{max}}^h \coloneqq 250$ for  Strategy~\hyperlink{Strategy 2}{2}.
The gradient-flow initializations in Algorithms~\ref{alg:primal_gradient_flow}
and~\ref{alg:dual_gradient_flow} are run~with~$\varepsilon_{\mathtt{init}}^h \coloneqq 0.1$~and~$\ell_{\mathtt{max}}^h \coloneqq +\infty$.

\subsection{TV-minimization problem}

\hspace{5mm}In this test, we compare the $\operatorname{prox}$-based semi-smooth Newton method (\textit{cf}.\ Algorithm~\ref{alg:semi-smooth_newton}) with a canonical primal semi-smooth Newton method applied directly to the Fr\'echet derivative $\mathrm{D}I_h\colon V_h\to V^*_h$ of 
the discrete primal functional \eqref{eq:rof_discrete_primal}, which is globally Newton differentiable with possible (global) Newton derivative selection $\mathtt{J}_{\smash{\mathrm{D}I_h}}\colon  V_h\to  \mathcal{L}(V_h;V^*_h)$, for every $u_h,v_h,w_h\in  V_h$ defined by
\begin{align*}
\langle\mathtt{J}_{\smash{\mathrm{D}I_h}}(u_h) w_h,
v_h
\rangle_{V_h}
\coloneqq
\int_\Omega
\mathtt{J}_{\mathrm{D}\phi_\varepsilon}(\nabla u_h)
\nabla w_h\cdot \nabla v_h
\,\mathrm{d}x
+
\alpha\int_\Omega I_{V_h}\{w_hv_h\}\,\mathrm{d}x\,,
\end{align*}
 where $\mathtt{J}_{\mathrm{D}\phi_\varepsilon}\colon \mathbb{R}^d\to \mathbb{R}^{d\times d}$, for every $t\in \mathbb{R}^d$ defined by\vspace{-1.25mm}
 \begin{align*}
    \mathtt{J}_{\mathrm{D}\phi_\varepsilon}(t)
\coloneqq
\begin{cases}
    \frac{1}{\varepsilon}\mathbbone &\text{ if }\vert t\vert<\varepsilon\,,\\
    \frac{1}{\vert t\vert}\mathtt{P}_t&\text{ if }\vert t\vert\ge \varepsilon\,,
\end{cases}\\[-6.5mm]\notag
 \end{align*}
 denotes a possible (global) Newton derivative of $\mathrm{D}\phi_\varepsilon\colon \mathbb{R}^d\to \mathbb{R}^d$.

\begin{algorithm}[Primal semi-smooth  Newton method for discrete TV-minimization]
\label{alg:Newton-disc}
Let $\varepsilon>0$ be a regularization parameter, let
$\varepsilon_{\mathtt{abs}}^h>0$ be a stopping parameter, let $u_h^0\in V_h$ be an initial iterate, and let $k_{\mathtt{max}}^h\in \mathbb{N}\cup\{+\infty\}$ be a maximal 
number of iterations. Then, for ${k=0,\ldots,\smash{k_{\mathtt{max}}^h}}$, perform the following iteration loop:
\begin{itemize}[noitemsep,topsep=2pt,leftmargin=!,labelwidth=\widthof{(2)}]
\item[(1)] \hypertarget{alg:Newton-disc.1}{} Compute the primal update direction $\delta \smash{u_h^k\in V_h}$ such that
\begin{align*}
    \mathtt{J}_{\smash{\mathrm{D}I_h}}(u_h^k)\delta u_h^k = -\mathrm{D}I_h(u_h^k)
\quad\text{ in }V_h^*\,,
\end{align*} 
and the updated iterate $u_h^{k+1}\coloneqq u_h^k+\alpha_k\delta u_h^k\hspace{-0.1em}\in \hspace{-0.1em}V_h$, where $\alpha_k\hspace{-0.1em}>\hspace{-0.1em}0$ is a (possibly~\mbox{variable})~step~size; 

\item[(2)]  \hypertarget{alg:Newton-disc.2}{} If %$\|\mathrm{D}I_h^{\varepsilon}(u_h^{k+1})\|_{V_h^\ast}<\max\{\smash{\varepsilon_{\mathtt{abs}}^h},\smash{\varepsilon_{\mathtt{rel}}^h}\|\mathrm{D}I_h^{\varepsilon}(u_h^0)\|_{V_h^\ast}\}$, 
$\|\mathrm{D}I_h(u_h^{k+1})\|_{V_h^\ast}<\smash{\varepsilon_{\mathtt{abs}}^h}$, 
then \textup{STOP}; otherwise, set $k\to k+1$ and continue with~Step~(\hyperlink{alg:Newton-disc.1}{1}).
\end{itemize}
\end{algorithm}

We consider the following  benchmark example for the TV-minimization problem that provides a discontinuous primal solution and a Lipschitz continuous dual solution (see, \textit{e.g.}, \cite[Expl.~10.4]{Bartels2015}).

\begin{example}\label{ex:41}
Let $\Omega = (-1,1)^{d}$, $d\in \mathbb{N}$, $\Gamma_{D} = \partial\Omega$, $\alpha>0$, $0<r<1$, and $g = \chi_{B_{r}^d(0)}\in L^2(\Omega)$.
Then, \hspace{-0.1mm}the \hspace{-0.1mm}unique \hspace{-0.1mm}primal \hspace{-0.1mm}solution \hspace{-0.1mm}$u\hspace{-0.1em}\in\hspace{-0.1em} BV(\Omega)\cap L^2(\Omega)$ \hspace{-0.1mm}with \hspace{-0.1mm}$u\hspace{-0.1em}=\hspace{-0.1em}0$ \hspace{-0.1mm}a.e.\ \hspace{-0.1mm}on \hspace{-0.1mm}$\partial\Omega$,~\hspace{-0.1mm}\textit{i.e.},~\hspace{-0.1mm}a~\hspace{-0.1mm}\mbox{minimizer}~\hspace{-0.1mm}of~\hspace{-0.1mm}\eqref{eq:rof_primal}, 
and a dual solution $z\in W^2(\operatorname{div};\Omega)$, \textit{i.e.}, a maximizer of \eqref{eq:rof_dual},  are given via
\begin{subequations} 
\begin{alignat}{2} 
  u &\coloneqq \max\bigl\{ 0, 1 - \tfrac{d}{\alpha r}\bigr\}\chi_{B_{r}^d(0)}&&\quad \text{ a.e.\ in }\Omega\,,\\
  z &\coloneqq  
      -\tfrac{r}{\max\{r,\vert \cdot\vert\}^2}\min\left\{1,\tfrac{\alpha r}{d}\right\}\operatorname{id}_{\mathbb{R}^d} &&\quad\text{ in }\Omega\,. 
\end{alignat} 
\end{subequations}
\end{example}

More precisely, in the following, we consider the case $d=3$, $\alpha=10$, $r=\frac{1}{2}$, and a sequence of triangulations $\{\mathcal{T}_{h_i}\}_{i=0,\ldots,6}$ (\textit{cf}.\ Figure \ref{fig:3D_triang}), each consisting of $6\cdot 8^i$ tetrahedra and~$(2^i+1)^3$~vertices,\linebreak obtained by red-refinement starting from a standard Kuhn triangulation $\mathcal{T}_0$ (\textit{cf}.\ \cite{Kuhn1960})~of~${\overline{\Omega}\hspace{-0.15em}=\hspace{-0.15em}[-1,1]^3}$, $\gamma_1=\gamma_2=1$, and $\varepsilon=h$ (which preserves the quasi-optimal error~decay~rates~for~element-wise affine approximations of the TV-minimization problem under uniform mesh-refinement, \textit{cf}.~\cite{ChambollePock2020,Bartels2021,BartelsKaltenbach2022}).

\begin{figure}[H]
    \centering
    \includegraphics[width=\linewidth]{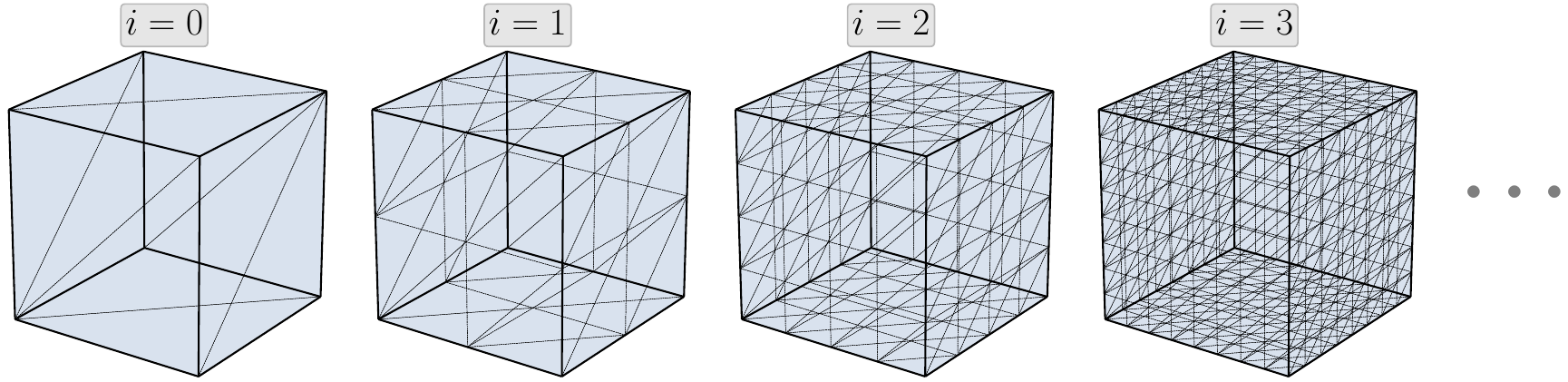}
    \caption{Sequence of triangulations $\{\mathcal{T}_{h_i}\}_{i=0,\ldots,6}$ obtained by red-refinement of a standard~Kuhn triangulation $\mathcal{T}_0$ (\textit{cf}.\ \cite{Kuhn1960}) of $\overline{\Omega}=[-1,1]^3$.}
    \label{fig:3D_triang}
\end{figure}

For the above experimental setup, Figures \ref{fig:tv1} and \ref{fig:tv2} compare the $\operatorname{prox}$-based semi-smooth Newton
method (\textit{cf}.\ Algorithm~\ref{alg:semi-smooth_newton} employing the representations \eqref{eq:rof_proximity} and  \eqref{eq:rof_proximity_derivatives}) with the
 primal semi-smooth Newton method (\textit{cf}.\ Algorithm~\ref{alg:Newton-disc}) under mesh-refinement.~To enable a direct comparison, we report the common $\operatorname{prox}$-based nonlinear residual~$\|\mathtt{F}_h(z_h^k,u_h^k)\|_{(L^2(\Omega))^d\times L^2(\Omega)}$,~${k\in \mathbb{N}_0}$,
for both methods. 
The convergent runs exhibit fast local residual decay for both methods. Therefore, the
main difference is not the local decay rate, but the robustness of the iteration: for both the
mass-lumped (\hyperlink{ML}{ML}) and non-mass-lumped (\hyperlink{NML}{NML}) variants and both globalization strategies
(\textit{cf}.\ Strategies~\hyperlink{Strategy 1}{1}~and~\hyperlink{Strategy 2}{2}),
the $\operatorname{prox}$-based semi-smooth Newton method remains effective for a larger
range of refined triangulations.

\begin{figure}[H]
    \centering
    \includegraphics[width=\linewidth]{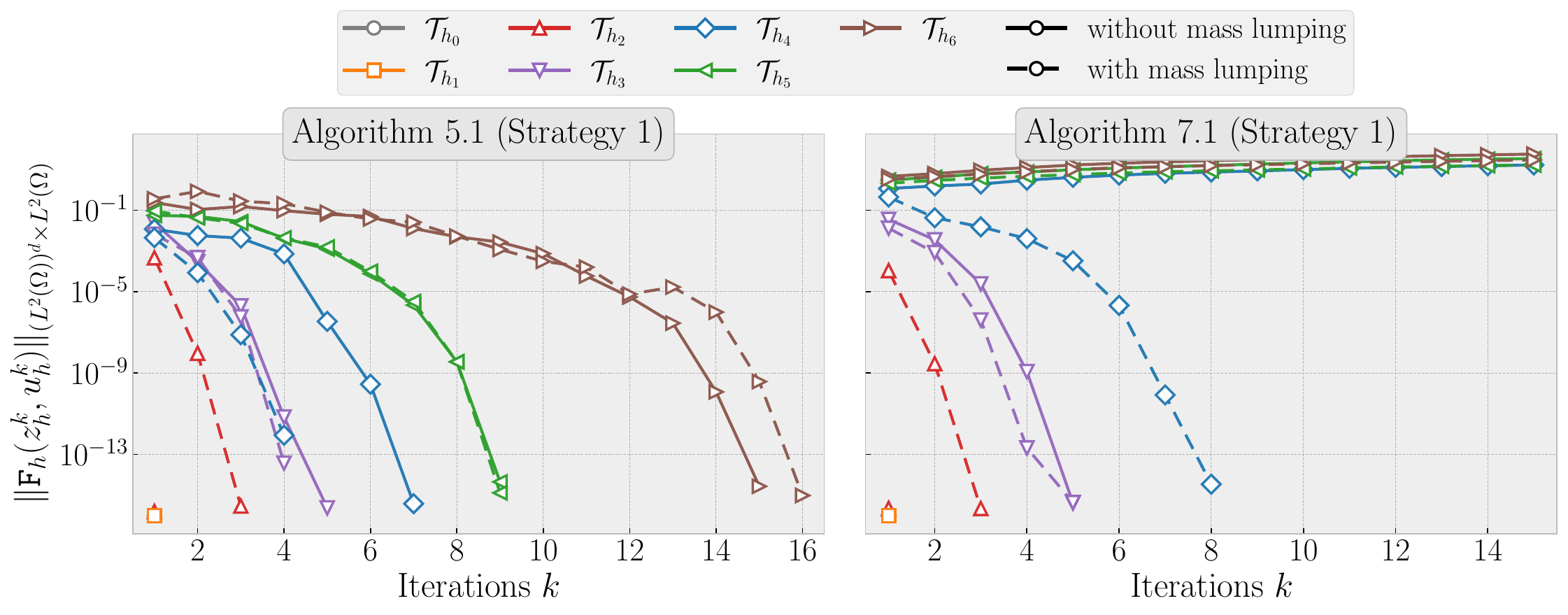}\vspace{-1.5mm}
   \caption{
   Residual decay for the TV-minimization problem (\textit{cf}.\ Subsection \ref{subsec:tv-min}) using
Strategy~\protect\hyperlink{Strategy 1}{1}, \textit{i.e.}, the $L^2$-gradient-flow initialization described in Remark \ref{rem:gradient_flow_initializations}; each
measured by the $\operatorname{prox}$-based nonlinear residual
$\|\mathtt{F}_h(z_h^k,u_h^k)\|_{(L^2(\Omega))^d\times L^2(\Omega)}$, $k\in\mathbb{N}_0$:
\textit{left:} $\operatorname{prox}$-based semi-smooth Newton method (\textit{cf}.\ Algorithm \ref{alg:semi-smooth_newton} employing the representations \eqref{eq:rof_proximity} and  \eqref{eq:rof_proximity_derivatives}); \textit{right:} primal semi-smooth Newton method
(\textit{cf}.\ Algorithm \ref{alg:Newton-disc}). The $\operatorname{prox}$-based semi-smooth Newton method is less sensitive to mesh-refinement and to the use
of mass lumping.}
    \label{fig:tv1}
\end{figure}\vspace{-5mm}

\begin{figure}[H]
    \centering
    \includegraphics[width=\linewidth]{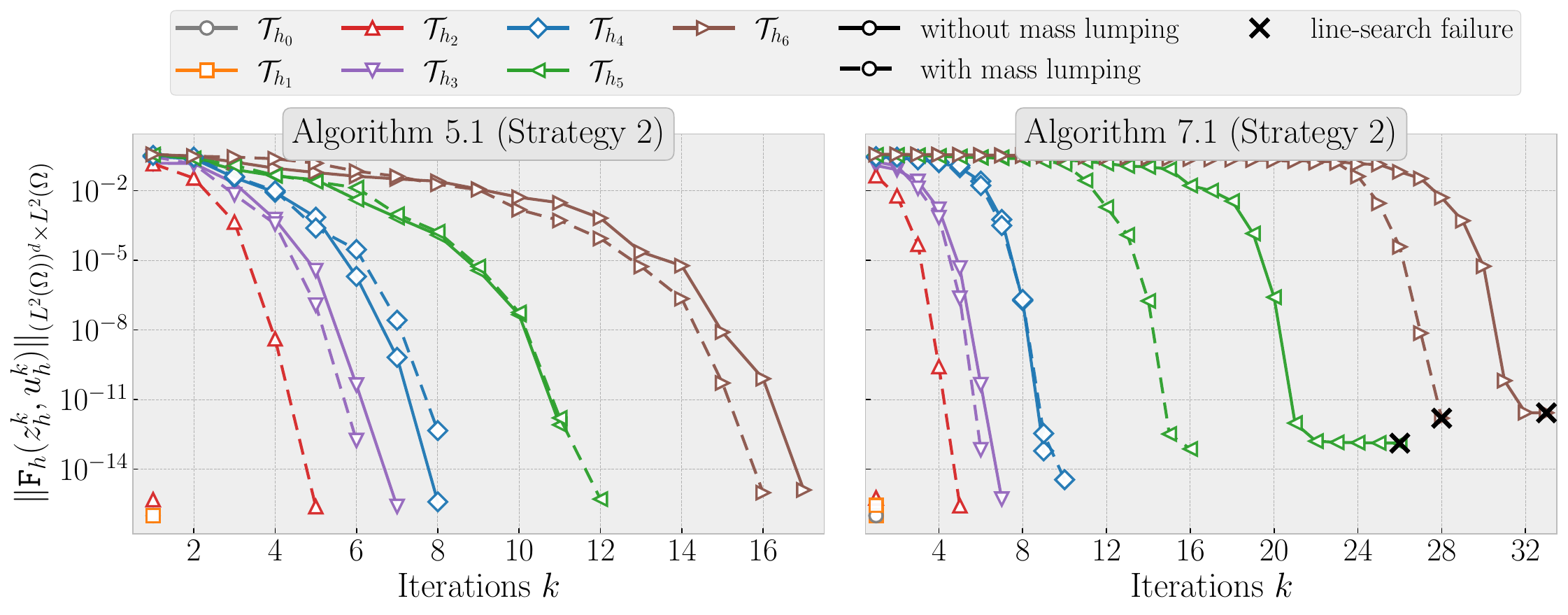}\vspace{-1.5mm}
    \caption{Residual decay for the TV-minimization problem (\textit{cf}.\ Subsection \ref{subsec:tv-min}) using
Strategy~\protect\hyperlink{Strategy 2}{2}, \textit{i.e.}, the residual-based Armijo--Goldstein line search; each measured by the $\operatorname{prox}$-based nonlinear residual
$\|\mathtt{F}_h(z_h^k,u_h^k)\|_{(L^2(\Omega))^d\times L^2(\Omega)}$, $k\in\mathbb{N}_0$:
\textit{left:} $\operatorname{prox}$-based semi-smooth Newton method (\textit{cf}.\ Algorithm \ref{alg:semi-smooth_newton} employing the representations \eqref{eq:rof_proximity} and  \eqref{eq:rof_proximity_derivatives}); \textit{right:}  primal semi-smooth~Newton method
(\textit{cf}.\ Algorithm \ref{alg:Newton-disc}). The $\operatorname{prox}$-based semi-smooth Newton method is less sensitive to mesh-refinement and to the use
of mass lumping.}
    \label{fig:tv2}
\end{figure}
\pagebreak

\if0
\begin{table}[H]
\centering
\renewcommand{\arraystretch}{1.05}
\setlength{\tabcolsep}{5pt}
\begin{tabular}{c|ccccccc}
\toprule
  & $\mathcal{T}_{0}$ & $\mathcal{T}_{1}$ & $\mathcal{T}_{2}$ & $\mathcal{T}_{3}$ & $\mathcal{T}_{4}$ & $\mathcal{T}_{5}$ & $\mathcal{T}_{6}$ \\
\midrule
$h_i$ & $3.4641\mathrm{e}{+}0$ & $1.7321\mathrm{e}{+}0$ & $8.6603\mathrm{e}{-}1$ & $4.3301\mathrm{e}{-}1$ & $2.1651\mathrm{e}{-}1$ & $1.0825\mathrm{e}{-}1$ & $5.4127\mathrm{e}{-}2$ \\
$\vert \mathcal{N}_i\vert$ & $8$ & $27$ & $125$ & $729$ & $4913$ & $35937$ & $274625$ \\
$\vert\mathcal{T}_{h_i}\vert$ & $6$ & $48$ & $384$ & $3072$ & $24576$ & $196608$ & $1572864$ \\
\bottomrule
\end{tabular}\vspace{-1mm}
\caption{Mesh sizes, numbers of vertices, and numbers of tetrahedra.}
\label{tab:boxmesh_triangulations}
\end{table}\fi
%\newpage

\subsection{$p$-Dirichlet problem}\label{subsec:experiments_pdirichlet}

\hspace{5mm}In this test, we compare the $\operatorname{prox}$-based semi-smooth Newton method (\textit{cf}.\ Algorithm~\ref{alg:semi-smooth_newton}) with the classical primal Newton method applied directly to 
the discrete primal functional~\eqref{eq:pdirichlet_primal_discrete}, which  is twice continuously Fr\'echet differentiable with second Fr\'echet derivative $\mathrm{D}^2 I_h\colon  V_h\to  \mathcal{L}(V_h;V^*_h)$, for every $u_h,v_h,w_h\in  V_h$ defined by
\begin{align*}
\langle \mathrm{D}^2I_h(u_h) w_h,
v_h
\rangle_{V_h}
\coloneqq
\int_\Omega
\mathrm{D}^2\phi_\varepsilon(\nabla u_h)
\nabla w_h\cdot \nabla v_h
\,\mathrm{d}x\,,
\end{align*}
 where the second derivative $\mathrm{D}^2\phi_\varepsilon\colon \mathbb{R}^d\to \mathbb{R}^{d\times d}$, for every $t\in \mathbb{R}^d$, is given via
 \begin{align*}
    \mathrm{D}^2\phi_\varepsilon(t)
\coloneqq
(|t|^2+\varepsilon^2)^{\smash{\frac{p-2}{2}}} \mathbbone
+
(p-2)
(|t|^2+\varepsilon^2)^{\smash{\frac{p-4}{2}}}
 t\otimes t\,.
 \end{align*}

\begin{algorithm}[Primal classical Newton method for the discrete $p$-Dirichlet problem]
\label{alg:p_laplace_Newton}
Let $\varepsilon>0$ be a regularization parameter, let
$\varepsilon_{\mathtt{abs}}^h>0$ be a stopping parameter, let $u_h^0\in V_h$ be an initial iterate, and let $k_{\mathtt{max}}^h\in \mathbb{N}\cup\{+\infty\}$ be a maximal 
number of iterations. Then, for $k=0,\ldots,\smash{k_{\mathtt{max}}^h}$, perform the following iteration loop:
\begin{itemize}[noitemsep,topsep=2pt,leftmargin=!,labelwidth=\widthof{(2)}]
\item[(1)] \hypertarget{alg:Newton-disc.1}{} Compute the primal update direction $\delta \smash{u_h^k\in V_h}$ such that\vspace{-0.5mm}
\begin{align*}
    \mathrm{D}^2I_h(u_h^k)\delta u_h^k = -\mathrm{D}I_h(u_h^k)
\quad\text{ in }V_h^*\,,\\[-6mm]\notag
\end{align*} 
and the updated iterate $u_h^{k+1}\hspace{-0.1em}\coloneqq \hspace{-0.1em}u_h^k+\alpha_k\delta u_h^k\hspace{-0.1em}\in\hspace{-0.1em} V_h$, where $\alpha_k\hspace{-0.1em}>\hspace{-0.1em}0$~is~a~(\smash{possibly}~\mbox{variable})~\mbox{step~size}; 

\item[(2)]  \hypertarget{alg:Newton-disc.2}{} If %$\smash{\|\mathrm{D}I_h^{\varepsilon}(u_h^{k+1})\|_{V_h^\ast}}<\max\{\smash{\varepsilon_{\mathtt{abs}}^h},\smash{\varepsilon_{\mathtt{rel}}^h}\|\mathrm{D}I_h^{\varepsilon}(u_h^{k+1})\|_{V_h^\ast}\}$, 
$\smash{\|\mathrm{D}I_h(u_h^{k+1})\|_{V_h^\ast}}<\smash{\varepsilon_{\mathtt{abs}}^h}$, 
then \textup{STOP}; otherwise, set $k\to k+1$ and continue~with~Step~(\hyperlink{alg:Newton-disc.1}{1}).
\end{itemize}
\end{algorithm}

We consider the following benchmark example for the $p$-Dirichlet problem, 
%with constant right-hand side on the unit ball, 
which provides a
radially symmetric primal solution and an affine dual solution (see, \textit{e.g.}, \cite[§3]{Kawohl1990}).

\begin{example}\label{ex:pdirichlet_unit_ball}
Let $\Omega \hspace{-0.1em}=\hspace{-0.1em} B_1^d(0)$, $d\hspace{-0.1em}\in\hspace{-0.1em}\mathbb{N}$, $\Gamma_D\hspace{-0.1em}=\hspace{-0.1em}\partial\Omega$,
$p\hspace{-0.1em}\in\hspace{-0.1em}(1,+\infty)$,  and $f\hspace{-0.1em}\equiv \hspace{-0.1em}1\hspace{-0.1em}\in \hspace{-0.1em}L^{p'}(\Omega)$.~Then,~the~unique primal \hspace{-0.175mm}solution \hspace{-0.175mm}$u\hspace{-0.18em}\in\hspace{-0.18em} W_D^{1,p}(\Omega)$, \hspace{-0.175mm}\textit{i.e.},
\hspace{-0.175mm}a \hspace{-0.175mm}minimizer \hspace{-0.175mm}of \hspace{-0.175mm}\eqref{eq:pdirichlet_primal}, 
\hspace{-0.175mm}and \hspace{-0.175mm}the \hspace{-0.175mm}unique 
\hspace{-0.175mm}dual \hspace{-0.175mm}solution~\hspace{-0.175mm}${z\hspace{-0.18em}\in\hspace{-0.18em} \smash{W^{p'}_N(\operatorname{div};\Omega)}}$,
\textit{i.e.}, the maximizer of \eqref{eq:pdirichlet_dual}, are given
via
\begin{subequations}
\begin{alignat}{2} 
    u
    &\coloneqq
    \tfrac{p-1}{p}d^{-\smash{\frac{1}{p-1}}}
    (1-|\cdot|^{\smash{p'}})
    &&\quad \text{ in } \Omega\,,\\
    z
    &\coloneqq
    -\tfrac{1}{d}\,\mathrm{id}_{\mathbb R^d}
    &&\quad \text{ in } \Omega\,. 
\end{alignat}
\end{subequations}
\end{example}

More precisely, in the following, we consider the case $d=2$, $p\in \{1.1,100\}$,~a~\mbox{sequence}~of triangulations
 $\{\mathcal{T}_{h_i}\}_{i=0,\ldots,9}$ of $\overline{\Omega}=K_1^2(0)$ (\textit{cf}.\ Figure~\ref{fig:2D_triang} and Table~\ref{tab:tscherner_triangulations}, for the~\mbox{corresponding}~mesh~data), obtained~according to \cite[Sec.\ 5]{BartelsTscherner2025}, $\gamma_1=1$ if $p=1.1$, $\gamma_1=\frac{1}{10}$ if $p=100$, $\gamma_2=1$, and $\varepsilon=h^2$.

\begin{figure}[H]
    \centering
    \includegraphics[width=\linewidth]{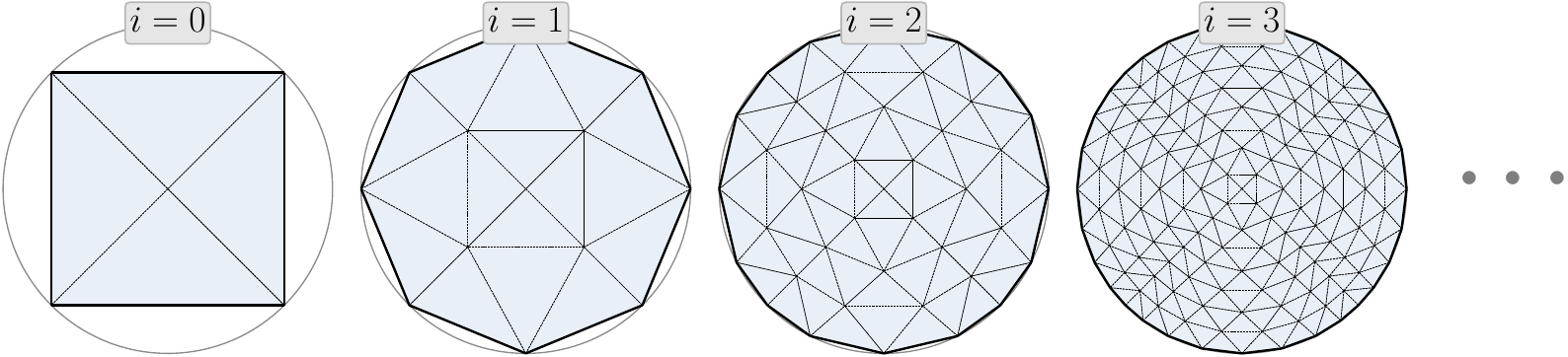}\vspace{-1mm}
    \caption{Sequence \hspace{-0.1mm}of \hspace{-0.1mm}triangulations
\hspace{-0.1mm}$\{\mathcal{T}_{h_i}\}_{i=0,\ldots,9}$ \hspace{-0.1mm}obtained \hspace{-0.1mm}according \hspace{-0.1mm}to \hspace{-0.1mm}\cite[\hspace{-0.1mm}Sec.~\hspace{-0.1mm}5]{BartelsTscherner2025}~\hspace{-0.1mm}on~\hspace{-0.1mm}${\overline{\Omega}\hspace{-0.05em}=\hspace{-0.05em}K_1^2(0)}$.}
    \label{fig:2D_triang}
\end{figure}\vspace{-2.5mm}

\begin{table}[H]
\centering
\renewcommand{\arraystretch}{1.05}
\setlength{\tabcolsep}{3.75pt}
\begin{tabular}{c|ccccccccc}
\toprule
 & $\mathcal{T}_{h_1}$ & $\mathcal{T}_{h_2}$ & $\mathcal{T}_{h_3}$ & $\mathcal{T}_{h_4}$ & $\mathcal{T}_{h_5}$ & $\mathcal{T}_{h_6}$ & $\mathcal{T}_{h_7}$ & $\mathcal{T}_{h_8}$ & $\mathcal{T}_{h_9}$ \\
\midrule
$h_i$ & $7.65\mathrm{e}{-}1$ & $4.74\mathrm{e}{-}1$ & $2.47\mathrm{e}{-}1$ & $1.25\mathrm{e}{-}1$ & $6.25\mathrm{e}{-}2$ & $3.12\mathrm{e}{-}2$ & $1.56\mathrm{e}{-}2$ & $7.81\mathrm{e}{-}3$ & $3.91\mathrm{e}{-}3$ \\
$\vert \mathcal{N}_i\vert$ &  $13$ & $41$ & $145$ & $545$ & $2,\!113$ & $8,\!321$ & $33,\!025$ & $131,\!585$ & $525,\!313$ \\
$\vert\mathcal{T}_{h_i}\vert$ &  $16$ & $64$ & $256$ & $1,\!024$ & $4,\!096$ & $16,\!384$ & $65,\!536$ & $262,\!144$ & $1,\!048,\!576$ \\
\bottomrule
\end{tabular}\vspace{-1mm}
\caption{Mesh data for the triangulations $\{\mathcal{T}_{h_i}\}_{i=1,\ldots,9}$ obtained  according to \cite[Sec.\ 5]{BartelsTscherner2025}~on ${\overline{\Omega}=K_1^2(0)}$.}
\label{tab:tscherner_triangulations}
\end{table}
 
For the above experimental setup, Figures \ref{fig:plaplace1} and \ref{fig:plaplace2} compare the $\operatorname{prox}$-based semi-smooth Newton
method (\textit{cf}.\ Algorithm~\ref{alg:semi-smooth_newton} employing the representations \eqref{eq:pdirichlet_proximity} and  \eqref{eq:pdirichlet_proximity_derivatives}) with the
classical primal Newton method (\textit{cf}.\ Algorithm~\ref{alg:p_laplace_Newton}) under mesh-refinement. To
enable a direct comparison, we again report the common $\operatorname{prox}$-based nonlinear residual $\|\mathtt{F}_h(z_h^k,u_h^k)\|_{(L^2(\Omega))^d\times L^2(\Omega)}$, $k\in\mathbb{N}_0$, 
for both methods.  
Again, the convergent runs exhibit fast local residual decay~for~both~methods. Therefore,
the main difference is not the local decay rate, but the robustness of the iteration:\linebreak for both
values of $p\in\{1.1,100\}$ and both globalization strategies
(\textit{cf}.\ Strategies~\hyperlink{Strategy 1}{1}~and~\hyperlink{Strategy 2}{2}),~the~$\operatorname{prox}$-based semi-smooth Newton method remains effective for a larger
range~of~refined~triangulations.\enlargethispage{6mm}\vspace{-1mm}

\begin{figure}[H]
    \centering
    \includegraphics[width=\linewidth]{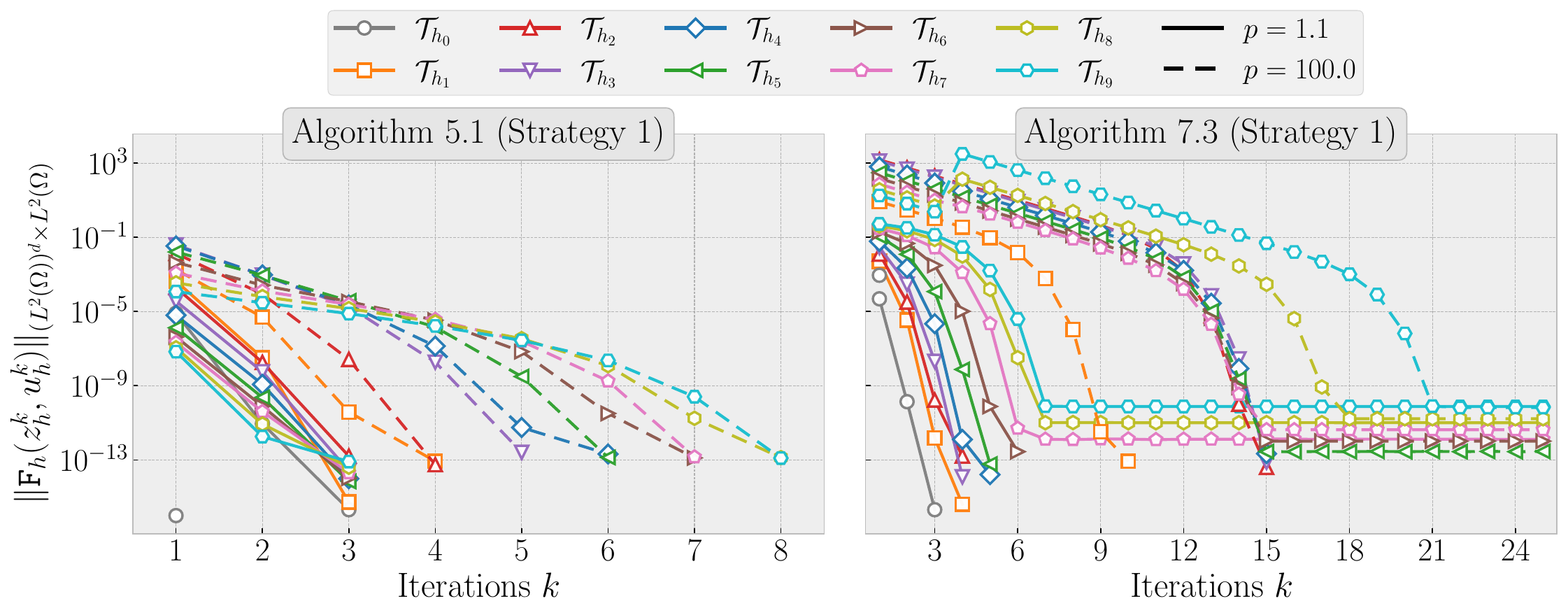}\vspace{-1.75mm}
   \caption{Residual decay for the $p$-Dirichlet problem (\textit{cf}.\ Subsection 6.2) using
Strategy~\protect\hyperlink{Strategy 1}{1}, \textit{i.e.}, the $L^2$-gradient-flow initialization described in Remark \ref{rem:gradient_flow_initializations}; each
measured by the $\operatorname{prox}$-based nonlinear residual
$\|\mathtt{F}_h(z_h^k,u_h^k)\|_{(L^2(\Omega))^d\times L^2(\Omega)}$, $k\in\mathbb{N}_0$:
\textit{left:} $\operatorname{prox}$-based semi-smooth Newton method (\textit{cf}.\ Algorithm \ref{alg:semi-smooth_newton} employing the representations \eqref{eq:pdirichlet_proximity} and  \eqref{eq:pdirichlet_proximity_derivatives}); \textit{right:} classical primal Newton method
(\textit{cf}.\ Algorithm \ref{alg:p_laplace_Newton}). The $\operatorname{prox}$-based semi-smooth Newton method is less sensitive to mesh-refinement. For finer triangulations, the $\operatorname{prox}$-based nonlinear residual of the primal Newton method reaches a numerical floor caused by the second equation \eqref{sec:semi-smooth_newton.2.2} in the discrete proximal optimality conditions \eqref{sec:semi-smooth_newton.2} and its $L^2$-Riesz representation; this is not observed when the flux is updated as an independent variable as in the $\operatorname{prox}$-based~\mbox{semi-smooth}~Newton~method.}
    \label{fig:plaplace1}
\end{figure}\vspace{-5mm} 

\begin{figure}[H]
    \centering
    \includegraphics[width=\linewidth]{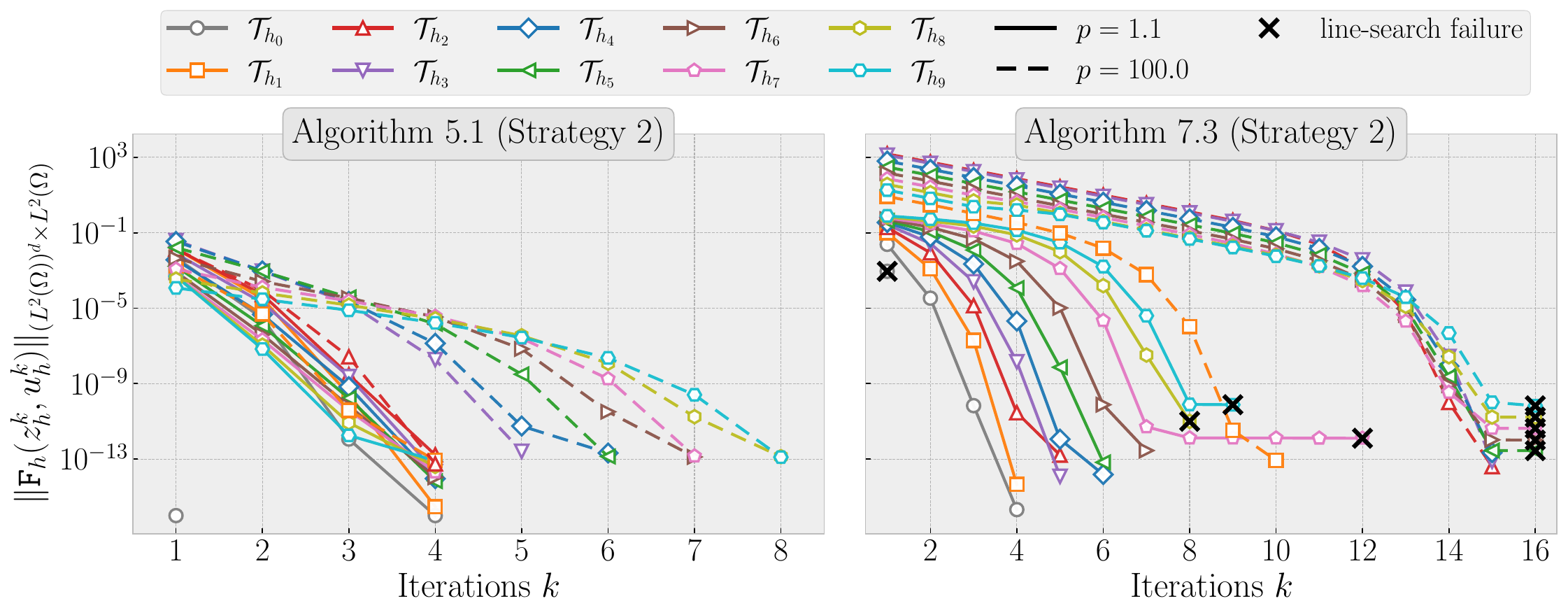}\vspace{-1.75mm}
    \caption{Residual decay for the $p$-Dirichlet problem (\textup{cf}.\ Subsection \ref{subsec:p-laplace}) using
Strategy~\protect\hyperlink{Strategy 2}{2}, \textit{i.e.}, the residual-based Armijo--Goldstein line search; each measured by the
 $\operatorname{prox}$-based nonlinear residual
$\|\mathtt{F}_h(z_h^k,u_h^k)\|_{(L^2(\Omega))^d\times L^2(\Omega)}$, $k\in\mathbb{N}_0$:
\textit{left:} $\operatorname{prox}$-based semi-smooth Newton method (\textit{cf}.\ Algorithm \ref{alg:semi-smooth_newton} employing the representations \eqref{eq:pdirichlet_proximity} and  \eqref{eq:pdirichlet_proximity_derivatives}); \textit{right:} classical primal Newton method
(\textit{cf}.\ Algorithm \ref{alg:p_laplace_Newton}). For $p=100$, the second $L^2$-gradient-flow iterate~is~used~as~\mbox{initial}~\mbox{iterate}. The $\operatorname{prox}$-based semi-smooth Newton method is less sensitive to mesh-refinement.}
    \label{fig:plaplace2}
\end{figure}
\pagebreak

\newpage
\subsection{Elasto-plastic torsion problem}

\hspace{5mm}In this test, we compare the $\operatorname{prox}$-based semi-smooth Newton
method (\textit{cf}.\ Algorithm~\ref{alg:semi-smooth_newton}) with a
canonical dual semi-smooth Newton method applied directly to the first-order optimality system of the discrete dual functional \eqref{eq:elasto_discrete_dual}.
More precisely, the variation
$\mathtt{G}_h\colon Y_h\times V_h\to (Y_h\times V_h)^*$,\linebreak for every
$(z_h,u_h),(y_h,v_h)\in Y_h\times V_h$ defined by
\begin{align*}
\langle
\mathtt{G}_h(z_h,u_h),(y_h,v_h)
\rangle_{Y_h\times V_h}
&\coloneqq
\int_\Omega
\mathrm{D}\varphi_\varepsilon^*(z_h)\cdot y_h\,\mathrm{d}x
\\&\quad+
\int_\Omega
I_{V_h}\{u_h\operatorname{div}_h y_h\}\,\mathrm{d}x
+
\int_\Omega
I_{V_h}\{(\operatorname{div}_h z_h+f_h)v_h\}\,\mathrm{d}x\,,
\end{align*}
is globally Newton differentiable with
possible (global) Newton derivative selection
$\mathtt{J}_{\mathtt{G}_h}\colon
Y_h\times V_h\to \mathcal{L}(Y_h\times V_h;(Y_h\times V_h)^*)$, for every
$(z_h,u_h),(\xi_h,w_h),(y_h,v_h)\in Y_h\times V_h$ defined by
\begin{align*}
\langle
\mathtt{J}_{\mathtt{G}_h}(z_h,u_h)(\xi_h,w_h),
(y_h,v_h)\rangle_{Y_h\times V_h}
\coloneqq
&\int_\Omega
\mathtt{J}_{\mathrm{D}\varphi_\varepsilon^*}(z_h)\xi_h\cdot y_h
\,\mathrm{d}x
\\
&+
\int_\Omega
I_{V_h}\{w_h\operatorname{div}_h y_h\}\,\mathrm{d}x
+
\int_\Omega
I_{V_h}\{v_h\operatorname{div}_h \xi_h\}\,\mathrm{d}x\, ,
\end{align*}
where $\mathtt{J}_{\smash{\mathrm{D}\varphi_\varepsilon^*}}\colon
\mathbb{R}^d\to\mathbb{R}^{d\times d}$, for every $t\in\mathbb{R}^d$ 
defined by
\begin{align*}
\mathtt{J}_{\smash{\mathrm{D}\varphi_\varepsilon^*}}(t)
\coloneqq
\begin{cases}
(1+\varepsilon)\mathbbone
&\text{ if } |t|<1\,,\\
\varepsilon\mathbbone+\smash{\frac{1}{|t|}}\mathtt{P}_t
&\text{ if } |t|\ge 1\,,
\end{cases} 
\end{align*}
denotes a possible (global) Newton derivative selection of
$\mathrm{D}\varphi_\varepsilon^*\colon\mathbb{R}^d\to\mathbb{R}^d$.

\begin{algorithm}[Dual semi-smooth Newton method for discrete~\mbox{elasto-plastic torsion}]
\label{alg:dual_Newton_elasto}
Let $\varepsilon>0$ be a regularization parameter, let
$\varepsilon_{\mathtt{abs}}^h>0$ be a stopping
parameter, let $(z_h^0,u_h^0)\in Y_h\times V_h$ be an initial iterate, and
let $k_{\mathtt{max}}^h\hspace{-0.1em}\in\hspace{-0.1em}\mathbb{N}\cup\{+\infty\}$ be a maximal number of
iterations.~Then,~for~${k\hspace{-0.1em}=\hspace{-0.1em}0,\ldots,\smash{k_{\mathtt{max}}^h}}$, perform the
following iteration loop:
\begin{itemize}[noitemsep,topsep=2pt,leftmargin=!,labelwidth=\widthof{(2)}]
\item[(1)] \hypertarget{alg:dual_Newton_elasto.1}{}
Compute the dual update direction
$(\delta z_h^k,\delta u_h^k)\in Y_h\times V_h$ such that
\begin{align*}
    \mathtt{J}_{\mathtt{G}_h}(z_h^k,u_h^k)
    (\delta z_h^k,\delta u_h^k)
    =
    -\mathtt{G}_h(z_h^k,u_h^k)
    \quad\text{in }(Y_h\times V_h)^*\,,
\end{align*}
and the updated iterate $\smash{(z_h^{k+1},u_h^{k+1})
    \coloneqq
    (z_h^k,u_h^k)+\alpha_k(\delta z_h^k,\delta u_h^k)
    \in Y_h\times V_h}$,
where $\alpha_k>0$ is a (possibly variable) step size;

\item[(2)] \hypertarget{alg:dual_Newton_elasto.2}{}
If %$\|\mathtt{G}_h(z_h^{k+1},u_h^{k+1})\|_{(Y_h\times V_h)^*} < \max\{ \varepsilon_{\mathtt{abs}}^h, \varepsilon_{\mathtt{rel}}^h \|\mathtt{G}_h(z_h^0,u_h^0)\|_{(Y_h\times V_h)^*}\}$,  
$\|\mathtt{G}_h(z_h^{k+1},u_h^{k+1})\|_{(Y_h\times V_h)^*}
<
\varepsilon_{\mathtt{abs}}^h$, 
then \textup{STOP}; otherwise, set $k\to k+1$ and continue with Step (\hyperlink{alg:dual_Newton_elasto.1}{1}).
\end{itemize}
\end{algorithm}

We consider the following benchmark example for the elasto-plastic torsion
problem, which provides a radially symmetric primal solution that
exhibits an elastic-plastic transition for~\mbox{sufficiently} large loads, together
with an affine  dual solution (see, \textit{e.g.},
\cite[Expl.\ 2.3.2, p.\ 122]{GlowinskiLionsTremolieres1981}).

\begin{example}\label{ex:elasto_plastic_torsion_ball_d}
Let $\Omega \coloneqq B_r^d(0)$, $d\in\mathbb N$, $r>0$,
$\Gamma_D\coloneqq \partial\Omega$,
and $f\equiv C_f\in L^1(\Omega)$ with $C_f>0$.~Then,\linebreak the \hspace{-0.1mm}unique \hspace{-0.1mm}primal \hspace{-0.1mm}solution
\hspace{-0.1mm}$u\hspace{-0.15em}\in\hspace{-0.15em} W^{1,\infty}_D(\Omega)$, \hspace{-0.1mm}\textit{i.e.}, \hspace{-0.1mm}the \hspace{-0.1mm}minimizer \hspace{-0.1mm}of \hspace{-0.1mm}\eqref{eq:elasto_primal}, \hspace{-0.1mm}and \hspace{-0.1mm}an \hspace{-0.1mm}absolutely~\hspace{-0.1mm}\mbox{continuous} dual solution
$\mu=z\otimes \mathrm{d}x\in (\mathrm{ba}(\Omega))^d$ with $z\in W^1_N(\operatorname{div};\Omega)$,~\textit{i.e.},~a~\mbox{maximizer}~of~\eqref{eq:elasto_dual2},~are~given~via
\begin{align}
\begin{aligned}
u
&\coloneqq
\begin{cases}
    \smash{\tfrac{C_f}{2d}}(r^2-\vert\cdot\vert^2),
    &\text{if } \smash{C_f}\le \tfrac{d}{r}\,,\\
    \left.\begin{cases}
        r-\vert\cdot\vert
        &\text{if } \smash{\tfrac{d}{C_f}}\le \vert\cdot\vert\le r\,,\\
        -\smash{\tfrac{C_f}{2d}}\vert\cdot\vert^2+r-\smash{\tfrac{d}{2C_f}}
        &\text{if } 0\le \vert\cdot\vert\le \smash{\tfrac{d}{C_f}}
    \end{cases}\right\}
    &\text{if } \smash{C_f}\ge \tfrac{d}{r}
\end{cases}&&\quad\text{ in }\Omega\,,\\
z
&\coloneqq
-\smash{\tfrac{C_f}{d}}\operatorname{id}_{\mathbb{R}^d}&&\quad\text{ in }\Omega\,.
\end{aligned}
\end{align}
\end{example}

More precisely, in the following, we consider the case $d=2$, $r=1$, $\smash{C_f}\in \{5,10\}$,~the same sequence of triangulations
$\{\mathcal{T}_{h_i}\}_{i=0,\ldots,9}$ of
$\overline{\Omega}=K_1^d(0)$ (\textit{cf}.\ Figure~\ref{fig:2D_triang}) constructed in  Subsection~\ref{subsec:experiments_pdirichlet}, 
$\gamma_1\hspace{-0.1em}=\hspace{-0.1em}\frac{1}{10}$, $\gamma_2\hspace{-0.1em}=\hspace{-0.1em}1$, and $\varepsilon\hspace{-0.1em}=\hspace{-0.1em}\smash{C_f} h^2$ (which preserves the quasi-optimal error~decay~rates~for~element-wise affine approximations of the elasto-plastic torsion~problem~\mbox{under}~\mbox{uniform}~\mbox{mesh-refinement},~\textit{cf}.~\cite{AntilBartelsKaltenbachKhandelwal2025}).\pagebreak

For the above experimental setup, Figures~\ref{fig:ept1} and~\ref{fig:ept2} compare the $\operatorname{prox}$-based~semi-smooth~New\-ton
method (\textit{cf}.\ Algorithm~\ref{alg:semi-smooth_newton} employing the representations \eqref{eq:elasto_proximity} and  \eqref{eq:elasto_proximity_derivatives}) with the
dual semi-smooth Newton method (\textit{cf}.\ Algorithm~\ref{alg:dual_Newton_elasto}) under mesh-refinement. To
enable~a~\mbox{direct}~\mbox{comparison},\linebreak we again report the common $\operatorname{prox}$-based nonlinear residual $\|\mathtt{F}_h(z_h^k,u_h^k)\|_{(L^2(\Omega))^d\times L^2(\Omega)}$, $k\in\mathbb{N}_0$, 
for both methods.
Again, the convergent runs exhibit fast local residual decay for both methods. Therefore,
the main difference is not the local decay rate, but the robustness of the iteration:\linebreak for both
values of $C_f\in\{5,10\}$ and both globalization strategies
(\textit{cf}.\ Strategies~\hyperlink{Strategy 1}{1}~and~\hyperlink{Strategy 2}{2}),~the~$\operatorname{prox}$-based semi-smooth Newton method remains effective for a larger
range~of~refined~triangulations.

\begin{figure}[H]
    \centering
    \includegraphics[width=\linewidth]{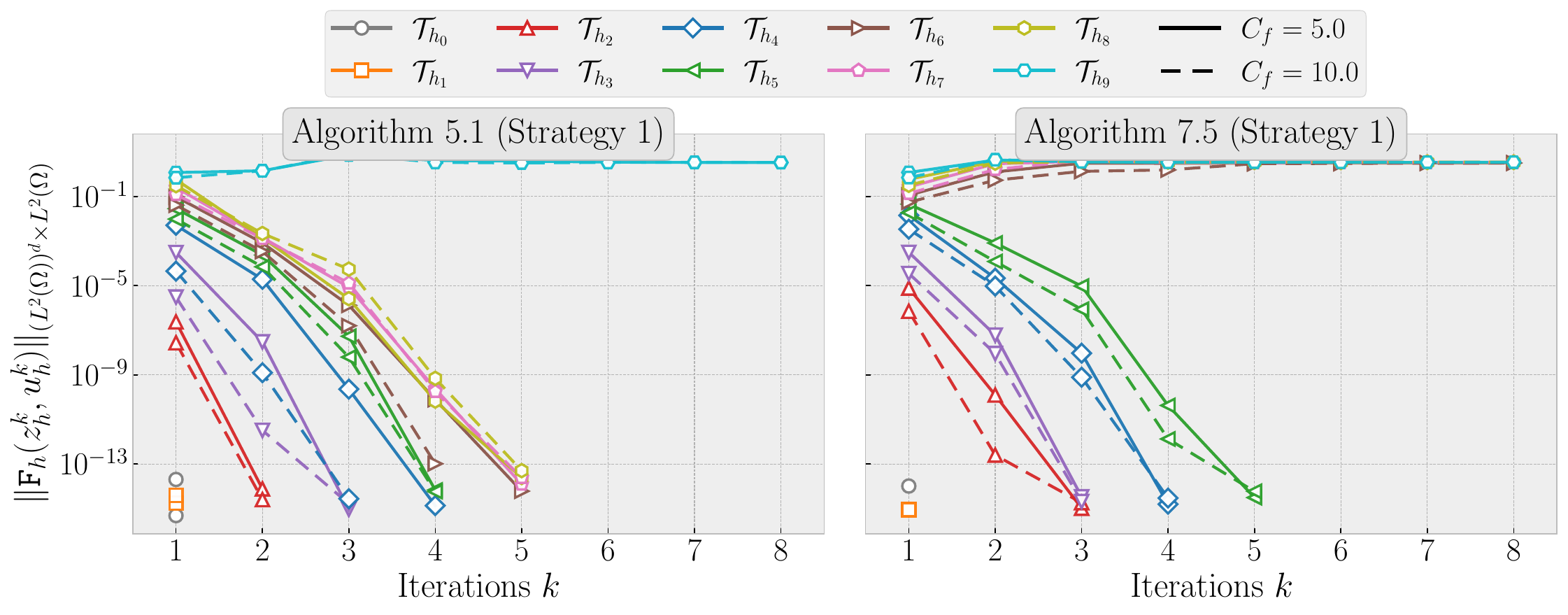}\vspace{-1.5mm}
   \caption{Residual decay for the elasto-plastic torsion problem (\textit{cf}.\ Subsection \ref{subsec:elasto-plastic})
using Strategy~\protect\hyperlink{Strategy 1}{1}, \textit{i.e.}, the $L^2$-gradient-flow initialization described in Remark \ref{rem:gradient_flow_initializations};
each measured by the $\operatorname{prox}$-based nonlinear residual
$\|\mathtt{F}_h(z_h^k,u_h^k)\|_{(L^2(\Omega))^d\times L^2(\Omega)}$, $k\in\mathbb{N}_0$:
\textit{left:} $\operatorname{prox}$-based semi-smooth Newton method (\textit{cf}.\ Algorithm \ref{alg:semi-smooth_newton} employing the representations \eqref{eq:elasto_proximity} and  \eqref{eq:elasto_proximity_derivatives}); \textit{right:} dual semi-smooth Newton method
(\textit{cf}.\ Algorithm \ref{alg:dual_Newton_elasto}). The $\operatorname{prox}$-based semi-smooth Newton method is less sensitive to mesh-refinement.}
    \label{fig:ept1}
\end{figure}

\begin{figure}[H]
    \centering
    \includegraphics[width=\linewidth]{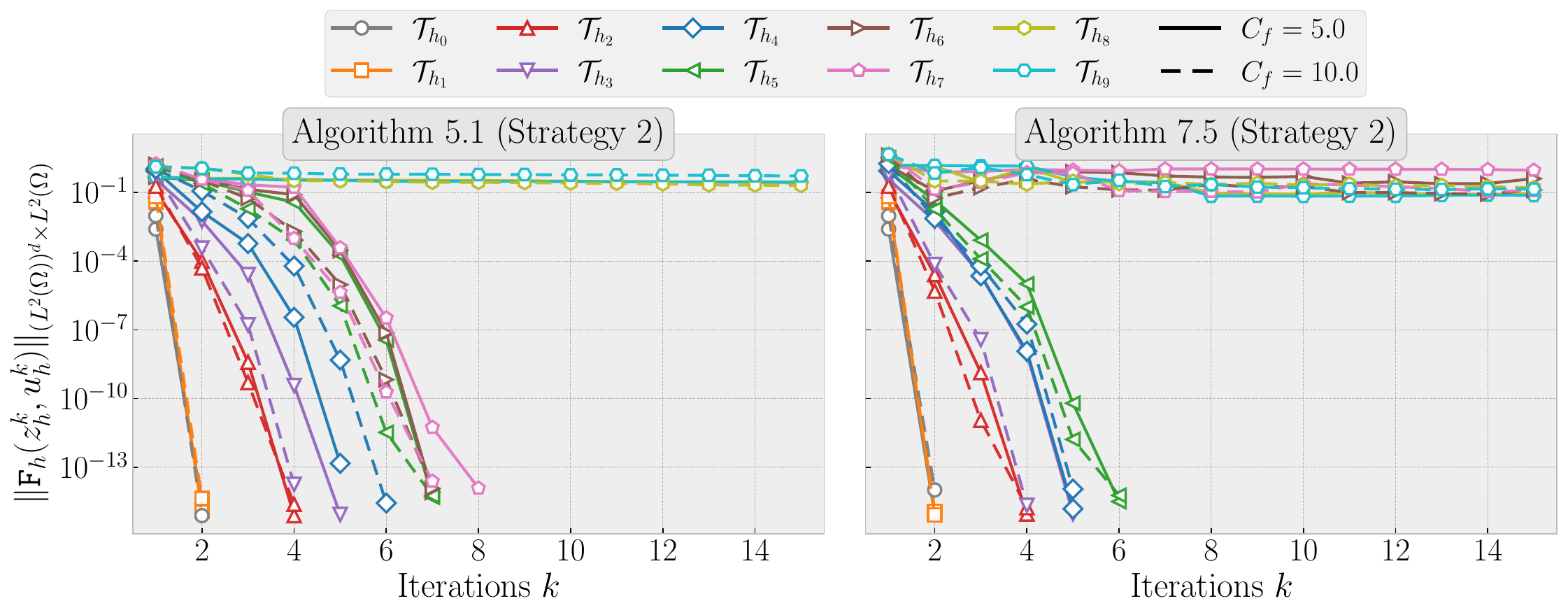}\vspace{-1.5mm}
    \caption{Residual decay for the elasto-plastic torsion problem (\textit{cf}.\ Subsection \ref{subsec:elasto-plastic})
using Strategy~\protect\hyperlink{Strategy 2}{2}, \textit{i.e.}, the residual-based Armijo--Goldstein line search; each measured by
the common $\operatorname{prox}$-based nonlinear residual
$\|\mathtt{F}_h(z_h^k,u_h^k)\|_{(L^2(\Omega))^d\times L^2(\Omega)}$, $k\in\mathbb{N}_0$:
\textit{left:} $\operatorname{prox}$-based semi-smooth Newton method (\textit{cf}.\ Algorithm \ref{alg:semi-smooth_newton} employing the representations \eqref{eq:elasto_proximity} and  \eqref{eq:elasto_proximity_derivatives}); \textit{right:} dual semi-smooth Newton method
(\textit{cf}.\ Algorithm \ref{alg:dual_Newton_elasto}). The $\operatorname{prox}$-based semi-smooth Newton method is less sensitive to mesh-refinement.}
    \label{fig:ept2}
\end{figure}

	{\setlength{\bibsep}{0pt plus 0.0ex}\small 
		
		\bibliographystyle{aomplain}
		\bibliography{references} 
		
	}
	
\end{document}